\newcommand{\proj}{\tn{P}}
\newcommand{\cont}{\mc{C}}
\newcommand{\partA}{\mc{J}_A}
\newcommand{\partB}{\mc{J}_B}
\newcommand{\partJ}{\mc{J}}
\newcommand{\partBasic}{I}
\newcommand{\partAn}{\mc{J}_A^{n}}
\newcommand{\partBn}{\mc{J}_B^{n}}
\newcommand{\partBasicn}{I^n}
\newcommand{\partk}{\iter{\partJ}{k}}
\newcommand{\partnk}{\iter{\partJ}{n,k}}
\newcommand{\partnkk}{\iter{\partJ}{n,k+1}}
\newcommand{\piInit}{\pi_{\tn{init}}}
\newcommand{\piInitN}{\pi_{\tn{init}}^n}
\newcommand{\neigh}{\mc{N}}
\newcommand{\floor}[1]{\lfloor #1 \rfloor}
\newcommand{\bmpi}{\bm{\pi}}
\newcommand{\bmomega}{\bm{\omega}}
\newcommand{\bmnu}{\bm{\lambda}}
\newcommand{\vecnu}{\lambda}
\newcommand{\projTX}{\proj_{(\R_+ \times X)}}
\newcommand{\projTime}{\proj_{\R_+}}
\newcommand{\Lebesgue}{\mathcal{L}}
\DeclareMathOperator{\sign}{sign}
\newcommand{\WTV}{\tn{WTV}}
\newcommand{\WTVb}{\tn{WTVB}}
\newcommand{\WoY}{W_Y}
\newcommand{\xJn}{\ol{x}_{t,x}^n}
\newcommand{\boundaryIn}{\partial I^n}
\newcommand{\interiorIn}{\mathring{I}^n}
\newcommand{\pivot}{\mathfrak{p}}
\newcommand{\bitset}{B}
\newcommand{\nuset}{\mc{V}}
\newcommand{\nset}{\mc{Z}}
\newcommand{\nsetliminf}{\nset'}
\newcommand{\nsetb}{\hat{\nset}}
\newcommand{\inner}[2]{\langle {#1}, {#2}\rangle}
\newcommand{\MuDens}{\RadNikD{\mu}{\Lebesgue}}
\newcommand{\muDens}{\RadNik{\mu}{\Lebesgue}}
\newcommand{\muLow}{M_l}
\newcommand{\muHi}{M_u}
\newcommand{\WY}{\mc{W}}
\newcommand{\WoR}{W_{\R}}
\newcommand{\len}{L}
\newcommand{\measleq}{\preceq}
\newcommand{\BoundarySet}{\mathfrak{B}}
\newcommand{\IntWeight}{\mathcal{I}}
\tikzstyle{result} = [rectangle, minimum width=3cm, minimum height=1cm, text centered, draw=black, align = center]
\tikzstyle{mainresult} = [rectangle, rounded corners, minimum width=3cm, minimum height=1cm,text centered, draw=black, align = center]
\tikzstyle{arrow} = [thick,->,>=stealth]
\title{Asymptotic analysis of domain decomposition for optimal transport}
\author{Mauro Bonafini, Ismael Medina, Bernhard Schmitzer}
\date{\today}
\begin{document}
\maketitle
\begin{abstract}
Large optimal transport problems can be approached via domain decomposition, i.e.~by iteratively solving small partial problems independently and in parallel.
Convergence to the global minimizers under suitable assumptions has been shown in the unregularized and entropy regularized setting and its computational efficiency has been demonstrated experimentally. An accurate theoretical understanding of its convergence speed in geometric settings is still lacking. In this article we work towards such an understanding by deriving, via $\Gamma$-convergence, an asymptotic description of the algorithm in the limit of infinitely fine partition cells.
The limit trajectory of couplings is described by a continuity equation on the product space where the momentum is purely horizontal and driven by the gradient of the cost function. Convergence hinges on a regularity assumption that we investigate in detail.
Global optimality of the limit trajectories remains an interesting open problem, even when global optimality is established at finite scales.
Our result provides insights about the efficiency of the domain decomposition algorithm at finite resolutions and in combination with coarse-to-fine schemes.
\end{abstract}

\section{Introduction}
\subsection{Overview}
\paragraph{(Computational) optimal transport.} Optimal transport (OT) is an ubiquitous optimization problem with applications in various branches of mathematics, including stochastics, PDE analysis and geometry.
Let $\mu$ and $\nu$ be probability measures over spaces $X$ and $Y$ and let $\Pi(\mu,\nu)$ be the set of transport plans, i.e.~probability measures on $X \times Y$ with $\mu$ and $\nu$ as first and second marginal.
Further, let $c : X \times Y \to \R$ be a \emph{cost function}.
The Kantorovich formulation of optimal transport is then given by
\begin{align}
	\label{eq:IntroOT}
	\inf \left\{ \int_{X \times Y} c(x,y)\,\diff \pi(x,y) \middle| \pi \in \Pi(\mu,\nu) \right\}.
\end{align}
We refer to the monographs \cite{Villani-OptimalTransport-09} and \cite{SantambrogioOT} for a thorough introduction and historical context.
Due to its geometric intuition and robustness it is becoming particularly popular in data analysis and machine learning.
Therefore, the development of efficient numerical methods is of immense importance, and considerable progress was made in recent years, such as solvers for the Monge--Amp\`ere equation \cite{ObermanMongeAmpere2014}, semi-discrete methods \cite{LevySemiDiscrete2015,KiMeThi2019}, entropic regularization \cite{Cuturi2013}, and multi-scale methods \cite{MultiscaleTransport2011,SchmitzerSchnoerr-SSVM2013}.
An introduction to computational optimal transport, an overview on available efficient algorithms, and applications can be found in \cite{PeyreCuturiCompOT}.

\begin{figure}[hbt]
	\centering
	\includegraphics[width=0.9\linewidth]{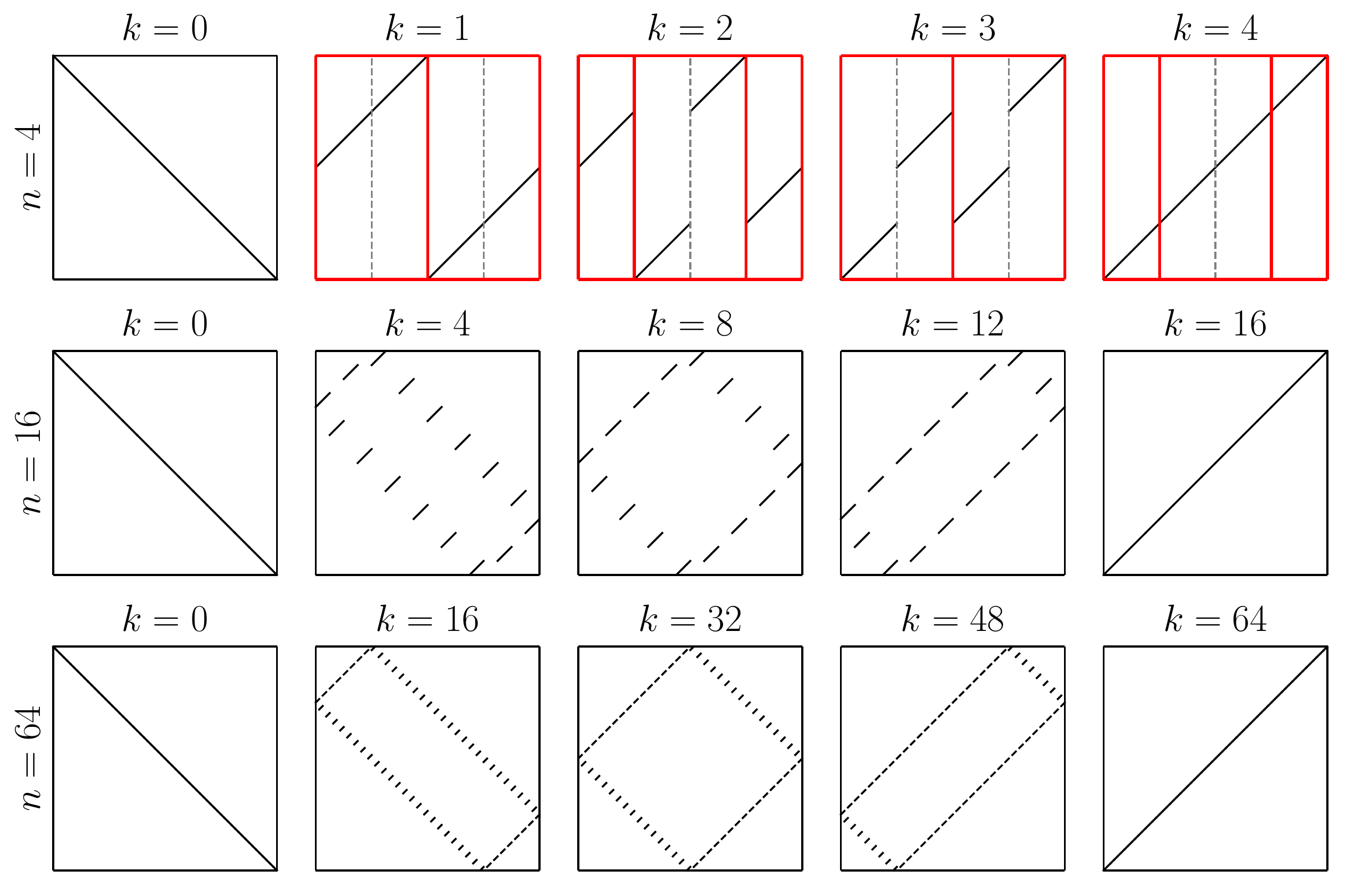}
	\caption{Iterations of the domain decomposition algorithm for $X=Y=[0,1]$, $\mu = \nu = \Lebesgue\restr [0,1]$, $c(x,y)=(x-y)^2$ and a ``flipped'' initialization (the diagonal plan is optimal, we start with the flipped `anti-diagonal'), for several resolution levels.
	At each level $X$ is divided into $n$ equal cells, which are then grouped into two staggered partitions $\partA$ and $\partB$. Partition cells of $\partA$ and $\partB$ are shown in red in the first row. As the number of cells $n$ is increased, the trajectories of the algorithm seem to converge to an asymptotic limit where each iteration corresponds to a time-step of size $1/n$.
	}
	\label{fig:introflipped}
\end{figure}
\paragraph{Domain decomposition.}
Benamou introduced a domain decomposition algorithm for Was\-ser\-stein-2 optimal transport on $\R^d$ \cite{BenamouPolarDomainDecomposition1994}, based on Brenier's polar factorization \cite{MonotoneRerrangement-91}. The case of entropic transport was studied in \cite{BoSch2020}.
The algorithm works as follows: $X$ is divided into two `staggered' partitions $\{X_J | J \in \partA\}$ and $\{X_{\hat{J}} | \hat{J} \in \partB\}$. In the first iteration, an initial coupling $\pi^0$ is optimized separately on the cells $X_J \times Y$ for $J \in \partA$, yielding $\pi^1$. Then $\pi^1$ is optimized separately on the cells $X_{\hat{J}} \times Y$ for $\hat{J} \in \partB$, yielding $\pi^2$. Subsequently, one continues alternating optimizing on the two partitions.
This is illustrated in the first row of Figure \ref{fig:introflipped}.
In each iteration the problems on the individual cells can be solved in parallel, thus making the algorithm amenable for large-scale parallelization.

In \cite{BenamouPolarDomainDecomposition1994} it was shown that the algorithm converges to the global minimizer of \eqref{eq:IntroOT} for $X, Y$ being bounded subsets of $\R^d$, $\mu$ being Lebesgue-absolutely continuous and $c(x,y)=\|x-y\|^2$, if each partition contains two cells that satisfy a `convex overlap principle' which roughly requires that a function $f : X \to \R$ which is convex on each of the cells of $\partA$ and $\partB$ must be convex on $X$. The extension to more complex partitions was discussed and clearly works on certain `non-cyclic' partitions, but no proof was given for beyond this case. Also, no rate of convergence was given.

In \cite{BoSch2020} convergence to the minimizer for the entropic setting was shown under rather mild conditions: $c$ needs to be bounded and the two partitions need to be `connected', indicating roughly that it is possible to traverse $X$ by jumping between overlapping cells of $\partA$ and $\partB$. Convergence was shown to be linear in the Kullback--Leibler (KL) divergence.
In addition, an efficient numerical implementation with various features such as parallelization, coarse-to-fine optimization, adaptive sparse truncation and gradual reduction of the regularization parameter was introduced and its favourable performance was demonstrated on numerical examples.
The convergence mechanism used in the proof is based on the entropic smoothing and the obtained convergence rate is exponentially slow as regularization goes to zero. It was shown to be approximately accurate on carefully designed worst-case problems. On problems with more geometric structure, such as the matching of image intensities with the quadratic cost, the algorithm empirically converged much faster. In combination with the coarse-to-fine scheme even a logarithmic number of iterations (in the image pixel number) was sufficient. The main mechanism for driving convergence seemed to be the geometric structure of the cells and the cost function. This was not reflected in the convergence analysis. The relation of the one-dimensional case to the odd-even-transposition sort was discussed, but the argument does not extend to higher dimensions.

\paragraph{Asymptotic dynamic of the Sinkhorn algorithm.}
The celebrated Sinkhorn algorithm has advanced to an ubiquitous numerical method for optimal transport by means of entropic regularization \cite{Cuturi2013,PeyreCuturiCompOT}. Linear convergence of the algorithm in Hilbert's projective metric is established in \cite{FranklinLorenz-Scaling-1989}. As in \cite{BoSch2020} the convergence analysis of \cite{FranklinLorenz-Scaling-1989} is solely based on the entropic smoothing and the convergence rate tends to 1 exponentially as regularization decreases (in fact, the former article was inspired by the latter).

In was observed numerically (e.g.~\cite{SchmitzerScaling2019}) that on problems with sufficient geometric structure the Sinkhorn algorithm tends to converge much faster, in particular with appropriate auxiliary techniques such as coarse-to-fine optimization and gradual reduction of the regularization parameter.

In \cite{Berman-Sinkhorn-2017} the asymptotic dynamic of the Sinkhorn algorithm for the squared distance cost on the torus is studied in the joint limit of decreasing regularization and refined discretization. The dynamic is fully characterized by the evolution of the dual variables (corresponding to the scaling factors in the Sinkhorn algorithm) which were shown to converge towards the solution of a parabolic PDE of Monge--Amp\`ere type. This PDE had already been studied by \cite{KitagawaFlow2012,KiStWa2012} and thus allowed estimates on the required number of iterations of the Sinkhorn algorithm for convergence within a given accuracy. This bound is much more accurate on geometric problems, providing a theoretical explanation for the efficiency of numerical methods.

\subsection{Contribution and outline}
\paragraph{Motivation.}
Empirically domain decomposition was demonstrated to be a robust and efficient numerical method for optimal transport, amenable for large-scale parallelization.
For the entropic setting a linear convergence rate has been derived based on the entropic smoothing.
On sufficiently `geometric' problems it appears to converge much faster but this mechanism is not yet understood theoretically.

In this article we work towards such an understanding.
At the level of a finite partition resolution this seems daunting. We therefore aim at giving an asymptotic description of the algorithm as the number of partition cells tends to $\infty$. The conjecture for the existence of such a limit behaviour is motivated by Figure \ref{fig:introflipped} (and additional illustrations throughout the article).
Intuitively, we therefore seek to provide for the domain decomposition algorithm an equivalent of what is provided by \cite{Berman-Sinkhorn-2017} for the Sinkhorn algorithm.

\paragraph{Preview of the main result.}
For simplicity we consider the case $X=[0,1]^d$, $Y \subset \R^d$ compact and $c \in \cont^1(X \times Y)$ with partition cells of $X$ being staggered regular $d$-dimensional cubes (see Figure \ref{fig:introflipped} for an illustration in one dimension). At discretization scale $n$, during iteration $k$, the domain decomposition algorithm applied to \eqref{eq:IntroOT} requires the solution of the cell problem
\begin{align}
	\label{eq:IntroCellProblem}
	\inf\left\{ \int_{X_J^n \times Y} c\,\diff \pi + \veps^n \cdot \KL(\pi|\mu \otimes \nu)
	\,\middle|\, \pi \in \Pi(\mu^n_J,\nu^{n,k}_J) \right\}.
\end{align}
Here, $X^n_J$ is a cell of the relevant partition $\partAn$ or $\partBn$ (depending on $k$), $\veps^n$ is the entropic regularization parameter at scale $n$ (we consider the cases $\veps^n=0$ and $\veps^n>0$, and in the latter case a dependency on $n$ will turn out to be essential), $\mu^n_J$ is the restriction of $\mu$ to $X^n_J$ and $\nu^{n,k}_J$ is the $Y$-marginal of the previous iterate $\pi^{n,k-1}$, restricted to $X^n_J \times Y$.

For each $n$ this generates a sequence of iterates $(\pi^{n,k})_k$, which we interpret as time-continuous piecewise constant trajectories $\R_+ \ni t \mapsto \bmpi^n_t \assign \pi^{n,\lfloor n \cdot k\rfloor}$. That is, at scale $n$, one iteration corresponds to a time-step $1/n$.

Our main result will be that, under suitable conditions, the sequence of trajectories $(t \mapsto \bmpi^n_t)_n$ converges (up to subsequences) to a limit trajectory $t \mapsto \bmpi_t$. The convergence is uniform on compact time intervals with respect to a metric $\WY$ on $\Pi(\mu,\nu)$ which is stronger than weak* convergence and which almost implies pointwise weak* convergence of the disintegrations of $\bmpi_t^n$ along $X$.
In addition, there will be a momentum field $\R_+ \ni t \mapsto \bmomega_t \in \meas(X \times Y)^d$ such that $\bmpi_t$ and $\bmomega_t$ solve a `horizontal' continuity equation on $X \times Y$,
\begin{align}
	\label{eq:IntroCE}
	\partial_t \bmpi_t + \ddiv_X \bmomega_t = 0
\end{align}
for $t \geq 0$ with an initial-time boundary condition, in a distributional sense. Here $\ddiv_X$ is the divergence of vector fields on $X \times Y$ that only have a `horizontal' component along $X$. We find that $\bmomega_t \ll \bmpi_t$ and the velocity $v_t \assign \RadNik{\bmomega_t}{\bmpi_t}$ has entries bounded by $1$, i.e.~mass moves at most with unit speed along each spatial axis, corresponding to the fact that particles can at most move by one cell per iteration.

The momentum field $\bmomega_t$ in turn is generated from a family of measures $(\bmnu_{t,x})_{x \in X}$ which are minimizers of
\begin{align}
	\label{eq:IntroCellProblemLimit}
	\inf \left\{
	\int_{Z \times Y}
	\inner{\nabla_X c(x, y)}{z} \, \diff \vecnu(z,y)
	+
	\eta \cdot \KL(\vecnu | \sigma\otimes \bmpi_{t,x})
	\middle|
	\vecnu\in\Pi(\sigma, \bmpi_{t,x})
	\right\}.
\end{align}
which can be shown to be the $\Gamma$-limit of problem \eqref{eq:IntroCellProblem}, which can be anticipated by a careful comparison of the two problems:
$Z=[-1,1]^d$ represents the asymptotic infinitesimal partition cell $X^n_J$ (blown up by a factor $n$), we find that the transport cost is linearly expanded in $X$-direction by the gradient, $\eta \assign \lim_{n \to \infty} \veps^n \cdot n$ is the asymptotic entropic contribution (which we assume to be finite for now, but the case $\eta=\infty$ is also discussed), $\sigma$ is the asymptotic infinitesimal restriction of $\mu$ to the partition cells (which may be the Lebesgue measure on $Z$, but we may also obtain different measures if $\mu$ is discretized) and $\bmpi_{t,x}$ is the disintegration of $\bmpi_t$ with respect to the $X$ marginal at $x$, which corresponds to the asymptotic infinitesimal $Y$-marginal of $\bmpi_t$, when restricted to the `point-like' cell at $x$.
It is this pointwise $\Gamma$-convergence that requires a particular notion of convergence of the trajectories $(\bmpi^n_t)_n$.

In one dimension, the disintegration $\bmomega_{t,x}$ of $\bmomega_t$ is obtained from $\bmnu_{t,x}$ via
\begin{align}
	\label{eq:IntroOmega}
	\bmomega_{t,x}(A)
	\assign
	\bmnu_{t,x}(\{z>0\} \times A)
	-
	\bmnu_{t,x}(\{z<0\} \times A)
\end{align}
for measurable $A \subset Y$. That is, particles sitting in the left half of the cell ($z<0$) move left with velocity $-1$, particles in the right half move right with velocity $+1$.

In a nutshell, the limit of the trajectories generated by the domain decomposition algorithm is described by a flow field which is generated by a limit version of the algorithm.

The necessary convergence of $\bmpi^n_t$ to $\bmpi_t$ in the metric $\WY$ hinges on a regularity assumption on the discrete iterates, which intuitively implies that the disintegrations of $\bmpi^n_t$ against $X$ are of bounded variation in a suitable sense, related to total variation of metric space valued functions \cite{AmbrosioMetricBVFunctions1990}. We cannot establish validity of the assumption in the general case. A proof for a simple one-dimensional setting is given (Section \ref{sec:OscillationsBound}). We conjecture that it holds in the majority of cases, but we also provide a potential numerical counter-example for a rather pathological setting (Section \ref{sec:WTVExample}).

Compared to a single Sinkhorn algorithm as in \cite{Berman-Sinkhorn-2017}, the state of the domain decomposition algorithm cannot be described by a scalar potential $X \to \R$, but requires the full (generally non-deterministic) coupling $\pi^{n,k}$. Consequently, the limit system \eqref{eq:IntroCE} - \eqref{eq:IntroOmega} is not a `relatively simple' PDE for a scalar function but formally a non-local PDE for a measure. This system has not been studied previously. Consequently, after having established the convergence to this system, we cannot use existing results to conclude our convergence analysis. Instead we are left with a variety of open questions, mostly concerning the behaviour of the limit system.

However, from our result we can already deduce that as we increase $n$, the number of iterations required to approximate the asymptotic stationary state of the algorithm (which may not necessarily be a global minimizer) increases linearly in $n$, which is much faster than the exponential bound in \cite{BoSch2020}.

\paragraph{Generalized proof of Benamou's convergence result.}
At finite discretization scales $n$, in this article we consider a decomposition of the domain into two staggered grids of cubes. While this is natural from a numerical point of view, see \cite{BoSch2020}, Benamou's original convergence proof does not cover this setting, even for $\mu \ll \Lebesgue$ and $c(x,y)=\|x-y\|^2$, because the arguments for the existence of a continuous, and subsequently convex, global Kantorovich potential do not apply.
Therefore, in Appendix \ref{sec:Benamou} we give a generalization of Benamou's convergence proof at finite discretization scales that covers our setting.

\paragraph{Outline.}
Notation and necessary background on optimal transport and domain decomposition are recalled in Section \ref{sec:Background}.
The detailed setting for the algorithm is introduced in Section \ref{sec:DomDecNotation}, the discrete trajectories are defined in Section \ref{sec:DiscreteTrajectories} (which includes a smoothing step that we have omitted in the above preview).
Once all preliminaries have been introduced, a more detailed preview of the subsequent sections is gathered in Section \ref{sec:Preview}.
Convergence of the trajectories $\bmpi^n_t$ to the limit $\bmpi_t$ is studied in Section \ref{sec:Oscillations}. Convergence of the cell problems, the continuity equation and the complete statement of the main result are given in Section \ref{sec:Convergence}.
Some numerical examples that illustrate extreme cases of the method are given in Section \ref{sec:Numerics}.
The paper ends with a conclusive discussion and open questions in Section \ref{sec:Conclusion}.
Several proofs are delegated to the Appendices.

\section{Background}
\label{sec:Background}
\subsection{Notation and setting}
\label{sec:Notation}
\begin{itemize}
	\item Let $X = [0,1]^d$, $Y$ be a compact subset of $\R^d$. We assume compactness to avoid overly technical arguments while covering the numerically relevant setting. We conjecture that the results of this paper can be generalized to compact $X$ with Lipschitz boundaries.

	\item For a metric space $Z$ denote by $\meas(Z)$ the $\sigma$-finite measures over $Z$. If $Z$ is compact, then measures in $\meas(Z)$ are finite. Further, denote by $\measp(Z)$ the subset of non-negative $\sigma$-finite measures and by $\prob(Z)$ the subset of probability measures.
	\item The Lebesgue measure of any dimension is denoted by $\Lebesgue$. The dimension will be clear from context.
	\item For $\rho\in\meas_+(Z)$ and a measurable $S\subset Z$ we denote by $\rho \restr S$ the restriction of $\rho$ to $S$.

	\item The maps $\proj_X : \measp(X \times Y) \to \measp(X)$ and $\proj_Y : \measp(X \times Y) \to \measp(Y)$ denote the projections of measures on $X \times Y$ to their marginals, i.e.
	\begin{align*}
	(\proj_X \pi)(S_X) \assign \pi(S_X \times Y) \qquad \tn{and} \qquad (\proj_Y \pi)(S_Y) \assign \pi(X \times S_Y)
	\end{align*}
	for $\pi \in \measp(X \times Y)$, $S_X \subset X$, $S_Y \subset Y$ measurable.
	We will use the projection notation analogously for other product spaces.

	\item For a compact metric space $Z$ and $\mu \in \meas(Z)$, $\nu \in \measp(Z)$ the \emph{Kullback--Leibler divergence} (or relative entropy) of $\mu$ with respect to $\nu$ is given by
	\begin{align*}
	\KL(\mu|\nu) \assign \begin{cases}
	\int_Z \varphi\left(\RadNik{\mu}{\nu}\right)\,\diff \nu & \tn{if } \mu \ll \nu,\,\mu \geq 0, \\
	+ \infty & \tn{else,}
	\end{cases}
	\;\; \tn{with} \;\;
	\varphi(s) \assign \begin{cases}
	s\,\log(s)-s+1 & \tn{if } s>0, \\
	1 & \tn{if } s=0, \\
	+ \infty & \tn{else.}
	\end{cases}
	\end{align*}
	\item The total variation of a function $u \in L^1(\R^d)$ is given by
	\begin{equation}
		\TV(u)
		\assign
		\sup \left\{
		\int_{\R^d} u \operatorname{div} \phi\, \mathrm{d} x\
		\middle|\
		\phi \in \cont_{c}^{1}(\R^d),\ |\phi| \leq 1
		\right\}.
	\end{equation}
	If the total variation of $u$ is finite, we say that $u$ is of bounded variation.
\end{itemize}

\subsection{Optimal transport}
\label{sec:BackgroundOT}
For ${\mu} \in \measp(X)$, ${\nu} \in \measp(Y)$ with the same mass, denote by
\begin{align}
	\label{eq:TransportPlans}
	\Pi({\mu},{\nu}) & \assign
	\left\{ \pi \in \measp(X \times Y) \,\middle|\, \proj_X \pi={\mu},\proj_Y \pi = {\nu}\right\}
\end{align}
the set of \emph{transport plans} between ${\mu}$ and ${\nu}$. Note that $\Pi(\mu,\nu)$ is non-empty if and only if $\mu(X)=\nu(Y)$.

Let $c \in \cont(X \times Y)$ and $\veps \in \R_+$. Pick $\hat{\mu} \in \measp(X)$ and $\hat{\nu} \in \measp(Y)$ such that $\mu \ll \hat{\mu}$ and $\nu \ll \hat{\nu}$.
The (entropic) optimal transport problem between $\mu$ and $\nu$ with respect to the cost function $c$, with regularization strength $\veps$ and with respect to the reference measure $\hat{\mu} \otimes \hat{\nu}$ is given by
\begin{align}
	\label{eq:OT}
	\inf \left\{ \int_{X \times Y} c(x,y)\,\diff \pi(x,y) + \veps\,\KL(\pi|\hat{\mu} \otimes \hat{\nu})
	\middle| \pi \in \Pi(\mu,\nu) \right\}.
\end{align}

For $\veps=0$ this is the (unregularized) Kantorovich optimal transport problem. The existence of minimizers follows from standard compactness and lower-semicontinuity arguments. Of course, more general cost functions (e.g.~lower-semicontinuous) can be considered. We refer, for instance, to \cite{Villani-OptimalTransport-09,SantambrogioOT} for in-depth introductions of unregularized optimal transport. Common motivations for choosing $\veps>0$ are the availablility of efficient numerical methods and increased robustness in machine learning applications, see \cite{PeyreCuturiCompOT} for a broader discussion of entropic regularization.
In this article, the above setting is entirely sufficient.

For a compact metric space $(Z,d)$ we denote by $W_Z$ the Wasserstein-1 metric on $\prob(Z)$ (or more generally, subsets of $\measp(Z)$ with a prescribed mass). By the Kantorovich--Rubinstein duality \cite[Remark 6.5]{Villani-OptimalTransport-09} one has for $\mu, \nu \in \measp(Z)$, $\mu(Z)=\nu(Z)$,
\begin{align}
	\label{eq:KantRubin}
	W_Z(\mu,\nu) & = \sup_{\phi \in \Lip_1(Z)} \int_Z \phi\,\diff (\mu-\nu)
\end{align}
where $\Lip_1(Z) \subset \cont(Z)$ denotes the Lipschitz continuous functions over $Z$ with Lipschitz constant at most $1$.

\subsection{Domain decomposition for optimal transport}
\label{sec:BackgroundDomDec}
Domain decomposition for solving the optimal transport problem \eqref{eq:OT} was proposed in \cite{BenamouPolarDomainDecomposition1994} and studied in \cite{BoSch2020} for the case of entropic transport. We briefly recall the main definitions.

\begin{definition}[Basic and composite partitions \protect{\cite[Definition 3.1]{BoSch2020}}]
	\label{def:Partitions}
	A partition of $X$ into measurable sets $\{X_i\}_{i \in \partBasic}$, for some finite index set $\partBasic$, is called a basic partition of $(X,\mu)$ if the measures $\mu_i \assign \mu \restr X_i$ for $i \in \partBasic$ satisfy $\mu_i(X_i)>0$.
	By construction one has $\sum_{i \in \partBasic} \mu_i = \mu$.
	Often we will refer to a basic partition merely by the index set $I$.

	For a basic partition $\{X_i\}_{i \in \partBasic}$ of $(X,\mu)$ a composite partition $\partJ$ is a partition of $\partBasic$.
	For $J \in \partJ$ we will use the following notation:
	\begin{align*}
	X_J & \assign \bigcup_{i \in J} X_i, &
	\mu_J & \assign \sum_{i \in J} \mu_i=\mu \restr X_J.
	\end{align*}
	Of course, the family $\{X_J\}_{J \in \partJ}$ is a measurable partition of $X$ and the families $\{X_i \times Y\}_{i \in \partBasic}$ and $\{X_J \times Y\}_{J \in \partJ}$ are measurable partitions of $X \times Y$.
\end{definition}

Throughout the article (at each discretization scale $n \in 2\N$) we will use one basic partition $\partBasic$ and two corresponding composite partitions $\partA$ and $\partB$. The precise choices of partitions will be given in Section \ref{sec:DomDec}.

The domain decomposition algorithm now works as follows: Starting with a feasible plan $\piInit \in \Pi(\mu,\nu)$, one optimizes the coupling within each cell $X_J \times Y$ for $J \in \partA$ separately, while keeping the marginals on that cell fixed.
This can be done independently and in parallel for each cell and the coupling attains a potentially better score while remaining in $\Pi(\mu,\nu)$ \cite[Proposition 3.3]{BoSch2020}.
Then repeat this step on partition $\partB$, then again on $\partA$ and so on, continuing to alternate between the two partitions. A formal statement of the algorithm is given in Algorithm \ref{alg:DomDec}.

\begin{algorithmfloat}[bt]
	\noindent
	\textbf{Input}: initial coupling $\piInit \in \Pi(\mu,\nu)$

	\noindent
	\textbf{Output}: a sequence $(\iter{\pi}{k})_{k}$ of feasible couplings in $\Pi(\mu,\nu)$
	\smallskip

	\begin{algorithmic}[1]
		\State $\iter{\pi}{0} \leftarrow \piInit$
		\State $k \leftarrow 0$
		\Loop
		\State $k \leftarrow k+1$
		\State \algorithmicif\ ($k$ is odd)\ \algorithmicthen\ $\partk \leftarrow \partA$\ \algorithmicelse\ $\partk \leftarrow \partB$
		\Comment{select the partition}
		\ForAll{$J \in \partk$}
		\Comment{iterate over each composite cell}
		\State $\iter{\nu_J}{k} \leftarrow \proj_Y(\iter{\pi}{k-1} \restr(X_{J} \times Y))$
		\Comment{compute $Y$-marginal on cell}
		\label{line:YMarginal}
		\State $\iter{\pi_{J}}{k} \leftarrow \arg\min \left\{
		\int_{X_J \times Y} c\,\diff \pi + \veps\,\KL(\pi|\mu_J \otimes \nu) \,\middle|\, \pi \in \Pi\left(\mu_J,\iter{\nu_J}{k}\right)\right\}$ 
		\label{line:CellProblem}
		\EndFor
		\State $\iter{\pi}{k} \leftarrow \sum_{J \in \partk} \iter{\pi}{k}_{J}$
		\EndLoop
	\end{algorithmic}
	\caption{Domain decomposition for optimal transport \protect{\cite[Algorithm 1]{BoSch2020}}}
	\label{alg:DomDec}
\end{algorithmfloat}

The case $\veps=0$ and $c(x,y)=\|x-y\|^2$ was studied in \cite{BenamouPolarDomainDecomposition1994} and it was shown that the sequence $(\iter{\pi}{k})_k$ converges to the unique minimizer of \eqref{eq:OT} when $\mu \ll \Lebesgue$ and the partitions $\partA$ and $\partB$ satisfy a particular convex overlapping condition. See Appendix \ref{sec:Benamou} for a generalization to a larger set of practically relevant decompositions.

The case $\veps>0$ and $c$ bounded was studied in \cite{BoSch2020} and convergence to the global minimizer of \eqref{eq:OT} was shown when the partitions $\partA$ and $\partB$ satisfy a particular (weaker) connectedness condition.

\section{Asymptotic analysis of domain decomposition}
\label{sec:DomDec}
\subsection{Problem setup and more notation}
\label{sec:DomDecNotation}

In this article, we are then concerned with applying the domain decomposition problem to (discretizations) of the (possibly entropy regularized) optimal transport problem \eqref{eq:OT},
\begin{equation}
	\inf \left\{ \int_{X \times Y} c(x,y)\,\diff \pi(x,y) + \veps\,\KL(\pi|\mu \otimes \nu)
	\middle| \pi \in \Pi(\mu,\nu) \right\},
\end{equation}
with increasingly finer cells and to study its asymptotic behaviour as the cell size tends to zero.

In the following we outline the adopted setting and corresponding notation that we require for the subsequent analysis.
\begin{enumerate}
	\item $\mu \in \prob(X)$, $\nu \in \prob(Y)$ with $\mu \ll \Lebesgue$. For some results, we will also require a further regularity on $\mu$.
\end{enumerate}
	\begin{assumption}\label{assumption:RegularityMu}
		$\muDens$ is bounded from below and above by two constants $\muLow$, $\muHi$ with $0<\muLow \leq \muDens(x) \leq \muHi < \infty$ for all $x \in X$, and $\muDens$ is of bounded variation, i.e.~$\TV(\muDens) < \infty$.
	\end{assumption}
\begin{enumerate}[resume]
	\item For a \emph{discretization level} $n \in 2\N$, that we assume \emph{even} for simplicity, the index set $\partBasicn$ for the \emph{basic partition} is given by a uniform Cartesian grid with $n$ points along each axis,
	\begin{align*}
		\partBasicn & \assign \Big\{(i_1,\ldots,i_d) \,\Big| \, i_1,\ldots,i_d \in \{0,\ldots,n-1\} \Big\},
		\intertext{and we set the corresponding basic cells as}
		X^n_i & \assign i/n + [0,1/n]^d \qquad \tn{for} \qquad i \in \partBasicn.
	\end{align*}
\end{enumerate}
\begin{remark}
	\label{rem:BasicCellOverlap}
	Note that strictly speaking the set $(X^n_i)_{i \in \partBasicn}$ of closed hypercubes does not form a partition of $X$ as they are not all pairwise disjoint, since adjacent sets contain their common boundary. However, due to the assumption $\mu \ll \Lebesgue$, these overlaps do not carry any mass (and neither do the overlaps between any $X_i \times Y$ with respect to any $\pi \in \Pi(\mu,\nu)$) and hence we could simply assign the boundary regions to any one of the adjacent sets without changes to the algorithm.
	Equivalently, we can just keep the $(X^n_i)_i$ as closed cubes, which is a bit simpler.
\end{remark}

\begin{enumerate}[resume]
	\item The \emph{mass} and the \emph{center} of basic cell $i \in \partBasicn$ are given by
	\begin{align*}
	     m_i^n & \assign \mu(X_i^n), &
	       x_i^n & \assign n^d \int_{X_i^n} x\, \diff x = (i+(\tfrac12,\ldots,\tfrac12))/n.
	\end{align*}

	\item At level $n$ we approximate the original marginal $\mu$ by $\mu^n\in \meas_1(X)$. For example, $\mu^n$ could be a discretization of $\mu$. We assume that $\mu^n(X_i^n) = m_i^n$ for each basic cell $i\in I^n$,
	which in particular implies that $(\mu^n)_n$ converges weak* to $\mu$. We also assume that $\mu^n$ assigns no mass to any basic cell boundary, so Remark \ref{rem:BasicCellOverlap} remains applicable. Further regularity conditions on the sequence $(\mu^n)_n$ will be required in Definition \ref{def:regular_discretization}. In accordance with Definition \ref{def:Partitions} we set
	\begin{align*}
		\mu^n_i & \assign \mu^n \restr X^n_i.
	\end{align*}
	\label{item:MuN}
	
	\item Analogously, let $(\nu^n)_{n}$ be a sequence in $\prob(Y)$, converging weak* to $\nu$, and $(\piInit^n)_{n}$ a sequence in $\prob(X \times Y)$ with $\piInit^n \in \Pi(\mu^n,\nu^n)$, converging weak* to some $\piInit \in \Pi(\mu,\nu)$.
	Again, $\nu^n$ can slightly differ from $\nu$ to allow for potential discretization or approximation steps.
	There are various ways how a corresponding sequence $\piInit^n$ could be generated, for instance via an adaptation of the \emph{block approximation} \cite{Carlier-EntropyJKO-2015} from some $\piInit \in \Pi(\mu,\nu)$.
	\item The cells of the composite partition $\partA$ are generated by forming groups of $2^d$ adjacent basic cells; the cells of $\partB$ are generated analogously, but with an offset of 1 basic cell in every direction. (Of course, composite cells may contain less basic cells at the boundaries of $X$). As in Definition \ref{def:Partitions} we set
	\begin{align*}
	X^n_J & \assign \bigcup_{i \in J} X^n_i, &
	\mu^n_J & \assign \sum_{i \in J} \mu^n_i=\mu^n \restr X_J,
	\end{align*}
	for $J \in \partAn$ or $J \in \partBn$. Again, Remark \ref{rem:BasicCellOverlap} remains applicable.

	\item The \emph{mass} of a composite cell $J$ is $m_J^n \assign \sum_{i\in J}m_i^n$. For an $A$ (resp. $B$) composite cell $J$, we will define its center $x_J^n$ as the unique point on the regular grid
	\begin{equation}
		\left\{\frac{1}{n}, \frac{3}{n},...,\frac{n-1}{n} \right\}^d
		,\qquad
		\left(\text{respectively}	\left\{0, \frac{2}{n},...,\frac{n-2}{n}, 1 \right\}^d \right)
		,
	\end{equation}
	that is contained in $X_J^n$. For $A$ composite cells and $B$ composite cells that do not lie at the boundary of $X$, it coincides with the average of the centers of their basic cells, cf. Figure \ref{fig:basicandcompositecells}.

	\item Two distinct composite cells $J\in \partAn$ and $\hat{J}\in \partBn$ are said to be \emph{neighboring} if they share a basic cell. The set of neighbouring composite cells for a given composite cell $J$ is denoted by $\neigh(J)$.
	By construction, the shared basic cell is unique, and we denote it by $i(J,\hat{J})$. For compactness, instead of writing, for instance, $m^n_{i(J,\hat{J})}$, we often merely write $m^n_{J,\hat{J}}$.
	\item Two composite cells $J, \hat{J}\in \partAn$ or $J, \hat{J}\in \partBn$ are \emph{adjacent} if the sets  $X_J^n$ and $X_{\hat{J}}^n$ share a boundary.
	\label{item:NotationAdjacent}
	\begin{figure}[bt]
		\centering
		\includegraphics[width=\linewidth]{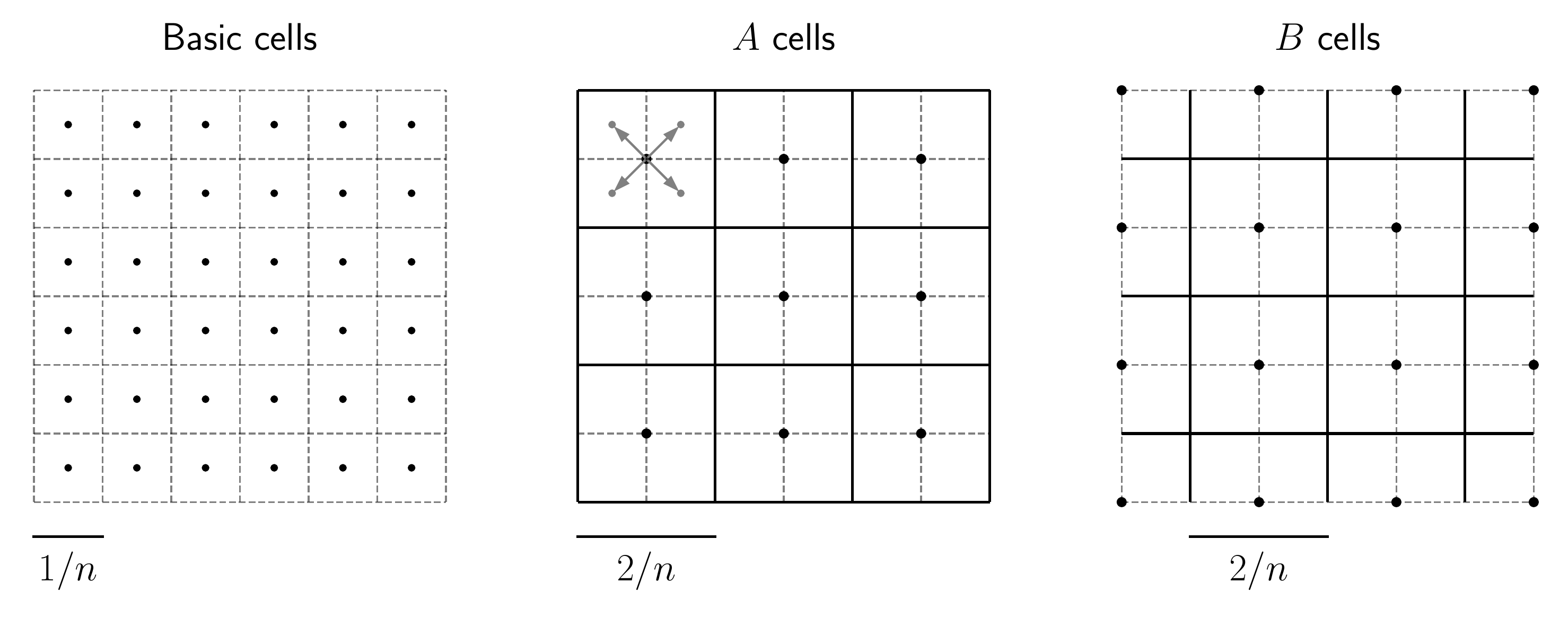}
		\caption{Close-up of the decomposition of $X = [0,1]^2$ into basic and composite cells. Left, basic cells and their centers. Center and right, respectively $A$ and $B$ composite cells and their centers. The top left $A$ composite cell shows that the basic cell center of cell $j\in J$ is of the form $x_J^n + b/2n$, for some $b\in \{-1,1\}^d$.}
		\label{fig:basicandcompositecells}
	\end{figure}
	\item The \emph{transport cost function} $c$ is some $\cont^1$ function on $X\times Y$.
	At scale $n$ it is approximated by a function $c^n$, to account for discretization steps or grid artifacts. We assume that there exists a sequence of real valued functions $(f^n)_n$ on $X\times Y$ uniformly converging to zero such that
	\begin{equation}\label{eq:assumption_modified_cost}
		c^n(x,y) = c(x,y) + \frac{f^n(x,y)}{n}
	\end{equation}
	for all $(x,y)\in X\times Y$.
	\label{item:ModifiedCost}

	\item The entropic \emph{regularization parameter} at level $n$ is $\veps^n \in [0,\infty)$.
	While \cite{BenamouPolarDomainDecomposition1994} was restricted to $\veps=0$ and \cite{BoSch2020} to $\veps>0$, in this article we consider both cases.
	We assume that the sequence $(n \cdot \veps^n)_n$ is convergent as $n \to \infty$ and set
	\begin{align}
		\eta \assign \lim_{n \to \infty} n \cdot \veps^n,
		\label{eq:EpsEta}
	\end{align}
	where we explicitly allow for the case $\eta=\infty$.
	\label{item:EpsEta}
\end{enumerate}
Now, for given $n \in 2\N$, we apply Algorithm \ref{alg:DomDec} to the discrete problem
\begin{align}
	\inf \left\{ \int_{X \times Y} c^n(x,y)\,\diff \pi(x,y) + \veps^n\,\KL(\pi|\mu^n \otimes \nu^n)
	\middle| \pi \in \Pi(\mu^n,\nu^n) \right\},
\end{align}
where we use the basic partition $\partBasicn$, composite partitions $\partAn$ and $\partBn$ and the initial coupling $\piInit^n$.

\begin{enumerate}[resume]
	\item At scale $n$, the $k$-th \emph{iterate} will be denoted by $\iter{\pi}{n,k}$ for $k \in \N$ and one has $\iter{\pi}{n,k} \in \Pi(\mu^n, \nu^n)$. The composite partition used during that iteration will be denoted by $\partnk$, which is either $\partAn$ or $\partBn$, depending on whether $k$ is odd or even.
	\item Based on line \ref{line:YMarginal} of Algorithm \ref{alg:DomDec} we introduce the partial $Y$-marginals of the iterates $\iter{\pi}{n,k}$ when restricted to basic or composite cells:
	\begin{align*}
		\iter{\nu}{n,k}_i & \assign \proj_Y \big(\iter{\pi}{n,k} \restr (X_i \times Y) \big)
		\quad \tn{for} \quad i \in \partBasicn, \\
		\iter{\nu}{n,k}_J & \assign \sum_{i \in J} \iter{\nu}{n,k}_i = \proj_Y \big(\iter{\pi}{n,k} \restr (X_i \times Y) \big) \quad \tn{for} \quad J \in \partAn \tn{ or } \partBn.
	\end{align*}
	It will also be convenient to introduce the normalized versions of $\iter{\nu}{n,k}_i$ and $\iter{\nu}{n,k}_J$,
	\begin{align*}
		\iter{\rho}{n,k}_i & \assign \iter{\nu}{n,k}_i/m_i^n, &
		\iter{\rho}{n,k}_J & \assign \iter{\nu}{n,k}_J/m_J^n.
	\end{align*}
	From Algorithm \ref{alg:DomDec}, lines \ref{line:YMarginal} and \ref{line:CellProblem} we conclude
	\begin{align}
		\label{eq:CompCellMarginalPreservation}
		\nu^{n,k+1}_J = \proj_Y \pi^{n,k+1}_J = \nu^{n,k}_J
	\end{align}
	for $J\in \partnkk$.
	\label{item:NotationPartialMarginals}
\end{enumerate}

\subsection{Discrete trajectories and momenta}\label{sec:DiscreteTrajectories}
	At discretization level $n \in 2\N$, we now associate one iteration with a time-step of $\Delta t = 1/n$, i.e.~iterate $k$ is associated with the time $t = k/n$. Loosely speaking, we now want to consider the family of trajectories $(\R_+ \ni t \mapsto \iter{\pi}{n,\floor{t \cdot n}})_{n \in 2\N}$ and then study their asymptotic behaviour as $n \to \infty$.
	However, we find that the measures $\iter{\pi}{n,k}$ can oscillate very strongly at the level of basic cells, allowing at best for weak* convergence to some limit coupling, cf. Figure \ref{fig:example_iterations}, top.
	In contrast, our hypothesis for the dynamics of the limit trajectory requires a stronger `fiber-wise' convergence of the disintegration against the $X$-marginal.
	We observe that the oscillations in the couplings become much weaker if we average the couplings $\iter{\pi}{n,k}$ at the level of composite cells first, cf. Figure \ref{fig:example_iterations}, bottom.
	Motivated by this we now introduce discrete trajectories of approximate couplings, averaged over composite cells.
	We rely on the following conventions for notation.

	\begin{figure}[bt]
		\centering
		\includegraphics[width=\linewidth]{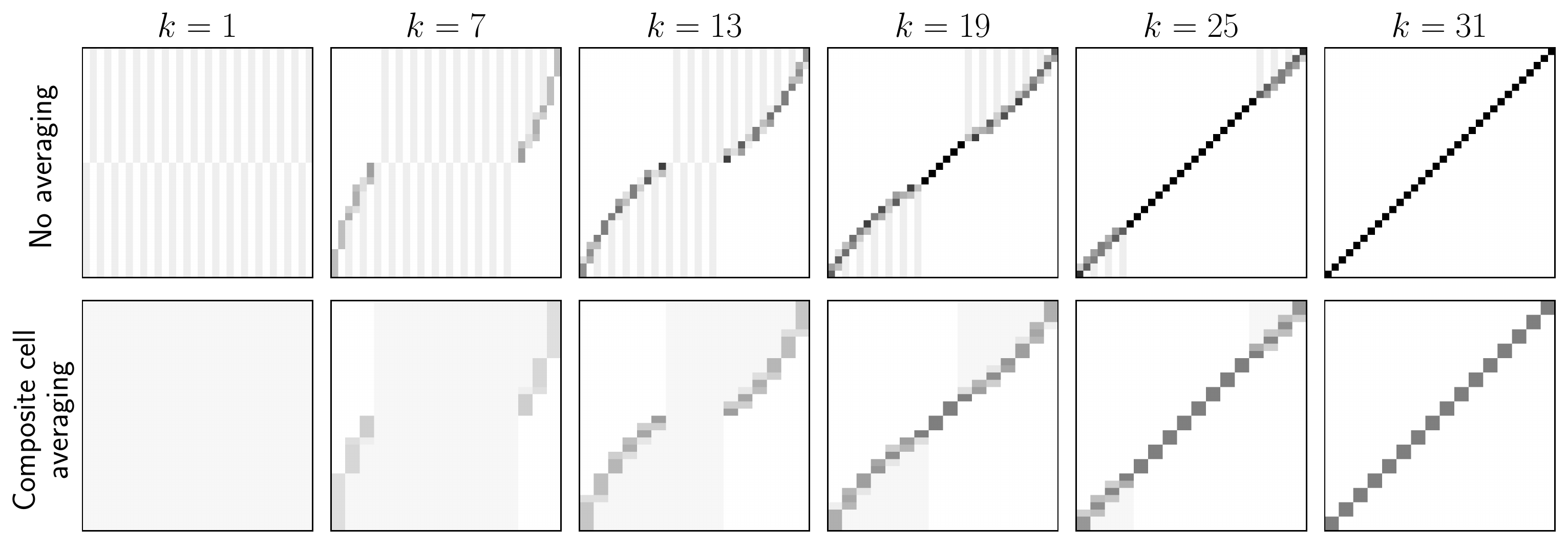}
		\caption{Top, discrete iterates $\iter{\pi}{n,k}$ for $n = 32$, $\mu^n = \nu^n$ discretized Lebesgue, $\piInit^n = \mu^n \otimes \nu^n$, $c^n = c$ the quadratic cost, $\veps^n = 0$. Bottom, same iterates but averaged at the composite cell level. Note how the oscillations in space become much weaker.
		See also Figure \ref{fig:example_iterations_detail} for a similar illustration.}
		\label{fig:example_iterations}
	\end{figure}

	\begin{remark}[Disintegration notation and measure trajectories]\hfill
	\label{rem:Disintegration}
	\begin{enumerate}

	\item We will represent trajectories of measures on $X \times Y$ as measures on $\R_+ \times X \times Y$ and use bold symbols to denote them.
	For a non-negative measure $\bm{\lambda} \in \measp(\R_+ \times X \times Y)$ with $\projTime \bm{\lambda} \ll \Lebesgue$ we write $(\bm{\lambda}_{t})_{t \in \R_+}$ for its disintegration with respect to $\Lebesgue$ on the time-axis such that
	\begin{align*}
		\int_{\R_+ \times X \times Y} \phi(t,x,y) \cdot \diff \bm{\lambda}(t,x,y) = \int_{\R_+} \left[ \int_{X \times Y} \phi(t,x,y) \cdot \diff \bm{\lambda}_t(x,y) \right]\,\diff t,
	\end{align*}
	for $\phi \in \cont_c(\R_+ \times X \times Y)$. This disintegration is well defined for $\sigma$-finite measures: $\bmnu$ can be written as a countable sum of finite measures $(\bmnu^k)_{k\in \N}$ (which have a well-defined disintegration) where $\bmnu^k$ is concentrated on $U_k \times X \times Y$ for a disjoint family of sets $(U_k)_{k\in \N}$ in $\R_+$, so $\bmnu_t$ is given simply by $\bmnu^k_t$, with $k$ the (Lebesgue-a.e.)~unique index satisfying $t\in U_k$.

	\item When $\proj_X \bm{\lambda}_t \ll \mu$ for $\Lebesgue$-a.e.~$t \in \R_+$, we write $(\bm{\lambda}_{t,x})_{(t,x) \in \R_+ \times X}$ for the disintegration in time and $X$ such that
	\begin{align*}
		\int_{\R_+ \times X \times Y} \phi(t,x,y) \cdot \diff \bm{\lambda}(t,x,y) =
		\int_{\R_+} \left[ \int_X \left[ \int_{Y} \phi(t,x,y) \cdot \diff \bm{\lambda}_{t,x}(y) \right]\,\diff \mu(x)\right]\,\diff t,
	\end{align*}
	for $\phi \in \cont_c(\R_+ \times X \times Y)$.
	\item Conversely, a (measurable) family of signed measures $(\lambda_x)_{x \in X}$ in $\meas(Y)$ with uniformly bounded variation (i.e. $\sup_{x\in X}|\lambda_x|(Y) < \infty$) can be glued together along $X$ with respect to $\mu$ to obtain a measure $\lambda \in \meas(X \times Y)$ via
	\begin{align*}
		\int_{X \times Y} \phi(x,y)\,\diff \lambda(x,y) \assign \int_{X} \left[ \int_Y \phi(x,y)\,\diff \lambda_x(y) \right] \diff \mu(x)
	\end{align*}
	for $\phi \in \cont(X \times Y)$. The uniform bounded variation is merely a sufficient condition for the $\lambda$ to have finite mass and that suffices for our purposes. We will denote $\lambda$ as $\mu \otimes \lambda_x$. Similarly, we can glue families over $(t,x) \in \R_+ \times X$ to obtain a $\sigma$-finite measure on $\R_+\times X \times Y$, which we denote by $\Lebesgue \otimes \mu \otimes \lambda_{t,x}$. 
	\item The above points extend to vector measures by component-wise application.
	\end{enumerate}
	\end{remark}

\begin{definition}[Discrete trajectories and momenta]\label{def:DiscreteTrajectories}
	The \emph{discrete trajectory and momentum}, $\bmpi^n \in \measp(\R_+ \times X \times Y)$ and $\bmomega^n \in \meas(\R_+ \times X \times Y)^d$ are defined via their disintegration with respect to $\Lebesgue\otimes \mu$ at $t \in \R_+$, $x\in X$ as:
	\begin{align}
		\bmpi_{t,x}^n
		&\assign
		\frac{1}{m_J^n}
		\iter{\nu_J}{n,k}
		=
		\iter{\rho_J}{n,k},
		\label{eq:pi_discrete}
		\\
		\bmomega_{t,x}^n
		& \assign
		\frac{1}{m_J^n}
		\sum_{\hat{J} \in \neigh(J)}
		\iter{\nu_{J, \hat{J}}}{n,k} \cdot (x_{\hat{J}}^n-x_J^n) \cdot n
		=
		\frac{1}{m_J^n}
		\sum_{b\in \{-1,+1\}^d}  \iter{\nu_{i(J,b)}}{n,k} \cdot b,
		\label{eq:omega_discrete}
	\end{align}
	where we set $k = \floor{nt}$, and $i(J,b)$ is the basic cell contained in $J$ whose center sits at $X_J^n + b/2n$. 
	The vectors $b$ are illustrated in Figure \ref{fig:basicandcompositecells}. 
	For composite $B$ cells at the boundary some of the basic cells $i(J,b)$ might lie outside of $X$ and we ignore the corresponding terms in the sum.
\end{definition}
Note that we use the composite cell marginals $\iter{\nu_J}{n,k}$ in the definition of $\bmpi_{t,x}^n$, hence this implements the aforementioned averaging over composite cells.
An intuitive interpretation of the discrete momentum $\bmomega^n$ is that mass in the basic cell $i(J,\hat{J})$ will travel from $x_J^n$ to $x_{\hat{J}}^n$ during iteration $k$ within a time span of $1/n$.
We will show in Section \ref{sec:ContinuityEquation} that $\bmpi^n$ and $\bmomega^n$ approximately solve a continuity equation on the product space $X \times Y$ in a distributional sense, where $\bmomega^n$ encodes a `horizontal' flow (i.e.~only along the $X$-component). Formally, this can be written as
\begin{align}
	\label{eq:DiscreteCE}
	\partial_t \bmpi^n_t + \ddiv_X \bmomega^n_t = o(1) \qquad \tn{for $t>0$ and}
		\qquad \bmpi^n_{t=0} \to \piInit \qquad \tn{as $n \to \infty$}
\end{align}
and it is to be interpreted via integration against test functions in $\cont^1_c(\R_+ \times X \times Y)$ where the $\R_+$ factor corresponds to time $t$ (cf.~Proposition \ref{prop:CE}).

The discrete trajectory $\bmpi_t^n$ is illustrated in the bottom row of Figure \ref{fig:example_iterations}. The corresponding momentum $\bmomega_t^n$ is visualized in Figure \ref{fig:example_iterations_omega}. A detailed view of the iterations, trajectories and momentum for a small $n$ is given in Figure \ref{fig:example_iterations_detail}.

\begin{figure}[bt]
	\centering
	\includegraphics[width=\linewidth]{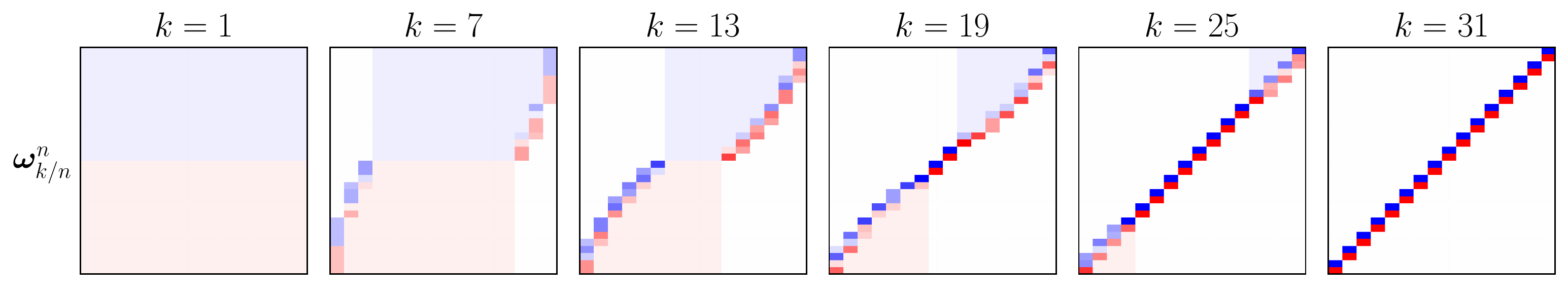}
	\caption{Discrete momentum field for the trajectory shown in Figure \ref{fig:example_iterations}. Blue shade indicates positive velocity (i.e., mass moving towards the right); red shade indicates a negative velocity. The intensity marks the amount of mass that is transported. Note that the momentum does not vanish completely even when the algorithm has converged. This is related to the finite discretization scale. For $n \to \infty$ we anticipate that the momentum (after convergence of the algorithm) converges weak* (but not in norm) to zero.}
	\label{fig:example_iterations_omega}
\end{figure}

\begin{figure}[bt]
	\centering
	\includegraphics[width=\linewidth]{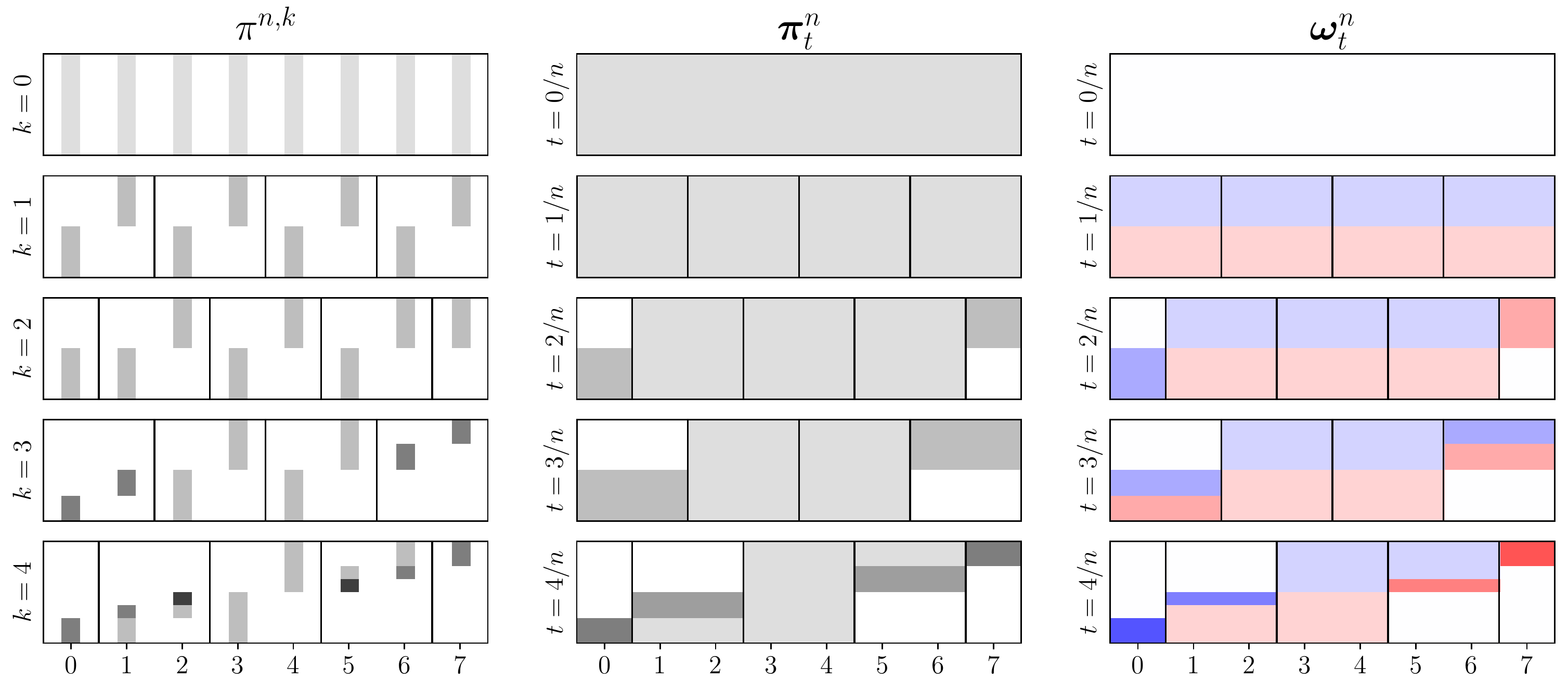}
	\caption{Left, example of discrete iterations $\iter{\pi}{n,k}$ for $n = 8$, $\mu^n = \nu^n=\tn{discretized Lebesgue}$, $\piInitN = \mu^n \otimes \nu^n$, $\veps^n = 0$. The vertical lines represent the extent of the composite cells where the subproblems are solved in each iteration. Center, discrete trajectories $\bmpi^n_t$ for the same setting. These have $\mu$ as $X$-marginal. The averaging procedure reduces oscillations in time as well as in space; the relation between these two kind of oscillations will be given by Proposition \ref{prop:WTVb_implies_WY}. Right, momentum $\bmomega_{t}$. In blue (resp.~red) regions where $\bmomega_t$ is positive (resp.~negative), the intensity marks its magnitude. }
	\label{fig:example_iterations_detail}
\end{figure}

\begin{remark}
	$\bmpi_{t}^n$ is generated from $\pi^{n,\floor{nt}}$ by averaging over the composite cells. Therefore, for all $n \in 2\N$ and $t \in \R_+$, it holds $W_{X \times Y}(\bmpi_{t}^n, \pi^{n,\floor{nt}}) \le 2\sqrt{d}/n$.
	Consequently the sequences $(\bmpi_{t}^n)_n$ and $(\pi^{n,\floor{nt}})_n$ have the same weak* limits or cluster points.
\end{remark}

\subsection{Overview of following sections}
\label{sec:Preview}
The rest of this paper is now dedicated to the study of the asymptotic behaviour of the trajectories $\bmpi^n$ and momenta $\bmomega^n$.

\paragraph{Convergence of $\bmpi^n$.} Taking inspiration from the numerical results shown in Figure \ref{fig:example_iterations_several_n} we expect that the trajectories $\bmpi^n$ converge (under suitable conditions, and up to subsequences) to some limit $\bmpi$. This convergence seems to be stronger than weak*. It appears to be approximately `fiber-wise', i.e.~the disintegrations $\bmpi^n_{t,x}$ converge weak* on $Y$ to some limit $\bmpi_{t,x}$ for `most' $(t,x)$.
A suitable metric for this is given in Section \ref{sec:WY} (Definition \ref{def:WY}).
Convergence will then be studied in Section \ref{sec:OscillationsConvergence} with an Ascoli--Arzelà argument (Proposition \ref{prop:WYUniformConvergence}), for which we establish a suitable notion of equicontinuity (Proposition \ref{prop:WTVb_implies_WY}), as well as pointwise compactness of the trajectories $(\bmpi^n_t)_n$ for all times $t$ (Proposition \ref{prop:WYPrecompacness}).
The main result is summarized in Proposition \ref{prop:WTVbGivesWYConditions}.

The results hinge on a regularity assumption on the discrete iterates $\pi^{n,k}$.
Unfortunately, we are not able to establish validity of this assumption in general. In Section \ref{sec:OscillationsBound} we prove it for the special case of $d=1$, $\mu=\Lebesgue \restr [0,1]$ and $\veps^n=0$ (covering, for instance, the setting of Figures \ref{fig:example_iterations_detail} and \ref{fig:example_iterations_several_n}). Based on our numerical experiments, the assumption seems to hold in the overwhelming majority of cases, although we believe that there are also counter-examples (see Section \ref{sec:Numerics} for details).
In this respect, our situation is comparable to that of \cite{LeclercFlows2020} where the asymptotic convergence of a Lagrangian discretization scheme for minimizing movements in Wasserstein space is established and the result also hinges on a regularity condition on the discrete solutions that could only be proved to hold in one dimension but seems to hold in many cases, based on numerical evidence.

\paragraph{Convergence of $\bmomega^n$.}
The vector measure $\bmomega^n$ (approximately) encodes the evolution of $\bmpi^n$, see the `horizontal' continuity equation \eqref{eq:DiscreteCE}.
It is constructed from the basic cell marginals of $\bmpi^n$, i.e.~from the solutions to the cell-wise problems of the domain decomposition algorithm.
In Section \ref{sec:Convergence} we study the limit $\bmomega$ (up to subsequences) of the $\bmomega^n$ and show that it can be constructed from solutions to a problem that is the $\Gamma$-limit of the cell-wise domain decomposition problems where the cells have collapsed to single points $x \in X$ (Proposition \ref{prop:MomentumConvergence}).
In addition, the limit pair $(\bmpi,\bmomega)$ solves the `horizontal' continuity equation on $X \times Y$, cf.~\eqref{eq:DiscreteCE} (Proposition \ref{prop:CE}).

In summary, the limit of trajectories generated by the domain decomposition algorithm can be associated with a limit notion of the domain decomposition algorithm (Theorem \ref{thm:main}).

\begin{figure}[bt]
	\centering
	\includegraphics[width=0.9\linewidth]{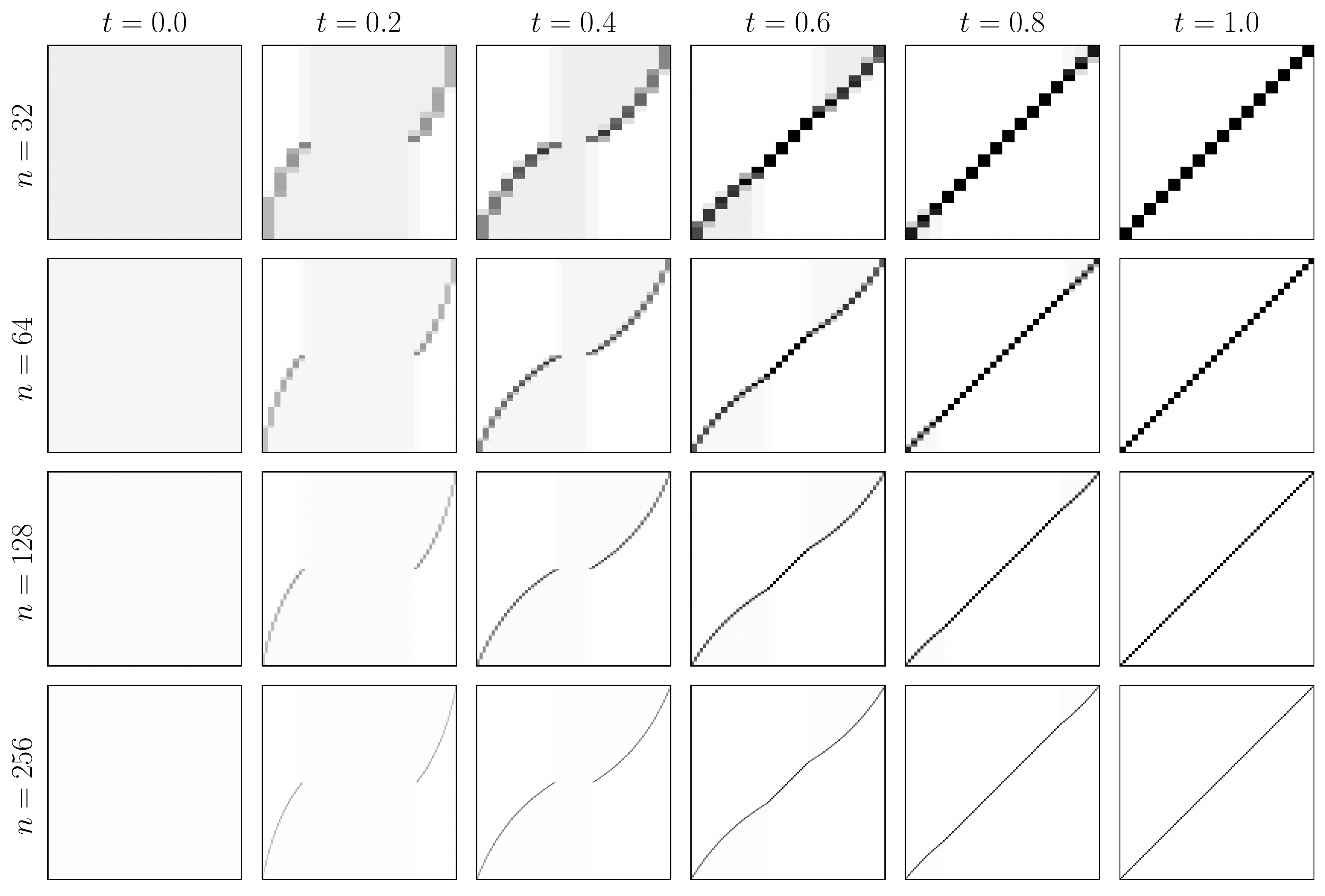}
	\caption{Comparison of discrete trajectories $\bmpi^n$ for increasing $n$ for the same setting as in Figure \ref{fig:example_iterations_detail}. It is tempting to conjecture that the disintegrations $\bmpi^n_{t,x}$ converge weak* for almost all $(t,x)$. 
	}
	\label{fig:example_iterations_several_n}
\end{figure}

\paragraph{Role of regularization parameter $\veps^n$.}
We expect that the behaviour of the limit trajectory and momentum $(\bmpi,\bmomega)$ depends on the behaviour of the sequence of regularization parameters $\veps^n$.
This is motivated by the numerical simulations illustrated in Figure \ref{fig:example_iterations_flipped_bottleneck}. The upper rows show the evolution under the domain decomposition algorithm on two kinds of initial data, for $d=1$, $n = 64$, $\veps^n=0$. The setting dubbed ``flipped'' has again $\mu^n = \nu^n= \tn{discretized Lebesgue}$, but the initial plan is the `flipped' version of the optimal (diagonal) plan. The setting named ``bottleneck'' has as $\mu^n$ a measure with piecewise constant density that features a low-density region (the bottleneck) around $x = 0.5$, while $\nu^n$ is again discretized Lebesgue and $\piInitN= \mu^n \otimes \nu^n$. This bottleneck slows down exchange of mass between the two sides of the $X$ domain, and thus two shocks appear (cf.~$t= 0.4$), which slowly merge as mass traverses the bottleneck.

These evolution examples for $\veps^n=0$ serve as reference for comparison with the regularized cases that are illustrated in the bottom plots of Figure \ref{fig:example_iterations_flipped_bottleneck} for fixed $t$ and various $n$. We examine three different `schedules' for the regularization parameter: $\veps^n=2/n^2$ (left), $\veps^n=2/(64 n)$ (middle) and $\veps^n=2/64^2$ (right). The values were chosen so that the regularization at scale $n=64$ is the same for all schedules. 
\begin{itemize}
	\item For $\veps^n \sim 1/n^2$, the trajectories become increasingly `crisp' and seem to be very close to the unregularized ones.
	\item For $\veps^n \sim 1/n$, the trajectories appear slightly blurred and seem to lag slightly behind the unregularized ones, but still evolve consistently as $n \to \infty$.
	\item For $\veps^n \sim 1$ blur and lag seem to increase with $n$.
\end{itemize}
The three schedules yield $\eta=\lim_{n \to \infty} n \cdot \veps^n = 0$, $\eta \in (0,\infty)$ and $\eta=+\infty$ respectively, see \eqref{eq:EpsEta}. Based on Figure \ref{fig:example_iterations_flipped_bottleneck} we conjecture that for $\eta=0$, the problem describing the limit dynamics of $(\bmpi,\bmomega)$ does not contain an entropic term. For $\eta \in (0,\infty)$, there will be an entropic term. For $\eta=\infty$ the entropic term will dominate and the trajectory $t \mapsto \bmpi_t=\piInit$ will be stationary. This conjecture will be confirmed in Section \ref{sec:Convergence}.

\paragraph{Open questions.}
In this article we are concerned with the convergence of the trajectories $\bmpi^n$ and momenta $\bmomega^n$ as $n \to \infty$ and the dynamics of this limit in $t$. This leads to several natural and important follow-up questions: Does the curve $t \mapsto \bmpi_t$ have a limit as $t \to \infty$? What is the form of this limit (e.g.~does it live on the graph of a map)? When is this limit a minimizer of the (unregularized) optimal transport problem? How fast is the convergence in $t$?
These are beyond the scope of the current article but we consider the present results to be a crucial step on the way.
We will return to this discussion in Section \ref{sec:Conclusion}.

\begin{figure}[p]
	\centering
	\includegraphics[width=0.90\linewidth]{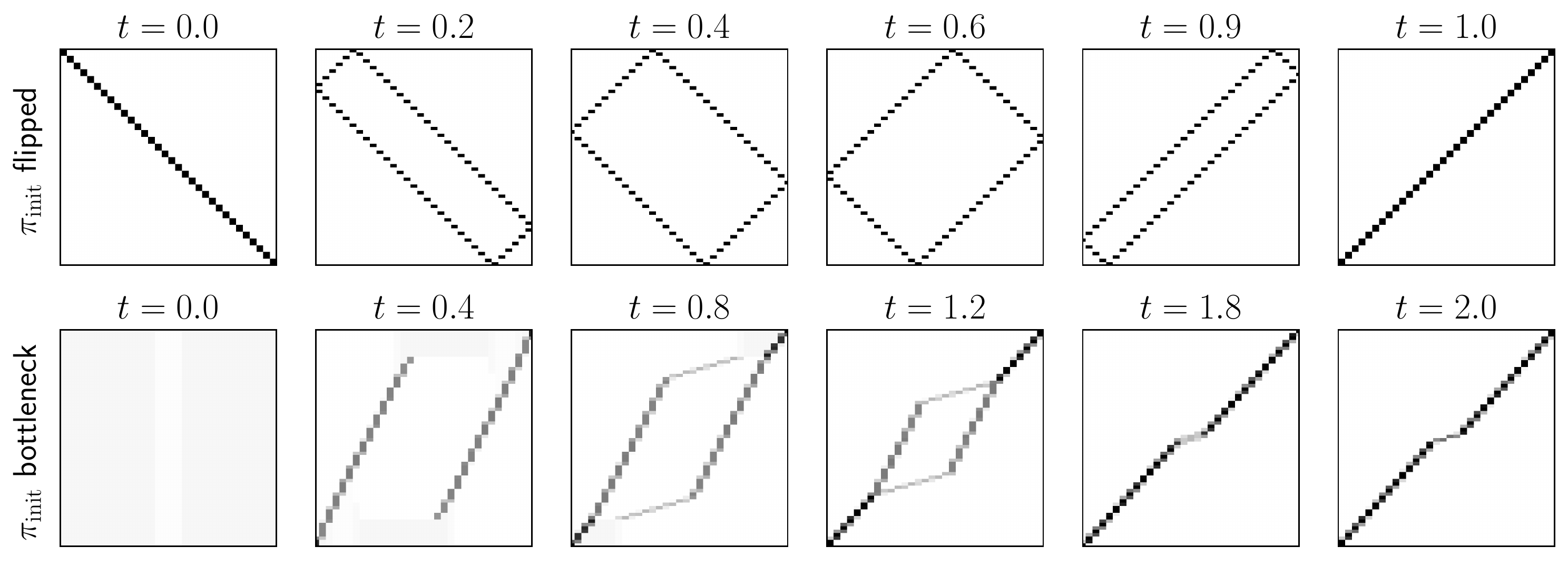}

	\vspace{5mm}

	\begin{minipage}{.5\textwidth}
		\centering
		\includegraphics[width=0.95\linewidth]{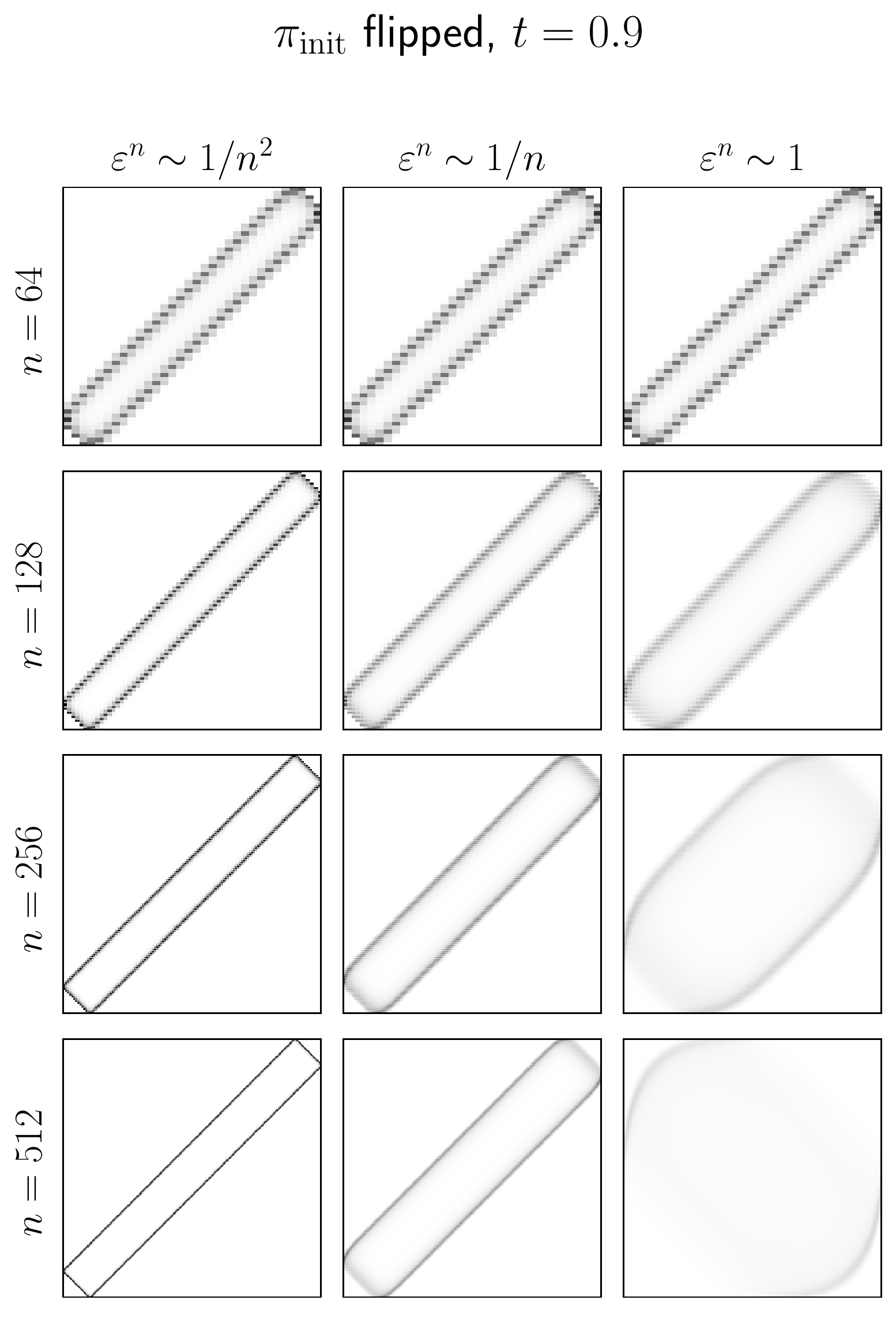}
	\end{minipage}%
	\begin{minipage}{0.5\textwidth}
		\centering
		\includegraphics[width=0.91\linewidth]{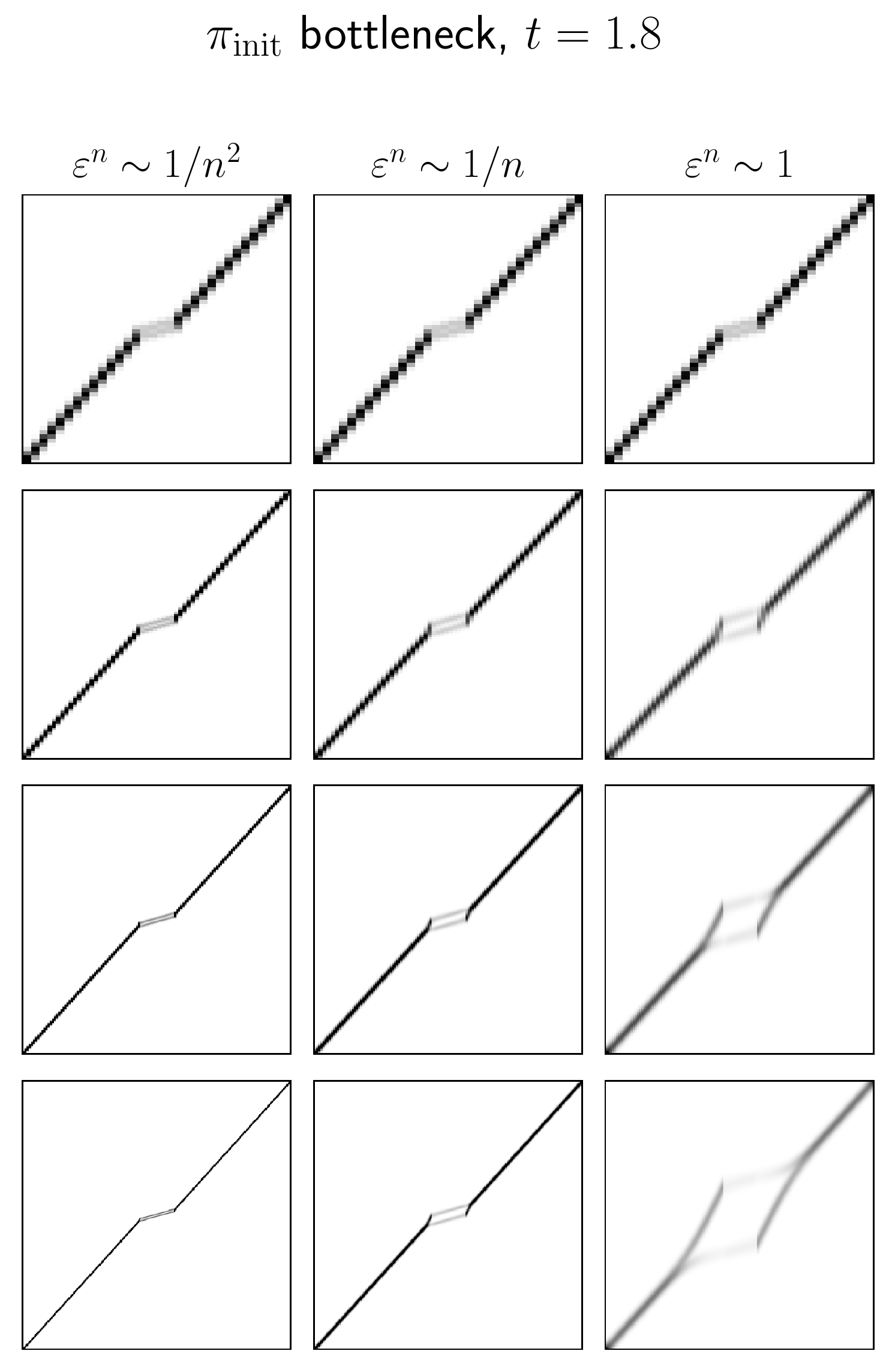}
	\end{minipage}
	\caption{Top, evolution of the discrete trajectories for  $\piInit$ of type ``flipped'' and ``bottleneck'', $\veps=0$. Bottom, snapshot of the discrete trajectories at a fixed time (left $t=0.9$, right $t=1.8$), for different values of $n$ and scaling behaviour of the regularization parameter $\veps^n$.}
	\label{fig:example_iterations_flipped_bottleneck}
\end{figure}

\section{Convergence of trajectories}
\label{sec:Oscillations}
We study in this section convergence of the discrete trajectories $(\bmpi^n)_n$ under the assumption of a uniform bound for a particular notion of spatial oscillations of the iterates (Definition \ref{def:WTVb}).
The proper notion of convergence will be introduced in Section \ref{sec:WY} (Definition \ref{def:WY}), which will be sufficiently strong for the subsequent asymptotic analysis of the momenta $(\bmomega^n)_n$ in Section \ref{sec:Convergence}.
In Section \ref{sec:OscillationsConvergence} we then employ an Ascoli--Arzelà argument.
A priori bounds on the oscillations in the general case are still an open problem. We provide a bound for the special case $d=1$, $\mu=\Lebesgue \restr{[0,1]}$, $\veps^n=0$ in Section \ref{sec:OscillationsBound} and give a discussion.

\subsection{Vertical transport distance}
\label{sec:WY}
In the following we view each $\bmpi^n$ as a curve $t \mapsto \bmpi^n_t$ in the set
\begin{align}
	\label{eq:HalfCouplings}
	\Pi(\mu) \assign \left\{ \pi \in \measp(X \times Y) \, \middle|\, \proj_X \pi = \mu \right\}
\end{align}
which we equip with the following notion of ``fiber-wise vertical convergence''.
\begin{definition}[Vertical transport metric $\WY$]\label{def:WY}
For $\pi, \pi' \in \Pi(\mu)$ we set
\begin{align}
	\WY(\pi,\pi') \assign \int_X \WoY(\pi_{x},\pi_{x}')\,\diff \mu(x).
\end{align}
\end{definition}
\begin{remark}[Interpretation and motivation of $\WY$]
	\label{rem:WYInterpretation}
	$\WY$ can be interpreted as $L^1$-type metric on functions $X \to \prob(Y)$ with reference measure $\mu$ on $X$ and pointwise distance $\WoY$. A measure $\pi \in \Pi(\mu)$ is then interpreted as function $x \mapsto \pi_x$.
	Alternatively, $\WY$ can also be interpreted as an optimal transport metric on $X \times Y$ which only allows `vertical' transport, i.e.~along the $Y$-component. So in any fiber $\{x\} \times Y$ a transport plan from $\pi_{x}$ onto $\pi'_{x}$ must be sought, whereas transport between different $x,x'\in X$ is not allowed.
	From this intuition we deduce the alternative formulation
	\begin{multline}
	\label{eq:WYAlternative}
	\WY(\pi,\pi') = \inf \Bigg\{ \int_{(X \times Y)^2} \|(x,y)-(x',y')\|\,\diff \gamma((x,y),(x',y')) \Bigg| \\
	\gamma \in \Pi(\pi,\pi') \tn{ with $x=x'$ $\gamma((x,y),(x',y'))$-a.e.} \Bigg\},
	\end{multline}
	the relation $\WY(\pi,\pi') \geq W_{X \times Y}(\pi,\pi')$ and that convergence in the former implies convergence in the latter, which is equivalent to weak* convergence (by compactness of $X \times Y$) \cite[Theorem 6.9]{Villani-OptimalTransport-09}.
\end{remark}
\begin{remark}
	\label{rem:WYGeodesic}
	The space $(\Pi(\mu),\WY)$ is geodesic, i.e.~for any pair $\pi^0, \pi^1 \in \Pi(\mu)$ there exists a curve $[0,1] \ni s \mapsto \pi(s) \in \Pi(\mu)$ such that
	\begin{align}
	\label{eq:WYGeodesic}
	\WY(\pi(s),\pi(s')) = |s-s'| \cdot \WY(\pi^0,\pi^1) \qquad \tn{for} \qquad s,s' \in [0,1].
	\end{align}
	This property is inherited from $(\prob(Y),\WoY)$, which is well-known to be geodesic \cite[Chapter 6]{Villani-OptimalTransport-09}, and a geodesic $s \mapsto \pi(s)$ between $\pi^0$ and $\pi^1$ can be written as
	$\pi(s)=\mu \otimes \pi_x(s)$ (see Remark \ref{rem:Disintegration}) where $\mu(x)$-a.e.~$\pi_x(s)$ needs to be a point on a (constant speed) geodesic between $\pi^0_x$ and $\pi^1_x$ such that one has
	$$
		\WoY(\pi_x(s),\pi_x(s')) = |s-s'| \cdot \WoY(\pi^0_x,\pi^1_x) \qquad \tn{for} \qquad s,s' \in [0,1].
	$$
	This readily implies \eqref{eq:WYGeodesic}.
	A geodesic can be obtained from minimizers of \eqref{eq:WYAlternative} as $\pi(s) \assign (T_s)_\sharp \gamma$ where for $s \in [0,1]$ we set
	\begin{align*}
		T_s & : (X \times Y)^2 \to X \times Y, & (x,y,x',y') \mapsto (x,(1-s) \cdot y + s \cdot y').
	\end{align*}		
\end{remark}

\begin{remark}
	The metric space $(\Pi(\mu),\WY)$ is complete, but not compact.
	In Figure \ref{fig:WY_compactness_counterexample} we construct a sequence with no Cauchy subsequence.
\end{remark}

\begin{figure}[bt]
	\centering
	\includegraphics[width=\linewidth]{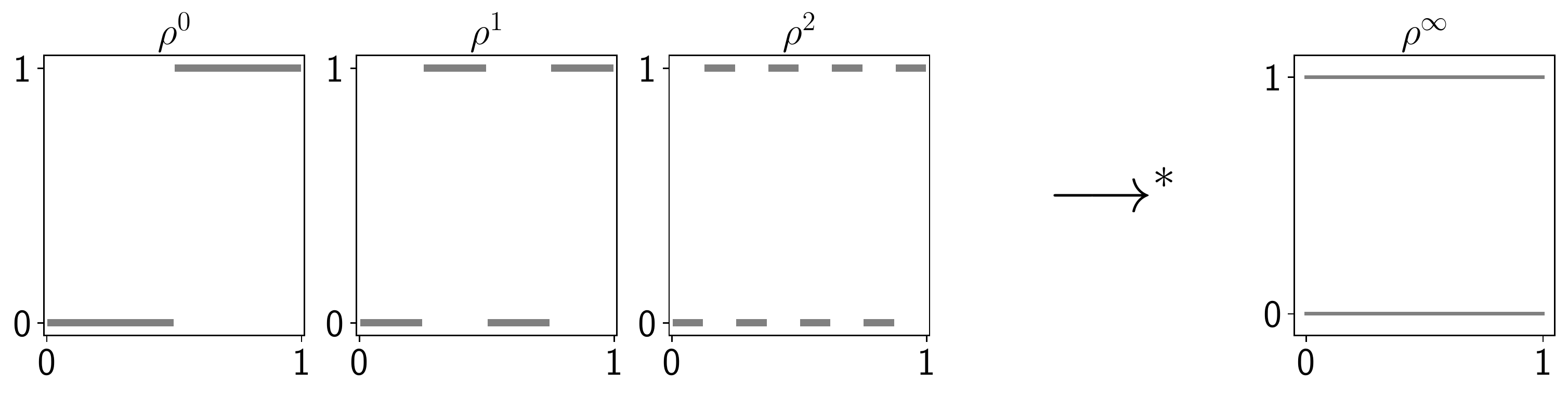}
	\caption{Let $X=Y=[0,1]$. For an integer $n$, construct the measure $\rho^n\in \meas_1([0,1]^2)$ by taking the restriction of the one-dimensional Hausdorff measure $\mathcal{H}^1$ to the alternating gray line segments shown above. The sequence $(\rho^n)_n$ has $\rho^\infty = \tfrac{1}{2}\Lebesgue \otimes (\delta_0 + \delta_1)$ as its weak* limit, but has no Cauchy subsequence in $(\Pi(\mu),\WY)$. This is because $\WY(\rho^n, \rho^m) = \tfrac{1}{2}$ for all $n\neq m$.}
	\label{fig:WY_compactness_counterexample}
\end{figure}

\subsection{Wasserstein total variation and Ascoli--Arzelà argument}
\label{sec:OscillationsConvergence}
\begin{figure}[hbt]
	\centering
	\includegraphics[width=\linewidth]{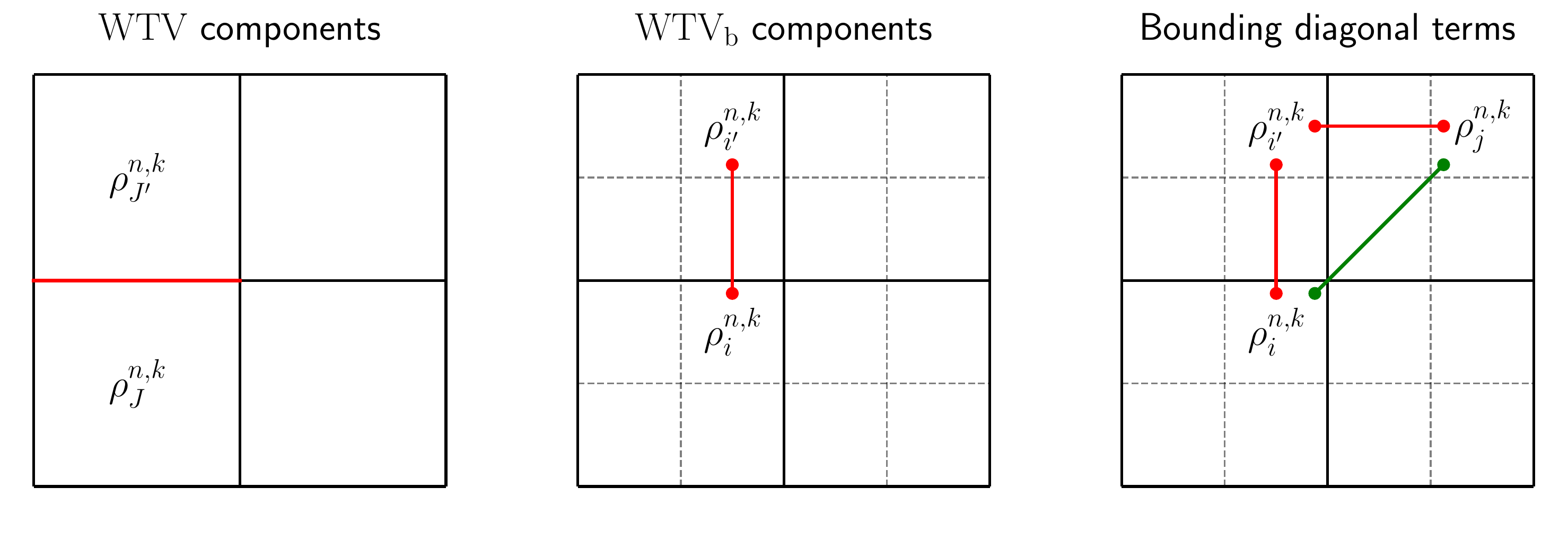}
	\caption{Left, the contribution of the pair of adjacent composite cells $(J, J')$ to $\WTV(\bmpi_{t}^n)$ is the distance $\WoY(\iter{\rho_J}{n,k}, \iter{\rho_J}{n,k})$, weighted by the area of their shared boundary. Center, contribution of a pair of basic cells $(i, i')$ to $\WTVb(\pi^{n,k})$. Note that $i$ and $i'$ are in the same relative position within their respective composite cells. Right, if $i$ and $j$ are in the same relative position in their composite cells, we can bound $\WoY(\iter{\rho_i}{n,k}, \iter{\rho_j}{n,k})$ by a sum of Wasserstein distances along coordinate directions, all of which appear in $\WTVb(\pi^{n,k})$. The number of terms required is precisely $||j-i||_1 / 2$.}
	\label{fig:WTV_figure}
\end{figure}
To apply an Ascoli--Arzelà argument to curves in $(\Pi(\mu),\WY)$ we need to establish equicontinuity of trajectories and pointwise precompactness. In this perspective, two notions of \emph{total variation} for couplings will play a central role (Definitions \ref{def:WTV} and \ref{def:WTVb}).

For given $n \in 2\N$, $t \in \R_+$, recall that the map $x \mapsto \bmpi^n_{t,x}$ is piecewise constant on composite cells $X^n_J$ where $J \in \partnk$ with $k=\lfloor t\,n\rfloor$. Thus, we can define a \emph{Wasserstein total variation} for this function by summing the jump distances of $x \mapsto \bmpi^n_{t,x}$ with respect to $\WoY$ at composite cell boundaries, weighted by boundary areas.
This is formalized in the subsequent definition.
\begin{definition}[$\WTV$: Wasserstein total variation for discrete trajectories]\label{def:WTV}
Consider a discrete trajectory $\bmpi^n$ at time $t$, and let $k = \lfloor t\,n\rfloor$. The \emph{Wasserstein total variation} of $\bmpi^n_t$ is defined as
\begin{equation}
\WTV(\bmpi^n_{t})
\assign
\sum_{\substack{J, J' \in \partnk,\\\tn{adjacent}}}
\mathcal{H}^{d-1}\left( X_J^n \cap X_{J'}^n \right)
\cdot
\WoY(\rho_J^{n,k}, \rho_{J'}^{n, k} ),
\label{eq:WTV_def}
\end{equation}
where $\mathcal{H}^{d-1}$ is the $(d-1)$-dimensional Hausdorff measure. See Section \ref{sec:DomDecNotation}, items \ref{item:NotationAdjacent} and \ref{item:NotationPartialMarginals} for the notion of \emph{adjacent} composite cells and the definition of $\rho_J^{n,k}$. Contributions to $\WTV$ are illustrated in Figure \ref{fig:WTV_figure}, left.
\end{definition}

\begin{remark}[$\WTV$: generalization to general measures $\pi \in \Pi(\mu)$]
\label{remark:WTVIsTotalVariation}
Definition \ref{def:WTV} is a particular case of the total variation of $(\meas_1(Y), \WoY)$-valued functions, as introduced in \cite{AmbrosioMetricBVFunctions1990} for general metric spaces. Indeed, for a discrete trajectory $\bmpi^n$ at time $t$, the function $x\mapsto \bmpi_{t,x}^n$ is a simple function (i.e., it only takes a finite number of values) and it is constant on composite cells. Hence, our definition of $\WTV$ coincides with the formula given in \cite[Proposition 3.1]{AmbrosioMetricBVFunctions1990} for the total variation of metric space-valued simple functions. We will leverage that sequences with bounded $\WTV$ enjoy some compactness \cite[Theorem 2.4(i)]{AmbrosioMetricBVFunctions1990}.
\end{remark}

\begin{remark}[Relation to $\TV$]
For a coupling that is concentrated on the graph of a transport map $S$, $\pi=(\id,S)_\sharp \mu$ one has $\pi_x = \delta_{S(x)}$ for $\mu$-a.e.~$x$. Consequently, $\WTV(\pi)=\TV(S)$ where $\TV(S)$ is the total variation of the map $S$ restricted to $X$.
\end{remark}

$\WTV$ measures oscillations at the composite cell level. We also need a related notion for oscillations at the level of basic cells.

\begin{definition}[Basic cell `skip one' $\WTVb$]\label{def:WTVb}
For a discrete iterate $\pi^{n,k}$, the \emph{basic-cell $\WTV$} is defined as
\begin{equation}\label{eq:WTVb_def}
\WTVb(\pi^{n,k})
\assign
\frac{1}{n^{d-1}}
\sum_{\substack{
		i, i' \in I_n, \\
		i - i' \in \{2\mathbf{e}_1,...,2\mathbf{e}_d\}
	}}
\WoY(\rho_i^{n,k}, \rho_{i'}^{n,k} ).
\end{equation}
with $\bm{e}_i$ the $i$-th coordinate unit vector.
\end{definition}
In $\WTVb$ we compare the (normalized) $Y$-marginals of basic cells with those of basic cells that lie two cells apart in every coordinate direction. This is illustrated in Figure \ref{fig:WTV_figure}. The intuition behind this definition is that basic cell marginals with the same relative position inside their respective composite cells ``play a similar role'' within their composite cells. For example, for $d=1$, $\veps=0$, the right-hand side basic cells always hold the upper part of the mass (cf. Figure \ref{fig:example_iterations_detail}, left).
We will find that a bound on $\WTVb(\iter{\pi}{n,\lfloor t \cdot n \rfloor})$, uniform in $n$ and $t$, provides a uniform bound on $\WTV(\bmpi^n_t)$ (Proposition \ref{prop:WTVb_implies_WTV}) and an equicontinuity result for the discrete trajectories (Proposition \ref{prop:WTVb_implies_WY}). Eventually this will lead to the convergence of trajectories $t \mapsto \bmpi^n_t$ to $t \mapsto \bmpi_t$ in $\WY$, uniformly on compact time intervals.
The relation between the partial results is illustrated in Figure \ref{fig:FlowchartPartialResults}.
We start with a couple of auxiliary lemmas.

\begin{figure}[hbt]
	\begin{tikzpicture}[node distance=25mm]
		\node (WTVb) [result] {$\WTVb(\pi^{n,k})$ \\ uniformly bounded};
		\node (WY) [result, right=of WTVb] {$\WY$-equicontinuity \\ of $(t\mapsto \bmpi_t^n)_n$};
		\node (WTV) [result, below of=WTVb] {$\WTV(\bmpi_t^n)$ \\ uniformly bounded};
		\node (precompactness) [result, below of=WY] {$\WY$-precompactness\\ of $(\bmpi_t^n)_n$};
		\node (main_result) [mainresult, right=of WY, yshift = -10mm, xshift=-10mm] {$\WY$-uniform convergence \\ of $(\bmpi^n)_n$ to $\bmpi$};

		\draw [arrow] (WTVb) -- node[anchor=west] {Prop.~\ref{prop:WTVb_implies_WTV}} (WTV);
		\draw [arrow] (WTV) -- node[anchor=north] {Prop.~\ref{prop:WYPrecompacness}} (precompactness);
		\draw [arrow] (WTVb) -- node[anchor=north] {Prop.~\ref{prop:WTVb_implies_WY}} (WY);
		\draw [arrow] (precompactness) |- (main_result);
		\draw [arrow] (WY) |- node[anchor=north, xshift = 18mm] {Prop.~\ref{prop:WYUniformConvergence}} (main_result);
	\end{tikzpicture}
	\caption{Relationship between partial results. Proposition~\ref{prop:WTVbGivesWYConditions} serves as a summary.}
	\label{fig:FlowchartPartialResults}
\end{figure}
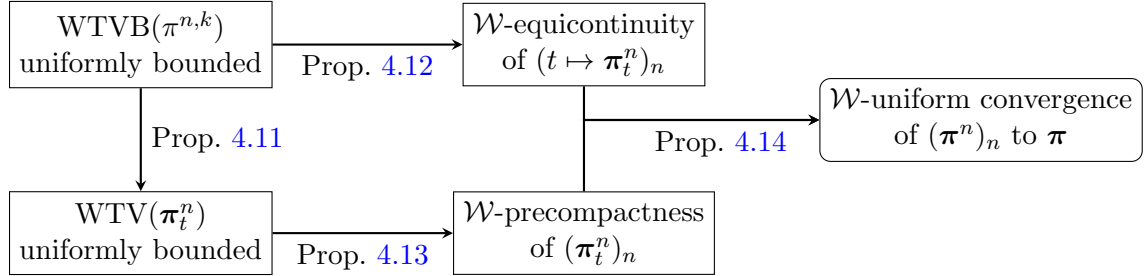

\begin{lemma}\label{lemma:IntegralDifferenceTV}
	Let $\Omega\subset\R^d$ be a bounded open set with Lipschitz boundary. For any $u \in \BV(\Omega)$ and $K \subset \Omega$, one has
	\begin{equation}
	\label{eq:IntegralDifferenceTV}
	\int_{K} |u(x+y) - u(x)| \diff x
	\le
	\|y\| \TV(u)
	\quad
	\text{for all } y \text{ s.t. } \|y\| < \textup{dist}(K, \partial \Omega). 
	\end{equation}
\end{lemma}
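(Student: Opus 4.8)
The plan is to reduce the statement to the classical fact that translation is continuous in $L^1$ with modulus controlled by the total variation, proved first for smooth functions and then extended by density. First I would handle the case $u \in \cont^1(\Omega) \cap \BV(\Omega)$: for such $u$ and for $x \in K$, $\|y\| < \dist(K,\partial\Omega)$, the whole segment $\{x + ty : t \in [0,1]\}$ lies in $\Omega$, so by the fundamental theorem of calculus $u(x+y) - u(x) = \int_0^1 \inner{\nabla u(x+ty)}{y}\,\diff t$. Taking absolute values, integrating over $K$, applying Fubini, and using $|\inner{\nabla u(x+ty)}{y}| \le \|y\|\,|\nabla u(x+ty)|$ gives
\begin{align*}
	\int_K |u(x+y) - u(x)|\,\diff x
	\le \|y\| \int_0^1 \left[ \int_K |\nabla u(x+ty)|\,\diff x \right]\diff t
	\le \|y\| \int_0^1 \left[ \int_\Omega |\nabla u(z)|\,\diff z \right]\diff t
	= \|y\|\,\|\nabla u\|_{L^1(\Omega)},
\end{align*}
where the second inequality uses that $x \mapsto x + ty$ maps $K$ into $\Omega$ for each $t \in [0,1]$, and the last equality is $\|\nabla u\|_{L^1(\Omega)} = \TV(u)$ for $\cont^1$ functions.

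For general $u \in \BV(\Omega)$, I would invoke strict approximation: since $\Omega$ is bounded with Lipschitz boundary, there is a sequence $(u_m) \subset \cont^\infty(\Omega) \cap \BV(\Omega)$ with $u_m \to u$ in $L^1(\Omega)$ and $\TV(u_m) \to \TV(u)$. Fix $y$ with $\|y\| < \dist(K,\partial\Omega)$ and choose an intermediate open set $\Omega'$ with $K \subset \Omega' \Subset \Omega$ and still $\|y\| < \dist(K, \partial\Omega')$; applying the smooth estimate on $\Omega'$ to each $u_m$ gives $\int_K |u_m(x+y) - u_m(x)|\,\diff x \le \|y\|\,\TV(u_m; \Omega') \le \|y\|\,\TV(u_m; \Omega)$. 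The left side converges to $\int_K |u(x+y)-u(x)|\,\diff x$ because $u_m \to u$ in $L^1(\Omega)$ forces $u_m(\cdot + y) \to u(\cdot + y)$ in $L^1(K)$ (translation is an $L^1$-isometry and $K+y \subset \Omega$), and the right side converges to $\|y\|\,\TV(u)$; passing to the limit yields \eqref{eq:IntegralDifferenceTV}.

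The only mild subtlety — and the one point I would be careful about — is the interplay between the translated domain and the boundary: one must ensure $x + ty \in \Omega$ throughout the segment, which is exactly why the hypothesis is phrased with $\dist(K, \partial\Omega)$ rather than, say, a bound involving $\diam(\Omega)$, and why inserting the intermediate set $\Omega'$ in the approximation step is needed so that the smooth estimate applies with room to spare. Everything else is routine; there is no serious obstacle here, and the lemma is a standard $\BV$ fact recorded for later use (it will be applied to $u = \muDens$ and to marginals of the iterates when converting bounds on $\WTVb$ into bounds on $\WTV$).
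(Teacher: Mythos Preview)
Your proposal is correct and follows essentially the same approach as the paper: both first establish the inequality for smooth $u$ via the fundamental theorem of calculus and Fubini, then extend to general $\BV$ by strict approximation (the paper simply cites \cite[Remark 3.25]{ambrosio2000functions} for this step, while you spell out the limit argument). The intermediate set $\Omega'$ you introduce is harmless but unnecessary, since the smooth estimate already applies directly on $\Omega$ once $\|y\| < \dist(K,\partial\Omega)$.
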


\begin{proof}
	Assume $u \in C^\infty(\Omega) \cap \BV(\Omega)$, then
	\begin{align*}
		\int_{K} &|u(x+y)-u(x)| \,\diff x
		=
		\int_{K} \int_0^1 \frac{\diff}{\diff t} u(x+ty)\,\diff t\,\diff x
		=
		\int_0^1 \int_{K} \nabla u(x+ty) \cdot y \,\diff x\,\diff t
		\\
		& \leq \|y\| \int_0^1 \int_{K} \|\nabla u(x+ty)\| \,\diff x\,\diff t
		\leq
		\|y\| \int_0^1 \int_{\Omega} \|\nabla u(x)\| \,\diff x\,\diff t
		=
		\|y\|\TV(u).
	\end{align*}
	The extension to $u \in \BV(\Omega)$ follows by approximation (see, e.g., \cite[Remark 3.25]{ambrosio2000functions}).
\end{proof}

\begin{lemma}
	\label{lem:MassBalanceError}
	Let the density of $\mu$ fulfill $\muLow \le \muDens$ for some $\muLow>0$, and let $\muDens$ have bounded variation. Then, for $n \in 2\N$, $\partJ=\partAn$ or $\partBn$, it holds
	\begin{align}
		\label{eq:MassBalanceErrorLemmaEq}
		\frac{1}{n^{d-1}} \sum_{J \in \partJ} \sum_{i \in J} \left|\frac{m^n_i}{m^n_J}-\frac{1}{2^d}\right|
		\leq
		\frac{\sqrt{d}}{\muLow} \cdot \TV(\muDens).
	\end{align}
\end{lemma}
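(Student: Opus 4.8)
The plan is to estimate, for each composite cell $J$, the deviation of the mass fractions $m_i^n/m_J^n$ from the uniform value $1/2^d$ in terms of the oscillation of the density $\muDens$ over $X_J^n$, and then to sum these local estimates. First I would write $m_i^n = \int_{X_i^n} \muDens\,\diff x$ and $m_J^n = \int_{X_J^n}\muDens\,\diff x = \sum_{i\in J} m_i^n$. Since each basic cell $X_i^n$ has volume $n^{-d}$ and each (interior) composite cell $X_J^n$ consists of $2^d$ basic cells and has volume $2^d n^{-d}$, the ``ideal'' situation $\muDens \equiv \text{const}$ on $X_J^n$ gives exactly $m_i^n/m_J^n = 1/2^d$. (Boundary composite cells of $\partB^n$ contain fewer basic cells, but then $1/2^d$ should be read as the corresponding reciprocal of the cell count; alternatively one checks that those terms only help, or one absorbs the finitely many such cells into the bound — I would handle this with a short remark rather than a separate case analysis.)

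Next I would make the local estimate quantitative. Writing $\bar\mu_J \assign m_J^n/\Lebesgue(X_J^n)$ for the average density on the composite cell, one has
\begin{align*}
\left|\frac{m_i^n}{m_J^n} - \frac{1}{2^d}\right|
= \frac{1}{m_J^n}\left| m_i^n - \frac{m_J^n}{2^d}\right|
= \frac{1}{m_J^n}\left| \int_{X_i^n}\left(\muDens - \bar\mu_J\right)\diff x \right|
\le \frac{1}{m_J^n} \int_{X_i^n} \left|\muDens - \bar\mu_J\right|\diff x,
\end{align*}
so that summing over $i\in J$ gives $\sum_{i\in J}\left|\frac{m_i^n}{m_J^n}-\frac{1}{2^d}\right| \le \frac{1}{m_J^n}\int_{X_J^n}|\muDens - \bar\mu_J|\,\diff x$. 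Now I would invoke the lower bound $\muDens \ge \muLow$ to replace $m_J^n = \int_{X_J^n}\muDens \ge \muLow\,\Lebesgue(X_J^n)$ in the denominator, and estimate the numerator by a Poincaré-type inequality on the cube: $\int_{X_J^n}|\muDens - \bar\mu_J|\,\diff x \le \operatorname{diam}(X_J^n)\cdot |D\muDens|(X_J^n) = \frac{\sqrt d}{n}\,|D\muDens|(X_J^n)$ for an interior composite cell of side $2/n$ (more carefully $2\sqrt d/n$ for the diameter; the clean constant is whatever the chosen Poincaré/Lemma~\ref{lemma:IntegralDifferenceTV}-type estimate produces — I would track it so the final $\sqrt d$ comes out). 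Combining, the contribution of cell $J$ is at most $\frac{1}{\muLow}\cdot\frac{\sqrt d}{n}\cdot\frac{|D\muDens|(X_J^n)}{\Lebesgue(X_J^n)}\cdot\Lebesgue(X_J^n)/\Lebesgue(X_J^n)$; more transparently, dividing through, the $J$-term of the left-hand side of \eqref{eq:MassBalanceErrorLemmaEq} is bounded by $\frac{\sqrt d}{\muLow\, n}\,|D\muDens|(X_J^n)$ once we account for the $n^{d-1}$ prefactor against the $n^{-d}\cdot n = n^{-(d-1)}$ scaling.

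Finally I would sum over $J\in\partJ$: since the composite cells $\{X_J^n\}_{J\in\partJ}$ are essentially disjoint (overlaps are $\Lebesgue$-null, cf. Remark~\ref{rem:BasicCellOverlap}), the measures $|D\muDens|(X_J^n)$ add up to at most $|D\muDens|(X) = \TV(\muDens)$, and the $1/n^{d-1}$ factor is exactly cancelled by the local scaling computed above, yielding $\frac{1}{n^{d-1}}\sum_{J}\sum_{i\in J}\left|\frac{m_i^n}{m_J^n}-\frac{1}{2^d}\right| \le \frac{\sqrt d}{\muLow}\,\TV(\muDens)$, which is \eqref{eq:MassBalanceErrorLemmaEq}. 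The main obstacle I anticipate is bookkeeping rather than conceptual: getting the constant exactly $\sqrt d$ requires choosing the right form of the Poincaré inequality on the cube (using $L^1$ mean oscillation bounded by $\operatorname{diam}\times\TV$, as in Lemma~\ref{lemma:IntegralDifferenceTV}, rather than a sharper constant) and correctly matching the powers of $n$ coming from cell volumes, the number $2^d$ of basic cells per composite cell, and the $n^{1-d}$ normalization; a secondary nuisance is the honest treatment of boundary composite cells, which I would dispatch by noting that for those the role of $1/2^d$ is played by $1/\#(J)$ and the same argument applies verbatim with $2^d$ replaced by $\#(J)\le 2^d$.
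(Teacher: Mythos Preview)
Your approach via a cell-wise $L^1$-Poincar\'e inequality is sound and genuinely different from the paper's. The paper does not invoke Poincar\'e; instead it bounds $|m_i^n - m_J^n/2^d| \le 2^{-d}\sum_{j\in J}|m_i^n - m_j^n|$, rewrites each pairwise difference as a translation integral $\int |\muDens(x) - \muDens(x+h)|\,\diff x$ with $h=(b'-b)/(2n)$, sums over all composite cells first, and then applies Lemma~\ref{lemma:IntegralDifferenceTV} once per pair $(b,b')\in\{-1,1\}^{2d}$, each time picking up the \emph{global} $\TV(\muDens)$. Your route localizes the variation to each cell and then sums the local pieces, which is conceptually cleaner and (if you track constants through $\Lebesgue(X_J^n)=(2/n)^d$ and $\operatorname{diam}(X_J^n)=2\sqrt d/n$) actually yields the sharper bound $\sqrt d/(\muLow\,2^{d-1})\,\TV(\muDens)$. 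Two small corrections: Lemma~\ref{lemma:IntegralDifferenceTV} is a translation--difference bound, not a Poincar\'e inequality, so citing it for the mean-oscillation estimate is a misattribution (the inequality you want is standard but external to the paper); and ``overlaps are $\Lebesgue$-null'' does not by itself justify $\sum_J |D\muDens|(X_J^n)\le\TV(\muDens)$, since $|D\muDens|$ need not be $\Lebesgue$-absolutely continuous --- apply Poincar\'e on the open cells instead, whose interiors are disjoint.

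Your dismissal of boundary $B$-cells is a genuine gap, however. For $|J|<2^d$ the term $|m_i^n/m_J^n - 1/2^d|$ equals $|1/|J| - 1/2^d|>0$ even when $\muDens$ is constant, and there are of order $n^{d-1}$ such cells, so their total contribution to the left-hand side is of order $1$ --- neither ``only helping'' nor ``finitely many'' nor absorbable. In fact the lemma as literally stated is false for $\partJ=\partBn$: with $d=1$ and $\mu=\Lebesgue$ the left side equals $1$ while the right side equals $0$. The paper's own proof shares this oversight (both the step $\muLow\,2^d/n^d \le m_J^n$ and the indexing of basic cells by $b\in\{-1,1\}^d$ tacitly assume $|J|=2^d$), so you are in good company; the downstream uses in Propositions~\ref{prop:WTVb_implies_WTV} and~\ref{prop:WTVb_implies_WY} carry separate additive boundary terms anyway, so the slip does not affect the paper's main conclusions.
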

The expression on the left is zero if the mass within every composite cell is equally distributed over its basic cells. The key step in the proofs of Propositions \ref{prop:WTVb_implies_WTV} and \ref{prop:WTVb_implies_WY} hinges on this equal distribution.
The above Lemma asserts that deviations are bounded by the total variation of $\muDens$ and thus the error inflicted in the proofs below can be controlled.

\begin{proof}
	We use the lower bound of $\muDens$ to write:
	\begin{align}
		\label{eq:BeginningSumBound}
		& \overbrace{
			\muLow\frac{2^d}{n^d}
		}^{\le m_J^n}
		\sum_{J \in \partJ}
		\sum_{i \in J}
		\left|\frac{m^n_i}{m^n_J}-\frac{1}{2^d}\right|
		\le
		\sum_{J \in \partJ} m_J^n
		\sum_{i \in J}
		\left|\frac{m^n_i}{m^n_J}-\frac{1}{2^d}\right|
		\\
		& \quad
		=
		\sum_{J \in \partJ}
		\sum_{i \in J}
		\left|m^n_i-\frac{m_J^n}{2^d}\right|
		\le
		\frac{1}{2^d}
		\sum_{J \in \partJ}
		\sum_{i \in J}
		\sum_{j \in J}
		\left|m^n_i-m_j^n\right|
		\nonumber
		\\
		&\quad
		=
		\frac{1}{2^d}
		\sum_{J \in \partJ}
		\sum_{b \in B}
		\sum_{b' \in B}
		\left|
		\int_{x_J^n + b/2n + [-1/2n,1/2n]^d} \MuDens(x) - \MuDens(x
		+ \tfrac{b'-b}{2n})\diff x
		\right|,
		\nonumber
		\\
		\intertext{with $B \assign \{-1,+1\}^d$. Setting $\mc{X}^n_b \assign \bigcup_{J\in \partJ} (x_J^n + b/2n + [-1/2n,1/2n]^d)$, and using Lemma \ref{lemma:IntegralDifferenceTV},}
		&\quad \le
		\frac{1}{2^d}
		\sum_{b \in B}
		\sum_{b' \in B}
		\int_{\mc{X}^n_b}
		\left|\MuDens(x) - \MuDens(x
		+ \tfrac{b'-b}{2n})
		\right|\diff x
		\le
		\frac{1}{2^d}
		\sum_{b \in B}
		\sum_{b' \in B}
		\frac{||b'-b||}{2n} \TV(\muDens).
	\end{align}
	Using that the cardinality of $|B|$ is $2^d$, that $||b'-b||\le 2 \sqrt d$ and combining the end of the expression with its start at \eqref{eq:BeginningSumBound}, we obtain \eqref{eq:MassBalanceErrorLemmaEq}.
\end{proof}

\begin{proposition}[$\WTVb$-bound implies $\WTV$-bound.]\label{prop:WTVb_implies_WTV}
	Let the density of $\mu$ fulfill $\muLow \le \muDens$ for some $\muLow>0$, and let $\muDens$ have bounded variation.
	Then, for $n \in 2\N$, $k \in \N$, it holds
	\begin{align}
		\WTV(\bmpi_{k/n}^n)
		\leq
		\frac12 \WTVb(\iter{\pi}{n,k})
		+
		d\cdot \diam Y \left[
		2^{2d} + 2^d\frac{\sqrt{d}}{\muLow} \TV(\muDens)
		\right].
	\end{align}
\end{proposition}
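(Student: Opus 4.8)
The plan is to bound, for every pair of adjacent composite cells $J, J' \in \partnk$ (with $k = \lfloor t\,n\rfloor$ and $t=k/n$), the jump $\WoY(\rho_J^{n,k},\rho_{J'}^{n,k})$ by a combination of (i) Wasserstein‑$1$ distances between the normalized $Y$‑marginals of basic cells sitting in the \emph{same relative position} inside $J$ and $J'$ — these are exactly the terms appearing in $\WTVb(\pi^{n,k})$ — and (ii) error terms controlled by the mass‑imbalance quantity of Lemma~\ref{lem:MassBalanceError}; then to sum these estimates over all adjacent composite‑cell pairs, weighted by the shared face areas as in Definition~\ref{def:WTV}.

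For the per‑pair estimate I would first record the identity $\rho_J^{n,k} = \sum_{i \in J} \tfrac{m_i^n}{m_J^n}\,\rho_i^{n,k}$, immediate from $\nu_J^{n,k} = \sum_{i\in J}\nu_i^{n,k}$ and the definitions of $\rho_i^{n,k},\rho_J^{n,k}$. For a full composite cell (one containing all $2^d$ basic cells) introduce the equal‑weight average $\bar{\rho}_J \assign 2^{-d}\sum_{b \in \{-1,1\}^d} \rho_{i(J,b)}^{n,k}$. The triangle inequality for $\WoY$ gives
\[
\WoY(\rho_J^{n,k},\rho_{J'}^{n,k}) \;\le\; \WoY(\rho_J^{n,k},\bar{\rho}_J) + \WoY(\bar{\rho}_J,\bar{\rho}_{J'}) + \WoY(\bar{\rho}_{J'},\rho_{J'}^{n,k}).
\]
For the middle term, convexity of $\WoY$ in each argument (immediate from \eqref{eq:KantRubin}) yields $\WoY(\bar{\rho}_J,\bar{\rho}_{J'}) \le 2^{-d}\sum_b \WoY(\rho_{i(J,b)}^{n,k},\rho_{i(J',b)}^{n,k})$, and since $J,J'$ are adjacent, for each $b$ the basic cells $i(J,b)$ and $i(J',b)$ differ by $\pm 2\mathbf{e}_\ell$ in the coordinate $\ell$ along which $J,J'$ share a face, so each such distance is one of the summands of $\WTVb(\pi^{n,k})$. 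For the two outer terms I would invoke the elementary bound: if $\sigma_b \in \prob(Y)$ and $\sum_b \lambda_b = \sum_b \lambda_b' = 1$, then $\WoY(\sum_b \lambda_b \sigma_b, \sum_b \lambda_b' \sigma_b) \le \tfrac12\diam(Y)\sum_b |\lambda_b - \lambda_b'|$ — by \eqref{eq:KantRubin}, using a competing $1$‑Lipschitz test function with values in an interval of length $\diam(Y)$ together with $\sum_b(\lambda_b-\lambda_b') = 0$. With $\lambda_b' \equiv 2^{-d}$ this gives $\WoY(\rho_J^{n,k},\bar{\rho}_J) \le \tfrac12\diam(Y)\sum_{i\in J}\bigl|\tfrac{m_i^n}{m_J^n} - 2^{-d}\bigr|$.

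Next I would sum the per‑pair estimate over those adjacent pairs $\{J,J'\}$ in $\partnk$ for which both cells are full $2^d$‑cubes; such a pair shares a face of $\mathcal{H}^{d-1}$‑measure $(2/n)^{d-1} = 2^{d-1}/n^{d-1}$. The middle terms contribute at most $\tfrac12\WTVb(\pi^{n,k})$: each unordered pair of basic cells at distance $2\mathbf{e}_\ell$ arises from at most one pair (adjacent full composite pair, relative position $b$), so $\sum_{\{J,J'\}}\sum_b \WoY(\rho_{i(J,b)}^{n,k},\rho_{i(J',b)}^{n,k}) \le n^{d-1}\WTVb(\pi^{n,k})$, and $\tfrac{2^{d-1}}{n^{d-1}}\cdot 2^{-d}\cdot n^{d-1} = \tfrac12$. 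The outer (imbalance) terms contribute, using that each composite cell has at most $2d$ neighbours together with Lemma~\ref{lem:MassBalanceError}, at most $\tfrac{2^{d-1}}{n^{d-1}}\cdot\tfrac12\diam(Y)\cdot 2d\sum_{J\in\partnk}\sum_{i\in J}\bigl|\tfrac{m_i^n}{m_J^n} - 2^{-d}\bigr| \le 2^{d-1}d\,\diam(Y)\tfrac{\sqrt{d}}{\muLow}\TV(\muDens) \le d\,2^{d}\diam(Y)\tfrac{\sqrt{d}}{\muLow}\TV(\muDens)$.

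The main (though elementary) technical point is the remaining family of adjacent pairs in which at least one composite cell is \emph{not} a full $2^d$‑cube; by construction these occur only when $\partnk = \partBn$, and then the cell in question meets $\partial X$, so $\bar{\rho}_J$ above is not directly available. There are at most $2n^{d-1}$ such boundary composite cells — indeed $(n/2+1)^d - (n/2-1)^d \le 2n^{d-1}$ — each with at most $2d$ neighbours, and for every such pair one simply uses $\WoY(\rho_J^{n,k},\rho_{J'}^{n,k}) \le \diam(Y)$ and $\mathcal{H}^{d-1}(X_J^n\cap X_{J'}^n) \le 2^{d-1}/n^{d-1}$; their total contribution is then at most $2n^{d-1}\cdot 2d\cdot\tfrac{2^{d-1}}{n^{d-1}}\diam(Y) = 2^{d+1}d\,\diam(Y) \le d\,2^{2d}\diam(Y)$. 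Adding the three contributions proves the claim (the $2^{2d}$‑term being vacuous when $k$ is odd, since $\partnk = \partAn$ has no boundary cells).
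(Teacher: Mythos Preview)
Your proposal is correct and follows essentially the same strategy as the paper: split each $\WoY(\rho_J^{n,k},\rho_{J'}^{n,k})$ via a triangle inequality through the equal-weight averages $\bar\rho_J$, bound the middle term by $\WTVb$-contributions using the bijection of basic cells at the same relative position, bound the outer ``imbalance'' terms via Lemma~\ref{lem:MassBalanceError}, and treat non-full (boundary) composite cells by the trivial $\diam Y$ bound. The only cosmetic differences are that you use the slightly sharper constant $\tfrac12\diam(Y)\sum_b|\lambda_b-\lambda_b'|$ in the imbalance estimate (the paper uses $\diam(Y)\sum_i|\cdot|$) and a tighter boundary-cell count $(n/2+1)^d-(n/2-1)^d\le 2n^{d-1}$, both of which you then relax to match the stated constants; your observation that the boundary term is vacuous for $A$-iterations is also not made explicit in the paper.
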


\begin{proof}
	Since $x \mapsto \bmpi_{k/n,x}^n$ is piecewise constant on composite cells, with $\bmpi_{k/n,x}^n=\rho^{n,k}_J$ where $J \in \partnk$ is $\mu(x)$-almost surely uniquely determined by the condition $x \in X^n_J$, one has
	\begin{align}
		\label{eq:WTVBoundFirstLine}
		\WTV(\bmpi_{k/n}^n)
		& =
		\sum_{\substack{J, J' \in \partnk,\\\tn{adjacent}}}
		\mathcal{H}^{d-1}\left( X_J^n \cap X_{J'}^n \right)
		\cdot
		\WoY(\rho_J^{n,k},\rho_{J'}^{n,k}).
	\end{align}
	The $B$ composite cells at the boundary have an intersection area smaller than the rest of $B$ cells or $A$ cells. For simplicity, we bound the contributions of the boundary for $A$ or $B$ iterations: there are at most $2d(\tfrac{n}{2}+1)^{d-1}$ boundary composite cells, each with at most $2^d$ neighbors, and the interface area is at most of $(2/n)^{d-1}$. On the other hand, the interface area of the rest of pairs of cells is precisely $(2/n)^{d-1}$. Thus we can bound the expression above by:
	\begin{align}
		\tn{\eqref{eq:WTVBoundFirstLine}} \le
		\left[2d\left( \frac{n}{2} + 1  \right)^{d-1}\right] \cdot 2^d \cdot
		\frac{2^{d-1}}{n^{d-1}}
		\cdot \diam Y
		+
		\frac{2^{d-1}}{n^{d-1}}
		\sum_{\substack{J,J' \in \partnk,\\\tn{adjacent and interior}}} \WoY(\rho_J^{n,k},\rho_{J'}^{n,k}).
		\label{eq:WTVBoundSecondLine}
	\end{align}
	The first term is bounded by $d2^{2d}\diam Y$. We now focus on the second term: recall that for any composite cell $J \in \partAn$ or $\partBn$ we set (Section \ref{sec:DomDecNotation}, item \ref{item:NotationPartialMarginals})
	\begin{align}
		\iter{\rho_J}{n,k} & = \sum_{i \in J} \frac{m^n_i}{m^n_J} \iter{\rho_i}{n,k}. \nonumber \\
		\intertext{We now define}
		\label{eq:BalancedP}
		\iter{\hat{\rho}_J}{n,k} & \assign \sum_{i \in J} \frac{1}{2^d} \iter{\rho_i}{n,k}, \\
		\intertext{and find that}
		\label{eq:MassBalanceError}
		\WoY(\iter{\rho_J}{n,k},\iter{\hat{\rho}_J}{n,k})
		& \leq
		\diam(Y) \cdot \sum_{i \in J} \left|\frac{m^n_i}{m^n_J}-\frac{1}{2^d}\right|.
	\end{align}
	In addition, each term in the sum of the second term of \eqref{eq:WTVBoundSecondLine} may be bounded as
	\begin{equation}\label{eq:AdjacentCellsBound}
	\WoY(\rho_J^{n,k},\rho_{J'}^{n,k})
	\le
	\WoY(\hat{\rho}_J^{n,k},\hat{\rho}_{J'}^{n,k}) + \WoY(\hat{\rho}_J^{n,k},\rho_J^{n,k})
		+ \WoY(\hat{\rho}_{J'}^{n,k},\rho_{J'}^{n,k}).
	\end{equation}
	Let us focus on the first term on the right hand side of \eqref{eq:AdjacentCellsBound}: for two adjacent composite cells $J, J'$ there always exist a coordinate direction $\bm{e}$ such that $J\ni i \mapsto i+2\bm{e}$ is a bijection between cells in $J$ and cells in $J'$. Thus,
	\begin{equation}
	\WoY(\hat{\rho}_J^{n,k},\hat{\rho}_{J'}^{n,k})
	=
	\WoY\left(
		\frac{1}{2^d} \sum_{i \in J} \iter{\rho_i}{n,k},
		\frac{1}{2^d} \sum_{i \in J} \iter{\rho_{i+2\bm{e}}}{n,k}
		\right)
	\le
	\frac{1}{2^d}
	\sum_{i \in J}
	\WoY(\iter{\rho_i}{n,k}, \iter{\rho_{i+2\bm{e}}}{n,k}).
	\end{equation}
	Considering all possible $J$ and $J'$, each contribution of the form $\WoY(\iter{\rho_i}{n,k}, \iter{\rho_{i+2\bm{e}}}{n,k})$ can only appear once, since otherwise we would be counting the same pair of cells $J, J'$ repeatedly. This yields the bound:
	\begin{equation}
	\frac{2^{d-1}}{n^{d-1}} \sum_{\substack{J,J' \in \partnk,\\\tn{adjacent and interior}}}
	\WoY(\hat{\rho}_J^{n,k},\hat{\rho}_{J'}^{n,k})
	\le
	\frac{1}{2}\frac{1}{n^{d-1}}
	\sum_{\substack{i, i' \in I_n, \\ i - i' \in \{2\mathbf{e}_1,...,2\mathbf{e}_d\}}}
	\WoY(\rho_i^{n,k}, \rho_{i'}^{n,k} ),
	\end{equation}
	which is precisely (up to a factor of $1/2$) our definition of $\WTVb(\iter{\pi}{n,k})$.

	The second and third term on the right hand side of \eqref{eq:AdjacentCellsBound} can be bounded by using \eqref{eq:MassBalanceError}, Lemma \ref{lem:MassBalanceError}, and the fact that each composite cell admits at most $2d$ adjacent cells, so each composite cell appears at most $2d$ times in the global sum. Introducing these considerations into \eqref{eq:WTVBoundSecondLine} we obtain:
	\begin{align}
		\WTV(\bmpi_{k/n}^n)
		&\leq
		d2^{2d} \diam Y
		+
		\frac12\WTVb(\iter{\pi}{n,k})
		+
		\frac{2^{d-1}}{n^{d-1}}2d
		\sum_{J\in \partnk}
		\diam(Y) \cdot \sum_{i \in J} \left|\frac{m^n_i}{m^n_J}-\frac{1}{2^d}\right|
		\nonumber
		\\
		&\le
		\frac12 \WTVb(\iter{\pi}{n,k})
		+
		d\cdot \diam Y \left[
		2^{2d} + 2^d\frac{\sqrt{d}}{\muLow} \TV(\muDens)
		\right].
		\nonumber
		\qedhere
	\end{align}
\end{proof}

\begin{proposition}[$\WTVb$-bound implies $\WY$-equicontinuity.]
	\label{prop:WTVb_implies_WY}
	Let the density of $\mu$ fulfill $\muLow \le \muDens \leq \muHi$ for some $\muLow>0$, $\muHi < \infty$, and let $\muDens$ have bounded variation. Then there exists a constant $C<\infty$ (possibly depending on the dimension $d$) such that for $n \in 2\N$, $k \in \N$, it holds
	\begin{align}
		\WY(\bmpi^n_{k/n},\bmpi^n_{(k+1)/n})
		\leq
		\frac{ M_u}{n}
		\left[
		C \cdot \WTVb(\pi^{n,k})
		+
		2^{d+1} \diam Y \frac{\sqrt{d}}{\muLow} \TV(\muDens)
		+
		2d\diam Y
		\right].
	\end{align}
\end{proposition}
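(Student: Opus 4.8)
The plan is to estimate $\WY(\bmpi^n_{k/n},\bmpi^n_{(k+1)/n}) = \int_X \WoY(\bmpi^n_{k/n,x},\bmpi^n_{(k+1)/n,x})\,\diff\mu(x)$ by tracking, for each basic cell $i$, how the (normalized) fiber marginal $\rho^{n,k}_i$ sitting over $i$ at time $k/n$ relates to the composite-cell averages that define $\bmpi^n_{(k+1)/n,x}$ for $x\in X^n_i$. Since $\mu\restr X^n_i$ has mass $m^n_i\le \muHi/n^d$, the integral splits as a sum over basic cells $i$ with weights $m^n_i$, so it suffices to bound $\sum_{i} m^n_i\,\WoY(\bmpi^n_{k/n,x_i},\bmpi^n_{(k+1)/n,x_i})$ by $\muHi/n^d$ times a sum of $\WoY$-distances, and then recognize that sum as being controlled by $\WTVb(\pi^{n,k})$ plus mass-imbalance terms handled by Lemma~\ref{lem:MassBalanceError}.

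Concretely, I would proceed as follows. First, fix the parity of $k$ so that iteration $k$ uses partition $\partnk$ (say $\partAn$) and iteration $k+1$ uses $\partnkk=\partBn$. By \eqref{eq:CompCellMarginalPreservation}, the composite $Y$-marginals are preserved across the cell problems, so $\nu^{n,k+1}_{\hat J}=\nu^{n,k}_{\hat J}$ for $\hat J\in\partBn$, i.e. $\rho^{n,k+1}_{\hat J}$ is the $m^n$-weighted average of the $\rho^{n,k}_i$ over the basic cells $i\in\hat J$. Second, for a basic cell $i$, both $\bmpi^n_{k/n,x}=\rho^{n,k}_J$ (with $i\in J\in\partAn$) and $\bmpi^n_{(k+1)/n,x}=\rho^{n,k+1}_{\hat J}$ (with $i\in\hat J\in\partBn$) are averages over $2^d$ basic cells each, all within $\ell^1$-distance $2$ (in basic-cell index units) of $i$. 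Introducing the balanced averages $\hat\rho^{n,k}_J=2^{-d}\sum_{i'\in J}\rho^{n,k}_{i'}$ and $\hat\rho^{n,k+1}_{\hat J}=2^{-d}\sum_{i'\in\hat J}\rho^{n,k}_{i'}$ as in \eqref{eq:BalancedP}, I would use the triangle inequality and convexity of $\WoY$ (as in \eqref{eq:AdjacentCellsBound}--\eqref{eq:MassBalanceError}) to write
\begin{align*}
\WoY(\rho^{n,k}_J,\rho^{n,k+1}_{\hat J})
\le
\WoY(\hat\rho^{n,k}_J,\hat\rho^{n,k+1}_{\hat J})
+\WoY(\rho^{n,k}_J,\hat\rho^{n,k}_J)
+\WoY(\rho^{n,k+1}_{\hat J},\hat\rho^{n,k+1}_{\hat J}),
\end{align*}
where the last two terms are bounded by $\diam(Y)$ times the mass-imbalance quantities $\sum_{i'\in J}|m^n_{i'}/m^n_J-2^{-d}|$ and $\sum_{i'\in\hat J}|m^n_{i'}/m^n_{\hat J}-2^{-d}|$. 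Third — the crucial geometric step — I bound $\WoY(\hat\rho^{n,k}_J,\hat\rho^{n,k+1}_{\hat J})$: since both are uniform ($2^{-d}$-weighted) averages over $2^d$ basic cells, and the cube $\hat J$ is a shift of the cube $J$ by a vector in $\{-1,+1\}^d$ (in basic-cell units), I pair up basic cells $i'\in J$ with basic cells $i'+w\in\hat J$ via a single common shift $w$ and get $\WoY(\hat\rho^{n,k}_J,\hat\rho^{n,k+1}_{\hat J})\le 2^{-d}\sum_{i'\in J}\WoY(\rho^{n,k}_{i'},\rho^{n,k}_{i'+w})$. Decomposing the shift $w\in\{-1,+1\}^d$ into at most $d$ unit coordinate steps and inserting intermediate basic cells, each $\WoY(\rho^{n,k}_{i'},\rho^{n,k}_{i'+w})$ is at most a sum of $\le d$ distances between basic cells one step apart in a single coordinate — but $\WTVb$ is defined via cells \emph{two} steps apart. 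This parity mismatch (the staggering between $\partAn$ and $\partBn$ is by one basic cell, while $\WTVb$ skips one) is the point where I would have to be careful: I expect to resolve it by noting that shifting by $w\in\{-1,+1\}^d$ followed by the averaging over the $2^d$-cube actually re-expresses things in terms of the even sublattice, so that when summed over all $J$ the distances that appear are exactly the $i-i'\in\{2\mathbf{e}_1,\dots,2\mathbf{e}_d\}$ pairs counted in \eqref{eq:WTVb_def}, each appearing a bounded number of times (giving the dimensional constant $C$).

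Finally, I sum over all basic cells $i$ with weights $m^n_i\le\muHi/n^d$. The $\WoY(\hat\rho^{n,k}_J,\hat\rho^{n,k+1}_{\hat J})$ contributions, after the reorganization above, sum to at most $C\,\muHi\,n^{-1}\WTVb(\pi^{n,k})$ (the factor $n^{d-1}$ in the denominator of $\WTVb$ cancels $n^{d}$ from the weights up to one power of $n$, and the $C$ absorbs the bounded multiplicity and the $\le d$ coordinate steps). The mass-imbalance contributions, summed with weights $m^n_i$, are controlled using Lemma~\ref{lem:MassBalanceError}: $\sum_{J}\sum_{i\in J}|m^n_i/m^n_J-2^{-d}|\le \frac{\sqrt d}{\muLow}\TV(\muDens)\,n^{d-1}$, and multiplying by $\muHi/n^d$ and $\diam Y$ and by the factor $2$ coming from having both a $\partAn$ and a $\partBn$ imbalance term yields the $2^{d+1}\diam Y\,\frac{\sqrt d}{\muLow}\TV(\muDens)\,\muHi/n$ term; the leftover boundary cells of $\partBn$ (where some $i(J,b)$ fall outside $X$, cf. Definition~\ref{def:DiscreteTrajectories}) contribute the $2d\diam Y\,\muHi/n$ term, since there are $O(n^{d-1})$ of them, each with weight $O(\muHi/n^d)$ and fiber distance $\le\diam Y$. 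Collecting these three pieces gives the claimed inequality. The main obstacle, as indicated, is the bookkeeping in the third step — correctly matching the one-cell staggering of the partitions against the two-cell skip in $\WTVb$ and verifying that each $\WTVb$-summand is charged only $O_d(1)$ times; everything else is the triangle inequality, convexity of $\WoY$, and the already-proven Lemma~\ref{lem:MassBalanceError}.
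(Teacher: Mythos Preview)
Your overall architecture matches the paper's: decompose $\WY$ as a sum over basic cells with weight $m_i^n \le \muHi/n^d$, separate boundary cells (the $2d\,\diam Y$ term), use $\rho^{n,k+1}_{\hat J} = \rho^{n,k}_{\hat J}$, insert balanced averages $\hat\rho$, and handle the imbalance terms via Lemma~\ref{lem:MassBalanceError}. All of that is correct.

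The gap is exactly where you flag it, and your proposed resolution does not work. Bounding $\WoY(\hat\rho^{n,k}_{J(i)},\hat\rho^{n,k}_{\hat J(i)})$ via the shift bijection $j\mapsto j+w$, $w\in\{-1,+1\}^d$, produces distances $\WoY(\rho^{n,k}_j,\rho^{n,k}_{j+w})$ between cells an \emph{odd} number of steps apart, and these are not controlled by $\WTVb$: if $\rho^{n,k}_j$ alternates between two fixed probability measures according to the parity of $j$ (the checkerboard pattern visible in Figure~\ref{fig:example_iterations}, top), every one-step distance equals a fixed positive constant while $\WTVb(\pi^{n,k})=0$. In that example the target $\WoY(\hat\rho^{n,k}_{J(i)},\hat\rho^{n,k}_{\hat J(i)})$ actually vanishes, so the inequality you need holds---your intermediate bound is simply too coarse, and no subsequent ``re-expression in the even sublattice'' can recover what was thrown away. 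The paper's fix is to choose a different bijection from the start: for each interior basic cell $i$, the reflection $\pivot(i,j)\assign 2i-j$ maps $J(i)$ bijectively onto $\hat J(i)$ and satisfies $j-\pivot(i,j)=2(j-i)\in\{-2,0,2\}^d$. Every resulting distance $\WoY(\rho^{n,k}_j,\rho^{n,k}_{\pivot(i,j)})$ is then between cells at even displacement in every coordinate and decomposes into at most $d$ edges of the $\WTVb$ graph; each such edge is charged $O_d(1)$ times over all $(i,j)$, yielding the constant $C$. With this single change your outline goes through.
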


\begin{proof}
	Fix $n, k$.
	For $i \in I^n$ denote by $J(i)$ the unique element of $\partnk$ with $i \in J(i)$.  Likewise, $\hat{J}(i)$ is the unique element of $\partnkk$ containing $i$. We also define $\boundaryIn$ as the set of basic cells $i\in I^n$ whose extent $X_i^n$ has non-empty intersection with $\partial X$, and $\interiorIn \assign I^n \setminus \boundaryIn$.
	Further, observe that $\iter{\rho}{n,k+1}_{\hat{J}(i)} = \iter{\rho}{n,k}_{\hat{J}(i)}$ since $\hat{J}(i) \in \partnkk$, see \eqref{eq:CompCellMarginalPreservation}.
	Then,
	\begin{align}
		\WY(\bmpi^n_{k/n},\bmpi^n_{(k+1)/n})
		&=
		\sum_{i\in I^n}
		m_i^n
		\WoY(\iter{\rho}{n,k}_{J(i)} ,\iter{\rho}{n,k+1}_{\hat{J}(i)} )
		=
		\sum_{i\in I^n}
		\underbrace{m_i^n}_{\le \muHi n^{-d}}
		\WoY(\iter{\rho}{n,k}_{J(i)} ,\iter{\rho}{n,k}_{\hat{J}(i)} )
		\nonumber
		\\
		&\le
		\muHi n^{-d}
		\left(
		\sum_{i\in \boundaryIn}
		\WoY(\iter{\rho}{n,k}_{J(i)} ,\iter{\rho}{n,k}_{\hat{J}(i)} )
		+
		\sum_{i\in \interiorIn}
		\WoY(\iter{\rho}{n,k}_{J(i)} ,\iter{\rho}{n,k}_{\hat{J}(i)} )
		\right)
		\nonumber
		\\
		&\le
		\frac{2d \muHi \diam Y}{n}
		+
		\muHi n^{-d}
		\sum_{i\in \interiorIn}
		\WoY(\iter{\rho}{n,k}_{J(i)} ,\iter{\rho}{n,k}_{\hat{J}(i)} ),
		\label{eq:LastLineWYBound}
	\end{align}
	where we have used that the cardinality of $\boundaryIn$ is at most $2d n^{d-1}$.

	Using again \eqref{eq:BalancedP}, we can bound the last term of \eqref{eq:LastLineWYBound} by
	\begin{align}\label{eq:WTV_contributions}
		\sum_{i\in \interiorIn}
		\WoY(\iter{\rho}{n,k}_{J(i)} ,\iter{\rho}{n,k}_{\hat{J}(i)} )
		&\le
		\sum_{i \in \interiorIn} \WoY(\hat{\rho}_{J(i)}^{n,k},\hat{\rho}_{\hat{J}(i)}^{n,k})
		+ \WoY(\rho^{n,k}_{J(i)},\hat{\rho}^{n,k}_{J(i)})
		+ \WoY(\rho^{n,k}_{\hat{J}(i)},\hat{\rho}^{n,k}_{\hat{J}(i)}).
	\end{align}
	The first term can be bounded by $C  n^{d-1} \WTVb(\pi^{n,k})$ for a certain $C$, let us see how. First, for $i\in \interiorIn$, $j\in J(i)$, introduce the \textit{pivoted cell} $\pivot(i,j) \assign j + 2(i-j)$. It is easy to check that $j\mapsto \pivot(i,j)$ is a bijection between basic cells in $J(i)$ and $\hat{J}(i)$. Then a first step is to bound, for each $i\in \interiorIn$,

	\begin{equation}\label{eq:CompositeCellWTVb}
	\WoY(\hat{\rho}_{J(i)}^{n,k},\hat{\rho}_{\hat{J}(i)}^{n,k})
	\le
	\sum_{j \in J(i)} \frac{1}{2^d}\WoY(\iter{\rho_j}{n,k}, \iter{\rho_{\pivot(i,j)}}{n,k}).
	\end{equation}

	Now we introduce the ``\emph{$\WTVb$ graph}'' with $I^n$ as set of vertices and
	\begin{equation*}
	E \assign \{(j, j') \subset I_n \mid
	j - j' \in \{2\mathbf{e}_1,...,2\mathbf{e}_d\}
	\}
	\end{equation*}
	as its set of edges where each edge corresponds to one term in the definition of $\WTVb$. Then for each $i,j$ we can find a path in the graph between $j$ and $\pivot(i,j)$ consisting of at most $d$ edges --- this is because $j$ and $\pivot(i,j)$ can be regarded as vertices of a coordinate hypercube with edges contained in $E$ (as exemplified in Figure \ref{fig:WTV_figure}, right). We name this path $E(i,j)  \subset E$ and so each distance in \eqref{eq:CompositeCellWTVb} can be bounded as
	\begin{equation*}
	\WoY(\iter{\rho_j}{n,k}, \iter{\rho_{\pivot(i,j)}}{n,k})
	\le
	\sum_{(\ell, \ell') \in E(i,j)}
	\WoY(\iter{\rho_\ell}{n,k}, \iter{\rho_{\ell'}}{n,k} ).
	\end{equation*}
	Collecting these considerations one arrives at the bound
	\begin{equation}
	\sum_{i \in \interiorIn} \WoY(\hat{\rho}_{J(i)}^{n,k},\hat{\rho}_{\hat{J}(i)}^{n,k})
	\le
	\frac{1}{2^d}
	\sum_{i \in \interiorIn}
	\sum_{j\in J(i)}
	\sum_{(\ell, \ell') \in E(i,j)}
	\WoY(\iter{\rho_\ell}{n,k}, \iter{\rho_{\ell'}}{n,k} ),
	\end{equation}
	where all the distances are computed along edges in $E$. Now, each edge $e$ of $E$ is used at most as many times as $j$ and $\pivot(i,j)$ are vertices of a hypercube that has $e$ as one of its edges. This number of events is of course finite and independent of $n$ and $k$, so there exists a constant $C$ (that may depend on the dimension $d$) such that
	\begin{equation}
	\label{eq:WTVPartWY}
	\sum_{i \in \interiorIn} \WoY(\hat{\rho}_{J(i)}^{n,k},\hat{\rho}_{\hat{J}(i)}^{n,k})
	\le
	C
	\sum_{(\ell, \ell') \in E}
	\WoY(\iter{\rho_\ell}{n,k}, \iter{\rho_{\ell'}}{n,k} )
	=
	Cn^{d-1} \WTVb(\pi^{n,k}).
	\end{equation}
	
	In the second and third term of \eqref{eq:WTV_contributions}, note that each composite cell appears at most $2^d$ times (since each inner composite cell contains $2^d$ basic cells). After using this fact, we apply \eqref{eq:MassBalanceError} and Lemma \ref{lem:MassBalanceError} to bound the second term of \eqref{eq:WTV_contributions} by
	\begin{align}
		\sum_{i \in \interiorIn} \WoY(\rho^{n,k}_{J(i)},\hat{\rho}^{n,k}_{J(i)})
		&\leq
		2^d \sum_{J \in \partnk} \WoY(\rho^{n,k}_{J},\hat{\rho}^{n,k}_{J})
		\leq
		2^d \sum_{J \in \partnk}
		\diam(Y) \cdot \sum_{i \in J} \left|\frac{m^n_i}{m^n_J}-\frac{1}{2^d}\right|
		\nonumber
		\\
		&\le
		2^d n^{d-1} \diam Y
		\frac{\sqrt{d}}{\muLow} \cdot
		\TV(\muDens).
		\label{eq:MassBalancingPartWY}
	\end{align}
	The same bound applies to the third term. Inserting \eqref{eq:WTVPartWY} and \eqref{eq:MassBalancingPartWY} into \eqref{eq:LastLineWYBound} yields
	\begin{equation*}
	\WY(\bmpi^n_{k/n},\bmpi^n_{(k+1)/n})
	\le
	\frac{ M_u}{n}
	\left[
	C \cdot \WTVb(\pi^{n,k})
	+
	2^{d+1} \diam Y \frac{\sqrt{d}}{\muLow} \TV(\muDens)
	+
	2d\diam Y
	\right]
	\end{equation*}
	where $C$ is the constant in \eqref{eq:WTVPartWY}.
\end{proof}

For fixed $t>0$, a uniform $\WTV$ bound for the sequence $(\bmpi_t^n)_n$ provides compactness of the sequence itself within $(\Pi(\mu), \WY)$. This is a direct consequence of \cite[Theorem 2.4(i)]{AmbrosioMetricBVFunctions1990} combined with the boundedness of $Y$. 
\begin{proposition}
	\label{prop:WYPrecompacness}
	Let $(\gamma^n)_n$ be a sequence in $\Pi(\mu)$ with uniformly bounded $\WTV$. Then the sequence is precompact in $\Pi(\mu)$ with respect to $\WY$, and any cluster point $\gamma$ satisfies
	\begin{equation}
	\WTV(\gamma) \le \liminf_{n\to \infty} \WTV(\gamma^n).
	\end{equation}
\end{proposition}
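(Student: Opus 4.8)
The plan is to recognise $\WTV$ as the total variation of a metric-space valued function, as noted in Remark \ref{remark:WTVIsTotalVariation}, and then to invoke the compactness theory for such functions from \cite{AmbrosioMetricBVFunctions1990}. To each $\gamma^n \in \Pi(\mu)$ I associate the map $u^n : X \to \prob(Y)$, $u^n(x) \assign \gamma^n_x$, i.e.\ the disintegration of $\gamma^n$ against its $X$-marginal $\mu$, which is well defined $\mu$-a.e. By Remark \ref{remark:WTVIsTotalVariation} the total variation of $u^n$ in the sense of $\meas_1(Y)$-valued $\BV$ functions equals $\WTV(\gamma^n)$, and by Definition \ref{def:WY} the $L^1$-distance between $u^n$ and $u^m$, viewed as $(\prob(Y),\WoY)$-valued maps on $(X,\mu)$, equals $\WY(\gamma^n,\gamma^m)$.

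First I would record that $(\prob(Y),\WoY)$ is a compact metric space: since $Y\subset\R^d$ is compact, $\prob(Y)$ is weak*-compact, $\WoY$ metrises the weak* topology on $\prob(Y)$, and $\diam_{\WoY}(\prob(Y)) \le \diam Y < \infty$; in particular every $u^n$ takes values in one fixed compact set. Then I would apply \cite[Theorem 2.4(i)]{AmbrosioMetricBVFunctions1990} to the sequence $(u^n)_n$: the uniform bound $\sup_n \WTV(\gamma^n) < \infty$ together with the uniform boundedness of the values yields a (non-relabelled) subsequence converging in $L^1(X,\mu;\prob(Y))$ to some $u$ of bounded variation, together with lower semicontinuity of the total variation along this convergence.

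It then remains to translate back. Gluing $u$ along $X$ with respect to $\mu$ (Remark \ref{rem:Disintegration}) produces $\gamma \assign \mu\otimes u(x)$; since $u(x)\in\prob(Y)$ for $\mu$-a.e.\ $x$ we have $\proj_X\gamma = \mu$, i.e.\ $\gamma\in\Pi(\mu)$, and the $L^1$-convergence of $u^n$ to $u$ is exactly $\WY(\gamma^n,\gamma)\to 0$. Via Remark \ref{remark:WTVIsTotalVariation} the lower semicontinuity reads $\WTV(\gamma) \le \liminf_n \WTV(\gamma^n)$ along the converging subsequence. Running the same extraction inside an arbitrary subsequence of $(\gamma^n)_n$ gives precompactness in $(\Pi(\mu),\WY)$, and for any $\WY$-cluster point the stated estimate follows by applying the lower semicontinuity to a subsequence realising it.

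The argument is largely bookkeeping once \cite[Theorem 2.4(i)]{AmbrosioMetricBVFunctions1990} is available; the points that need genuine care are (a) checking the hypotheses of that theorem in our setting --- compactness of the target $(\prob(Y),\WoY)$ and the identification of its $L^1$ topology with $\WY$, including the (harmless) discrepancy between integrating against $\mu$ and against Lebesgue measure as in the classical statement, which is absorbed via $\mu\ll\Lebesgue$ with bounded density --- and (b) ensuring that the $L^1$-limit $u$ may be taken $\prob(Y)$-valued, so that the cluster point stays in $\Pi(\mu)$; this holds because $\prob(Y)$ is $\WoY$-closed and the mass-one constraint passes to the limit along an a.e.-convergent further subsequence.
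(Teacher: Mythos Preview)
Your approach is essentially the same as the paper's: both identify $\WTV$ with the metric-space total variation of $x\mapsto\gamma^n_x$ and invoke \cite[Theorem 2.4(i)]{AmbrosioMetricBVFunctions1990}. The only cosmetic difference is that the paper reads off pointwise a.e.\ convergence of the disintegrations from that theorem and then upgrades to $\WY$-convergence via dominated convergence (using $\diam Y<\infty$), whereas you phrase the output directly as $L^1$-convergence; your explicit remarks on the $\mu$ versus $\Lebesgue$ discrepancy and on the limit remaining $\prob(Y)$-valued are points the paper leaves implicit.
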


\begin{proof}
	Since $(\gamma^n)_n$ is a sequence of bounded $\WTV$, the family of disintegration maps $(X\ni x\mapsto \gamma^n_{x}\in \prob(Y))_n$ have uniformly bounded variation with respect to $\WoY$  (cf. Remark \ref{remark:WTVIsTotalVariation}). Besides, since the space $(\prob(Y),\WoY)$ has finite diameter, by assumption for any $\rho \in \prob(Y)$ we have that the sequence
	\begin{align*}
		\WTV(\gamma^n) + \int_X \WoY(\gamma^n_{x},~\rho)\,\diff x
	\end{align*}
	is bounded.
	Therefore, by \cite[Theorem 2.4(i)]{AmbrosioMetricBVFunctions1990}, there is some map $x\mapsto \gamma_x$ (that can be identified with some element $\gamma\in\Pi(\mu)$) with $\WTV(\gamma) \leq \liminf_n \WTV(\gamma^n)$, such that up to selection of a subsequence, $(\gamma^n_{x})_n$ converges to $\gamma_x$ for almost all $x \in X$.
	Since $Y$ has finite diameter, by dominated convergence this implies that $\gamma^n \to \gamma$ in $\WY$.
\end{proof}

\begin{proposition}
	\label{prop:WYUniformConvergence}
	Assume the discrete trajectories $\bmpi^n$ satisfy the `almost-equicontinuity' condition
	\begin{align}\label{eq:almost_equicontinuity}
		\WY(\bmpi^n_{k/n},\bmpi^n_{(k+1)/n}) \leq C/n \quad \tn{for all } n \in 2\N,\,k \in \N
	\end{align}
	for some $C \in \R_+$ that does not depend on $k$ or $n$.
	In addition, assume that the set $\{\bmpi^n_t | n \in 2\N\}$ is precompact in $(\Pi(\mu),\WY)$ for all $t \in \R_+$.
	Then there exists a subsequence $(\bmpi^{n_l})_l$ and a trajectory $\bmpi \in \measp(\R_+ \times X \times Y)$ with $\bmpi_t \in \Pi(\mu,\nu)$ for all $t \in \R_+$, such that for every $T \in \R_+$, $\bmpi^{n_l}_t$ converges to $\bmpi_t$ in $\WY$ uniformly for $t \in [0,T]$.
\end{proposition}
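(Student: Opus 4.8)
The plan is to run an Ascoli--Arzel\`a argument in the metric space $(\Pi(\mu),\WY)$, which is complete (though not compact). The first step is to upgrade the grid-only control \eqref{eq:almost_equicontinuity} into a genuine asymptotic Lipschitz modulus valid at all times. Since $t\mapsto\bmpi^n_t$ is piecewise constant with jumps only at multiples of $1/n$, for $0\le s\le t$ the interval $(s,t]$ contains at most $\lfloor nt\rfloor-\lfloor ns\rfloor\le n(t-s)+1$ jump points; chaining \eqref{eq:almost_equicontinuity} along them and using the triangle inequality for $\WY$ gives
\begin{align}
	\WY(\bmpi^n_s,\bmpi^n_t)\;\le\;\frac{C}{n}\big(n(t-s)+1\big)\;=\;C(t-s)+\frac{C}{n}
	\qquad(0\le s\le t).
	\label{eq:AsymptLip}
\end{align}

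Next I would perform a diagonal extraction. Fix a countable dense set $D\subset\R_+$ (say, the nonnegative rationals). By the assumed pointwise precompactness, $\{\bmpi^n_q\mid n\in2\N\}$ is precompact in $(\Pi(\mu),\WY)$ for each $q\in D$, so a diagonal argument yields a subsequence $(n_l)_l$ with $\bmpi^{n_l}_q\to\bmpi_q$ in $\WY$ for all $q\in D$, for some $\bmpi_q\in\Pi(\mu)$. Letting $l\to\infty$ in \eqref{eq:AsymptLip} along this subsequence shows $\WY(\bmpi_q,\bmpi_{q'})\le C|q-q'|$ for $q,q'\in D$, so $q\mapsto\bmpi_q$ is $C$-Lipschitz; by completeness it extends uniquely to a $C$-Lipschitz curve $t\mapsto\bmpi_t$ on all of $\R_+$ with values in $\Pi(\mu)$. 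This curve is weak*-continuous (Remark \ref{rem:WYInterpretation}), so $t\mapsto\int_{X\times Y}\phi(t,\cdot)\,\diff\bmpi_t$ is continuous for $\phi\in\cont_c(\R_+\times X\times Y)$, and $\int\phi\,\diff\bmpi\assign\int_{\R_+}\big(\int_{X\times Y}\phi(t,\cdot)\,\diff\bmpi_t\big)\,\diff t$ defines a $\sigma$-finite measure $\bmpi\in\measp(\R_+\times X\times Y)$ whose time-disintegration is $(\bmpi_t)_t$ (cf.\ Remark \ref{rem:Disintegration}).

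For the uniform convergence I would fix $T\in\R_+$, $\varepsilon>0$, and a finite $\tfrac{\varepsilon}{4C}$-net $F\subset D\cap[0,T]$ of $[0,T]$. Given $t\in[0,T]$, picking $q\in F$ with $|t-q|\le\tfrac{\varepsilon}{4C}$ and combining the triangle inequality with \eqref{eq:AsymptLip} and the $C$-Lipschitz bound on $\bmpi$ yields
\begin{align*}
	\WY(\bmpi^{n_l}_t,\bmpi_t)
	&\le\WY(\bmpi^{n_l}_t,\bmpi^{n_l}_q)+\WY(\bmpi^{n_l}_q,\bmpi_q)+\WY(\bmpi_q,\bmpi_t)\\
	&\le\frac{\varepsilon}{2}+\frac{C}{n_l}+\max_{q'\in F}\WY(\bmpi^{n_l}_{q'},\bmpi_{q'}),
\end{align*}
and since $F$ is finite the last two terms vanish as $l\to\infty$, giving $\sup_{t\in[0,T]}\WY(\bmpi^{n_l}_t,\bmpi_t)<\varepsilon$ for $l$ large; as $T$ was arbitrary, this is the claimed uniform convergence on compact time intervals. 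The marginal constraint then follows by passing to the limit: $\proj_X\bmpi^n_t=\mu$ for all $n,t$ by Definition \ref{def:DiscreteTrajectories}, while $\proj_Y\bmpi^n_t=\sum_{J\in\partnk}\iter{\nu_J}{n,k}=\proj_Y\iter{\pi}{n,k}=\nu^n\to\nu$ weak* (with $k=\lfloor nt\rfloor$); since $\WY$-convergence implies weak* convergence and the marginal projections are weak*-continuous, $\bmpi_t\in\Pi(\mu,\nu)$ for every $t$.

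The only step that needs real care is the first one: one has to make sure the grid-only estimate \eqref{eq:almost_equicontinuity} genuinely propagates to the asymptotic Lipschitz bound \eqref{eq:AsymptLip}, since it is this — and the fact that the residual error is $O(1/n)$ rather than $O(1)$ — that both makes the diagonal limit on $D$ Lipschitz and lets the $C/n_l$ term be absorbed in the final net estimate. Everything else, namely the diagonal subsequence, the completion of the limit curve, and the finite-net argument, is the standard Ascoli--Arzel\`a/compactness machinery in a complete metric space.
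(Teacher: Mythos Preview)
Your proof is correct and complete, but it takes a somewhat different route from the paper. The paper first repairs the failure of equicontinuity by replacing each piecewise-constant trajectory $\bmpi^n$ with a geodesic interpolant $\tilde{\bmpi}^n$ in $(\Pi(\mu),\WY)$ (using Remark~\ref{rem:WYGeodesic}); these interpolants are genuinely equi-$C$-Lipschitz and stay within $\WY$-distance $C/n$ of the originals, so the paper can invoke a black-box Ascoli--Arzel\`a theorem and then transfer the uniform limit back to $(\bmpi^n)_n$. You instead keep the piecewise-constant curves and extract the asymptotic Lipschitz estimate \eqref{eq:AsymptLip} directly, then carry out the Ascoli--Arzel\`a machinery by hand via diagonal extraction on a countable dense set and a finite-net argument. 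Your approach avoids the need for geodesics in $(\Pi(\mu),\WY)$ but uses completeness of the space (to extend the limit from $D$ to $\R_+$); the paper's approach trades completeness for the geodesic property. Both are standard variants. A small bonus of your version is that you explicitly verify $\bmpi_t\in\Pi(\mu,\nu)$ by passing to weak* limits of the marginals, which the paper leaves implicit.
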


\begin{proof}
	Equation \eqref{eq:almost_equicontinuity} states that the family $(t \mapsto \bmpi^n_t)_n$ (which is piecewise constant in $t$) is close to being equicontinuous in the $\WY$ metric. Since our purpose is to use Ascoli--Arzelà, we will construct equicontinuous versions of $\bmpi^n$ by using that $(\Pi(\mu),\WY)$ is geodesic (Remark \ref{rem:WYGeodesic}).	
	For every $n \in 2\N$ we introduce the trajectory $\R_+ \ni t \mapsto \tilde{\bmpi}^n_t$ (and the corresponding measure in $\measp(\R_+ \times X \times Y)$) by setting
	$$\tilde{\bmpi}^n_t \assign \bmpi^n_t \qquad \tn{for} \qquad t=k/n,\,k \in \N$$
	and on every interval $[k/n,(k+1)/n]$, $k \in \N$, we set $\tilde{\bmpi}^n_t$ to a constant speed geodesic with respect to $\WY$ between $\bmpi^n_{k/n}$ and $\bmpi^n_{(k+1)/n}$.
	\eqref{eq:almost_equicontinuity} then implies that the curve $t \mapsto \tilde{\bmpi}^n_t$ is Lipschitz with Lipschitz constant $C$ on each interval $[k/n,(k+1)/n]$ and thus on $\R_+$. Consequently, the family $(t \mapsto \tilde{\bmpi}^n_t)_n$ is equi-Lipschitz and thus equicontinuous.

	In addition, the construction also implies for all $t \in \R_+$ that
	\begin{align}
	\label{eq:distance_rho_pi}
	\WY(\tilde{\bmpi}^n_t,\bmpi^n_t) = \WY(\tilde{\bmpi}^n_t,\bmpi^n_{k/n})= \WY(\tilde{\bmpi}^n_t,\tilde{\bmpi}^n_{k/n}) \leq C/n \qquad \tn{where} \qquad k = \lfloor t \cdot n\rfloor.
	\end{align}
	So any cluster point of $(\bmpi^n_t)_n$ is also a cluster point of $(\tilde{\bmpi}^n_t)_n$ and thus precompactness of the former implies precompactness of the latter.

	We are now in the position to invoke an Ascoli--Arzelà theorem (in particular \cite[Chapter 7, Theorem 17]{kelley1975general}) to conclude that $(t \mapsto \tilde{\bmpi}^n_t)_n$ converges (up to subsequences) to some continuous $t \mapsto \bmpi_t$, uniformly in $\WY$ on any compact $[0,T]$. By \eqref{eq:distance_rho_pi}, the same subsequence of $\bmpi^n$ also converges uniformly in $\WY$ to $\bmpi$.
\end{proof}

We conclude the section by stating our main convergence result, which is given under the assumption of bounded basic cell oscillations and whose proof follows combining previous results as illustrated in the flow chart of Figure \ref{fig:FlowchartPartialResults}.
\begin{proposition}
	\label{prop:WTVbGivesWYConditions}
	Assume that $\mu \ll \Lebesgue$, with density bounded away from $0$ and $+\infty$ and of bounded variation (Assumption \ref{assumption:RegularityMu}).
	Assume the discrete iterates $\pi^{n,k}$ have uniformly bounded basic cell variation, i.e.
	\begin{align}
	\sup_{n \in 2\N,k \in \N} \WTVb(\pi^{n,k}) < +\infty.
	\end{align}
	Let $(\bmpi^n)_n$ be the discrete trajectories defined in Definition \ref{def:DiscreteTrajectories}.
	Then, up to a subsequence, there exists a trajectory $\bmpi \in \measp(\R_+ \times X \times Y)$ with $\bmpi_t \in \Pi(\mu,\nu)$ for all $t \in \R_+$, such that $\bmpi^{n}_t$ converges to $\bmpi_t$ in $\WY$ uniformly in $t \in [0,T]$, for all $T > 0$.
\end{proposition}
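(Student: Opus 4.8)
The plan is to verify, in turn, the two hypotheses of Proposition~\ref{prop:WYUniformConvergence} --- $\WY$-precompactness of the set $\{\bmpi^n_t \mid n \in 2\N\}$ for every fixed $t$, and the almost-equicontinuity \eqref{eq:almost_equicontinuity} --- and then invoke that proposition directly. This is precisely the chain of implications collected in Figure~\ref{fig:FlowchartPartialResults}. Throughout, write $M \assign \sup_{n \in 2\N,\,k \in \N} \WTVb(\pi^{n,k})$, which is finite by hypothesis, and note that Assumption~\ref{assumption:RegularityMu} supplies constants $0 < \muLow \le \muDens \le \muHi < \infty$ with $\TV(\muDens) < \infty$, so the regularity requirements of both Proposition~\ref{prop:WTVb_implies_WTV} and Proposition~\ref{prop:WTVb_implies_WY} are satisfied.

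First I would pass from the basic-cell bound to a uniform bound on the composite-cell oscillations. By Proposition~\ref{prop:WTVb_implies_WTV}, for all $n \in 2\N$ and $k \in \N$ one has $\WTV(\bmpi^n_{k/n}) \le \tfrac12 M + d\,\diam Y\big[2^{2d} + 2^d \tfrac{\sqrt d}{\muLow}\TV(\muDens)\big]$, which is independent of $n$ and $k$. Since by Definition~\ref{def:DiscreteTrajectories} the curve $t \mapsto \bmpi^n_t$ is piecewise constant with $\bmpi^n_t = \bmpi^n_{\lfloor nt\rfloor/n}$, the same bound holds for $\WTV(\bmpi^n_t)$ at every $t \in \R_+$. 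Fixing $t$, the sequence $(\bmpi^n_t)_n$ thus has uniformly bounded $\WTV$, and Proposition~\ref{prop:WYPrecompacness} shows that $\{\bmpi^n_t \mid n \in 2\N\}$ is precompact in $(\Pi(\mu),\WY)$.

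Next I would obtain the time-regularity. By Proposition~\ref{prop:WTVb_implies_WY} there is a constant $C < \infty$, depending only on $d$, $\muLow$, $\muHi$, $\diam Y$, $\TV(\muDens)$ and $M$, hence independent of $n$ and $k$, with $\WY(\bmpi^n_{k/n},\bmpi^n_{(k+1)/n}) \le C/n$ for all $n \in 2\N$, $k \in \N$; this is exactly \eqref{eq:almost_equicontinuity}. Both hypotheses of Proposition~\ref{prop:WYUniformConvergence} now being in place, that proposition produces a subsequence $(\bmpi^{n_l})_l$ and a limit trajectory $\bmpi \in \measp(\R_+\times X\times Y)$ with $\bmpi_t \in \Pi(\mu,\nu)$ for all $t$ --- the membership in $\Pi(\mu,\nu)$ being inherited from $\bmpi^n_t \in \Pi(\mu^n,\nu^n)$, since $\WY$-convergence forces weak* convergence (Remark~\ref{rem:WYInterpretation}) and $\nu^n \to \nu$ weak* --- such that $\bmpi^{n_l}_t \to \bmpi_t$ in $\WY$ uniformly on $[0,T]$ for every $T > 0$. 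This is the assertion.

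No substantive obstacle remains at this level: all the analytic content --- the combinatorial bookkeeping relating basic-cell and composite-cell oscillations, the compactness theorem for $\WoY$-valued functions of bounded variation from \cite{AmbrosioMetricBVFunctions1990}, and the Ascoli--Arzel\`a argument on geodesic interpolants in $(\Pi(\mu),\WY)$ --- has already been carried out in Propositions~\ref{prop:WTVb_implies_WTV}--\ref{prop:WYUniformConvergence}, and the present proof merely composes them. The real difficulty lies in the hypothesis: the uniform basic-cell bound $\sup_{n,k}\WTVb(\pi^{n,k}) < \infty$ is assumed here, and whether it holds in general --- it is established in this paper only for the one-dimensional case with $\mu = \Lebesgue \restr [0,1]$ and $\veps^n = 0$ --- is the open regularity question of Section~\ref{sec:OscillationsBound}.
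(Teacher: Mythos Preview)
Your proposal is correct and follows exactly the approach the paper takes: the paper's own proof is simply the sentence ``combining previous results as illustrated in the flow chart of Figure~\ref{fig:FlowchartPartialResults}'', and you have spelled out that chain (Proposition~\ref{prop:WTVb_implies_WTV} $\to$ Proposition~\ref{prop:WYPrecompacness} for precompactness, Proposition~\ref{prop:WTVb_implies_WY} for almost-equicontinuity, then Proposition~\ref{prop:WYUniformConvergence}) in full. One minor quibble: by Definition~\ref{def:DiscreteTrajectories} the discrete trajectory $\bmpi^n_t$ is glued along $\mu$, not $\mu^n$, so $\bmpi^n_t \in \Pi(\mu,\nu^n)$ rather than $\Pi(\mu^n,\nu^n)$; but this is immaterial since the conclusion $\bmpi_t \in \Pi(\mu,\nu)$ is already part of the statement of Proposition~\ref{prop:WYUniformConvergence} and needs no separate justification.
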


\subsection{Oscillation bound in one dimension, with Lebesgue-marginal, without regularization}
\label{sec:OscillationsBound}

\begin{proposition}\label{prop:OscillationsBoundPartialResult}
	Let $d=1$, $c(x,y)\assign h(x-y)$ for $h$ strictly convex, $\mu^n=\mu=\Lebesgue \restr [0,1]$, $\veps^n=0$, $\piInitN=\piInit$. 
	Then for $n \in 2\N$, $k \in \N$,
	\begin{align*}
		\WTVb(\iter{\pi}{n,k}) \leq 2\WTV(\piInit) + 4\,\diam Y.
	\end{align*}
\end{proposition}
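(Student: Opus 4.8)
The plan is to exploit the rigid structure of this one-dimensional setting. Since $c(x,y)=h(x-y)$ with $h$ strictly convex and $\mu^n=\Lebesgue\restr[0,1]$ is absolutely continuous, every cell subproblem solved in line~\ref{line:CellProblem} of Algorithm~\ref{alg:DomDec} has a \emph{unique} optimal plan, namely the monotone (nondecreasing) transport from $\Lebesgue\restr X^n_J$ onto the prescribed partial $Y$-marginal. Hence for every $k\ge 1$ the iterate $\iter{\pi}{n,k}$ is concentrated on the graph of a map $[0,1]\to Y$ which is monotone on each composite cell of $\partnk$. Because every interior composite cell $J$ consists of two basic cells of equal $\mu$-mass, monotonicity forces the normalized basic-cell marginal $\iter{\rho_i}{n,k}$ to be the renormalized restriction of $\iter{\rho_J}{n,k}$ to its \emph{lower} half of mass (in $Y$) when $i$ is the left child of $J$, and to its \emph{upper} half when $i$ is the right child. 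I would record this ``halving'' description as a first lemma; at the boundary singleton cells of $\partBn$ there is nothing to split.

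The second ingredient is a one-line identity for the $1$-Wasserstein distance on $\R$: writing $\rho^\flat$ and $\rho^\sharp$ for the lower and upper halves (each renormalized to unit mass) of a probability measure $\rho$ on $Y$, one has $\WoY(\rho^\flat,\sigma^\flat)+\WoY(\rho^\sharp,\sigma^\sharp)=2\,\WoY(\rho,\sigma)$ for all probability measures $\rho,\sigma$ on $Y$. This follows from the quantile representation $\WoY(\rho,\sigma)=\int_0^1 |G_\rho(s)-G_\sigma(s)|\,\diff s$, since $G_{\rho^\flat}(s)=G_\rho(s/2)$ and $G_{\rho^\sharp}(s)=G_\rho(\tfrac{1+s}{2})$, after the change of variables $u=s/2$ resp.\ $u=\tfrac{1+s}{2}$. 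Together with the convexity inequality $\WoY(\tfrac12\rho_1+\tfrac12\rho_2,\tfrac12\sigma_1+\tfrac12\sigma_2)\le\tfrac12\WoY(\rho_1,\sigma_1)+\tfrac12\WoY(\rho_2,\sigma_2)$, these are essentially the only analytic facts needed.

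Combining these, the core of the argument is to show that $\WTVb(\iter{\pi}{n,k})$ is comparable to twice the composite-cell Wasserstein variation $\WTV(\bmpi^n_{k/n})$ and is essentially nonincreasing in $k$. For $d=1$ the ``skip-one'' sum defining $\WTVb(\iter{\pi}{n,k})$ pairs the left child of a composite cell with the left child of the adjacent one, and the right child with the right child; so by the halving lemma and the identity above the contribution of each pair of adjacent composite cells $J,J'$ collapses to $2\,\WoY(\iter{\rho_J}{n,k},\iter{\rho_{J'}}{n,k})$, whence $\WTVb(\iter{\pi}{n,k})=2\,\WTV(\bmpi^n_{k/n})$ up to the boundary cells. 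Passing from iteration $k$ to $k+1$ one uses: that the partial composite marginals are preserved, $\iter{\rho_J}{n,k+1}=\iter{\rho_J}{n,k}$ for $J\in\partnkk$ (eq.~\eqref{eq:CompCellMarginalPreservation}); that each cell of $\partnkk$ is the union of the right-child cell of one $\partnk$-cell with the left-child cell of the neighbouring one, so that its marginal is the $\tfrac12/\tfrac12$ mixture of an upper half and a lower half of two consecutive $\partnk$-cell marginals; and then the convexity inequality followed once more by the identity. The interior terms telescope and leave $\WTVb(\iter{\pi}{n,k+1})\le\WTVb(\iter{\pi}{n,k})$ plus boundary corrections. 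For the base case one bounds $\WTVb(\iter{\pi}{n,1})\le 2\,\WTV(\bmpi^n_{1/n})\le 2\,\WTV(\piInit)$, using that the composite-cell marginals at $k=1$ coincide with the local $\mu$-averages of $x\mapsto (\piInit)_x$ and that such averaging does not increase the metric total variation of the disintegration (Remark~\ref{remark:WTVIsTotalVariation}), and separately $\WTVb(\piInit)\le 2\,\WTV(\piInit)$ by a triangle inequality over skip-one pairs. Chaining the per-iteration estimates then yields the stated bound.

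The main obstacle is precisely the boundary of $[0,1]$: because the staggered partition $\partBn$ has singleton composite cells at $x=0$ and $x=1$ that do not decompose into equal halves, both the identity $\WTVb(\iter{\pi}{n,k})=2\,\WTV(\bmpi^n_{k/n})$ and the monotone-decrease step acquire correction terms of order $\diam Y$ at each end and each iteration, and a careless accumulation would produce a bound growing like $k\cdot\diam Y$. The resolution requires a careful accounting of these two boundary cells: the half-steps passing from an even to an odd partition turn out to be exactly nonincreasing, while in the half-steps passing from an odd to an even partition the boundary gain must be offset against the strictly positive interior terms that are discarded near the boundary when the telescoping sum is formed. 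Making this bookkeeping precise, so that the \emph{total} boundary contribution is a fixed quantity of size at most $4\,\diam Y$ rather than one that accumulates, is the delicate point of the proof.
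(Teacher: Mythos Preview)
Your overall strategy is correct and matches the paper's proof closely: the halving description via monotone rearrangement, the quantile identity (which is exactly the equality case of the paper's inequality $\WoR(\mu_1+\mu_2,\nu_1+\nu_2)\le\WoR(\mu_1,\nu_1)+\WoR(\mu_2,\nu_2)$), the identification $\WTVb(\iter{\pi}{n,k})=2\,\WTV(\bmpi^n_{k/n})$ up to boundary terms, the exact nonincrease for the $A$-half-step, and the base-case bound $\WTVb(\piInit)\le 2\,\WTV(\piInit)$ are all as in the paper.

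The genuine gap is in your treatment of the boundary. You say the $B$-step gain is ``offset against the strictly positive interior terms that are discarded near the boundary when the telescoping sum is formed.'' This is not enough, and is not what actually happens. After the triangle inequality the discarded interior term only partially cancels the boundary gain; what remains at each $B$-step is a residual of the form $\WoY(\iter{\rho_{J_1}}{n,k},\iter{\rho_{J_1}}{n,k-1})$ (and symmetrically at the right end), and these residuals \emph{a priori} sum to something of order $k\cdot\diam Y$. The paper's resolution is a separate argument you have not identified: using the cumulative distribution functions $F^{n,k}_1$, one shows that at the left boundary mass only moves ``down'' in $Y$ across $B$-iterations, i.e.\ $F^{n,k}_1\ge F^{n,k-1}_1$ pointwise, so that in the $L^1$-representation $\WoY=\|F-G\|_1$ the sum $\sum_{\ell\le k}\WoY(\iter{\rho_{J_1}}{n,\ell},\iter{\rho_{J_1}}{n,\ell-1})$ is itself telescopic and collapses to $\WoY(\iter{\rho_{J_1}}{n,k},\iter{\rho_{J_1}}{n,0})\le\diam Y$. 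Without this monotone-CDF observation your bookkeeping cannot close, and this is precisely the ``delicate point'' you flag but do not resolve.
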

The proof is given in Appendix \ref{app:ProoWTVbBound}.
It is based on the monotonicity of unregularized optimal transport in one dimension.
We show that contributions to $\WTVb$ in the bulk of $X$ are locally non-increasing. Increase can only be generated at the boundaries and is controlled by $\diam Y$.
Numerical evidence suggests that more generally, if $\mu \neq \Lebesgue$ (but otherwise still the setting of Proposition \ref{prop:OscillationsBoundPartialResult}), increase is generated by variations in the density of $\mu$, and $\TV(\muDens)<\infty$ would probably be a necessary condition for a generalization. (The boundaries of $X$ can be interpreted as variations of $\muDens$, where the density drops to zero.)

The monotonicity argument breaks down in higher dimensions and in the presence of entropic regularization.
We observe numerically that in these cases the local non-increasing property does no longer hold exactly. But in the overwhelming majority of numerical examples it seems clear that $\WTVb(\pi^{n,k})$ is uniformly bounded.
In fact it was non-trivial to find a conjectured counter-example. One such example is presented in Section \ref{sec:WTVExample}.

Generally, while regularization does disrupt the strict monotonicity of optimal transport, we observed numerically that it seems to have a regularizing effect on $\WTVb$.
In case of the conjectured counter-example $\WTVb$ is much lower in the presence of entropic regularization but based on numerical evidence it is still unclear whether it is bounded or not.

\section{Convergence of momenta and dynamics}
\label{sec:Convergence}

So far we have treated the convergence of discrete trajectories $(\bmpi^n)_n$ in the vertical transport metric $\WY$.
Now we focus on the convergence of the momenta. The discrete momenta $(\bmomega^n)_n$ are constructed from basic cell marginals, see \eqref{eq:omega_discrete}, that are obtained by solving the cell-wise transport problems in Algorithm \ref{alg:DomDec}, line \ref{line:CellProblem}. They approximately describe the temporal evolution of the discrete trajectories via a horizontal continuity equation \eqref{eq:DiscreteCE}.
We will now show that there exists a limit momentum $\bmomega$, which is constructed from the solutions to fiber-wise optimal transport problems (entropic regularization strength given by $\eta = \lim_{n \to \infty} n \cdot \veps^n$), which are the $\Gamma$-limit of the (re-scaled) discrete cell problems. Convergence of $(\bmpi^n)_n$ in $\WY$ is required to obtain meaningful fiber-wise convergence of the problems, weak* convergence would not be sufficient. Finally, the limit trajectory $\bmpi$ and momentum $\bmomega$ solve the horizontal continuity equation.
We can think of this limit as a continuum limit of the domain decomposition algorithm.
	
This section is structured as follows: In Section \ref{sec:ProblemRescaling} we introduce the re-scaled versions of the domain decomposition cell problems which have a meaningful $\Gamma$-limit.
In Section \ref{sec:GluedProblems} we state the supposed limit functional and prepare the proof.
Liminf and limsup conditions are provided in Sections \ref{sec:Liminf} and \ref{sec:Limsup}.
The continuity equation is addressed in Section \ref{sec:ContinuityEquation}.

\begin{assumption}\label{assumption:PitConvergence}
	Assume that there is a trajectory $\bmpi=\Lebesgue \otimes \bmpi_t \in \measp(\R_+ \times X \times Y)$, $\bmpi_t \in \Pi(\mu,\nu)$ for a.e.~$t \in \R_+$, such that, up to extraction of a subsequence $\nset \subset 2\N$, the discrete trajectories $(\bmpi^n)_{n}$, defined in \ref{def:DiscreteTrajectories}, converge for a.e.~$t \in \R_+$ to $\bmpi$ in $\WY$. More precisely,
	\begin{equation}
	\lim_{n\in \nset,\ n\rightarrow\infty}\WY(\bmpi^n_t, \bmpi_t) = 0,
	\qquad
	\tn{for $\Lebesgue$-a.e.~$t$.}
	\end{equation}
	Recall that this implies $\bmpi^n_t \rightweaks \bmpi_t$ for a.e.~$t$ (Remark \ref{rem:WYInterpretation}).
\end{assumption}

\begin{remark}
	If Assumption \ref{assumption:RegularityMu} holds and $\sup_{n,k} \WTVb(\pi^{n,k})<\infty$ then Assumption \ref{assumption:PitConvergence} holds by virtue of Proposition \ref{prop:WTVbGivesWYConditions} (the latter implies a slightly stronger notion of convergence).
\end{remark}

\subsection{Re-scaled discrete cell problems}
\label{sec:ProblemRescaling}
Recall that at resolution $n\in2\N$, during iteration $k\in \N$, in a composite cell $J \in \partnk$ we need to solve the following regularized optimal transport problem (Algorithm \ref{alg:DomDec}, line \ref{line:CellProblem}):
\begin{align}
	\label{eq:ExampleCellProblem}
	\inf\left\{ \int_{X_J^n \times Y} c^n\,\diff \pi + \veps^n \cdot \KL(\pi|\mu^n_J \otimes \nu^n)
	\,\middle|\, \pi \in \Pi(\mu^n_J,\nu^{n,k}_J) \right\}
\end{align}
For the limiting procedure we will map $X^n_J$ to a reference hyper-cube $Z=[-1,1]^d$, normalize the cell marginals $\mu^n_J$ and $\nu^{n,k}_J$ (the latter will then become $\bmpi^n_{k/n,x}$ for $x \in X^n_J$). We will subtract some constant contributions from the transport and regularization terms and re-scale the objective such that the dominating contribution is finite in the limit (the proper scaling will depend on whether $\eta$ is finite).
In addition, as $n \to \infty$, the cells $X^n_J$ become increasingly finer, we thus expect that we can replace the cost function $c^n$ by a linear expansion along $x$. These transformations are implemented in the two following definitions, yielding the functional \eqref{eq:Fn}. Equivalence with \eqref{eq:ExampleCellProblem} is then established in Proposition \ref{prop:DomDecGeneratesMinimizerFn}.

\begin{definition}\label{def:reference_cell}
	We define the \textit{scaled composite reference cell} as $Z = [-1,1]^d$. Let $n \in \nset$.
	\begin{itemize}
		\item For a composite cell $J \in \partnk$, $k > 0$, the \textit{scaling map} of cell $J$ is given by
		\[
		S_J^n : X_J^n \to Z, \quad x \mapsto n(x - x_J^n).
		\]
		\item For $J \in \partnk$, $k > 0$, we define the \textit{scaled $X$-marginal} as
		\[
		\sigma_J^n \assign (S_J^n)_\sharp \mu_J^n / m_J^n \in \measp(Z).
		\]
		\item For $t > 0$, $x \in X$, let $k = \floor{tn}$ and let $J \in \partnk$ be the $\mu$-a.e.~unique composite cell in $\partnk$ such that $x \in X_J^n$. We will write
		\begin{align*}
		J_{t,x}^n & \assign J, &
		\xJn & \assign x_J^n, &
		S_{t,x}^n & \assign S_J^n, &
		\sigma_{t,x}^n & \assign \sigma_{J}^n.
		\end{align*}
		This will allow us to reference more easily between the continuum limit problem in fiber $x \in X$ and its corresponding family of discrete problems at finite scale $n$.
	\end{itemize}
\end{definition}

\begin{definition}[Discrete fiber problem]
	For each $n\in 2\N$, $t\in \R_+$ and $x\in X$, we define the following functional over $\meas_1(Z\times Y)$:
	\begin{align}\label{eq:Fn}
		F^n_{t,x}(\vecnu) & \assign \begin{cases}
			C^n_{t,x}(\vecnu) & \tn{if  $\vecnu\in\Pi(\sigma_{t,x}^n, \bmpi_{t,x}^n)$,} \\
			+ \infty & \tn{otherwise,}
		\end{cases}
	\end{align}
	where for $\eta<\infty$,
	\begin{align}
		C^n_{t,x}(\vecnu)
		& \assign
		\int_{Z\times Y} \left[\inner{\nabla_X c(\xJn, y)}{z}
		+
		\Delta^n(\xJn,z,y)  \right] \diff \vecnu(z,y)
		+
		n\,\veps^n \cdot \KL(\vecnu | \sigma_{t,x}^n\otimes \bmpi_{t,x}^n),
		\label{eq:Cn}
		\\
		\intertext{and if $\eta =\infty$,}
		C^n_{t,x}(\vecnu)
		& \assign
		\KL(\vecnu | \sigma_{t,x}^n\otimes \bmpi_{t,x}^n)
		+
		\frac{1}{n\,\veps^n}
		\int_{Z\times Y}
		\left[\inner{\nabla_X c(\xJn, y)}{z} + 	\Delta^n(\xJn,z,y)  \right] \diff \vecnu(z,y),
		\label{eq:CnInfty}
		\end{align}
		with
		\begin{equation}
		\Delta^n(x,z,y)
		\assign
		n \cdot [ c^n(x + z/n, y) - c(x, y)
		-
		\inner{\nabla_X c(x, y)}{z/n} ].
		\label{eq:Deltan}
		\end{equation}	
\end{definition}
We now establish equivalence between \eqref{eq:ExampleCellProblem} and minimizing \eqref{eq:Fn}.
\begin{proposition}[Domain decomposition algorithm generates a minimizer of $F_{t,x}^n$]
	\label{prop:DomDecGeneratesMinimizerFn}
	Let $n \in 2\N$, $t > 0$, $x \in X$, $k = \lfloor t\,n \rfloor$ and $J = J_{t,x}^n$.
	Then problem \eqref{eq:ExampleCellProblem} is equivalent to minimizing $F^n_{t,x}$, \eqref{eq:Fn}, over $\measp(Z \times Y)$ in the sense that the latter is obtained from the former by a coordinate transformation, a positive re-scaling and subtraction of constant terms. The minimizers $\pi^{n,k}_J$ of \eqref{eq:ExampleCellProblem} are in one-to-one correspondence with minimizers $\bmnu^n_{t,x}\in \measp(Z \times Y)$ of \eqref{eq:Fn} via the bijective transformation
	\begin{align}\label{eq:MinimizerFnDomDec}
		\bmnu^n_{t,x}
		\assign
		(S_{J}^n, \id)_{\sharp} \pi^{n,k}_{J}/m_{J}^n.
	\end{align}
\end{proposition}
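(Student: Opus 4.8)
The plan is to derive $F^n_{t,x}$ from the cell problem \eqref{eq:ExampleCellProblem} by an explicit change of variables followed by careful bookkeeping of the $\pi$-independent constants; each individual step is elementary, and essentially all the content lies in that bookkeeping.

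First I would fix $n$, $t>0$, $x\in X$, set $k=\floor{tn}$ and $J=J^n_{t,x}$, recall that $\xJn=x^n_J$ (Definition~\ref{def:reference_cell}), and record the map underlying the correspondence. Since $S^n_J$ is an affine bijection of $X^n_J$ onto a sub-box $S^n_J(X^n_J)\subseteq Z$ (all of $Z$ for interior composite cells, a proper sub-box for boundary $B$-cells, which is harmless because $\sigma^n_{t,x}$ is supported on it), the map $(S^n_J,\id)$ is bimeasurable on $X^n_J\times Y$, so $\pi\mapsto\vecnu\assign(S^n_J,\id)_\sharp\pi/m^n_J$ is a bijection between the non-negative measures on $X^n_J\times Y$ and those on $S^n_J(X^n_J)\times Y$ (extended by zero to $Z\times Y$), with inverse induced by $((S^n_J)^{-1},\id)$. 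Since marginals transform covariantly under pushforward and $\bmpi^n_{t,x}=\nu^{n,k}_J/m^n_J$ by \eqref{eq:pi_discrete}, this restricts to a bijection between $\Pi(\mu^n_J,\nu^{n,k}_J)$ and $\Pi(\sigma^n_{t,x},\bmpi^n_{t,x})$; this yields the claimed correspondence of feasible sets, and in particular formula \eqref{eq:MinimizerFnDomDec}.

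Next I would transform the two parts of the objective. For the transport term, the substitution $x=\xJn+z/n$ gives $\int_{X^n_J\times Y}c^n\,\diff\pi=m^n_J\int_{Z\times Y}c^n(\xJn+z/n,y)\,\diff\vecnu$; inserting the identity $c^n(\xJn+z/n,y)=c(\xJn,y)+\tfrac1n\inner{\nabla_X c(\xJn,y)}{z}+\tfrac1n\Delta^n(\xJn,z,y)$, which is a rearrangement of \eqref{eq:Deltan}, splits this into the constant $m^n_J\int_Y c(\xJn,y)\,\diff\bmpi^n_{t,x}(y)$ (it depends on $\vecnu$ only through its prescribed $Y$-marginal) plus $\tfrac{m^n_J}{n}\int_{Z\times Y}\bigl[\inner{\nabla_X c(\xJn,y)}{z}+\Delta^n(\xJn,z,y)\bigr]\,\diff\vecnu$. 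For the entropy term I would use three facts about $\KL$: invariance under the bimeasurable map $(S^n_J,\id)$; its transformation by an explicit scalar term under simultaneous rescaling of both arguments by positive constants; and the chain-rule identity $\KL(\pi|\mu^n_J\otimes\nu^n)=\KL(\pi|\mu^n_J\otimes\nu^{n,k}_J)+\KL(\nu^{n,k}_J|\nu^n)-(1-m^n_J)^2$, valid for every feasible $\pi$, whose last two summands are constant on $\Pi(\mu^n_J,\nu^{n,k}_J)$ (and the whole entropy discussion is vacuous when $\veps^n=0$). Combining these, $\veps^n\KL(\pi|\mu^n_J\otimes\nu^n)=\veps^n m^n_J\,\KL(\vecnu|\sigma^n_{t,x}\otimes\bmpi^n_{t,x})+\tn{const.}$

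Finally I would assemble: up to a $\pi$-independent constant the cell objective equals $\tfrac{m^n_J}{n}\int_{Z\times Y}\bigl[\inner{\nabla_X c(\xJn,y)}{z}+\Delta^n(\xJn,z,y)\bigr]\,\diff\vecnu+\veps^n m^n_J\,\KL(\vecnu|\sigma^n_{t,x}\otimes\bmpi^n_{t,x})$, so that multiplying by the positive factor $n/m^n_J$ when $\eta<\infty$ turns it into exactly $C^n_{t,x}(\vecnu)$ of \eqref{eq:Cn}, while when $\eta=\infty$ (hence $\veps^n>0$ for all large $n$) multiplying instead by $1/(m^n_J\veps^n)$ turns it into \eqref{eq:CnInfty}. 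Thus \eqref{eq:ExampleCellProblem} and minimizing $F^n_{t,x}$ differ only by a coordinate change, a positive rescaling and subtraction of constants, so their minimizers are in bijection via \eqref{eq:MinimizerFnDomDec}; in particular the domain-decomposition iterate $\pi^{n,k}_J$ is sent to a minimizer $\bmnu^n_{t,x}$ of $F^n_{t,x}$. The one genuinely delicate point, which I would write out in full, is the constant-tracking in the entropy term — specifically the passage from the reference measure $\nu^n$ used by Algorithm~\ref{alg:DomDec} to the normalized partial marginal $\bmpi^n_{t,x}$ appearing in the limit functional; everything else is a routine change of variables.
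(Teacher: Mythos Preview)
Your proposal is correct and follows essentially the same approach as the paper: a change of variables via $(S^n_J,\id)$, a positive rescaling, and subtraction of constants from the transport and entropy parts. The only cosmetic difference is the order of bookkeeping in the entropy term: the paper first normalizes to probability measures (using $1$-homogeneity of $\KL$) and then applies the clean chain rule $\KL(\lambda|\sigma^n_J\otimes\nu^n)=\KL(\lambda|\sigma^n_J\otimes\bmpi^n_{t,x})+\KL(\bmpi^n_{t,x}|\nu^n)$, whereas you apply the chain rule at the level of the non-normalized measures (hence the extra $-(1-m^n_J)^2$) and track the rescaling constants explicitly afterwards; both routes are valid and lead to the same conclusion.
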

Note that $m^n_J>0$ is a consequence of the fundamental property of basic partitions that each cell carries non-zero mass (Definition \ref{def:Partitions}).

\begin{proof}
We subsequently apply equivalent transformations to \eqref{eq:ExampleCellProblem} to turn it into \eqref{eq:Fn}, while keeping track of the corresponding transformation of minimizers. We start with the case $\eta<\infty$.

First, we multiply the objective of \eqref{eq:ExampleCellProblem} by $n$ and re-scale the mass of $\pi$ by $1/m^n_J$ such that it becomes a probability measure. We obtain that \eqref{eq:ExampleCellProblem} is equivalent to
\begin{align}
	\label{eq:ExampleCellProblemB}
	\inf\left\{ \int_{X_J^n \times Y} (n \cdot c^n)\,\diff \pi + n \cdot \veps^n \cdot \KL(\pi|\tfrac{\mu^n_J}{m^n_J} \otimes \nu^n)
	\,\middle|\, \pi \in \Pi(\tfrac{\mu^n_J}{m^n_J},\bmpi^n_{t,x}) \right\},
\end{align}
where we used that $\KL(\cdot|\cdot)$ is positively 1-homogeneous under joint re-scaling of both arguments and that $\nu^{n,k}_J/m^n_J=\bmpi^n_{t,x}$ by \eqref{eq:pi_discrete} (and the relation of $t$, $x$, $n$, $k$ and $J$). Minimizers of \eqref{eq:ExampleCellProblemB} are obtained from minimizers $\pi$ of \eqref{eq:ExampleCellProblem} as $\pi/m^n_J$.

Second, we transform the cell $X^n_J$ to the reference cell $Z$ via the map $S^n_J$.
For the transport term in \eqref{eq:ExampleCellProblemB} we find
\begin{align*}
	\int_{X_J^n \times Y} (n \cdot c^n)\,\diff \pi = \int_{Z \times Y} (n \cdot c^n) \circ (S^n_J,\id)^{-1}\,\diff (S^n_J,\id)_\sharp \pi.
\end{align*}
Using that $(S^n_J,\id)$ is a homeomorphism one gets that
\begin{align*}
	\RadNikD{(S^n_J,\id)_\sharp \pi}{\left((S^n_J)_\sharp \tfrac{\mu^n_J}{m^n_J} \otimes \bmpi^n_{t,x}\right)} \circ (S^n_J,\id) = \RadNikD{\pi}{\left(\tfrac{\mu^n_J}{m^n_J} \otimes \bmpi^n_{t,x}\right)}
	\qquad \tn{$\left(\tfrac{\mu^n_J}{m^n_J} \otimes \bmpi^n_{t,x}\right)$-almost everywhere.}
\end{align*}
With this we can transform the entropy term of \eqref{eq:ExampleCellProblemB} to
\begin{align*}
	\KL(\pi|\tfrac{\mu^n_J}{m^n_J} \otimes \nu^n) = \KL((S^n_J,\id)_\sharp\pi|(S^n_J)_\sharp\tfrac{\mu^n_J}{m^n_J} \otimes \nu^n)
	= \KL((S^n_J,\id)_\sharp\pi|\sigma^n_J \otimes \nu^n).
\end{align*}
Finally, using once more that $(S^n_J,\id)$ is a homeomorphism one finds
\begin{align*}
	\left[ \pi \in \Pi(\tfrac{\mu^n_J}{m^n_J},\bmpi^n_{t,x}) \right] \qquad \Leftrightarrow \qquad
	\left[ (S^n_J,\id)_\sharp \pi \in \Pi(\sigma^n_J,\bmpi^n_{t,x}) \right].
\end{align*}
Consequently, \eqref{eq:ExampleCellProblemB} is equivalent to
\begin{align}
	\label{eq:ExampleCellProblemC}
	\inf\left\{ \int_{Z \times Y} (n \cdot c^n) \circ (S^n_J,\id)^{-1}\,\diff \pi + n \cdot \veps^n \cdot \KL(\pi|\sigma^n_J \otimes \nu^n)
	\,\middle|\, \pi \in \Pi(\sigma^n_J,\bmpi^n_{t,x}) \right\},
\end{align}
with minimizers to the latter obtained from minimizers $\pi$ of the former as $(S^n_J,\id)_\sharp \pi$. 

Third, we subtract a constant term from the transport part of \eqref{eq:ExampleCellProblemC}. Recalling \eqref{eq:Deltan} one quickly finds that
\begin{multline*}
	\int_{Z\times Y} \left[\inner{\nabla_X c(\xJn, y)}{z}
		+
		\Delta^n(\xJn,z,y)  \right] \diff \vecnu(z,y)
	= \\
	\int_{Z \times Y} (n \cdot c^n) \circ (S^n_J,\id)^{-1}\,\diff \lambda
	- \int_{Z \times Y} n \cdot c(\xJn,y) \diff \lambda(z,y)
\end{multline*}
where the left hand side is the transport term of \eqref{eq:Cn}, the first term on the right hand side is the transport term in \eqref{eq:ExampleCellProblemC} and the second term is constant for all $\lambda \in \Pi(\sigma^n_J,\bmpi^n_{t,x})$ and thus has no influence on the minimization.

Fourth, we subtract constant parts of the entropy term. Recall that $\bmpi^n_{t,x}=\nu^{n,k}_J/m^n_J$ with $\sum_{J' \in \partnk} \nu^{n,k}_{J'}=\nu^n$ and all partial marginals are non-negative. This implies that $\bmpi^n_{t,x} \ll \nu^n$ with the density $\RadNik{\bmpi^n_{t,x}}{\nu^n}$ lying in $[0,1/m^n_J]$ $\nu^n$-almost everywhere.
Consequently, if $\lambda \ll \sigma^n_J \otimes \bmpi^n_{t,x}$ then $\lambda \ll \sigma^n_J \otimes \nu^n$ and the densities satisfy
\begin{align*}
	\RadNikD{\lambda}{\sigma^n_J \otimes \nu^n}(z,y) = \RadNikD{\lambda}{\sigma^n_J \otimes \bmpi^n_{t,x}}(z,y)
	\cdot \RadNikD{\bmpi^n_{t,x}}{\nu^n}(y)
	\qquad \tn{$\sigma^n_J(z)$-$\nu^n(y)$-almost everywhere.}
\end{align*}
Since $\proj_Y \lambda = \bmpi^n_{t,x}$ for all feasible $\lambda \in \Pi(\sigma^n_J,\bmpi^n_{t,x})$, one also has [$\lambda \ll \sigma^n_J \otimes \nu^n$] $\Rightarrow$ [$\lambda \ll \sigma^n_J \otimes \bmpi^n_{t,x}$] and the same relation between the densities.
Using this one finds that when either of the two entropic terms in \eqref{eq:Cn} or \eqref{eq:ExampleCellProblemC} is finite, so is the other one where one has the relation
\begin{align*}
	\KL(\lambda|\sigma^n_J \otimes \nu^n) = \KL(\lambda | \sigma^n_J \otimes \bmpi^n_{t,x}) + \KL(\bmpi^n_{t,x}|\nu^n).
\end{align*}
Here, the second term on the right hand side is finite (due to the bound on the density $\RadNik{\bmpi^n_{t,x}}{\nu^n}$) and does not depend on $\lambda$. Hence, the entropic terms in \eqref{eq:Cn} and \eqref{eq:ExampleCellProblemC} are identical up to a constant and in conclusion, for $\eta<\infty$, both minimization problems are equivalent with the prescribed relation between minimizers.
The adaption to the case $\eta=\infty$ is trivial since \eqref{eq:CnInfty} is just a positive re-scaling of \eqref{eq:Cn}.
\end{proof}

\begin{remark}
	\label{rem:MomentumFieldFromLambda}
	For $\bmnu_{t,x}^n$ a minimizer of $F_{t,x}^n$ constructed as in \eqref{eq:MinimizerFnDomDec}, the discrete momentum field disintegration $\bmomega_{t,x}^n$ \eqref{eq:omega_discrete} can be written in terms of $\bmnu_{t,x}^n$ as:
	\begin{multline}
		(\bmomega^n_{t,x})_\ell
		=
		\proj_Y (\bmnu^n_{t,x}\restr Z^\ell_+ \times Y)
		-
		\proj_Y (\bmnu^n_{t,x}\restr  Z^\ell_- \times Y)
		\quad
		\text{for } \ell = 1,...,d,
		\quad
		\\
		\tn{with}\quad
		Z^\ell_\pm = \{z \in Z \mid \pm z_\ell > 0\}.
		\label{eq:MomentumFieldFromLambda}
	\end{multline}
	To see this, fix $J=J_{t,x}^n$. Then, for each $b\in \bitset \assign \{-1,+1\}^d$ define $Z_b \assign \{z\in Z \mid \sign(z_\ell) = b_\ell \tn{ for all } \ell = 1,...,d\}$. Further define $i(J,b)$ as the basic cell in composite cell $J$ whose center lies at $x_J^n + b/2n$. Then
	\begin{align}
		\proj_Y(\bmnu_{t,x}^n \restr Z_+^\ell \times Y)
		-
		\proj_Y(\bmnu_{t,x}^n \restr Z_-^\ell \times Y)
		&=
		\sum_{b\in \bitset}
		b_\ell \cdot
		\proj_Y(\bmnu_{t,x}^n \restr Z_b \times Y)
		=
		\sum_{b\in \bitset}
		b_\ell \cdot
		\iter{\rho_{i(J,b)}}{n,k},
	\end{align}
	which is precisely the $\ell$-th component in \eqref{eq:omega_discrete}.
\end{remark}

\subsection{Limit fiber problems and problem gluing}
\label{sec:GluedProblems}
In this section we state the expected limit of the discrete fiber problems \eqref{eq:Fn} as $n\to \infty$.
For this we need a sufficiently regular sequence of first marginals $(\sigma^n_{t,x})_n$, which will be dealt with in the first part of this section (Definition \ref{def:regular_discretization}, Assumption \ref{assumption:RegularDiscretization}, Lemma \ref{lem:RegularityMuN}).
Sufficient regularity of the second marginal constraint will be provided by the $\WY$-convergence of $(\bmpi^n)_n$ (Assumption \ref{assumption:PitConvergence}).
The conjectured limit problem is introduced in Definition \ref{def:LimitFiberProblem}.
Instead of proving $\Gamma$-convergence on the level of single fibers, we first `glue' the problems together (Definition \ref{def:GluedProblems}) along $t \in \R_+$ and $x \in X$ and then establish $\Gamma$-convergence for the glued problems (Proposition \ref{prop:GammaConvergence}).
This avoids issues with measurability and the selection of convergent subsequences.
Finally, from this we can deduce the convergence of the momenta to a suitable limit (Proposition \ref{prop:MomentumConvergence}).

\begin{definition}\label{def:regular_discretization}
	We say that $(\mu^n)_n$ is a \emph{regular discretization sequence for the $X$-marginal} if there is some $\sigma \in \prob(Z)$ such that for $\Lebesgue\otimes \mu$ almost all $(t,x) \in \R_+ \times X$ the sequence $(\sigma_{t,x}^n)_n$ converges weak* to $\sigma$ and $\sigma$ does not give mass to any coordinate axis, i.e.,	
	\begin{equation}
	\label{eq:SigmaNoMassCoordinateAxis}
	\sigma(\{z\in Z \mid z_\ell = 0\}) = 0
	\qquad
	\text{for } \ell = 1,...,d.
	\end{equation}
\end{definition}

\begin{assumption}\label{assumption:RegularDiscretization}
	From now on, we assume that $(\mu^n)_n$ is a regular discretization sequence.
\end{assumption}

\begin{remark}\label{remark:AdaptiveSigma}
	More generally one could consider the scenario where the limit of $(\sigma_{t,x}^n)_n$ depends on $x$, which could be useful for describing adaptive discretization schemes. For simplicity, this article is restricted to the constant case.
\end{remark}

\begin{lemma}[Regularity of discretization schemes]
\label{lem:RegularityMuN}
Prototypical choices for $\mu^n$ are:
\begin{enumerate}[label=(\roman*)]
	\item Collapsing all the mass within each basic cell to a Dirac at its center, $\mu^n=\sum_{i \in I^n} m_i^n \delta_{x_i^n}$. One obtains $\sigma=2^{-d} \sum_{b \in \{-1,+1\}^d} \delta_{b/2}$.
		\label{item:MuNDirac}
	\item Using the measure $\mu$ itself, without discretization, $\mu^n=\mu$. One obtains $\sigma=2^{-d}\cdot \Lebesgue \restr Z$.
		\label{item:MuNSelf}
	\item At every $n \in 2\N$ we collapse the mass of $\mu$ onto Diracs on a regular Cartesian grid such that every basic cell contains a sub-grid of $s^n$ points along each dimension, for a sequence $(s^n)_n$ in $\N$ with $ s^n \to +\infty$. One obtains $\sigma=2^{-d} \cdot \Lebesgue \restr Z$. A related refined discretization scheme was considered in \cite[Section 5.3]{BenamouPolarDomainDecomposition1994} where $n$ was kept fixed but $s^n$ was sent to $+\infty$ and it was shown that the sequence of fixed-points of the algorithm converges to the globally optimal solution.
		\label{item:MuDiracFine}
\end{enumerate}
	For $\mu \ll \Lebesgue$, the above schemes yield regular discretization sequences in the sense of Definition \ref{def:regular_discretization}.
\end{lemma}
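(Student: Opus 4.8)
The plan is to verify, for each of the three schemes separately, that $\sigma_{t,x}^n \rightweaks \sigma$ for $\mu$-a.e.\ $x \in X$ and every $t \in \R_+$, with $\sigma$ as listed; the exceptional set then has the form $\R_+ \times N_X$ with $\mu(N_X) = 0$, hence is $\Lebesgue \otimes \mu$-null, so Definition~\ref{def:regular_discretization} applies once \eqref{eq:SigmaNoMassCoordinateAxis} is checked. I would fix $x$ in the open cube $(0,1)^d$ that is moreover a Lebesgue point of $\muDens$ with $\muDens(x) > 0$; the remaining $x$ form a $\mu$-null set, since $\partial X$ and the non-Lebesgue points of $\muDens$ are $\Lebesgue$-null (hence $\mu$-null by $\mu \ll \Lebesgue$) and $\{\muDens = 0\}$ is $\mu$-null. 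For such $x$ and all $n$ large enough --- a threshold depending only on $\dist(x,\partial X)$, hence uniform in $t$ --- the composite cell $J = J_{t,x}^n$, whether in $\partAn$ or $\partBn$, is a full cube of side $2/n$ whose centre $x_J^n$ satisfies $|x_J^n - x| \le \sqrt{d}/n$; the map $S_J^n$ sends $X_J^n$ onto $Z = [-1,1]^d$ and the basic-cell centres $x_J^n + b/(2n)$, $b \in \{-1,+1\}^d$, to the points $b/2$. Throughout I would use the Lebesgue differentiation theorem in the form valid for sets shrinking regularly (but not necessarily centred) to $x$: if $(Q_n)_n$ are cubes with $x \in Q_n$ and side lengths tending to $0$, then $|Q_n|^{-1}\int_{Q_n}|\muDens(y) - \muDens(x)|\,\diff y \to 0$, because $Q_n \subseteq \overline{B(x,\diam Q_n)}$ and $|Q_n|/|B(x,\diam Q_n)|$ is a fixed positive dimensional constant.

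For scheme~\ref{item:MuNSelf} ($\mu^n = \mu$, so $m_J^n = \mu(X_J^n)$), the substitution $z = n(y - x_J^n)$ gives, for $\phi \in \cont(Z)$,
\[
	\int_Z \phi \,\diff \sigma_{t,x}^n
	=
	\frac{\int_Z \phi(z)\,\muDens(x_J^n + z/n)\,\diff z}{\int_Z \muDens(x_J^n + z/n)\,\diff z};
\]
applying the Lebesgue-point property to the cube $X_J^n$ and using $\|\phi\|_\infty < \infty$ shows the numerator tends to $\muDens(x)\int_Z \phi\,\diff z$ and the denominator to $2^d \muDens(x) > 0$, whence $\sigma_{t,x}^n \rightweaks 2^{-d}\,\Lebesgue\restr Z$. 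For scheme~\ref{item:MuNDirac} ($\mu^n = \sum_i m_i^n \delta_{x_i^n}$) one has $\sigma_{t,x}^n = \sum_{b \in \{-1,+1\}^d} \big(m_{i(J,b)}^n / m_J^n\big)\,\delta_{b/2}$; each basic cell $X_{i(J,b)}^n \subseteq X_J^n$ is a cube of side $1/n$, all of whose points lie within $2\sqrt{d}/n$ of $x$, so the same argument gives $n^d m_{i(J,b)}^n \to \muDens(x)$ and hence $m_{i(J,b)}^n/m_J^n \to 2^{-d}$, i.e.\ $\sigma_{t,x}^n \rightweaks 2^{-d}\sum_b \delta_{b/2}$. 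For scheme~\ref{item:MuDiracFine}, $\mu^n$ arises from $\mu$ by collapsing, inside each cell of a Cartesian sub-grid of mesh $1/(n s^n)$, the $\mu$-mass of that cell to its centre; this relocates each parcel of mass by at most $\tfrac{1}{2}\sqrt{d}/(n s^n)$ in $X$ and leaves the mass inside $X_J^n$ equal to $m_J^n = \mu(X_J^n)$. Denoting by $\tilde\sigma_{t,x}^n$ the scaled marginal that scheme~\ref{item:MuNSelf} would produce on the same cell, the transport plan realizing this relocation, pushed forward by the $n$-Lipschitz map $S_J^n$ and normalized by $m_J^n$, yields $W_Z(\sigma_{t,x}^n, \tilde\sigma_{t,x}^n) \le \tfrac{1}{2}\sqrt{d}/s^n \to 0$; combined with the convergence already proved for scheme~\ref{item:MuNSelf}, this gives $\sigma_{t,x}^n \rightweaks 2^{-d}\,\Lebesgue\restr Z$.

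In each case the limit $\sigma$ is independent of $x$ and $t$ and charges no coordinate hyperplane $\{z_\ell = 0\}$: this is immediate from absolute continuity for schemes~\ref{item:MuNSelf} and~\ref{item:MuDiracFine}, and from $b_\ell/2 = \pm\tfrac{1}{2} \ne 0$ for scheme~\ref{item:MuNDirac}. Thus \eqref{eq:SigmaNoMassCoordinateAxis} holds and $(\mu^n)_n$ is a regular discretization sequence. The one genuinely delicate point is the invocation of Lebesgue differentiation along the cubes $X_J^n$ and $X_{i(J,b)}^n$, which are \emph{not} centred at $x$; this is exactly why the regularly-shrinking version is needed and why I must restrict to points with $\muDens(x) > 0$, so that dividing by $m_J^n = \Theta(n^{-d})$ stays harmless. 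The remaining steps --- the change of variables for~\ref{item:MuNSelf}, the weight bookkeeping for~\ref{item:MuNDirac}, and the Wasserstein comparison reducing~\ref{item:MuDiracFine} to~\ref{item:MuNSelf} --- are routine.
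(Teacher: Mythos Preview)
Your proof is correct and follows essentially the same route as the paper: the Lebesgue differentiation theorem for nicely (regularly) shrinking sets, applied to the composite and basic cells, is the key ingredient in both. Your treatment of scheme~\ref{item:MuDiracFine} via an explicit Wasserstein comparison with scheme~\ref{item:MuNSelf} is slightly more detailed than the paper's one-line remark that the two schemes share weak* cluster points, and for scheme~\ref{item:MuNSelf} you test against continuous $\phi$ whereas the paper verifies setwise convergence $\sigma_{t,x}^n(A)\to\sigma(A)$, but the underlying arguments are the same.
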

The proof, reported in Appendix \ref{sec:RegularityMuN}, is based on the Lebesgue differentiation theorem \cite[Theorem 7.10]{RudinRealAndComplexAnalysis} for $L^1$ functions and leverages the fact that $\mu$ is Lebesgue absolutely continuous.

\begin{definition}[Limiting fiber problem]
	\label{def:LimitFiberProblem}
	For each $t\in \R_+$, $x\in X$ we define the following functional over $\meas_1(Z\times Y)$:
	\begin{align}
	\label{eq:LimitFiberProblem}
	F_{t,x}(\vecnu) & \assign \begin{cases}
	C_{t,x}(\vecnu) & \tn{if } \vecnu\in\Pi(\sigma, \bmpi_{t,x}), \\
	+ \infty & \tn{otherwise,}
	\end{cases}
	\intertext{where}
	C_{t,x}(\vecnu) & \assign
	\begin{cases}
	\displaystyle
	\int_{Z\times Y}
	\inner{\nabla_X c(x, y)}{z} \, \diff \vecnu(z,y)
	+
	\eta \cdot \KL(\vecnu | \sigma\otimes \bmpi_{t,x})
	& \tn{if $\eta <\infty$,} \\
	\KL(\vecnu | \sigma\otimes \bmpi_{t,x}) & \tn{if $\eta =\infty$.}
	\end{cases}
	\end{align}
\end{definition}

\begin{definition}[Glued problems (discrete and limiting)]
	\label{def:GluedProblems}
	Fix $T>0$. We define
	\begin{align}
	\nuset_T \assign \left\{ \bmnu \in \measp([0,T] \times X \times Z \times Y)
	\mid
	\projTX \bmnu = (\Lebesgue \restr [0,T]) \otimes \mu \right\},
	\end{align}
	where $\projTX$ takes non-negative measures on $\R_+ \times X \times Z \times Y$ to their marginal on $\R_+ \times X$ (cf.~Section \ref{sec:Notation}).
	In particular, any $\bmnu \in \nuset_T$ can be disintegrated with respect to $\Lebesgue \otimes \mu$, i.e.~there is a measurable family of probability measures $(\bmnu_{t,x})_{t,x}$  such that $\bmnu = (\Lebesgue \restr [0,T]) \otimes \mu \otimes \bmnu_{t,x}$ and
	\begin{align*}
	\int_{[0,T] \times X \times Z\times Y} \phi(t,x,z,y)\,\diff \bmnu(t,x,z,y)
	= \int_{[0,T] \times X} \int_{Z\times Y} \phi(t,x,z,y)\,\diff \bmnu_{t,x}(z,y)\,\diff \mu(x)\,\diff t
	\end{align*}
	for all measurable $\phi$ from $[0,T] \times X \times Z \times Y \to \R_+$ (see Remark \ref{rem:Disintegration}).

	For $\bmnu \in \nuset_T$, $n \in 2\N$ we define the glued discrete and limiting functionals
	\begin{align}
		\label{eq:FGlued}
		F^n_T(\bmnu) & \assign \int_0^T \int_{X} F^n_{t,x}(\bmnu_{t,x})\,\diff \mu(x)\,\diff t, &
		F_T(\bmnu) & \assign \int_0^T \int_{X} F_{t,x}(\bmnu_{t,x})\,\diff \mu(x)\,\diff t.
	\end{align}
\end{definition}
The finite time horizon $T$ is necessary since otherwise the infima of the glued functionals \eqref{eq:FGlued} might be infinity.

\begin{remark}
	\label{rem:DiscreteGluedMinimizer}
	For any $n \in 2\N$, $T >0$ a minimizer $\bmnu^n \in \nuset$ for $F^n_T$ can be obtained via Proposition \ref{prop:DomDecGeneratesMinimizerFn} (and hence via the domain decomposition algorithm) by gluing together discrete fiber-wise minimizers $\bmnu_{t,x}^n$ of \eqref{eq:Fn} given by \eqref{eq:MinimizerFnDomDec} to obtain $\bmnu^n \assign (\Lebesgue \restr [0,T]) \otimes \mu \otimes \bmnu^n_{t,x}$ (see Remark \ref{rem:Disintegration}). The obtained $\bmnu^n$ clearly lies in $\nuset_T$ and minimizes $F^n_T$ because each $\bmnu_{t,x}^n$ minimizes the fiberwise functional $F_{t,x}^n$.
	Due to the discreteness at scale $n$, only a finite number of minimizers must be chosen (one per discrete time-step and composite cell) and thus no measurability issues arise.
\end{remark}

\begin{proposition}\label{prop:GammaConvergence}
	Under Assumptions \ref{assumption:PitConvergence} and \ref{assumption:RegularDiscretization},
	for any $T>0$, $F^n_T$ $\Gamma$-converges to $F_T$ with respect to weak* convergence on $\nuset_T$ on the subsequence $n \in \nset$.
\end{proposition}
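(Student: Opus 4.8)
The plan is to establish the $\liminf$ and $\limsup$ halves of the $\Gamma$-convergence separately, which is precisely what Sections \ref{sec:Liminf} and \ref{sec:Limsup} will provide; both become transparent once one works with the \emph{glued} functionals rather than fiber by fiber, since that sidesteps $(t,x)$-pointwise extraction of subsequences. I would first record two reductions. Since, for $\bmnu \in \nuset_T$, the measures $\bmnu$, $\bm{R}^n \assign (\Lebesgue \restr [0,T]) \otimes \mu \otimes (\sigma^n_{t,x} \otimes \bmpi^n_{t,x})$ and $\bm{R} \assign (\Lebesgue \restr [0,T]) \otimes \mu \otimes (\sigma \otimes \bmpi_{t,x})$ all share the marginal $(\Lebesgue \restr [0,T]) \otimes \mu$ on $[0,T] \times X$, additivity of relative entropy under disintegration gives $\int_0^T \int_X \KL(\bmnu_{t,x} | \sigma^n_{t,x} \otimes \bmpi^n_{t,x}) \, \diff \mu \, \diff t = \KL(\bmnu | \bm{R}^n)$ and likewise $= \KL(\bmnu | \bm{R})$ with $\sigma, \bmpi_{t,x}$ in place of $\sigma^n_{t,x}, \bmpi^n_{t,x}$; so, aside from the transport integral $\int (\inner{\nabla_X c(\xJn,y)}{z} + \Delta^n(\xJn,z,y)) \, \diff \bmnu$, the glued functionals reduce to a single relative entropy on $[0,T] \times X \times Z \times Y$. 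Second, $\bm{R}^n \rightweaks \bm{R}$: indeed $W_Z(\sigma^n_{t,x},\sigma) \to 0$ for a.e.\ $(t,x)$ by Assumption \ref{assumption:RegularDiscretization} and $\int_0^T \WY(\bmpi^n_t, \bmpi_t) \, \diff t \to 0$ by Assumption \ref{assumption:PitConvergence}, so both $W_Z(\sigma^n_{t,x},\sigma)$ and $\WoY(\bmpi^n_{t,x},\bmpi_{t,x})$ tend to $0$ in $L^1((\Lebesgue \restr [0,T]) \otimes \mu)$, and the elementary bound $|\int \phi \, \diff \rho - \int \phi \, \diff \rho'| \leq \omega_\phi(\delta) + 2\delta^{-1}\|\phi\|_\infty\, W_1(\rho,\rho')$ (valid for any $\delta>0$, $\omega_\phi$ a modulus of continuity of $\phi$) turns this into weak* convergence of the glued references. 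The same bound also shows the transport integrand $\inner{\nabla_X c(\xJn,y)}{z} + \Delta^n(\xJn,z,y)$ converges uniformly to $\inner{\nabla_X c(x,y)}{z}$ (using $|\xJn - x| \leq 2\sqrt d/n$, uniform continuity of $\nabla_X c$ on the compact $X \times Y$, and $\Delta^n \to 0$ uniformly by \eqref{eq:assumption_modified_cost}--\eqref{eq:Deltan}), and that, for any weak*-convergent sequence of \emph{feasible} iterates $\bmnu^n \rightweaks \bmnu$, the marginals of $\bmnu^n$ on $[0,T]\times X\times Z$ and on $[0,T]\times X\times Y$ converge weak* to the corresponding marginals of $\bm{R}$, whence by uniqueness of weak* limits $\bmnu_{t,x} \in \Pi(\sigma, \bmpi_{t,x})$ for a.e.\ $(t,x)$; that is, the fiberwise marginal constraints are stable under weak* limits.

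For the $\liminf$ inequality, take $\bmnu^n \rightweaks \bmnu$, pass to a subsequence realizing $\liminf_{n \in \nset} F^n_T(\bmnu^n)$ and assume it finite, so each $\bmnu^n$, hence $\bmnu$, is feasible. Writing $F^n_T(\bmnu^n) = \int(\inner{\nabla_X c(\xJn,y)}{z} + \Delta^n)\,\diff\bmnu^n + n\veps^n \KL(\bmnu^n | \bm{R}^n)$ for $\eta < \infty$ (and dividing by $n\veps^n \to \infty$ when $\eta = \infty$, which renders the transport term negligible): the transport integral converges to $\int \inner{\nabla_X c(x,y)}{z}\,\diff\bmnu$ by uniform convergence of the integrand together with $\bmnu^n \rightweaks \bmnu$, while $\liminf_n n\veps^n \KL(\bmnu^n | \bm{R}^n) \geq \eta\, \KL(\bmnu | \bm{R})$ follows from $n\veps^n \to \eta$ (see \eqref{eq:EpsEta}), nonnegativity and joint weak* lower semicontinuity of $\KL$, and $\bm{R}^n \rightweaks \bm{R}$. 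Adding the two and invoking the disintegration identity for $\KL(\bmnu | \bm{R})$ gives $\liminf_n F^n_T(\bmnu^n) \geq F_T(\bmnu)$; the cases $\eta \in \{0, \infty\}$ are identical.

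For the recovery sequence, assume $F_T(\bmnu) < \infty$, so $\bmnu_{t,x} \in \Pi(\sigma, \bmpi_{t,x})$ for a.e.\ $(t,x)$. I would construct $\bmnu^n$ fiberwise: choose, measurably in $(t,x)$, $W_1$-optimal plans $\alpha^n_{t,x} \in \Pi(\sigma^n_{t,x}, \sigma)$ on $Z$ and $\beta^n_{t,x} \in \Pi(\bmpi^n_{t,x}, \bmpi_{t,x})$ on $Y$, disintegrate them over their first marginals, and push $\bmnu_{t,x}$ through the resulting Markov kernels --- transporting the $Y$-coordinate from $\bmpi_{t,x}$ to $\bmpi^n_{t,x}$, then the $Z$-coordinate from $\sigma$ to $\sigma^n_{t,x}$ --- to obtain $\bmnu^n_{t,x} \in \Pi(\sigma^n_{t,x}, \bmpi^n_{t,x})$; set $\bmnu^n \assign (\Lebesgue \restr [0,T]) \otimes \mu \otimes \bmnu^n_{t,x} \in \nuset_T$. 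Then $W_{Z \times Y}(\bmnu^n_{t,x}, \bmnu_{t,x}) \leq W_Z(\sigma^n_{t,x},\sigma) + \WoY(\bmpi^n_{t,x},\bmpi_{t,x}) \to 0$ in $L^1$, so $\bmnu^n \rightweaks \bmnu$ by the same $\omega_\phi(\delta)$ bound; the transport integral of $F^n_T(\bmnu^n)$ converges to that of $F_T(\bmnu)$ as before; and, since averaging the arguments of $\KL$ through a Markov kernel cannot increase relative entropy (Jensen on the conditionals, the reference being a product), $\KL(\bmnu^n_{t,x} | \sigma^n_{t,x} \otimes \bmpi^n_{t,x}) \leq \KL(\bmnu_{t,x} | \sigma \otimes \bmpi_{t,x})$ for every $n$. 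Multiplying by $n\veps^n \to \eta$ and integrating --- with $(t,x) \mapsto \KL(\bmnu_{t,x} | \sigma \otimes \bmpi_{t,x})$ integrable because $F_T(\bmnu) < \infty$, for $\eta \in (0, \infty]$ --- gives $\limsup_n F^n_T(\bmnu^n) \leq F_T(\bmnu)$.

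I expect the recovery side to be the main obstacle, for two reasons. First, the fiberwise construction must be made \emph{measurable} in $(t,x)$ (a measurable selection of optimal plans and of disintegrations), so that $\bmnu^n$ is a bona fide element of $\nuset_T$ --- here gluing the problems before taking the limit again pays off, as it removes the need for $(t,x)$-dependent subsequences. Second, the case $\eta = 0$ is genuinely delicate: $F_T$ then carries no entropy term, $\bmnu_{t,x}$ may have infinite relative entropy against $\sigma \otimes \bmpi_{t,x}$, and the bound $\KL(\bmnu^n_{t,x} | \cdot) \leq \KL(\bmnu_{t,x} | \cdot)$ becomes vacuous; there one must first approximate $\bmnu$ in energy by measures whose fiberwise densities are truncated at a level growing slowly in $n$, so that $n\veps^n$ times their relative entropy still vanishes, correcting the marginals by a vanishing perturbation, and only then run the construction above. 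Everything on the $\liminf$ side, and the transport term throughout, is routine once the disintegration identity and the weak* convergence of the references are in place. Combining the two inequalities yields the asserted $\Gamma$-convergence.
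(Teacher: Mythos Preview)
Your approach matches the paper's: both rewrite the glued entropy as a single $\KL$ on $[0,T]\times X\times Z\times Y$, invoke joint weak* lower semicontinuity of $\KL$ for the liminf, and build the recovery sequence by pushing $\bmnu_{t,x}$ through disintegrated optimal plans in $\Pi(\sigma^n_{t,x},\sigma)$ and $\Pi(\bmpi^n_{t,x},\bmpi_{t,x})$, bounding the entropy via Jensen. Your transport-term and marginal-stability arguments correspond to Lemma \ref{lemma:convergence_objective_function} and Step~1 of Lemma \ref{lemma:gamma_liminf}.

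The one point that is not quite pinned down is the recovery sequence when $\eta=0$ but $\veps^n>0$ for infinitely many $n$. You rightly flag that $\bmnu_{t,x}$ may be singular with respect to $\sigma\otimes\bmpi_{t,x}$, so the Jensen bound is vacuous; but ``truncating the fiberwise densities'' presupposes a density to truncate, and ``correcting the marginals by a vanishing perturbation'' leaves unclear how the entropy stays controlled after the correction. The paper's device is the \emph{block approximation} (Lemma \ref{lem:block_approximation}): after constructing $\bmnu^n_{t,x}\in\Pi(\sigma^n_{t,x},\bmpi^n_{t,x})$ by your Markov-kernel recipe, replace it by its block approximation $\hat\bmnu^n_{t,x}$ at scale $\ell_n=n\veps^n$. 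This preserves both marginals exactly (no correction needed), moves the plan by $O(\ell_n)$ in $W_{Z\times Y}$ so weak* convergence and the transport limit are unaffected, and gives the universal bound $\KL(\hat\bmnu^n_{t,x}\,|\,\sigma^n_{t,x}\otimes\bmpi^n_{t,x})\le C-2d\log\ell_n$, whence $n\veps^n\cdot\KL\le C\,n\veps^n-2d\,n\veps^n\log(n\veps^n)\to 0$. The paper also separates out the easy subcase where $\veps^n=0$ for all but finitely many $n$, in which there is no entropy term to control.
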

The proof is divided into liminf and limsup condition that are given in Sections \ref{sec:Liminf} and \ref{sec:Limsup}.

Based on this we can now extract cluster points from the minimizers to the discrete fiber problems that converge to minimizers of the limit fiber problems and also get convergence for the associated momenta.

\begin{proposition}[Convergence of fiber-problem minimizers and momenta]
	\label{prop:MomentumConvergence}
	Let $\bmnu^n\in \measp(\R_+\allowbreak \times X \times Z \times Y)$ be constructed from the discrete iterates $\iter{\pi}{n,k}$ as shown in Proposition \ref{prop:DomDecGeneratesMinimizerFn}.
	Under Assumptions \ref{assumption:PitConvergence} and \ref{assumption:RegularDiscretization} there is a subsequence $\nsetb \subset \nset \subset 2\N$ and a measure $\bmnu \in \measp(\R_+ \times X \times Z \times Y)$ such that for all $T \in (0,\infty)$,
	\begin{align*}
		\bmnu^n \restr ([0,T] \times X \times Z \times Y) \rightweaks \bmnu \restr ([0,T] \times X \times Z \times Y) \in \nuset_T
	\end{align*}
	on the subsequence $\nsetb$ and the limit is a minimizer of $F_T$.
	In addition, analogous to Remark \ref{rem:MomentumFieldFromLambda}, we introduce the limit momentum field $\bmomega \assign \Lebesgue \otimes \mu \otimes \bmomega_{t,x} \in \meas(\R_+ \times X \times Y)^d$ via
	\begin{multline*}
		(\bmomega_{t,x})_\ell
		\assign
		\proj_Y (\bmnu_{t,x}\restr Z^\ell_+ \times Y)
		-
		\proj_Y (\bmnu_{t,x}\restr  Z^\ell_- \times Y)
		\quad
		\text{for } \ell = 1,...,d,
		\\
		\text{with}\quad
		Z^\ell_\pm = \{z \in Z \mid \pm z_\ell > 0\}.
	\end{multline*}
	Then $\bmomega^n$, $n \subset \nsetb$, converges weak* to $\bmomega$ on any finite time interval $[0,T]$.
\end{proposition}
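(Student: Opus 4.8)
The plan is to argue in three steps: first, extract a weak* convergent subsequence of the glued discrete minimizers $\bmnu^n$ by compactness and diagonalization; second, identify the limit as a minimizer of $F_T$ via the $\Gamma$-convergence of Proposition~\ref{prop:GammaConvergence}; third, transfer this to the momenta by a portmanteau argument that crucially exploits that the limit puts no mass on the coordinate hyperplanes $\{z_\ell=0\}$. Throughout I write $Q_T\assign[0,T]\times X\times Z\times Y$. For the first step I would recall from Remark~\ref{rem:DiscreteGluedMinimizer} that $\bmnu^n=\Lebesgue\otimes\mu\otimes\bmnu^n_{t,x}$, so that for each $m\in\N$ the measure $\bmnu^n\restr Q_m$ has total mass $m$ and is supported on the fixed compact metric space $Q_m$, hence (Banach--Alaoglu) weak* precompact. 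A nested-subsequence/diagonal argument over $m\in\N$ then produces $\nsetb\subset\nset$ and a $\sigma$-finite $\bmnu\in\measp(\R_+\times X\times Z\times Y)$ with $\bmnu^n\restr Q_m\rightweaks\bmnu\restr Q_m$ along $\nsetb$ for every $m$; the partial limits are consistent under restriction and glue to $\bmnu$. Passing the marginal constraint $\projTX(\bmnu^n\restr Q_m)=(\Lebesgue\restr[0,m])\otimes\mu$ to the limit gives $\bmnu\restr Q_T\in\nuset_T$ for all $T$ (for non-integer $T$ I would restrict from $Q_{\lceil T\rceil}$, which is legitimate because the time marginal of $\bmnu$ is $\Lebesgue$ and hence does not charge $\{t=T\}$).

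For the second step, by Remark~\ref{rem:DiscreteGluedMinimizer} each $\bmnu^n\restr Q_T$ minimizes $F^n_T$. I would combine the liminf inequality of Proposition~\ref{prop:GammaConvergence} (applied to $\bmnu^n\restr Q_T\rightweaks\bmnu\restr Q_T$) with the limsup inequality applied to an explicit finite-energy competitor --- e.g.\ the glued measure $\Lebesgue\restr[0,T]\otimes\mu\otimes(\sigma\otimes\bmpi_{t,x})$, which has $F_T<\infty$ --- to obtain
\begin{align*}
	F_T(\bmnu) \le \liminf_{n\in\nsetb}\inf F^n_T \le \limsup_{n\in\nsetb}\inf F^n_T \le \inf F_T .
\end{align*}
Hence $\bmnu\restr Q_T$ is a minimizer of $F_T$ and $\inf F^n_T\to\inf F_T$; equivalently one may invoke the fundamental theorem of $\Gamma$-convergence, equi-coercivity being automatic since $\{F^n_T<\infty\}\subset\nuset_T$, a weak*-compact set. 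In particular $F_T(\bmnu)<\infty$, so $F_{t,x}(\bmnu_{t,x})<\infty$ and therefore $\bmnu_{t,x}\in\Pi(\sigma,\bmpi_{t,x})$ for $\Lebesgue\otimes\mu$-a.e.\ $(t,x)$ with $t\le T$.

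For the third step, fix $\ell\in\{1,\dots,d\}$ and $T>0$. Since $\bmnu_{t,x}\in\Pi(\sigma,\bmpi_{t,x})$ a.e., its $Z$-marginal is $\sigma$, and $\sigma(\{z_\ell=0\})=0$ by the regular discretization assumption~\eqref{eq:SigmaNoMassCoordinateAxis}; integrating over $(t,x)$ yields $\bmnu(\{(t,x,z,y)\in Q_T\mid z_\ell=0\})=0$. (This can also be obtained directly, bypassing the second step, from $\proj_{\R_+\times X\times Z}\bmnu^n=\Lebesgue\otimes\mu\otimes\sigma^n_{t,x}$, the a.e.\ convergence $\sigma^n_{t,x}\rightweaks\sigma$, dominated convergence and uniqueness of weak* limits.) By Remark~\ref{rem:MomentumFieldFromLambda} and the definition of $\bmomega$, for every $g\in\cont([0,T]\times X\times Y)$ and every $n$,
\begin{align*}
	\int_{[0,T]\times X\times Y} g\,\diff(\bmomega^n)_\ell = \int_{Q_T} g(t,x,y)\,\sign(z_\ell)\,\diff\bmnu^n ,
\end{align*}
and likewise for $\bmomega$ with $\bmnu$ in place of $\bmnu^n$. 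The integrand $(t,x,z,y)\mapsto g(t,x,y)\,\sign(z_\ell)$ is bounded with discontinuity set contained in the $\bmnu$-null set $\{z_\ell=0\}$, so the portmanteau theorem (stability of weak* convergence under integration of bounded Borel functions whose discontinuity set is null for the limit) gives $\int g\,\diff(\bmomega^n)_\ell\to\int g\,\diff(\bmomega)_\ell$ along $\nsetb$. Since $g$, $\ell$ and $T$ are arbitrary, $\bmomega^n\rightweaks\bmomega$ on every $[0,T]$, which is the claim.

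I expect the third step to be the main obstacle: the maps $\bmnu\mapsto\bmnu\restr(Z^\ell_\pm\times Y)$ that generate the momenta are not weak*-continuous in general, and the argument only goes through because the limit $\bmnu$ charges no coordinate hyperplane --- which is precisely the role of condition~\eqref{eq:SigmaNoMassCoordinateAxis} in the regular discretization assumption. A secondary nuisance is the bookkeeping in the first step, needed to produce a single limit measure and a single subsequence valid for all $T$ at once.
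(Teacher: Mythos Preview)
Your proof is correct and follows essentially the same approach as the paper: a diagonal compactness argument, identification of the limit as a minimizer via $\Gamma$-convergence, and a Portmanteau-type argument exploiting that $\bmnu$ does not charge $\{z_\ell=0\}$. The only cosmetic difference is in the third step: the paper shows that restriction to $Z^\ell_\pm$ is weak*-continuous along the sequence (via Portmanteau on open sets and a subsequence argument), whereas you integrate the bounded $\bmnu$-a.e.\ continuous function $g(t,x,y)\,\sign(z_\ell)$ directly --- these are equivalent formulations of the same Portmanteau principle.
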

\begin{proof}	
	By weak* compactness, for any $T>0$ one can extract a subsequence $\nset' \subset \nset$ such that $(\bmnu^n \restr ([0,T] \times X \times Z \times Y))_{n \in \nset'}$ converges weak* to some $\bmnu_T \in \measp([0,T] \times X \times Z \times Y)$.
	By a diagonal argument, we can choose a further subsequence $\nsetb \subset \nset$ such that $(\bmnu^{n})_{n \in \nsetb}$ converges to some $\bmnu\in \meas_+(\R_+ \times X \times Z \times Y)$ when restricted to $[0,T] \times X \times Z \times Y$ for any $T>0$.
	By construction (Proposition \ref{prop:DomDecGeneratesMinimizerFn}, Remark \ref{rem:DiscreteGluedMinimizer}), $\bmnu^{n}$, $n \in \nsetb$, is a minimizer of $F^{n}_T$ for any choice of $T$. Thus, by $\Gamma$-convergence (Proposition \ref{prop:GammaConvergence}) $\bmnu$ is also a minimizer of $F_T$ for all choices of $T$.

	It remains to be shown that the construction of $\bmomega^{n}$ from $\bmnu^{n}$ (for the discrete and the limit case) is a weak* continuous operation.
	The fiber-wise construction can be written at the level of the whole measures as
	\begin{align*}
		(\bmomega^{n})_\ell = \proj_{\R_+ \times X \times Y} (\bmnu^{n} \restr (\R_+ \times X \times Z^\ell_+ \times Y))
		-
		\proj_{\R_+ \times X \times Y} (\bmnu^{n} \restr (\R_+ \times X \times Z^\ell_- \times Y))
	\end{align*}
	for $\ell = 1,...,d$. Marginal projection is a weak* continuous operation, and so is the addition (subtraction) of two measures. Let us therefore focus on the restriction operation. In general, restriction is not weak* continuous, but it is under our regularity assumptions on $\sigma^n_{t,x}$ and $\sigma$ (Section \ref{sec:DomDecNotation}, item \ref{item:MuN} and Assumption \ref{assumption:RegularDiscretization}). None of these measures carry mass on the boundaries between the $Z^\ell_{\pm}$ (and these sets are relatively open in $Z$).
	For simplicity, we will now show that under these conditions, [$\sigma_{t,x}^n \rightweaks \sigma$] $\Rightarrow$ [$\sigma_{t,x}^n \restr Z^\ell_{\pm} \rightweaks \sigma \restr Z^\ell_{\pm}$] for any $\ell \in \{1,\ldots,d\}$. The same argument (but with heavier notation) will then apply to the convergence of the restrictions of $\bmnu^n$.
	By weak* compactness we can select a subsequence such that
	\begin{align*}
		\sigma_{t,x}^n \restr Z^\ell_{\pm} \rightweaks \sigma_\pm
	\end{align*}
	for two measures $\sigma_\pm \in \measp(Z)$, 
	and by the Portmanteau theorem for weak convergence of measures \cite[Theorem 2.1]{Billingsley1999} we have
	\begin{align*}
		\sigma_{\pm}(Z^\ell_{\mp}) \leq \liminf_n (\sigma_{t,x}^n \restr Z^\ell_{\pm})(Z^\ell_{\mp}) = 0.
	\end{align*}
	Now observe
	\begin{align*}
		\sigma = \lim_{n \to \infty} \sigma_{t,x}^n = 
		\lim_{n \to \infty} ( \sigma_{t,x}^n \restr Z^\ell_{+} + \sigma_{t,x}^n \restr Z^\ell_{-})
		= \sigma_+ + \sigma_-
	\end{align*}		
	where in the second equality we used that $\sigma_{t,x}^n$ does not carry mass on the set $\{z_\ell=0\}$.
	Using that $\sigma$ carries no mass on $\{z_\ell=0\}$ we conclude that $\sigma_\pm = \sigma \restr Z^\ell_\pm$.
	This holds for any convergent subsequence and thus by weak* compactness the whole sequences of restrictions converge to $\sigma \restr Z_{\pm}^\ell$.
	As indicated, the same argument will apply to the restriction of $\bmnu^{n}$ to the sets $[0,T] \times X \times Z^\ell_{\pm} \times Y$ for any finite horizon $T \in (0,\infty)$.
\end{proof}

\subsection{Liminf condition}
\label{sec:Liminf}

We start by establishing that the transport cost contribution in $F^n_T$ converges to that of $F_T$. We do so by gathering all transport contributions in the fibers $F^n_{t,x}$ into a single integral, and likewise for $F_{t,x}$.
\begin{lemma}[Convergence of the transport cost] \label{lemma:convergence_objective_function}
Let $T>0$ and $(\bmnu^n)_n$ be a weak* convergent sequence in $\nuset_T$ with limit $\bmnu \in \nuset_T$. Then the transport part of the glued functional $F^n_T$ converges to that of $F_T$. More specifically,
\begin{multline}
	\label{eq:convergence_objective_function}
		\lim_{n\rightarrow \infty}
		\int_{[0,T]\times X \times Z \times Y}
		\left[\inner{\nabla_X c(\xJn, y)}{z} +  \Delta^n(\xJn,z,y)  \right] \diff \bmnu^n(t,x,z,y)
		\\
		=
		\int_{[0,T]\times X\times Z\times Y}
		 \inner{\nabla_X c(x,y)}{z}\,\diff \bmnu(t,x,z,y).
\end{multline}
\end{lemma}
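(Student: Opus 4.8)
The plan is to write the discrete integrand as the target integrand $\inner{\nabla_X c(x,y)}{z}$ plus two remainders that vanish uniformly as $n\to\infty$, and then pass to the limit using the hypothesis $\bmnu^n \rightweaks \bmnu$. Concretely, for $(t,x,z,y)\in[0,T]\times X\times Z\times Y$ I would write
\[
	\inner{\nabla_X c(\xJn,y)}{z} + \Delta^n(\xJn,z,y)
	= \inner{\nabla_X c(x,y)}{z} + R^n_1(t,x,z,y) + R^n_2(t,x,z,y),
\]
with $R^n_1 \assign \inner{\nabla_X c(\xJn,y) - \nabla_X c(x,y)}{z}$ and $R^n_2 \assign \Delta^n(\xJn,z,y)$. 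The leading term is a genuine test function: since $c\in\cont^1(X\times Y)$, the map $(x,y,z)\mapsto\inner{\nabla_X c(x,y)}{z}$ is continuous on the compact set $X\times Y\times Z$, hence (being independent of $t$) lies in $\cont([0,T]\times X\times Z\times Y)$, so weak* convergence gives
\[
	\int_{[0,T]\times X\times Z\times Y}\inner{\nabla_X c(x,y)}{z}\,\diff\bmnu^n
	\;\longrightarrow\;
	\int_{[0,T]\times X\times Z\times Y}\inner{\nabla_X c(x,y)}{z}\,\diff\bmnu,
\]
which is precisely the right-hand side of \eqref{eq:convergence_objective_function}.

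It then remains to show $\int R^n_i\,\diff\bmnu^n\to0$ for $i=1,2$. Here I would use that all $\bmnu^n$ carry the same total mass $\bmnu^n([0,T]\times X\times Z\times Y)=T$, because $\projTX\bmnu^n=(\Lebesgue\restr[0,T])\otimes\mu$ and $\mu\in\prob(X)$; consequently $\bigl|\int R^n_i\,\diff\bmnu^n\bigr|\le T\cdot\sup_{[0,T]\times X\times Z\times Y}|R^n_i|$ and it suffices to prove that this supremum tends to $0$. Let $\omega$ be a modulus of continuity for $\nabla_X c$ on the compact set $X\times Y$ (uniform in $y$), and recall $\|z\|\le\sqrt d$ for $z\in Z$. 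For $R^n_1$: the point $x$ lies in the composite cell with centre $\xJn$, which is contained in $\xJn+[-1/n,1/n]^d$, so $\|x-\xJn\|\le\sqrt d/n$ and hence $\sup|R^n_1|\le\sqrt d\,\omega(\sqrt d/n)\to0$. For $R^n_2$: substituting $c^n=c+f^n/n$ into \eqref{eq:Deltan} and applying the fundamental theorem of calculus along the segment $s\mapsto\xJn+sz/n$, $s\in[0,1]$, gives
\[
	\Delta^n(\xJn,z,y) = f^n(\xJn+z/n,y) + \int_0^1 \inner{\nabla_X c(\xJn+sz/n,y)-\nabla_X c(\xJn,y)}{z}\,\diff s,
\]
whose first term is bounded by $\sup_{X\times Y}|f^n|\to0$ and whose second term, since $\|sz/n\|\le\sqrt d/n$, is bounded by $\sqrt d\,\omega(\sqrt d/n)\to0$. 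Combining the three contributions yields \eqref{eq:convergence_objective_function}.

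I do not anticipate a real obstacle; this is an equicontinuity-type argument whose only subtlety is a minor boundary issue. For a general $\bmnu^n\in\nuset_T$ the argument $\xJn+z/n$ can leave $X=[0,1]^d$ when $J^n_{t,x}$ is a cell of $\partBn$ meeting $\partial X$ and $z$ sits near the far corner of $Z$; this is harmless, since one may fix once and for all a $\cont^1$ extension of $c$ (and correspondingly of $f^n$, hence of $c^n$) to a neighbourhood of $X\times Y$, on which all the uniform estimates above persist. One may also note in passing that the maps $(t,x,z,y)\mapsto\nabla_X c(\xJn,y)$ and $(t,x,z,y)\mapsto\Delta^n(\xJn,z,y)$ are measurable — piecewise constant in $(t,x)$ composed with continuous functions — so the left-hand integral in \eqref{eq:convergence_objective_function} is well defined.
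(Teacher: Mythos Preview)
Your proof is correct and follows essentially the same approach as the paper: the same three-term decomposition (weak* convergence for the leading term $\inner{\nabla_X c(x,y)}{z}$, uniform smallness of $R^n_1$ via uniform continuity of $\nabla_X c$ and $\|x-\xJn\|\le\sqrt d/n$, and uniform smallness of $\Delta^n$ via the $f^n$ contribution plus a first-order Taylor remainder). The only cosmetic difference is that you control the Taylor remainder with the integral form of the fundamental theorem of calculus whereas the paper invokes the mean value theorem; your additional remarks on the boundary extension and measurability are careful touches the paper leaves implicit.
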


\begin{proof}
	We have to verify that the following expression tends to zero:
	\begin{align}
		\label{eq:LiminfTransportTerm}
		&
		\int_{[0,T]\times X \times Z \times Y}
		\inner{\nabla_X c(\xJn, y) - \nabla_X c(x, y)}{z}
		\diff \bmnu^n(t,x,z,y)
		\nonumber \\
		+&
		\int_{[0,T]\times X \times Z \times Y}
		\inner{\nabla_X c(x, y)}{z}
		\diff (\bmnu^n - \bmnu)(t,x,z,y)
		\nonumber \\
		+&
		\int_{[0,T]\times X \times Z \times Y}
		\Delta^n(\xJn,z,y)
		\diff \bmnu^n(t,x,z,y).
	\end{align}
	Since $\nabla_X c(x,y)$ is uniformly continuous, the integrand in the first term converges to zero uniformly (since $\|\xJn - x\| \le \sqrt{d}/n$), and since the masses of $\bmnu^n$ are uniformly bounded, the integral goes to zero.
	The second term converges to zero by weak* convergence of $\bmnu^n$ to $\bmnu$.

	Recalling the definition of $\Delta^n$, \eqref{eq:Deltan}, and $c^n$, \eqref{eq:assumption_modified_cost}, the integrand in the third term is given by the function
	\begin{align*}
		(t,x,z,y) & \mapsto \Delta^n(\xJn,z,y)
		= n \cdot [ c^n(\xJn + z/n, y) - c(\xJn, y)
		- \inner{\nabla_X c(\xJn, y)}{z/n} ] \\
		& = f^n(\xJn + z/n, y) + n \cdot [ c(\xJn + z/n, y) - c(\xJn, y)
		- \inner{\nabla_X c(\xJn, y)}{z/n} ]
	\end{align*}
	The first term converges to zero uniformly by assumption on $f^n$ (cf. Section \ref{sec:DomDecNotation}, Item \ref{item:ModifiedCost}).
	For the second term, by the mean value theorem, there exists a point $\xi$ on the segment $[\xJn,\xJn+z/n]$ (depending on $t,x,z,y,n$) such that it can be written as
	\begin{align*}
	\inner{\nabla_X c(\xi, y) - \nabla_X c(\xJn, y)}{z},
	\end{align*}
	which converges to zero uniformly since $\|\xi-\xJn\|\leq \sqrt{d}/n$ and $\nabla_X c$ is uniformly continuous.
	Finally, this implies that the third term in \eqref{eq:LiminfTransportTerm} converges to zero since the integrand converges to zero uniformly and the masses of $\bmnu^n$ are uniformly bounded.
\end{proof}

\begin{lemma}[Liminf inequality]\label{lemma:gamma_liminf}
	Let $T>0$ and $(\bmnu^n)_{n\in \nset}$ be a weak* convergent sequence in $\nuset_T$ with limit $\bmnu \in \nuset_T$. Then
	\begin{equation}\label{eq:gamma_liminf}
		\liminf_{n\in \nset,\ n\rightarrow\infty} F^n_T(\bmnu^n) \geq F_T(\bmnu).
	\end{equation}
\end{lemma}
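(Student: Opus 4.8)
The plan is to split the glued functional $F^n_T$ into its transport part and its relative-entropy part, to dispose of the transport part by Lemma~\ref{lemma:convergence_objective_function} (which gives a genuine limit, not merely a $\liminf$), and to handle the entropy part through the disintegration identity for $\KL$ together with the joint weak* lower semicontinuity of $\KL$.

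First I would reduce to the nontrivial case $\liminf_{n\in\nset} F^n_T(\bmnu^n)<\infty$ and pass, without relabelling, to a subsequence along which this $\liminf$ is attained. Along that subsequence $F^n_T(\bmnu^n)$ is bounded, hence $F^n_{t,x}(\bmnu^n_{t,x})<\infty$ for $(\Lebesgue\otimes\mu)$-a.e.\ $(t,x)$ once $n$ is large, and in particular $\bmnu^n_{t,x}\in\Pi(\sigma_{t,x}^n,\bmpi_{t,x}^n)$ a.e. The first substantive step is to show that the limit $\bmnu$ is fiberwise feasible, i.e.\ $\bmnu_{t,x}\in\Pi(\sigma,\bmpi_{t,x})$ for a.e.\ $(t,x)$. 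Since $\proj_Z\bmnu^n_{t,x}=\sigma_{t,x}^n$ a.e., the $([0,T]\times X\times Z)$-marginal of $\bmnu^n$ equals $(\Lebesgue\restr[0,T])\otimes\mu\otimes\sigma_{t,x}^n$; by Assumption~\ref{assumption:RegularDiscretization} ($\sigma_{t,x}^n\rightweaks\sigma$ a.e.) and bounded convergence this marginal converges weak* to $(\Lebesgue\restr[0,T])\otimes\mu\otimes\sigma$. Weak* continuity of marginal projection and uniqueness of disintegrations with respect to $\Lebesgue\otimes\mu$ then give $\proj_Z\bmnu_{t,x}=\sigma$ a.e., and the same argument using $\proj_Y\bmnu^n_{t,x}=\bmpi_{t,x}^n$ and Assumption~\ref{assumption:PitConvergence} gives $\proj_Y\bmnu_{t,x}=\bmpi_{t,x}$ a.e. Hence $F_{t,x}(\bmnu_{t,x})=C_{t,x}(\bmnu_{t,x})$ a.e., so by the disintegration formula of Definition~\ref{def:GluedProblems} and Remark~\ref{rem:Disintegration} the value $F_T(\bmnu)$ equals $A+\eta\,K$ when $\eta<\infty$ (and $K$ when $\eta=\infty$), where $A\assign\int_{[0,T]\times X\times Z\times Y}\inner{\nabla_X c(x,y)}{z}\,\diff\bmnu$ and $K\assign\int_0^T\!\int_X\KL(\bmnu_{t,x}|\sigma\otimes\bmpi_{t,x})\,\diff\mu\,\diff t$.

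For the transport part, Lemma~\ref{lemma:convergence_objective_function} shows directly that $\int_{[0,T]\times X\times Z\times Y}[\inner{\nabla_X c(\xJn,y)}{z}+\Delta^n(\xJn,z,y)]\,\diff\bmnu^n\to A$. For the entropy part, set $R^n\assign(\Lebesgue\restr[0,T])\otimes\mu\otimes(\sigma_{t,x}^n\otimes\bmpi_{t,x}^n)$ and $R\assign(\Lebesgue\restr[0,T])\otimes\mu\otimes(\sigma\otimes\bmpi_{t,x})$. A product of weak* convergent sequences of probability measures on the compacta $Z$ and $Y$ converges weak* to the product of the limits, so $\sigma_{t,x}^n\otimes\bmpi_{t,x}^n\rightweaks\sigma\otimes\bmpi_{t,x}$ a.e., and bounded convergence upgrades this to $R^n\rightweaks R$. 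Since $\bmnu^n$ and $R^n$ share the $([0,T]\times X)$-marginal $(\Lebesgue\restr[0,T])\otimes\mu$, the chain rule for relative entropy gives $\int_0^T\!\int_X\KL(\bmnu^n_{t,x}|\sigma_{t,x}^n\otimes\bmpi_{t,x}^n)\,\diff\mu\,\diff t=\KL(\bmnu^n|R^n)$, and likewise $K=\KL(\bmnu|R)$. As $\KL$ is jointly weak* lower semicontinuous (e.g.\ through the Donsker--Varadhan representation, each term of which is weak* continuous in both arguments), $\liminf_n\KL(\bmnu^n|R^n)\ge\KL(\bmnu|R)=K$.

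Finally I would assemble the bound by cases. For $\eta<\infty$, $F^n_T(\bmnu^n)$ equals the transport integral plus $n\veps^n\KL(\bmnu^n|R^n)\ge0$; since the transport integral converges to $A$ and $n\veps^n\to\eta$,
\[
\liminf_{n} F^n_T(\bmnu^n)=A+\liminf_{n}\, n\veps^n\,\KL(\bmnu^n|R^n)\ \ge\ A+\eta\,K=F_T(\bmnu),
\]
where for $\eta=0$ the last term simply drops and non-negativity of $\KL$ suffices, and for $\eta\in(0,\infty)$ one uses $\liminf_n n\veps^n\KL(\bmnu^n|R^n)=\eta\liminf_n\KL(\bmnu^n|R^n)$, valid because $n\veps^n$ is eventually bounded away from $0$. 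For $\eta=\infty$, $F^n_T(\bmnu^n)=\KL(\bmnu^n|R^n)+\frac{1}{n\veps^n}(\tn{transport integral})$; the transport integral is uniformly bounded (the integrand is bounded, as in the proof of Lemma~\ref{lemma:convergence_objective_function}, and $\bmnu^n$ has total mass $T$) while $1/(n\veps^n)\to0$, so $\liminf_n F^n_T(\bmnu^n)\ge\liminf_n\KL(\bmnu^n|R^n)\ge K=F_T(\bmnu)$. The main obstacle is the entropy step: establishing $R^n\rightweaks R$ — which is exactly where Assumptions~\ref{assumption:PitConvergence} and~\ref{assumption:RegularDiscretization} enter — and combining the joint lower semicontinuity of $\KL$ with correct bookkeeping of the regimes $\eta=0$, $\eta\in(0,\infty)$, $\eta=\infty$; the feasibility argument for the limit $\bmnu_{t,x}$ is the other point that requires care.
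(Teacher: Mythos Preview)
Your proof is correct and follows essentially the same approach as the paper: reduce to a subsequence with finite limit, verify the fiber marginal constraints via weak* continuity of marginal projections plus dominated convergence, invoke Lemma~\ref{lemma:convergence_objective_function} for the transport part, and use the disintegration identity $\int\KL(\bmnu^n_{t,x}|\sigma^n_{t,x}\otimes\bmpi^n_{t,x})\,\diff\mu\,\diff t=\KL(\bmnu^n|R^n)$ together with joint weak* lower semicontinuity of $\KL$ for the entropic part. The paper is slightly more explicit about the $Y$-marginal step (noting that Assumption~\ref{assumption:PitConvergence} gives convergence only at the $t$-level, so one passes through $\int_{X\times Y}\phi\,\diff\bmpi^n_t$ before applying dominated convergence), while your case split $\eta=0$ / $\eta\in(0,\infty)$ / $\eta=\infty$ is somewhat more detailed than the paper's, but these are presentational rather than substantive differences.
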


\begin{proof}
	By disintegration $\bmnu^n$ and $\bmnu$ can be written as
	\begin{align*}
		\bmnu^n & = (\Lebesgue \restr [0,T]) \otimes \mu \otimes \bmnu^n_{t,x}, &
		\bmnu & = (\Lebesgue \restr [0,T]) \otimes \mu \otimes \bmnu_{t,x}
	\end{align*}
	for suitable families $(\bmnu^n_{t,x})_{t,x}$ and $(\bmnu_{t,x})_{t,x}$.
	If the liminf is $+\infty$ there is nothing to prove. So we may limit ourselves to study subsequences with finite limit (and assume that we have extracted and relabeled such a sequence as $\nsetliminf \subset \nset$). 
	Unless otherwise stated, all limits in the proof are taken on this subsequence $\nsetliminf$, though we may not always state it to avoid overloading the notation.
	
	\smallskip
	\noindent \textit{Step 1: marginal constraints.}
	$F^n_T(\bmnu^n)$ can only be finite if $\bmnu^n_{t,x} \in \Pi(\sigma^n_{t,x},\bmpi^n_{t,x})$ for $\Lebesgue \otimes \mu$ almost all $(t,x) \in [0,T] \times X$. We find that this implies $\proj_Z \bmnu_{t,x}=\sigma$ for almost all $(t,x)$ by observing that for any $\phi \in \cont([0,T] \times X \times Z)$ one has
	\begin{align*}
	&\int_{[0,T]\times X} \int_{Z \times Y} \phi(t,x,z)  \,\diff\bmnu_{t,x}(z,y) \,\diff\mu(x)\,\diff t
	=
	\lim_{n \to \infty}
	\int_{[0,T]\times X} \int_{Z \times Y} \phi(t,x,z)  \,\diff\bmnu_{t,x}^n(z,y) \,\diff\mu(x)\,\diff t
	\\
	&=
	\lim_{n \to \infty}
	\int_{[0,T]\times X} \int_Z \phi(t,x,z)  \,\diff\sigma_{t,x}^n(z) \,\diff\mu(x)\,\diff t
	=
	\int_{[0,T]\times X} \int_Z \phi(t,x,z)  \,\diff\sigma(z) \,\diff\mu(x)\,\diff t
	\end{align*}
	where the first equality follows from $\bmnu^n \rightweaks \bmnu$ and the third one from dominated convergence since the inner integral converges pointwise almost everywhere (see Assumption \ref{assumption:RegularDiscretization}) and is uniformly bounded.
	
	The argument for $\proj_Y \bmnu_{t,x}=\bmpi_{t,x}$ is slightly more involved since Assumption \ref{assumption:PitConvergence} only provides that $\bmpi^n_t \rightweaks \bmpi_t$ for almost all $t$, but pointwise weak* convergence does not necessarily hold at the level of disintegrations in $(t,x)$.
	For any $\phi \in \cont([0,T] \times X \times Y)$ one has
	\begin{align*}
	&\int_{[0,T]\times X} \int_{Z \times Y} \phi(t,x,y)  \,\diff\bmnu_{t,x}(z,y) \,\diff\mu(x)\,\diff t
	=
	\lim_{n \to \infty}
	\int_{[0,T]\times X} \int_{Z \times Y} \phi(t,x,y)  \,\diff\bmnu_{t,x}^n(z,y) \,\diff\mu(x)\,\diff t
	\\
	&=
	\lim_{n \to \infty}
	\int_{[0,T]\times X} \int_Y \phi(t,x,y)  \,\diff\bmpi_{t,x}^n(y) \,\diff\mu(x)\,\diff t
	=
	\lim_{n \to \infty}
	\int_{[0,T]} \int_{X \times Y} \phi(t,x,y)  \,\diff\bmpi_{t}^n(x,y) \,\diff t
	\\
	&= \int_{[0,T]} \int_{X \times Y} \phi(t,x,y)  \,\diff\bmpi_{t}(x,y) \,\diff t
	=\int_{[0,T]\times Y} \int_Z \phi(t,x,y)  \,\diff\bmpi_{t,x}(y) \,\diff\mu(x)\,\diff t
	\end{align*}	
	where we argue again via dominated convergence from the second to the third line.
	Hence, $\bmnu_{t,x} \in \Pi(\sigma, \bmpi_{t,x})$ for almost all $(t,x)$.

	\smallskip
	\noindent \textit{Step 2: transport contribution.}
	The transport cost contributions of $F^n_T(\bmnu^n)$ converge to that of $F_T(\bmnu)$ by Lemma \ref{lemma:convergence_objective_function}.
	
	\noindent \textit{Step 3: entropy contribution.}
	Assume first $\eta<\infty$.
	Introducing the measures
	\begin{equation}
		\bmnu^n_\otimes
		\assign
		(\Lebesgue \restr [0,T]) \otimes \mu \otimes \sigma_{t,x}^n \otimes \bmpi_{t,x}^n
		\quad \text{ and } \quad
		\bmnu_\otimes
		\assign
		(\Lebesgue \restr [0,T]) \otimes \mu \otimes \sigma \otimes \bmpi_{t,x},
	\end{equation}
	the entropic terms of $F^n_T$ and $F_T$ can be written as $n\, \veps^n \KL(\bmnu^n \mid \bmnu^n_\otimes)$ and $\eta \KL(\bmnu \mid \bmnu_\otimes)$ respectively, since
	\begin{align*}
	&\int_{[0,T]\times X}
	\KL(\bmnu_{t,x}^n \mid \sigma_{t,x}^n \otimes \bmpi_{t,x}^n) \diff \mu(x) \diff t
	\\
	& \qquad =
	\int_{[0,T]\times X\times Z \times Y}
	\varphi\left(
		\RadNikD{\bmnu^n_{t,x}(z,y)}{(\sigma_{t,x}^n \otimes \bmpi^n_{t,x})}
	\right)
	\diff \sigma_{t,x}^n(z) \diff \bmpi^n_{t,x}(y)
	\diff \mu(x) \diff t
	\\
	& \qquad =
	\int_{[0,T]\times X\times Z \times Y}
	\varphi\left(
	\RadNikD{\bmnu^n(t,x,z,y)}{\bmnu^n_\otimes}
	\right)
	\diff \bmnu^n_\otimes(t,x,z,y)
	=
	\KL(\bmnu^n \mid \bmnu^n_\otimes),
	\end{align*}
	because $\bmnu^n$ and $\bmnu^n_\otimes$ have the same marginals in time and $X$. Analogously, the entropic contribution in $F_T$ is $\eta\KL(\bmnu \mid \bmnu_\otimes)$. Thus, by joint lower semicontinuity of $\KL$ (where we use $\bmnu^n_\otimes \rightweaks \bmnu_\otimes$, which follows from Assumptions \ref{assumption:PitConvergence} and \ref{assumption:RegularDiscretization}), convergence of $n\veps^n$ to $\eta$ and the fact that we selected a subsequence with finite limit (such that $\KL(\bmnu^n \mid \bmnu^n_\otimes)$ is uniformly bounded) we find
	\begin{equation}
		\liminf_{n\in \nsetliminf,\, n\rightarrow\infty}
		n\,\veps^n\,\KL(\bmnu^n \mid \bmnu^n_\otimes)
		\ge
		\eta \KL(\bmnu \mid \bmnu_\otimes).
	\end{equation}
	This shows that, for any subsequence $\nsetliminf$ with finite limit, $\liminf_{n\in \nsetliminf,\, n\rightarrow\infty} F^n_T(\bmnu^n) \geq F_T(\bmnu)$, so
	
	\begin{equation}
		\liminf_{n\in \nset,\, n\rightarrow\infty}
		F^n_T(\bmnu^n)
		= 
		\inf_{\nsetliminf \subset \nset} \liminf_{n\in \nsetliminf,\, n\rightarrow\infty}
		F^n_T(\bmnu^n)
		\geq 
		F_T(\bmnu).
	\end{equation}
	This concludes the proof for $\eta<\infty$. The case $\eta=\infty$ is analogous.
\end{proof}

\subsection{Limsup condition}
\label{sec:Limsup}

\begin{lemma}[Limsup inequality]\label{lemma:gamma_limsup}
	Let $T>0$, $\bmnu \in \nuset_T$. Then, there exists a sequence $(\bmnu^n)_{n\in\nset}$ in $\nuset_T$,
	converging weak* to $\bmnu$ such that
	\begin{equation}\label{eq:gamma_limsup}
		\limsup_{n\in\nset,\ n\rightarrow\infty} F^n_T(\bmnu^n) \leq F_T(\bmnu).
	\end{equation}
\end{lemma}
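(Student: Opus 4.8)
The plan is to construct an explicit recovery sequence by transporting the $Z$- and $Y$-marginals of the target coupling $\bmnu$ onto the prescribed discrete marginals through a product Markov kernel, and to control the entropy term by the data processing inequality for the relative entropy.

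\textit{Reductions.} If $F_T(\bmnu)=+\infty$ the claim holds for the constant sequence $\bmnu^n\equiv\bmnu\in\nuset_T$, so I assume $F_T(\bmnu)<\infty$; then $\bmnu=(\Lebesgue\restr[0,T])\otimes\mu\otimes\bmnu_{t,x}$ with $\bmnu_{t,x}\in\Pi(\sigma,\bmpi_{t,x})$ for $\Lebesgue\otimes\mu$-a.e.\ $(t,x)$. If $\eta>0$ (including $\eta=\infty$), finiteness of $F_T(\bmnu)$ already forces $\KL(\bmnu\mid\bmnu_\otimes)<\infty$, where $\bmnu_\otimes$ is as in the proof of Lemma~\ref{lemma:gamma_liminf} and $\KL(\bmnu\mid\bmnu_\otimes)$ equals the integrated fibrewise entropy $\int_0^T\int_X\KL(\bmnu_{t,x}\mid\sigma\otimes\bmpi_{t,x})\,\diff\mu\,\diff t$. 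If $\eta=0$ this may fail, and I would first approximate $\bmnu$ in energy by couplings of bounded relative density: on a fixed dyadic grid $\{B^Z_i\times B^Y_j\}$ of $Z\times Y$ set $\bmnu^{(m)}_{t,x}\restr(B^Z_i\times B^Y_j)\assign\bmnu_{t,x}(B^Z_i\times B^Y_j)\cdot\tfrac{\sigma\restr B^Z_i}{\sigma(B^Z_i)}\otimes\tfrac{\bmpi_{t,x}\restr B^Y_j}{\bmpi_{t,x}(B^Y_j)}$, omitting null cells. Using $\proj_Z\bmnu_{t,x}=\sigma$ and $\proj_Y\bmnu_{t,x}=\bmpi_{t,x}$ one checks $\bmnu^{(m)}_{t,x}\in\Pi(\sigma,\bmpi_{t,x})$, and from $\bmnu_{t,x}(B^Z_i\times B^Y_j)\le\bmpi_{t,x}(B^Y_j)$ that its density against $\sigma\otimes\bmpi_{t,x}$ is at most $\max_i\sigma(B^Z_i)^{-1}<\infty$ uniformly in $(t,x)$; moreover $\bmnu^{(m)}\rightweaks\bmnu$ as the mesh shrinks and, since for $\eta=0$ the energy $F_T$ is just the weak*-continuous transport functional on the constraint set, $F_T(\bmnu^{(m)})\to F_T(\bmnu)$. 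As the $\bmnu^{(m)}$ are energy-dense in the domain of $F_T$, lower semicontinuity of the $\Gamma$-limsup functional reduces \eqref{eq:gamma_limsup} to the case $\KL(\bmnu\mid\bmnu_\otimes)<\infty$, which I assume henceforth.

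\textit{Construction and weak* convergence.} For $n\in\nset$ and a.e.\ $(t,x)$ choose, measurably in $(t,x)$ (a measurable selection theorem applies; at fixed scale $n$ the measures $\sigma^n_{t,x},\bmpi^n_{t,x}$ take only finitely many values, which simplifies this) $W_1$-optimal plans $\gamma^{n,Z}_{t,x}\in\Pi(\sigma,\sigma^n_{t,x})$ and $\gamma^{n,Y}_{t,x}\in\Pi(\bmpi_{t,x},\bmpi^n_{t,x})$, disintegrate them against their first marginals as $\sigma\otimes(\gamma^{n,Z}_{t,x})_z$ and $\bmpi_{t,x}\otimes(\gamma^{n,Y}_{t,x})_y$, and let $\mathcal K^n_{t,x}$ be the product Markov kernel $(z,y)\mapsto(\gamma^{n,Z}_{t,x})_z\otimes(\gamma^{n,Y}_{t,x})_y$ on $Z\times Y$. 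Set $\bmnu^n_{t,x}\assign\mathcal K^n_{t,x}\#\bmnu_{t,x}$ and $\bmnu^n\assign(\Lebesgue\restr[0,T])\otimes\mu\otimes\bmnu^n_{t,x}\in\nuset_T$. Since $\proj_Z\bmnu_{t,x}=\sigma$ and $\proj_Y\bmnu_{t,x}=\bmpi_{t,x}$, the $Z$- and $Y$-marginals of $\bmnu^n_{t,x}$ are the second marginals of $\gamma^{n,Z}_{t,x}$ and $\gamma^{n,Y}_{t,x}$, i.e.\ $\sigma^n_{t,x}$ and $\bmpi^n_{t,x}$, so $\bmnu^n_{t,x}\in\Pi(\sigma^n_{t,x},\bmpi^n_{t,x})$ and $F^n_{t,x}(\bmnu^n_{t,x})=C^n_{t,x}(\bmnu^n_{t,x})<\infty$. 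For weak* convergence, the canonical coupling of $\bmnu_{t,x}$ and $\bmnu^n_{t,x}$ induced by $\mathcal K^n_{t,x}$ has $(z,z')$- and $(y,y')$-marginals $\gamma^{n,Z}_{t,x}$ and $\gamma^{n,Y}_{t,x}$, hence $W_{Z\times Y}(\bmnu_{t,x},\bmnu^n_{t,x})\le W_Z(\sigma,\sigma^n_{t,x})+\WoY(\bmpi_{t,x},\bmpi^n_{t,x})$. Integrating, $\int_0^T\int_X W_{Z\times Y}(\bmnu_{t,x},\bmnu^n_{t,x})\,\diff\mu\,\diff t$ is bounded by $\int_0^T\int_X W_Z(\sigma,\sigma^n_{t,x})\,\diff\mu\,\diff t+\int_0^T\WY(\bmpi_t,\bmpi^n_t)\,\diff t$, which tends to $0$ by dominated convergence: $W_Z(\sigma,\sigma^n_{t,x})\to0$ a.e.\ ($Z$ compact, Assumption~\ref{assumption:RegularDiscretization}) with $W_Z(\sigma,\sigma^n_{t,x})\le\diam Z$, and $\WY(\bmpi_t,\bmpi^n_t)\to0$ for a.e.\ $t$ (Assumption~\ref{assumption:PitConvergence}) with $\WY(\bmpi_t,\bmpi^n_t)\le\diam Y$. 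As all these measures share the marginal $(\Lebesgue\restr[0,T])\otimes\mu$ on $\R_+\times X$, this dominates $W_1(\bmnu^n,\bmnu)$ on the compact space $[0,T]\times X\times Z\times Y$, so $\bmnu^n\rightweaks\bmnu$.

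\textit{Energy estimate, and the main obstacle.} The transport part of $F^n_T(\bmnu^n)$ is precisely the left-hand side of \eqref{eq:convergence_objective_function}, which by Lemma~\ref{lemma:convergence_objective_function} converges to $\int_{[0,T]\times X\times Z\times Y}\inner{\nabla_X c(x,y)}{z}\,\diff\bmnu$, the transport part of $F_T(\bmnu)$ (a fixed finite number in all cases). For the entropy, since $\mathcal K^n_{t,x}$ acts coordinate-wise one has $\mathcal K^n_{t,x}\#(\sigma\otimes\bmpi_{t,x})=\sigma^n_{t,x}\otimes\bmpi^n_{t,x}$, so the data processing inequality gives $\KL(\bmnu^n_{t,x}\mid\sigma^n_{t,x}\otimes\bmpi^n_{t,x})\le\KL(\bmnu_{t,x}\mid\sigma\otimes\bmpi_{t,x})$, hence $\KL(\bmnu^n\mid\bmnu^n_\otimes)\le\KL(\bmnu\mid\bmnu_\otimes)<\infty$ with $\bmnu^n_\otimes,\bmnu_\otimes$ as in the proof of Lemma~\ref{lemma:gamma_liminf}. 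For $\eta<\infty$, multiplying by $n\veps^n\to\eta$ yields $\limsup_n n\veps^n\KL(\bmnu^n\mid\bmnu^n_\otimes)\le\eta\KL(\bmnu\mid\bmnu_\otimes)$, and adding the convergent transport term proves \eqref{eq:gamma_limsup}. For $\eta=\infty$ the same bound gives $\limsup_n\KL(\bmnu^n\mid\bmnu^n_\otimes)\le\KL(\bmnu\mid\bmnu_\otimes)$ while the transport part of \eqref{eq:CnInfty} is $1/(n\veps^n)$ times a convergent quantity and tends to $0$; again \eqref{eq:gamma_limsup} follows. The step I expect to be the main obstacle is exactly this control of the entropy term: it is automatic when $\eta>0$, but when $\eta=0$ with $\veps^n$ not eventually zero the target $\bmnu$ may have infinite fibrewise relative entropy, forcing the preliminary block-approximation/energy-density reduction together with lower semicontinuity of the $\Gamma$-limsup; the measurable selection of $\gamma^{n,Z}_{t,x},\gamma^{n,Y}_{t,x}$ is a secondary, routine technicality.
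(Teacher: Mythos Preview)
Your proposal is correct and shares the paper's core construction: the recovery sequence is built fibre-wise by pushing $\bmnu_{t,x}$ through the disintegrations of optimal plans $\gamma^{n,Z}_{t,x}\in\Pi(\sigma,\sigma^n_{t,x})$ and $\gamma^{n,Y}_{t,x}\in\Pi(\bmpi_{t,x},\bmpi^n_{t,x})$; your ``product Markov kernel'' is exactly the paper's formula \eqref{eq:TestLambdaNReconstructing}. The weak* convergence argument and the entropy bound are likewise the same---your data processing inequality is precisely the paper's Jensen step applied to the density \eqref{eq:ReconstructedDensityn}.

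The one genuine difference is the treatment of the case $\eta=0$ with $\veps^n>0$ for infinitely many $n$. The paper keeps the target $\bmnu$ fixed and instead modifies the \emph{recovery sequence} at each $n$: it replaces $\bmnu^n_{t,x}$ by its block approximation at scale $\ell_n=n\veps^n$, so that $n\veps^n\,\KL(\hat\bmnu^n_{t,x}\mid\sigma^n_{t,x}\otimes\bmpi^n_{t,x})\le n\veps^n\,(C-2d\log(n\veps^n))\to 0$ directly, with no diagonal argument. You instead block-approximate the \emph{target} $\bmnu$ first to reduce to bounded relative density (hence finite entropy), then invoke lower semicontinuity of the $\Gamma$-limsup to pass to the limit in the approximation parameter. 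Your route is more modular and makes the role of the data processing inequality cleaner; the paper's route is more self-contained and avoids the diagonal extraction. Both are valid and of comparable length.
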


\begin{proof}
Since $\bmnu \in \nuset_T$ it can be disintegrated into $\bmnu = \Lebesgue \otimes \mu \otimes \bmnu_{t,x}$ for a family $(\bmnu_{t,x})_{t,x}$ in $\prob(Z \times Y)$. We may assume that $F_T(\bmnu)<\infty$, as otherwise there is nothing to prove. Hence, $\bmnu_{t,x} \in \Pi(\sigma, \bmpi_{t,x})$ for $\Lebesgue \otimes \mu$-almost all $(t,x) \in [0,T] \times X$. We will build our recovery sequence by gluing, setting $\bmnu^n \assign \Lebesgue \otimes \mu \otimes \bmnu_{t,x}^n$ where we construct the fibers $\bmnu_{t,x}^n \in \Pi(\sigma_{t,x}^n, \bmpi_{t,x}^n)$ by tweaking the measures $\bmnu_{t,x}$.

\def\gammaol{\ol{\gamma}}
\smallskip
\noindent
\textit{Step 1: construction of the recovery sequence.}
For every $n \in \nset$, let $(\gamma^n_{t,x})_{t,x}$ be a family of measures in $\prob(Y \times Y)$ where $\gamma^n_{t,x}\in \Pi(\bmpi_{t,x}^n, \bmpi_{t,x})$ is an optimal transport plan for $\WoY(\bmpi_{t,x}^n, \bmpi_{t,x})$. (Measurability of this family can be obtained, for instance, by disintegration of a minimizer of \eqref{eq:WYAlternative}.)
Likewise, let $(\gammaol^n_{t,x})_{t,x}$ be a (measurable) family of measures in $\prob(Z \times Z)$ where $\gammaol^n_{t,x}\in \Pi(\sigma_{t,x}^n, \sigma)$ is an optimal transport plan for $W_X(\sigma_{t,x}^n, \sigma_{t,x})$.
We then define $\bmnu_{t,x}^n$ for $n\in \nset$ by integration against $\phi\in \cont(Z\times Y)$ via
\begin{equation}\label{eq:TestLambdaNReconstructing}
\int_{Z\times Y}
\phi(z,y)\,\diff \bmnu_{t,x}^n (z,y)
\assign
\int_{Z^2 \times Y^2}
\phi(z,y)\,
\diff (\gammaol_{t,x}^n)_{z'}(z)\,
\diff (\gamma_{t,x}^n)_{y'}(y)\,
\diff \bmnu_{t,x}(z',y'),
\end{equation}
where $(\gammaol_{t,x}^n)_{z'}$ denotes the disintegration of $\gammaol_{t,x}^n$ with respect to its second marginal (namely $\sigma$) at point $z'$ (and analogously for $(\gamma_{t,x}^n)_{y'}$).

\smallskip
\noindent
\textit{Step 2: correct marginals along the recovery sequence.}
Let us check that $\bmnu_{t,x}^n \in \Pi(\sigma_{t,x}^n, \bmpi_{t,x}^n)$. First, for any $\phi\in \cont(Y)$,
\begin{align*}
	\int_{Z\times Y} \phi(y) \diff \bmnu_{t,x}^n(z,y)
	&=
	\int_{Z^2 \times Y^2}
	\phi(y)
	\diff (\gammaol_{t,x}^n)_{z'}(z)
	\diff (\gamma_{t,x}^n)_{y'}(y)
	\diff \bmnu_{t,x}(z',y')
	\\
	&=
	\int_{Y^2 }
	\phi(y)
	\diff (\gamma_{t,x}^n)_{y'}(y)
	\diff \bmpi_{t,x}(y')
	=
	\int_{Y^2 }
	\phi(y)
	\diff \gamma_{t,x}^n(y, y')
	=
	\int_{Y}
	\phi(y) \diff \bmpi_{t,x}^n(y)
\end{align*}
where we used that $(\gammaol_{t,x}^n)_{z'}$ is a probability measure.
The same argument applies to the $Z$-marginal.

\smallskip
\noindent
\textit{Step 3: convergence of the recovery sequence.}
Now we show that $\bmnu^n \rightweaks \bmnu$ for $n\in \nset$. For this we will use the Kantorovich--Rubinstein duality for the Wasserstein-1 distance \eqref{eq:KantRubin}
\begin{align}
	\label{eq:LimSupRubinsteinA}
	W_{[0,T] \times X \times Z \times Y}(\bmnu^n,\bmnu) = \sup_{\phi \in \Lip_1} \int_{[0,T] \times X \times Z \times Y} \phi\,\diff(\bmnu^n - \bmnu).
\end{align}
where we abbreviate $\Lip_1\assign \Lip_1([0,T] \times X \times Z \times Y)$.
In the following, let $\phi \in \Lip_1$. We find
\begin{align}
	& \left| \int_{[0,T] \times X \times Z \times Y} \phi\,\diff(\bmnu^n  -\bmnu) \right|
	\nonumber \\
	\leq & \int_{[0,T] \times X \times (Z \times Y)^2} \left| \phi(t,x,z,y)-\phi(t,x,z',y')\right|
	\diff(\gammaol_{t,x}^n)_{z'}(z)
	\diff (\gamma_{t,x}^n)_{y'}(y)
	\diff \bmnu_{t,x}(z',y') \diff \mu(x) \diff t
	\label{eq:LimSupRubinsteinB}
\end{align}

Using the Lipschitz continuity of $\phi$ we can bound
\begin{multline*}
	\left| \phi(t,x,z,y)-\phi(t,x,z',y')\right|
	\leq \left| \phi(t,x,z,y)-\phi(t,x,z',y)\right| + \left| \phi(t,x,z',y)-\phi(t,x,z',y')\right| \\
	\leq \|z-z'\| + \|y-y'\|.
\end{multline*}
Thus, we can continue
\begin{align*}
	\tn{\eqref{eq:LimSupRubinsteinB}} & \leq 
	\int_{[0,T] \times X \times Z^2} \|z-z'\| \diff(\gammaol_{t,x}^n)_{z'}(z)\,
	\diff \sigma(z')\,\diff \mu(x)\,\diff t \\
	& \qquad +
	\int_{[0,T] \times X \times Y^2} \|y-y'\| \diff(\gamma_{t,x}^n)_{y'}(y)\,
	\diff \bmpi_{t,x}(y')\,\diff \mu(x)\,\diff t \\
	& \leq 
	\int_{[0,T] \times X} W_{Z}(\sigma^n_{t,x},\sigma)\,
	\diff \mu(x)\,\diff t
	+
	\int_{[0,T] \times X} \WoY(\bmpi^n_{t,x},\bmpi_{t,x})\,
	\diff \mu(x)\,\diff t \\
	& =
	\int_{[0,T] \times X} W_{Z}(\sigma^n_{t,x},\sigma)\,
	\diff \mu(x)\,\diff t
	+
	\int_{[0,T]} \WY(\bmpi^n_{t},\bmpi_{t})\,\diff t,
\end{align*}
where we have used optimality of the plans $\gammaol^n_{t,x}$ and $\gamma^n_{t,x}$.
Using Assumptions \ref{assumption:PitConvergence} and \ref{assumption:RegularDiscretization} and dominated convergence (where we exploit that $Z$ and $Y$ are compact, hence $W_Z$ and $\WY$ are bounded), we find that this tends to zero as $\nset\ni n \to \infty$.
Plugging this into \eqref{eq:LimSupRubinsteinA}, we find that $W_{[0,T] \times X \times Z \times Y}(\bmnu^n,\bmnu) \to 0$ and since Wasserstein distances metrize weak* convergence on compact spaces, we obtain $\bmnu^n \rightweaks \bmnu$ for $n\in\nset$.

\smallskip
\noindent
\textit{Step 4: $\limsup$ inequality.}
Now we have to distinguish between different behaviors of $(n \cdot \veps^n)_n$.
\begin{itemize}
	\item{} \textit{[$\eta=0$, $\veps^n = 0$ for all $n\in\nset$, with only a finite number of exceptions]}
	The exceptions have no effect on the lim sup, hence we may skip them.
	By Lemma \ref{lemma:convergence_objective_function} the transport contribution to the functional converges, and so we obtain that 
	$\displaystyle
	\lim_{n\in\nset, n\rightarrow\infty} F^n_T(\bmnu^n) = F_T(\bmnu)$.

	\item{} \textit{[$\eta>0$]} We have that $\veps^n>0$ for all $n$ up to a finite number of exceptions, which we may again skip.
	In this case, the limit cost has an entropic contribution, and thus for a.e.~$t \in [0,T]$ and $\mu$-a.e.~$x \in X$, $\bmnu_{t,x}$ has a density with respect to $\sigma \otimes \bmpi_{t,x}$, that we denote by $u_{t,x}$. Then, as we will show below, $\bmnu_{t,x}^n$ also has a density with respect to $\sigma_{t,x}^n \otimes \bmpi_{t,x}^n$, that is given by:
	\begin{equation}\label{eq:ReconstructedDensityn}
	u^n_{t,x}(z,y)
	\assign
	\int_{Z\times Y}u_{t,x}(z', y')\diff (\gammaol_{x,t}^n )_z(z')\diff(\gamma_{x,t}^n )_y(y'),
	\end{equation}
	where we use again the transport plans $\gammaol_{x,t}^n \in \Pi(\sigma^n_{t,x},\sigma)$ and $\gamma^n_{t,x} \in \Pi(\bmpi^n_{t,x},\bmpi_{t,x})$ and this time their disintegrations against the first marginals. 
	Let us prove that $u^n_{t,x}$ is indeed the density of $\bmnu_{t,x}^n$ with respect to $\sigma_{t,x}^n \otimes \bmpi_{t,x}^n$:
	\begin{align*}
		&\int_{Z\times Y}
		\phi(z,y)\,u_{t,x}^n(z,y)\,
		\diff \sigma_{t,x}^n(z)\, \diff \bmpi_{t,x}^n(y)
		\\
		&\quad=
		\int_{Z^2 \times Y^2}
		\phi(z,y)\,u_{t,x}(z', y')\,
		\underbrace{
			\diff (\gammaol_{x,t}^n)_{z}(z')\, \diff \sigma^n_{t,x}(z)
		}_{
			= \diff (\gammaol_{x,t}^n)_{z'}(z)\, \diff \sigma(z')\,
		}
		\underbrace{
			\diff (\gamma_{x,t}^n)_{y}(y')\,\diff \bmpi^n_{t,x}(y)
		}_{
			= \diff(\gamma_{x,t}^n)_{y'}(y)\, \diff \bmpi_{t,x}(y')
		},
		\intertext{where we switched the disintegration from the first to the second marginals. Now use that $\bmnu_{t,x} = u_{t,x} \cdot (\sigma \otimes \bmpi_{t,x})$,}
		&\quad=
		\int_{Z^2 \times Y^2}\phi(z,y)\,
		\diff (\gammaol_{x,t}^n)_{z'}(z)\,
		\diff(\gamma_{x,t}^n)_{y'}(y)\,
		\diff\bmnu_{t,x} (z',y')
		=
		\int_{Z\times Y}
		\phi(z,y)\,
		\diff \bmnu_{t,x}^n(z,y).
		\nonumber
	\end{align*}

	Regarding the entropic regularization, notice that $\varphi(s) = s\log(s) - s + 1$ is a convex function, so using Jensen's inequality we obtain:
	\begin{align*}
		\varphi(u_{t,x}^n(z,y) )
		&=
		\varphi\left(
		\int_{Z\times Y} u_{t,x}(z', y')
		\diff (\gammaol_{x,t}^n )_z(z')\diff(\gamma_{x,t}^n )_y(y')
		\right)
		\\
		&\le
		\int_{Z\times Y}
		\varphi(u_{t,x}(z', y') )
		\diff (\gammaol_{x,t}^n )_z(z')\diff(\gamma_{x,t}^n )_y(y'),
	\end{align*}
	so the entropic term can be bounded as
	\begin{align*}
		& \int_{[0,T]\times X}
		\KL(\bmnu_{t,x}^n | \sigma_{t,x}^n\otimes \bmpi_{t,x}^n )
		\,\diff \mu(x)\,\diff t
		=
		\\
		& \quad =
		\int_{[0,T]\times X}
		\int_{Z\times Y} \varphi(u_{t,x}^n(z,y) )
		\,\diff\sigma_{t,x}^n(z)	\,\diff \bmpi_{t,x}^n(y)
		\,\diff \mu(x) \,\diff t
		\\
		& \quad \le
		\int_{[0,T]\times X}
		\int_{Z^2\times Y^2} \varphi(u_{t,x}(z',y') )\,
		\diff \gammaol_{x,t}^n (z,z')\,\diff\gamma_{x,t}^n (y,y')
		\,\diff \mu(x)\,\diff t
		\\
		& \quad =
		\int_{[0,T]\times X}
		\int_{Z\times Y} \varphi(u_{t,x}(z',y') )
		\,\diff \sigma(z') \,\diff \bmpi_{t,x}(y')
		\,\diff \mu(x)\,\diff t
		\\
		& \quad =
		\int_{[0,T]\times X}
		\KL(\bmnu_{t,x} | \sigma \otimes \bmpi_{t,x})
		\,\diff \mu(x)\,\diff t.
	\end{align*}
	Adding to this the convergence of the transport contribution along weak* converging sequences (Lemma \ref{lemma:convergence_objective_function}) and that $n\veps^n$ converges to $\eta$ it follows that, for both $\eta<\infty$ and $\eta=\infty$, $\displaystyle \limsup_{n\in\nset, n\rightarrow\infty} F^n_T(\bmnu^n) \leq F_T(\bmnu)$.

	\item{} \textit{[$\eta=0$, $\veps_n > 0$ for an infinite number of indices $n$]}
	This case is slightly more challenging since the reconstructed $\bmnu_{t,x}^n$ may not have a density with respect to $\sigma_{t,x}^n \otimes \bmpi_{t,x}^n$, and thus the $\KL$ term at finite $n$ may explode for $\veps^n>0$. Hence, for those $n$ the recovery sequence needs to be adjusted. We apply the \textit{block approximation} technique as in \cite{Carlier-EntropyJKO-2015}, which is summarized in Lemma \ref{lem:block_approximation}. We set $\hat{\bmnu}_{t,x}^n$ to be the block approximation of $\bmnu_{t,x}^n$ at scale $\len_n \assign n\veps^n$ (where we set $\Omega \assign Y \cup Z$).
	Lemma \ref{lem:block_approximation} provides that the marginals are preserved, i.e.~$\hat{\bmnu}_{t,x}^n \in \Pi(\sigma_{t,x}^n,\bmpi_{t,x}^n)$. In addition we find that $W_{Z \times Y}(\bmnu_{t,x}^n, \hat{\bmnu}_{t,x}^n) \leq \ell_n \cdot \sqrt{2d}$ and thus $\hat{\bmnu}^n \rightweaks \bmnu$ (arguing as above, e.g.~via dominated convergence). So by Lemma \ref{lemma:convergence_objective_function} the transport contribution still converges.
	Finally, for the entropic contribution we get from Lemma \ref{lem:block_approximation},
	\begin{align}
		n\veps^n \KL(\hat{\bmnu}_{t,x}^n \mid \sigma_{t,x}^n \otimes \bmpi_{t,x}^n)
		\le
		Cn\veps^n - 2d n\veps^n \log(n\veps^n)
		\xrightarrow{n\rightarrow\infty} 0.
	\end{align}
	Wrapping up, this means that $\limsup_{ n\in\nset, n\rightarrow\infty} F^n_T(\hat{\bmnu}^n) \le F_T(\bmnu)$, and $(\hat{\bmnu}^n)_n$ represents a valid recovery sequence. \qedhere
\end{itemize}
\end{proof}

\subsection{Continuity equation}
\label{sec:ContinuityEquation}
The discrete momenta $\bmomega^n$, \eqref{eq:omega_discrete} have been introduce to approximately describe the `horizontal' mass movement in the discrete trajectories $\bmpi^n$, \eqref{eq:pi_discrete} via a continuity equation on $X \times Y$. We now establish that in the limit the relation becomes exact.
\begin{proposition}
	\label{prop:CE}
	Let Assumption \ref{assumption:PitConvergence} hold. Let $\bmomega \in \meas(\R_+ \times X \times Y)^d$ and $\nsetb \subset \nset \subset 2\N$ be a subsequence on which $\bmomega^n \restr [0,T] \rightweaks \bmomega \restr [0,T]$ for any $T \in (0,\infty)$. Then $\bmpi$ and $\bmomega$ solve the horizontal continuity equation
	\begin{align*}
		\partial_t \bmpi_t + \ddiv_X \bmomega_t = 0 \qquad \tn{for $t>0$ and}
		\qquad \bmpi_{t=0} = \piInit
	\end{align*}
	in a distributional sense. More precisely, for any $\phi \in \cont^1_c(\R_+ \times X \times Y)$ one has
	\begin{align}
		\label{eq:FlowWeakPDE}
		\int_{\R_+ \times X \times Y} \partial_t\phi\,\diff \bmpi
		+ \int_{\R_+ \times X \times Y} \nabla_X \phi \cdot \diff \bmomega
		= - \int_{X \times Y} \phi(0,x,y)\,\diff \piInit(x,y).
	\end{align}
\end{proposition}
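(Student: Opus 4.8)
The plan is to establish a discrete version of the weak continuity equation \eqref{eq:FlowWeakPDE} valid for each $n$ up to an error that vanishes as $n\to\infty$, and then to pass to the limit along $\nsetb$. Fix $\phi\in\cont^1_c(\R_+\times X\times Y)$ and choose $T_0$ with $\operatorname{supp}\phi\subset[0,T_0)\times X\times Y$; then all sums over iterations below are finite, with $O(nT_0)$ nonzero terms. Note first that Assumption~\ref{assumption:PitConvergence} gives $\bmpi^n_t\rightweaks\bmpi_t$ for a.e.~$t$, hence $\bmpi^n\rightweaks\bmpi$ on $[0,T_0]$ by dominated convergence; together with the hypothesis $\bmomega^n\restr[0,T_0]\rightweaks\bmomega\restr[0,T_0]$ this will control the passage to the limit. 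Since $t\mapsto\bmpi^n_t$ and $t\mapsto\bmomega^n_t$ are constant on each interval $[k/n,(k+1)/n)$, a summation by parts in $t$ yields
\begin{align*}
	\int\partial_t\phi\,\diff\bmpi^n
	= -\int_{X\times Y}\phi(0,\cdot)\,\diff\bmpi^n_0
	- \sum_{k\geq1}\int_{X\times Y}\phi(k/n,\cdot)\,\diff\bigl(\bmpi^n_{k/n}-\bmpi^n_{(k-1)/n}\bigr),
\end{align*}
while replacing, on each time subinterval, the time average of $\nabla_X\phi$ by its value at an endpoint (an error controlled by the modulus of continuity of $\nabla_X\phi$ in $t$ and the uniformly bounded mass of $\bmomega^n$ on $[0,T_0]$, hence $o(1)$) and reindexing gives
\begin{align*}
	\int\nabla_X\phi\cdot\diff\bmomega^n
	= \sum_{k\geq1}\tfrac1n\int_{X\times Y}\nabla_X\phi(k/n,\cdot)\cdot\diff\bmomega^n_{(k-1)/n}+o(1).
\end{align*}

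Thus the statement reduces to the per-step identity
\begin{align*}
	\int_{X\times Y}\phi(k/n,\cdot)\,\diff\bigl(\bmpi^n_{k/n}-\bmpi^n_{(k-1)/n}\bigr)
	= \tfrac1n\int_{X\times Y}\nabla_X\phi(k/n,\cdot)\cdot\diff\bmomega^n_{(k-1)/n}+E^n_k,
	\qquad\sum_{k}|E^n_k|\xrightarrow{n\to\infty}0.
\end{align*}
The structural key is \eqref{eq:CompCellMarginalPreservation}: the $Y$-marginal carried by a composite cell of the partition active at iteration $k$ equals the one it carried at iteration $k-1$. Hence both $\bmpi^n_{k/n}$ and $\bmpi^n_{(k-1)/n}$ are rebuilt from the same basic-cell marginals $\nu^{n,k-1}_i$, $i\in\partBasicn$, by averaging them over the composite cells of the iteration-$k$, respectively iteration-$(k-1)$, partition. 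Writing $J(i)$ and $\hat J(i)$ for the cells of the two partitions containing basic cell $i$, using the bijection between pairs (composite cell of the iteration-$k$ partition, corner $b\in\{-1,+1\}^d$) and basic cells, and the elementary geometric identity $x^n_{\hat J(i)}-x^n_{J(i)}=b(i)/n$, one rewrites the left-hand side as $\sum_i\int_Y[\phi(k/n,x^n_{\hat J(i)},y)-\phi(k/n,x^n_{J(i)},y)]\,\diff\nu^{n,k-1}_i(y)$ up to error, and the right-hand side, via the same bijection and \eqref{eq:omega_discrete}, as $\sum_i\int_Y\nabla_X\phi(k/n,x^n_{J(i)},y)\cdot(b(i)/n)\,\diff\nu^{n,k-1}_i(y)$ up to error; a first-order Taylor expansion of $\phi$ in $x$ then matches the two main terms.

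It remains to bound the errors, each accumulated over the $O(nT_0)$ relevant iterations. They are of three types: (i) first-order Taylor remainders of $\phi(k/n,\cdot)$ in $x$ over sets of diameter $O(1/n)$, which are $o(1/n)$ uniformly by $\cont^1$-regularity of $\phi$ and sum to $o(1)$ since $\nu^{n,k-1}$ has unit mass; (ii) the replacement of the $\mu$-average of $\nabla_X\phi(k/n,\cdot)$ over a composite cell by its value at the cell center, costing $n^{-1}\cdot\omega_{\nabla_X\phi}(O(1/n))=o(1/n)$ per iteration; and (iii) terms $\nabla_X\phi(k/n,x^n_J,y)\cdot\int_{X^n_J}(x-x^n_J)\,\diff\mu(x)$ stemming from the fact that the geometric cell center $x^n_J$ hard-wired into \eqref{eq:omega_discrete} need not be the $\mu$-barycenter of $X^n_J$. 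For interior cells $x^n_J$ is the geometric centre, so $\int_{X^n_J}(x-x^n_J)\,\diff\Lebesgue=0$, whence $\int_{X^n_J}(x-x^n_J)\,\diff\mu=\int_{X^n_J}(x-x^n_J)(\muDens-a_J)\,\diff\Lebesgue$, with $a_J$ the average of $\muDens$ over $X^n_J$, has norm at most $Cn^{-1}\int_{X^n_J}|\muDens-a_J|\,\diff\Lebesgue$; summing over $J$ produces $Cn^{-1}\|\muDens-P_n\muDens\|_{L^1}$, with $P_n$ the conditional expectation onto the composite partition, and this is $n^{-1}o(1)$ because $\muDens\in L^1$; the $O(n^{d-1})$ boundary cells, where $x^n_J$ may differ from the centre, contribute $O(n^{-2})$ per iteration. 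Multiplying by $O(nT_0)$, all three tend to zero. Combining the three displays yields $\int\partial_t\phi\,\diff\bmpi^n+\int\nabla_X\phi\cdot\diff\bmomega^n+\int\phi(0,\cdot)\,\diff\bmpi^n_0=o(1)$; letting $n\to\infty$ along $\nsetb$, the first two terms converge to $\int\partial_t\phi\,\diff\bmpi$ and $\int\nabla_X\phi\cdot\diff\bmomega$ by the weak* convergences, and $\bmpi^n_0$ converges weak* to $\piInit$ since $W_{X\times Y}(\bmpi^n_0,\pi^{n,0})=O(1/n)$ and $\pi^{n,0}=\piInitN\rightweaks\piInit$ (Remark after Definition~\ref{def:DiscreteTrajectories}), which is exactly \eqref{eq:FlowWeakPDE}.

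I expect the main obstacle to be item (iii): arranging the bookkeeping so that the mismatch between the geometric composite-cell centers used to define $\bmomega^n$ and the $\mu$-barycenters that arise naturally when pairing $\bmpi^n$ with a test function contributes only a vanishing error. This is where $\mu\ll\Lebesgue$ enters, through $L^1$-convergence of piecewise-constant conditional expectations (or, under Assumption~\ref{assumption:RegularityMu}, the sharper $O(1/n)$ rate from $\TV(\muDens)<\infty$). The remaining ingredients — the summation by parts, the combinatorics of the staggered partitions, and the Taylor estimates — are routine.
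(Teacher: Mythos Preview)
Your proof is correct and shares the paper's overall plan (establish a discrete continuity equation with $o(1)$ error, then pass to the weak* limit), but the bookkeeping is organised differently in an instructive way. The paper replaces $x$ by the composite-cell centre $\xJn$ \emph{before} any summation by parts, directly inside the time integrals $\int_0^T\int\partial_t\phi\,\diff\bmpi^n_t\,\diff t$ and $\int_0^T\int\nabla_X\phi\cdot\diff\bmomega^n_t\,\diff t$; this costs a single global error $T\cdot\omega_{\partial_t\phi}(O(1/n))=o(1)$, after which $\int_{X^n_J}\diff\mu=m^n_J$ cancels the normalisation in $\rho^{n,k}_J$ exactly and the telescoping structure emerges without any barycentre correction. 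Your route---summation by parts first, then per-step identities---forces you to control $\int\phi(k/n,\cdot)\,\diff\bmpi^n_{k/n}$ and $\int\phi(k/n,\cdot)\,\diff\bmpi^n_{(k-1)/n}$ separately; a zero-order replacement $x\to x^n_J$ would give only an $O(1/n)$ error per step, which over $O(n)$ steps does not vanish, so you correctly upgrade to a first-order Taylor expansion and are left with your item~(iii) barycentre-mismatch terms, which you control via $L^1$-convergence of piecewise-constant conditional expectations of $\muDens$. This is valid but unnecessary: the paper's proof of this proposition never appeals to $\mu\ll\Lebesgue$, only to the uniform continuity of $\partial_t\phi$ and $\nabla_X\phi$. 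The takeaway is that performing the spatial discretisation at the level of the time-integrated test function, rather than at each time step, sidesteps the barycentre issue entirely and yields a shorter, assumption-lighter argument. (Minor slip: the bijection you invoke is between pairs (cell of the iteration-$(k-1)$ partition, corner $b$) and basic cells, not the iteration-$k$ partition; and it fails for boundary $B$-cells, though the geometric identity $x^n_{\hat J(i)}-x^n_{J(i)}=b(i)/n$ still holds there by the paper's choice of cell centres, and the missing corners are a negligible boundary effect you already account for.)
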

\begin{proof}
	Let $\phi \in \cont^1_c(\R_+ \times X \times Y)$. We will show that
	\begin{align}
		\int_{\R_+ \times X \times Y} \partial_t \phi \,\diff \bmpi^n
		+ \int_{\R_+ \times X \times Y} \nabla_X \phi \cdot\diff \bmomega^n
		=
		- \int_{X \times Y} \phi(0,x,y)\,\diff \piInitN(x,y)
		+ o(1)
		\label{eq:FlowWeakPDE_N}
	\end{align}
	for $\nsetb \ni n \to \infty$ and then \eqref{eq:FlowWeakPDE} will follow by weak* convergence of $(\bmpi^n)_n$ to $\bmpi$ and of $(\bmomega^n)_n$ to $\bmomega$ on compact time intervals.

	Since $\phi$ has compact support, there exists some $T\in \R_+$ such that $\phi(t, \cdot, \cdot)  = 0$ for all $t \ge T$. Now fix some $n$, and note that $\partial_t \phi$ and $\nabla_X \phi$ are uniformly continuous and $\bmpi_t^n$ has finite mass on $[0,T]$. Thus, replacing $\partial_t \phi(t,x,y)$ and $\nabla_X \phi(t,x,y)$ on the left hand side of \eqref{eq:FlowWeakPDE_N} by $\partial_t \phi(t,\xJn,y)$ and $\nabla_X \phi(t,\xJn,y)$ only introduces an error of $o(1)$ in the first two terms, since $\|x-\xJn\|\leq \sqrt{d}/n$. Thus the first term of \eqref{eq:FlowWeakPDE_N} becomes

	\begin{align}
		&\int_{0}^{\R_+} \int_{X \times Y}
		\partial_t \phi(t,x,y) \,\diff \bmpi^n_t(x,y)\, \diff t
		=
		\int_{0}^T \int_{X \times Y} \partial_t \phi(t,\xJn,y) \,\diff \bmpi^n_t(x,y)\, \diff t
		+
		o(1).
		\nonumber
		\\
		\intertext{Now take $K = \lceil nT \rceil$, and use that $\bmpi_t^n$ is constant on time intervals of length $1/n$ and on composite cells, so we can continue}
		&\qquad =
		\sum_{k = 0}^{K-1}
		\sum_{J\in \partnk}
		\int_{0}^{1/n}
		\int_{Y}
		\partial_t \phi(\tfrac{k}{n} + s, x_J^n, y)
		\diff \iter{\nu_J}{n,k}(y)\, \diff s
		+
		o(1)
		\nonumber
		\\
		&\qquad =
		\sum_{k = 0}^{K-1}
		\sum_{J\in \partnk}
		\int_{Y}
		[\phi(\tfrac{k+1}{n}, x_J^n, y) - \phi(\tfrac{k}{n}, x_J^n, y)]
		\diff \iter{\nu_J}{n,k}(y)
		+
		o(1).
		\label{eq:PDE_first_term_n}
	\end{align}

	Likewise, in the second term of \eqref{eq:FlowWeakPDE_N} we replace again $x$ by $\xJn$, and also $t$ by $\tfrac{\lceil nt \rceil}{n}$,
	which yields again an error of order $o(1)$. The second term then becomes
	\begin{align}
		& \int_{\R_+} \int_{X \times Y} \nabla_X \phi(t,x,y) \cdot\diff \bmomega^n_t(x,y) \, \diff t
		=
		\int_{0}^T \int_{X \times Y} \nabla_X
		\phi(\tfrac{\lceil nt \rceil}{n},\xJn,y)
		\cdot\diff \bmomega^n_t(x,y) \, \diff t
		+
		o(1)
		\nonumber
		\\
		&\qquad =
		o(1)
		+
		\sum_{k = 0}^{K-1}
		\sum_{J\in \partnk}
		\sum_{\hat{J} \in \neigh(J)}
		\int_{0}^{1/n}
		\int_{Y}
		\nabla_X\phi(\tfrac{k+1}{n}, x_J^n, y) \cdot
		(x_{\hat{J}}^n-x_J^n) \cdot n\,
		\diff \iter{\nu_{J,\hat{J}}}{n,k}(y)\,
		\diff s.
		\nonumber
		\\
		\intertext{Now the integral over $[0,1/n]$ cancels with the factor $n$, and $\nabla_X\phi(\tfrac{k+1}{n}, x_J^n, y) \cdot
			(x_{\hat{J}}^n-x_J^n) = \phi(\tfrac{k+1}{n}, x_{\hat{J}}^n, y) - \phi(\tfrac{k+1}{n}, x_J^n, y) + o(1/n)$. We get}
		&\qquad =
		o(1)
		+
		\sum_{k = 0}^{K-1}
		\sum_{J\in \partnk}
		\sum_{\hat{J} \in \neigh(J)}
		\int_{Y}
		\left[\phi(\tfrac{k+1}{n}, x_{\hat{J}}^n, y) - \phi(\tfrac{k+1}{n}, x_J^n, y) + o(1/n)\right]
		\diff \iter{\nu_{J,\hat{J}}}{n,k}(y).
		\nonumber
		\\
		\intertext{The sum of all $\iter{\nu_{J, \hat{J}}}{n,k}$ over $J$ and $\hat{J}$ has unit mass, so the total contribution of the $o(1/n)$ errors scales like $K\cdot o(1/n) = T \cdot n \cdot o(1/n)$, which is $o(1)$. Thus, we can absorb this error term into the global $o(1)$ error:}
		&=
		o(1)
		+
		\sum_{k = 0}^{K-1}
		\left[
		\sum_{J\in \partnk}
		\sum_{\hat{J} \in \neigh(J)}
		\int_{Y}
		\phi(\tfrac{k+1}{n}, x_{\hat{J}}^n, y)
		\diff \iter{\nu_{J,\hat{J}}}{n,k}(y)
		-
		\sum_{J\in \partnk}
		\sum_{\hat{J} \in \neigh(J)}
		\int_{Y}
		\phi(\tfrac{k+1}{n}, x_J^n, y)
		\diff \iter{\nu_{J,\hat{J}}}{n,k}(y)
		\right]
		\nonumber
		\\
		\intertext{Then, in the second term we can regroup all $ \iter{\nu_{J,\hat{J}}}{n,k}$ with the same $J$ into $\iter{\nu_{J}}{n,k}$. In the first term we can first reverse the order of the sums, and then use that adding up all $\iter{\nu_{J,\hat{J}}}{n,k}$ with the same $\hat{J}$ results in $\iter{\nu_{\hat{J}}}{n,k} = \iter{\nu_{\hat{J}}}{n,k+1}$ (see \eqref{eq:CompCellMarginalPreservation} for the equality). This leaves us with}
		&=
		o(1)
		+
		\sum_{k = 0}^{K-1}
		\left[
		\sum_{\hat{J}\in \partnkk}
		\int_{Y}
		\phi(\tfrac{k+1}{n}, x_{\hat{J}}^n, y)
		\diff \iter{\nu_{\hat{J}}}{n,k+1}(y)
		-
		\sum_{J\in \partnk}
		\int_{Y}
		\phi(\tfrac{k+1}{n}, x_J^n, y)
		\diff \iter{\nu_{J}}{n,k}(y)
		\right].
		\label{eq:PDE_second_term_n}
	\end{align}

 Now we can combine the temporal and spatial parts, noticing that the first term in \eqref{eq:PDE_first_term_n} cancels with the second term in \eqref{eq:PDE_second_term_n}, so the left hand side of \eqref{eq:FlowWeakPDE_N} equals
\begin{align}
	\int_{\R_+} & \int_{X \times Y} \partial_t \phi(t,x,y) \,\diff \bmpi^n_t(x,y)\, \diff t
	+ \int_{\R_+} \int_{X \times Y} \nabla_X \phi(t,x,y) \cdot\diff \bmomega^n_t(x,y) \, \diff t
	=
	\nonumber
	\\
	&=
	o(1)
	+
	\sum_{k = 0}^{K-1}
	\Bigg[
	\sum_{\hat{J}\in \partnkk}
	\int_{Y}
	\phi(\tfrac{k+1}{n}, x_{\hat{J}}^n, y)
	\diff \iter{\nu_{\hat{J}}}{n,k+1}(y)
	-
	\sum_{J\in \partnk}
	\int_{Y}
	\phi(\tfrac{k}{n}, x_J^n, y)
	\diff \iter{\nu_{J}}{n,k}(y)
	\Bigg]
	\nonumber
	\\
	\intertext{which is a telescopic sum. The surviving terms are just}
	&=
	o(1)
	+
	\sum_{\hat{J}\in \partJ^{n,K}}
	\int_{Y}
	\phi(\tfrac{K}{n}, x_{\hat{J}}^n, y)
	\diff \iter{\nu_{\hat{J}}}{n,k+1}(y)
	-
	\sum_{J\in \partJ^{n,0}}
	\int_{Y}
	\phi(0, x_J^n, y)
	\diff \iter{\nu_{J}}{n,0}(y)
	\nonumber
	\\
	\intertext{The first integral vanishes, since $\phi(K/n,\cdot, \cdot) = 0$. In the second integral we first integrate again in space:}
	&=
	o(1)
	-
	\sum_{J\in \partJ^{n,0}}
	\int_{X_J^n\times Y}
	\phi(0, x_J^n, y)
	\diff \mu(x)
	\frac{1}{m_J^n}
	\diff \iter{\nu_{J}}{n,0}(y)
	\nonumber
	\\
	&=
	o(1)
	-
	\sum_{J\in \partJ^{n,0}}
	\int_{X_J^n\times Y}
	\phi(0, x_J^n, y)
	\diff \bmpi_0^n(x,y)
	\nonumber
	\\
	\intertext{We use for the last time that replacing $x_J^n$ by $x$ in $X_J^n$ introduces a global $o(1)$ error (since $\bmpi_0^n$ has unit mass):}
	&=
	o(1)
	-
	\sum_{J\in \partJ^{n,0}}
	\int_{X_J^n\times Y}
	\phi(0,x, y)
	\diff \bmpi_0^n(x,y)
	=
	o(1)
	-
	\int_{X \times Y}
	\phi(0,x,y)
	\diff \bmpi_0^n(x,y)
	\nonumber
	\\
	\intertext{and finally, since $(\bmpi_0^n)_n$ converges weak* to $\piInit$,}
	&=
	-
	\int_{X \times Y}
	\phi(0,x,y)
	\diff \piInit(x,y)
	+
	o(1)
	,
	\nonumber
\end{align}
which is precisely \eqref{eq:FlowWeakPDE_N}.
\end{proof}

The following observation may help in the interpretation of the trajectories generated by domain decomposition.
\begin{proposition}
	\label{prop:OmegaAbsContinuous}
	The sequence momentum fields $(\bmomega^n)_n$ and the limit $\bmomega$ (Proposition \ref{prop:MomentumConvergence}) are absolutely continuous with respect to their respective trajectories $(\bmpi^n)_n$ and $\bmpi$ and the component-wise density is bounded by one. More precisely,
	\begin{equation}
		\left|\RadNikD{(\bmomega^n_{t,x})_\ell}{\bmpi^n_{t,x}} \right| \le 1
		\qquad \tn{for all $n\in 2\N$, and}
		\qquad
		\left|\RadNikD{(\bmomega_{t,x})_\ell}{\bmpi_{t,x}} \right| \le 1.
	\end{equation}
	for a.e.~$t$, $\mu$-a.e.~$x$ and all $\ell \in \{1,\ldots,d\}$.
\end{proposition}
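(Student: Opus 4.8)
The plan is to read off the conclusion directly from the representation of the momenta in terms of the fiber minimizers. Recall from Remark \ref{rem:MomentumFieldFromLambda} that, for $\ell \in \{1,\ldots,d\}$,
\[
(\bmomega^n_{t,x})_\ell = \proj_Y (\bmnu^n_{t,x}\restr Z^\ell_+ \times Y) - \proj_Y (\bmnu^n_{t,x}\restr Z^\ell_- \times Y), \qquad Z^\ell_\pm = \{z \in Z \mid \pm z_\ell > 0\},
\]
where $\bmnu^n_{t,x} \in \Pi(\sigma^n_{t,x},\bmpi^n_{t,x})$ is the fiber minimizer of Proposition \ref{prop:DomDecGeneratesMinimizerFn}; by Proposition \ref{prop:MomentumConvergence} the limit component $(\bmomega_{t,x})_\ell$ has the same form with $\bmnu^n_{t,x}$ replaced by the limit minimizer $\bmnu_{t,x} \in \Pi(\sigma,\bmpi_{t,x})$. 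So in either case the $\ell$-th component is the difference of the two non-negative measures $\alpha_\pm \assign \proj_Y (\bmnu^n_{t,x}\restr Z^\ell_\pm \times Y)$ (and correspondingly in the limit).

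First I would note that, because $Z^\ell_+$ and $Z^\ell_-$ are disjoint subsets of $Z$, one has $\alpha_+ + \alpha_- = \proj_Y (\bmnu^n_{t,x}\restr (Z^\ell_+\cup Z^\ell_-)\times Y) \leq \proj_Y \bmnu^n_{t,x} = \bmpi^n_{t,x}$, the last identity being the $Y$-marginal constraint defining $\Pi(\sigma^n_{t,x},\bmpi^n_{t,x})$; likewise $\alpha_+ + \alpha_- \leq \bmpi_{t,x}$ in the limit. (Invoking that $\sigma^n_{t,x}$, resp.~$\sigma$, assigns no mass to the hyperplane $\{z_\ell=0\}$ --- item \ref{item:MuN}, resp.~\eqref{eq:SigmaNoMassCoordinateAxis} --- would make this an equality, but the inequality already suffices.) Hence $\alpha_\pm \ll \bmpi^n_{t,x}$ with densities $a_\pm \assign \RadNikD{\alpha_\pm}{\bmpi^n_{t,x}}$ taking values in $[0,1]$ and satisfying $a_+ + a_- \leq 1$ for $\bmpi^n_{t,x}$-a.e.~$y$, so $(\bmomega^n_{t,x})_\ell = (a_+-a_-)\cdot\bmpi^n_{t,x}$ is absolutely continuous with respect to $\bmpi^n_{t,x}$ with density $a_+-a_-$, whose modulus is bounded by $a_+ + a_- \leq 1$. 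The identical computation bounds $\RadNikD{(\bmomega_{t,x})_\ell}{\bmpi_{t,x}}$ by $1$ in absolute value.

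It remains to transfer the fiberwise bound to the full ($\sigma$-finite) measures. Since $\bmomega^n = \Lebesgue \otimes \mu \otimes \bmomega^n_{t,x}$ and $\bmpi^n = \Lebesgue \otimes \mu \otimes \bmpi^n_{t,x}$ (Definition \ref{def:DiscreteTrajectories}), the fiberwise relation $(\bmomega^n_{t,x})_\ell = g^n_{t,x}\cdot\bmpi^n_{t,x}$, where $g^n_{t,x} \assign a_+-a_-$ has $|g^n_{t,x}|\leq 1$ $\bmpi^n_{t,x}$-almost everywhere, glues --- via the disintegration identity of Remark \ref{rem:Disintegration}, tested against $\phi \in \cont_c(\R_+\times X\times Y)$ --- to the statement that $(\bmomega^n)_\ell = g^n\cdot\bmpi^n$ where $g^n(t,x,y) \assign g^n_{t,x}(y)$ satisfies $|g^n|\leq 1$ and has $(t,x)$-disintegration equal to $\RadNikD{(\bmomega^n_{t,x})_\ell}{\bmpi^n_{t,x}}$; the same argument with $\bmomega = \Lebesgue \otimes \mu \otimes \bmomega_{t,x}$ and $\bmpi = \Lebesgue \otimes \mu \otimes \bmpi_{t,x}$ (Proposition \ref{prop:MomentumConvergence}) handles the limit. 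The proof is essentially bookkeeping; the only point requiring some care is the joint measurability of the density family $(t,x,y)\mapsto g^n_{t,x}(y)$ (and its limit analogue), which follows from the measurability built into the disintegrations $(\bmnu^n_{t,x})_{t,x}$ and $(\bmnu_{t,x})_{t,x}$, so I do not anticipate a genuine obstacle here.
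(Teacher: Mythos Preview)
Your proof is correct and follows essentially the same approach as the paper's: both arguments read off the bound directly from the representation $(\bmomega^n_{t,x})_\ell = \proj_Y(\bmnu^n_{t,x}\restr Z^\ell_+\times Y) - \proj_Y(\bmnu^n_{t,x}\restr Z^\ell_-\times Y)$ and use that the two non-negative summands add up to (at most) $\bmpi^n_{t,x}$, so the density of their difference has modulus bounded by $1$. The paper's version is slightly terser and stays at the fiber level; your additional paragraph on gluing to the full $\sigma$-finite measures is fine but not strictly needed, since the ``more precisely'' part of the statement is already fiberwise.
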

\begin{proof}
For $n\in 2\N$, this is a simple consequence of \eqref{eq:MomentumFieldFromLambda}:
	\begin{align*}
		\left|\RadNikD{(\bmomega^n_{t,x})_\ell}{\bmpi^n_{t,x}} \right|
		&=
		\left|\RadNikD{[\proj_Y (\bmnu_{t,x}^n\restr Z^\ell_+ \times Y)
			-
			\proj_Y (\bmnu_{t,x}^n\restr  Z^\ell_- \times Y)]}{\bmpi^n_{t,x}} \right|
		\\
		& \le
		\left|\RadNikD{\proj_Y (\bmnu_{t,x}^n\restr Z^\ell_+ \times Y)}{\bmpi^n_{t,x}} \right|
		+
		\left|\RadNikD{\proj_Y (\bmnu_{t,x}^n\restr  Z^\ell_- \times Y)}{\bmpi^n_{t,x}} \right|
		\\
		\intertext{and now use that $\bmnu^n$ is a positive measure}
		&=
		\RadNikD{\proj_Y (\bmnu_{t,x}^n\restr Z^\ell_+ \times Y)}{\bmpi^n_{t,x}}
		+
		\RadNikD{\proj_Y (\bmnu_{t,x}^n\restr  Z^\ell_- \times Y)}{\bmpi^n_{t,x}}
		=
		\RadNikD{\proj_Y (\bmnu^n_{t,x}\restr Z \times Y)}{\bmpi^n_{t,x}}
		=
		1.
	\end{align*}
The argument for the limit momentum $\bmomega$ is completely analogous.
\end{proof}
\begin{remark}[Interpretation]
In Section \ref{sec:Oscillations} we have introduced the `vertical transport metric' $\WY$, Definition \ref{def:WY}, which can be interpreted as an optimal transport metric on $X \times Y$ that only allows `vertical transport' of mass along the $Y$-direction. It was established that (under suitable conditions) the discrete and limit trajectories are equicontinuous in this metric. This means, we can interpret the changes in $\bmpi^n_t$ and $\bmpi_t$ over time as being induced by relatively regular movement of mass in the vertical direction (and the corresponding convergence of $\bmpi^n_t$ to $\bmpi_t$ was important for the convergence of the cell problems in Section \ref{sec:Convergence}).

Conversely, Proposition \ref{prop:OmegaAbsContinuous} allows the interpretation of the changes in $\bmpi^n_t$ and $\bmpi_t$ over time as horizontal movement of mass, along the $X$-direction, where particles move at most with velocity 1 along each spatial axis. Hence, if we introduced a `horizontal' analogue of the metric $\WY$, the curves $\bmpi^n_t$ and $\bmpi_t$ would be Lipschitz with respect to that metric with Lipschitz constant $\sqrt{d}$.
This corresponds to the fact that each mass particle can only travel by one basic cell along each axis per iteration.
Unfortunately, this regularity is not suitable for the convergence of the fiber problems. Hence, the `detour' via Section \ref{sec:Oscillations} is necessary.
\end{remark}

\subsection{Main result}
We can now summarize and assemble the results from the two previous sections to arrive at the main result of the article.
\begin{theorem}
	\label{thm:main}
	Assume Assumptions \ref{assumption:PitConvergence} and \ref{assumption:RegularDiscretization} hold.
	Then, up to selection of a subsequence $\nsetb \subset \nset$, the sequences of discrete trajectories $(\bmpi^n)_n$, \eqref{eq:pi_discrete}, and momenta $(\bmomega^n)_n$, \eqref{eq:omega_discrete}, that are generated by the domain decomposition algorithm at scales $n$, converge weak* on compact sets to a limit trajectory $\bmpi$ and momentum $\bmomega$ as $n \to \infty$.
	The limits solve the horizontal continuity equation
	\begin{align*}
		\partial_t \bmpi_t + \ddiv_X \bmomega_t = 0 \qquad \tn{for $t>0$ and}
		\qquad \bmpi_{t=0} = \piInit
	\end{align*}
	on $X \times Y$ in a distributional sense, \eqref{eq:FlowWeakPDE}.
	The limit momentum $\bmomega$ is induced by an asymptotic version of the domain decomposition algorithm. More precisely, for $\Lebesgue \otimes \mu$-almost all $(t,x)$ its disintegration is given by
	\begin{multline*}
		(\bmomega_{t,x})_\ell
		\assign
		\proj_Y (\bmnu_{t,x}\restr Z^\ell_+ \times Y)
		-
		\proj_Y (\bmnu_{t,x}\restr  Z^\ell_- \times Y)
		\quad
		\text{for } \ell = 1,...,d,
		\\
		\text{with}\quad
		Z^\ell_\pm = \{z \in Z \mid \pm z_\ell > 0\}
	\end{multline*}
	where for $\eta<\infty$ the measure $\bmnu_{t,x}$ is given as a minimizer of the asymptotic cell problem
	\begin{align*}
	\inf \left\{
	\int_{Z\times Y}
	\inner{\nabla_X c(x, y)}{z} \, \diff \vecnu(z,y)
	+
	\eta \cdot \KL(\vecnu | \sigma\otimes \bmpi_{t,x})
	\middle|
	\vecnu\in\Pi(\sigma, \bmpi_{t,x})
	\right\}.
	\end{align*}
	For $\eta=\infty$ one finds $\bmnu_{t,x}=\sigma \otimes \bmpi_{t,x}$, which implies $\bmomega_{t,x}=0$ and thus $\bmpi_t=\piInit$ for all $t \in \R_+$. Hence, the algorithm asymptotically freezes.
\end{theorem}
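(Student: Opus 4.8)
\noindent\emph{Proof strategy.} The statement is an assembly of the results of Sections~\ref{sec:Oscillations} and~\ref{sec:Convergence}; the plan is to invoke them in the correct order and, as the only genuinely new point, to disintegrate the minimality of the glued limit problem $F_T$ into fiberwise minimality. First I would fix the subsequence and collect the convergences. By Proposition~\ref{prop:MomentumConvergence} (valid under Assumptions~\ref{assumption:PitConvergence} and~\ref{assumption:RegularDiscretization}) there is a subsequence $\nsetb \subset \nset \subset 2\N$, a measure $\bmnu \in \measp(\R_+ \times X \times Z \times Y)$ and a momentum field $\bmomega = \Lebesgue \otimes \mu \otimes \bmomega_{t,x}$ built from $\bmnu$ by the stated fiberwise formula, such that for every $T \in (0,\infty)$ one has $\bmnu^n \restr ([0,T]\times X\times Z\times Y) \rightweaks \bmnu \restr ([0,T]\times X\times Z\times Y) \in \nuset_T$ with $\bmnu$ minimizing $F_T$, and $\bmomega^n \restr [0,T] \rightweaks \bmomega \restr [0,T]$, along $\nsetb$. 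The weak* convergence $\bmpi^n \rightweaks \bmpi$ on every $[0,T]\times X\times Y$ I would obtain directly from Assumption~\ref{assumption:PitConvergence}, which yields $\bmpi^n_t \rightweaks \bmpi_t$ for a.e.\ $t$: for $\phi \in \cont([0,T]\times X\times Y)$ the inner integrals $\int_{X\times Y}\phi(t,\cdot,\cdot)\,\diff\bmpi^n_t$ converge for a.e.\ $t$ and are uniformly bounded (all $\bmpi^n_t$ are probability measures), so dominated convergence gives $\int\phi\,\diff\bmpi^n \to \int\phi\,\diff\bmpi$. With $\nsetb$ and these weak* limits in hand, Proposition~\ref{prop:CE} applies verbatim and delivers the distributional continuity equation \eqref{eq:FlowWeakPDE}, i.e.\ $\partial_t \bmpi_t + \ddiv_X \bmomega_t = 0$ for $t>0$ together with $\bmpi_{t=0} = \piInit$.

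It then remains to identify $\bmnu_{t,x}$. Since $F_T(\bmnu) = \int_0^T\!\!\int_X F_{t,x}(\bmnu_{t,x})\,\diff\mu(x)\,\diff t$ and $\bmnu$ minimizes $F_T$, I would show that $\bmnu_{t,x}$ minimizes $F_{t,x}$ for $\Lebesgue\otimes\mu$-a.e.\ $(t,x)$: each $F_{t,x}$ is weak*-lower semicontinuous on $\prob(Z\times Y)$ with weak*-compact sublevel sets ($Z\times Y$ compact), and the product $\sigma\otimes\bmpi_{t,x}$ is a feasible competitor of finite cost, so $\inf F_{t,x}$ is finite and attained for a.e.\ $(t,x)$; a measurable-selection argument (e.g.\ Kuratowski--Ryll-Nardzewski, using that $(t,x)\mapsto\bmpi_{t,x}$ is measurable while $\sigma$ is fixed and $c\in\cont^1$) produces a measurable family of minimizers, and substituting it on any positive-measure set where $\bmnu_{t,x}$ failed to be a minimizer would strictly decrease $F_T$, a contradiction. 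Unfolding Definition~\ref{def:LimitFiberProblem}, for $\eta<\infty$ this minimality is exactly the asymptotic cell problem displayed in the theorem, and the expression for $\bmomega_{t,x}$ in terms of $\bmnu_{t,x}$ is the one recorded in Proposition~\ref{prop:MomentumConvergence}.

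For $\eta=\infty$ the fiber functional reduces to $F_{t,x}(\vecnu)=\KL(\vecnu\mid\sigma\otimes\bmpi_{t,x})$ on $\Pi(\sigma,\bmpi_{t,x})$, which is non-negative and vanishes only at the feasible point $\sigma\otimes\bmpi_{t,x}$; hence $\bmnu_{t,x}=\sigma\otimes\bmpi_{t,x}$, and then $\proj_Y(\bmnu_{t,x}\restr Z^\ell_+\times Y) - \proj_Y(\bmnu_{t,x}\restr Z^\ell_-\times Y) = (\sigma(Z^\ell_+)-\sigma(Z^\ell_-))\,\bmpi_{t,x}=0$, since $\sigma$ assigns mass $\tfrac12$ to each halfspace $Z^\ell_\pm$ (immediate for the prototypical regular discretization sequences of Lemma~\ref{lem:RegularityMuN}, where $\sigma$ is symmetric under $z_\ell\mapsto -z_\ell$); thus $\bmomega\equiv 0$ and the continuity equation forces $\bmpi_t=\piInit$ for all $t$. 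The only step not reducible to a direct citation of Propositions~\ref{prop:MomentumConvergence} and~\ref{prop:CE} and Definition~\ref{def:LimitFiberProblem} is the disintegration of minimality together with the measurability it requires; I expect that to be the main — though essentially routine — point of care.
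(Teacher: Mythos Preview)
Your proposal is correct and follows essentially the same assembly of Propositions~\ref{prop:MomentumConvergence} and~\ref{prop:CE} as the paper. Two remarks. First, the fiberwise-minimality step you single out as the main point of care is simply glossed over in the paper's proof, which just asserts that the disintegration $\bmnu_{t,x}$ consists of fiberwise minimizers; your measurable-selection argument is therefore an addition of rigor rather than a different route. Second, for $\eta=\infty$ your justification of $\sigma(Z^\ell_+)=\sigma(Z^\ell_-)$ via reflection symmetry only covers the prototypical $\sigma$ of Lemma~\ref{lem:RegularityMuN}; the paper instead invokes Remark~\ref{remark:ZQuadrantsSameMass}, which shows $\sigma(Z_b)=2^{-d}$ for each quadrant $Z_b$ and for \emph{any} regular discretization sequence (using only the standing assumption $\mu^n(X^n_i)=m^n_i$ together with Lebesgue differentiation), so that $\sigma(Z^\ell_\pm)=\tfrac12$ holds in full generality under Assumption~\ref{assumption:RegularDiscretization}. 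Replacing your symmetry remark by that reference closes the only small gap.
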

\begin{proof}
Assumptions \ref{assumption:PitConvergence} includes the weak* convergence on compact sets of $\bmpi^n$ to a limit $\bmpi$ for a subsequence $\nset$.
Under Assumptions \ref{assumption:PitConvergence} and \ref{assumption:RegularDiscretization}, Proposition \ref{prop:MomentumConvergence} provides the existence of a subsequence $\nsetb \subset \nset$ such that $\bmomega^n$ converges weak* on compact sets to a limit $\bmomega$ on $\nsetb$, and this limit is of the prescribed form for (almost-everywhere) fiber-wise minimizers $\bmnu_{t,x}$ of a limit fiber problem given in Definition \ref{def:LimitFiberProblem}.

For $\eta<\infty$ the limit fiber problem is as stated. For $\eta=\infty$, the unique minimizer of the limit fiber problem is given by $\bmnu_{t,x}=\sigma \otimes \bmpi_{t,x}$. With Assumption \ref{assumption:RegularDiscretization} this implies $\bmomega_{t,x}=0$ (see Remark \ref{remark:ZQuadrantsSameMass}).

Solution to the continuity equation is provided by Proposition \ref{prop:CE}. For $\eta=\infty$, with $\bmomega=0$ this implies that the limit trajectory $\bmpi_t$ is constant and equal to $\piInit$.
\end{proof}

\begin{remark}
By virtue of Proposition \ref{prop:WTVbGivesWYConditions}, Assumption \ref{assumption:PitConvergence} can be replaced by Assumption \ref{assumption:RegularityMu} and the condition $\sup_{n,k} \WTVb(\pi^{n,k})<\infty$.
\end{remark}

\begin{remark}[Discussion]
\label{rem:MainDiscussion}
We observe that solutions to the limit system given in Theorem \ref{thm:main} are not unique. For instance, if $\piInit=(\id,S)_\sharp \mu$ for some (non-optimal) Monge-map $S: X \to Y$, then a solution to the limit system is given by $\bmpi_t=\piInit$, $\bmomega_t=0$, $\bmnu_{t,x}=\sigma \otimes \delta_{S(x)}$ for all $t \geq 0$ where we used that $\bmpi_{t,x}=(\piInit)_x=\delta_{S(x)}$.
In contrast, limit solutions generated by the domain decomposition algorithm are usually able to leave such a point by making the coupling non-deterministic, since at each $n$ the algorithm has a `non-zero range of vision' (see Figure \ref{fig:example_iterations_flipped_bottleneck}, for an example).
This situation is somewhat analogous to a gradient flow being stuck in a saddle point, whereas a minimizing movement scheme or a proximal point algorithm is able to move on.
		
The algorithm by Angenent, Haker, and Tannenbaum \cite{OptimalTransportWarpingTheory} solves the $W_2$-optimal transport problem (for convex $X$ and $\mu \ll \Lebesgue$) by starting from some feasible Monge map and then subsequently removing its curl in a suitable way. It therefore generates trajectories that lie solely in the subset of Monge couplings (i.e.~concentrated on a map) and the algorithm breaks down when cusps or overlaps form. (\cite{OptimalTransportWarpingTheory} also discusses a regularized version.)
From the previous paragraph we deduce that in asymptotic domain decomposition trajectories, mass can only move when the coupling is not of Monge-type and the algorithm is well-defined on the whole space of Kantorovich transport plans.
As shown in Figure \ref{fig:example_iterations_flipped_bottleneck}, the domain decomposition algorithm can also evolve away from an initial, sub-optimal Monge coupling by making it instantaneously non-deterministic (see also Remark \ref{rem:MainDiscussion}).
\end{remark}

\section{Numerical examples}\label{sec:Numerics}
Numerical examples for the practical efficiency of the method have already been illustrated in \cite{BoSch2020}.
A basic intuition for the asymptotic behaviour as $n \to \infty$ on `well-behaved' examples can be drawn from Figures \ref{fig:introflipped} to \ref{fig:example_iterations_flipped_bottleneck}.
The examples in this section aim at providing some glimpse beyond the theoretical results of this article by illustrating counter-examples and conjectures.
In Section \ref{sec:HessianExample} we show a initialization that is locally optimal on all composite cells but not not globally minimal for $\veps=0$. In Section \ref{sec:SemidiscreteExample} we show a semi-discrete example that takes an increasingly long time for convergence for each $n$, with the limit trajectory remaining stuck at $\piInit$. Finally, in Section \ref{sec:WTVExample} we present a numerical example that suggests that a $\WTVb$ bound may not hold universally.

\subsection{Example for asymptotic sub-optimality: discretization}
\label{sec:HessianExample}
It is well-known that the domain decomposition algorithm on discrete unregularized problems may fail to converge to the globally optimal solution.
Examples are given in \cite[Section 5.2]{BenamouPolarDomainDecomposition1994} and \cite[Example 4.12]{BoSch2020}.
Nevertheless it is instructive to study such an example in the context of the asymptotic behaviour of the algorithm.

\begin{figure}[bt]
	\centering	
	\includegraphics[width=\linewidth]{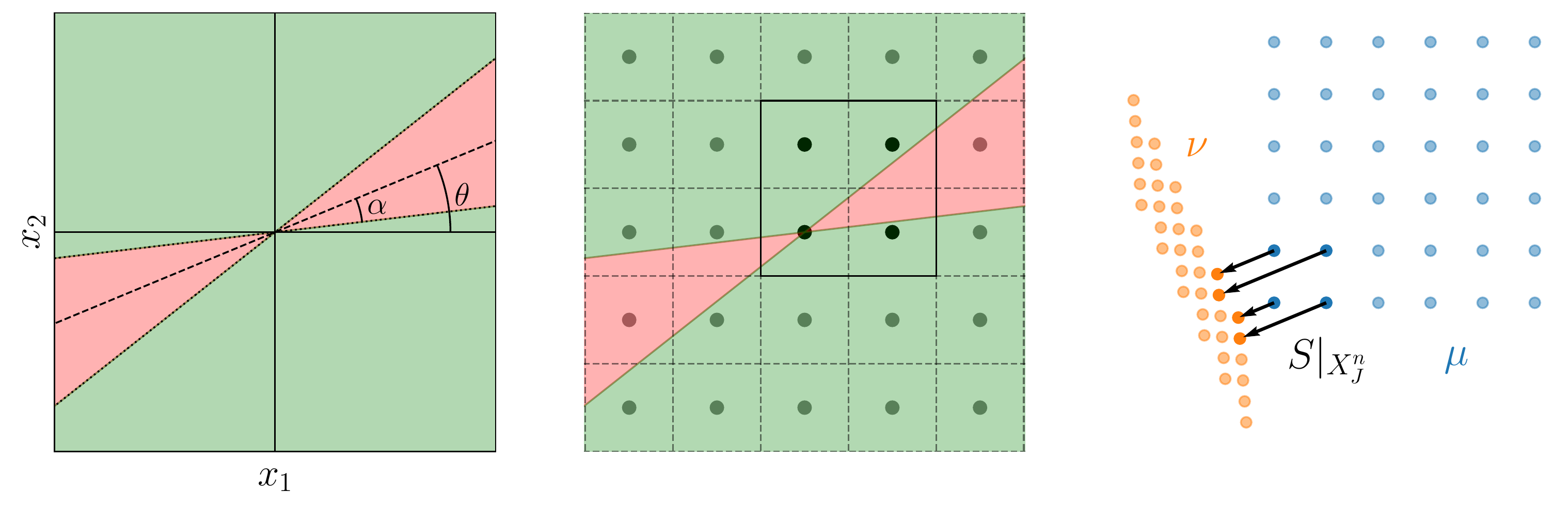}
	\caption{Left, cone where $x^T H x < 0$ in red. Center, for $H$ with a sufficiently narrow, $S\big|_{X_J^n}$ is a monotone arrangement for all composite cells $X_J^n$. Right, the resulting coupling $\piInitN$ is optimal on each composite cell, albeit not globally. 
	}
	\label{fig:hessian_example}
\end{figure}
	
In the following, for $\beta \in S^1$ denote by $e_\beta$ the unit vector in $\R^2$ with orientation $\beta$. Further, let
$$
	H \assign R_\theta^\top \begin{pmatrix} -(\tan \alpha)^2 & 0 \\ 0 && 1 \end{pmatrix} R_\theta
	\qquad \tn{where} \qquad
	R_\theta  \assign
	\begin{pmatrix}
	\cos \theta  & \sin \theta  \\
	- \sin \theta  & \cos \theta 
	\end{pmatrix}$$
for some $\theta \in S^1$ and some $\alpha \in (0,\pi/2)$.
Set
\begin{align*}
	V & : \R^2 \to \R, & x & \mapsto \tfrac12 x^\top H\,x, & S \assign \nabla V.
\end{align*}
Clearly $H$ is indefinite with eigenvalues $-(\tan \alpha)^2$ and $1$ for eigenvectors $e_\theta$ and $e_{\theta+\pi/2}$.
Consequently $V$ is not convex and thus $S$ is in general not an optimal transport map between $\mu \in \measp(\R^2)$ and $S_\sharp \mu$ for the squared distance cost on $\R^2$ by virtue of Brenier's polar factorization \cite{MonotoneRerrangement-91}.
However, for a set $A \subset \R^2$ such that $(x_1-x_2)^\top H (x_1-x_2) \geq 0$ for all $x_1,x_2 \in A$ one quickly verifies that the graph of $S$ over $A$ is $c$-cyclically monotone for the squared distance and therefore $S$ is an optimal transport map between $\mu$ and $S_\sharp \mu$ for $\mu \in \measp(A)$.
One has $e_\beta^\top H e_\beta < 0$ if and only if $\beta \in (\theta-\alpha,\theta+\alpha) \cup (\theta+\pi-\alpha,\theta+\pi+\alpha)$ on $S^1$ (see Figure \ref{fig:hessian_example} for an illustration of this and the subsequent construction).
Therefore, if $\mu^n = \sum_{i \in I^n} m_i^n \cdot \delta_{x_i^n}$ (see Lemma \ref{lem:RegularityMuN}) such that each composite cell is essentially a small $2$ by $2$ Cartesian grid, and $\theta$ and $\alpha$ are chosen carefully, then $S$ will be optimal on each composite cell. But for sufficiently large $n$, some grid points from another cell will eventually lie in the red cone and $S$ is then not globally optimal on $X$.
Hence, if we set $\nu^n \assign S_\sharp \mu^n$ and $\piInitN=(\id,S)_\sharp \mu^n$, $\veps^n=0$, then the discrete trajectory at each $n$ will be stationary and so will be the limit trajectory. But it will not be globally optimal.

Fix now a scale $n$. If each basic cell contains more points, the space where the red cone `remains unnoticed' becomes smaller and thus $\alpha$ must decrease, but it can always be chosen to be strictly positive, i.e.~$S$ will not be globally optimal on a sufficiently large grid.
If we send the number of points per basic cell to infinity (being arranged on a regular Cartesian grid), for fixed $n$, it was shown in \cite[Section 5.2]{BenamouPolarDomainDecomposition1994} that in the limit one recovers a globally optimal coupling.
The behaviour in the case where the number of points per basic cell and $n$ tend to $\infty$ simultaneously remains open (see also Lemma \ref{lem:RegularityMuN}).

Similarly, if we set $\veps^n>0$, then for each fixed $n$ we know by \cite{BoSch2020} that the algorithm converges to the global minimizer. If $\eta=\infty$, then asymptotically the algorithm will freeze in the initial configuration (Theorem \ref{thm:main}).
If one takes $\veps^n$ to zero (for fixed $n$), then the sequence of first iterates (all with the same initialization) will converge (possibly up to subsequences) to a first iterate for the case $\veps^n=0$ (see \cite{Cominetti-ExpBarrierConvergence-1992,LeonardSchroedingerMK2012,Carlier-EntropyJKO-2015}). By stability of optimal transport \cite[Theorem 5.20]{Villani-OptimalTransport-09} this will then extend to a fixed finite number of iterations. That is, the iterate for $\veps^n>0$ after $k$ iterations should converge to a possible trajectory for $\veps^n=0$ after $k$ iterations (for $\veps^n=0$ the trajectory may not be unique, since the cell problems may not always have unique solutions), as $\veps^n \to 0$.
By sending $\veps^n$ to zero sufficiently fast, it seems therefore possible to obtain the same asymptotic behaviour as for $\veps^n=0$, i.e.~potentially we end up in a non-minimal configuration in the limit, even though at each $n$, eventually the globally optimal solution is found (after times that increase exponentially in $n$).
An open question is therefore, if there is an intermediate regime of scaling $(\veps^n)_n$ such that the global minimizer is obtained in the asymptotic trajectory.
Preliminary numerical experiments (and data from \cite{BoSch2020}) suggest that such a regime may exist.

\subsection{Example for asymptotic sub-optimality: semi-discrete transport}
\label{sec:SemidiscreteExample}
\begin{figure}[bt]
	\centering	
	\includegraphics[width=0.5\linewidth]{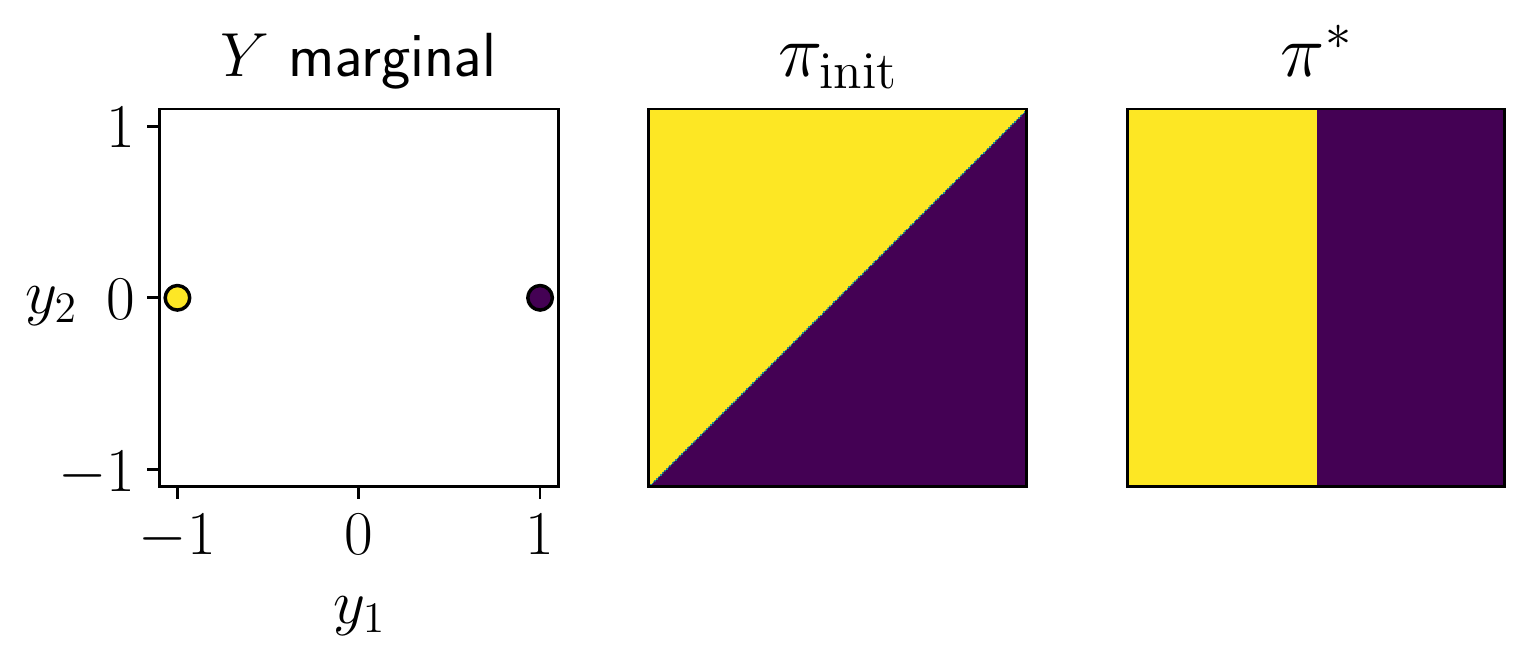}
	\caption{Configuration, initialization and optimal state for the semi-discrete example. Colors in the images for $\piInit$ and $\pi^\ast$ indicate the target point in $Y$.}
	\label{fig:semidiscrete}
\end{figure}
\begin{figure}[bt]
	\centering
	\includegraphics[width=0.91\linewidth]{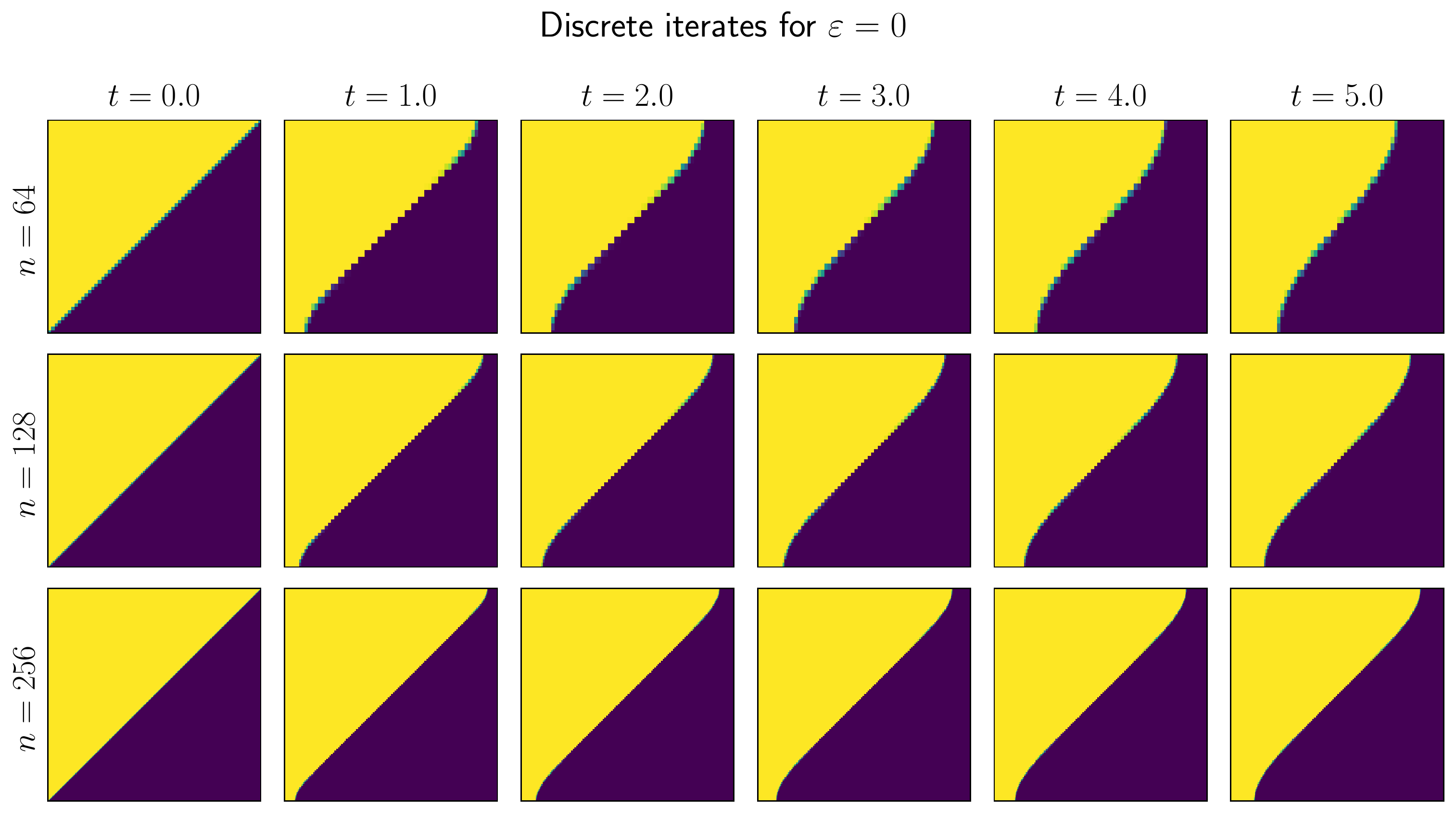}	
	\caption{Domain decomposition iterates $\iter{\pi}{n, \lfloor nt \rfloor}$ for the semi-discrete example. Colors in the images for couplings indicate the target point of mass in $Y$ for given cells. Along the interface mass is sent to both points, resulting in a mixed color. Note that changes in the iterations occur only along the interface, which remains approximately stable during iterations. As $n\to\infty$ the dynamics freeze.}
	\label{fig:semidiscreteB}
\end{figure}

Another asymptotic obstruction to global optimality occurs when a sub-optimal initial plan is chosen where sub-optimality is concentrated on an increasingly small subset of composite cells (as $n\to \infty$) and if this concentration is `stable' under iterations. Then most of the cells will not induce any change in the plan and asymptotically the trajectory freezes.
We illustrate this phenomenon with a semi-discrete example. Let $\mu = \mu^n = \Lebesgue \restr X$, $\nu = \nu^n$ is the sum of two Diracs at $(\pm 1,0)$ with equal mass, and the initialization $\piInit=\piInitN$ takes all the mass to the left of an approximately vertical interface line to $(-1,0)$ and everything to the right to $(+1,0)$ (see Figure \ref{fig:semidiscrete}). The optimal coupling would be given by a vertical interface.

All composite cells that do not touch the interface are locally optimal and will not change during an iteration, only cells that intersect the interface change. On a macroscopic level the effect is roughly as follows: mass for the left and right points in $Y$ will essentially `travel along the interface' in the appropriate direction, the rough structure of the interface itself remains stable. At the boundaries of $X$ the interface will curve towards the right orientation and this will gradually propagate into the interior of the domain (see Figure \ref{fig:semidiscreteB}, first row).
For each fixed $n$ convergence to the global minimizer follows from Benamou's analysis \cite{BenamouPolarDomainDecomposition1994} and the extension in Appendix \ref{sec:Benamou}.
However, the capacity of mass that can flow along the interface decreases with $n$ and thus convergence will become gradually slower, freezing in the limit $n \to \infty$ (see Figure \ref{fig:semidiscreteB}, rest of rows).

If the orientation of the initial interface is further than $\pi/2$ from the optimal orientation then it is not approximately stable under iterations and more dramatic changes to the plan happen at early times. However the eventual stationary point is in general also not globally optimal as $n \to \infty$.

\subsection{Example for potentially unbounded WTVB}\label{sec:WTVExample}
\begin{figure}[bt]
	\centering	
	\includegraphics[width=0.5\linewidth]{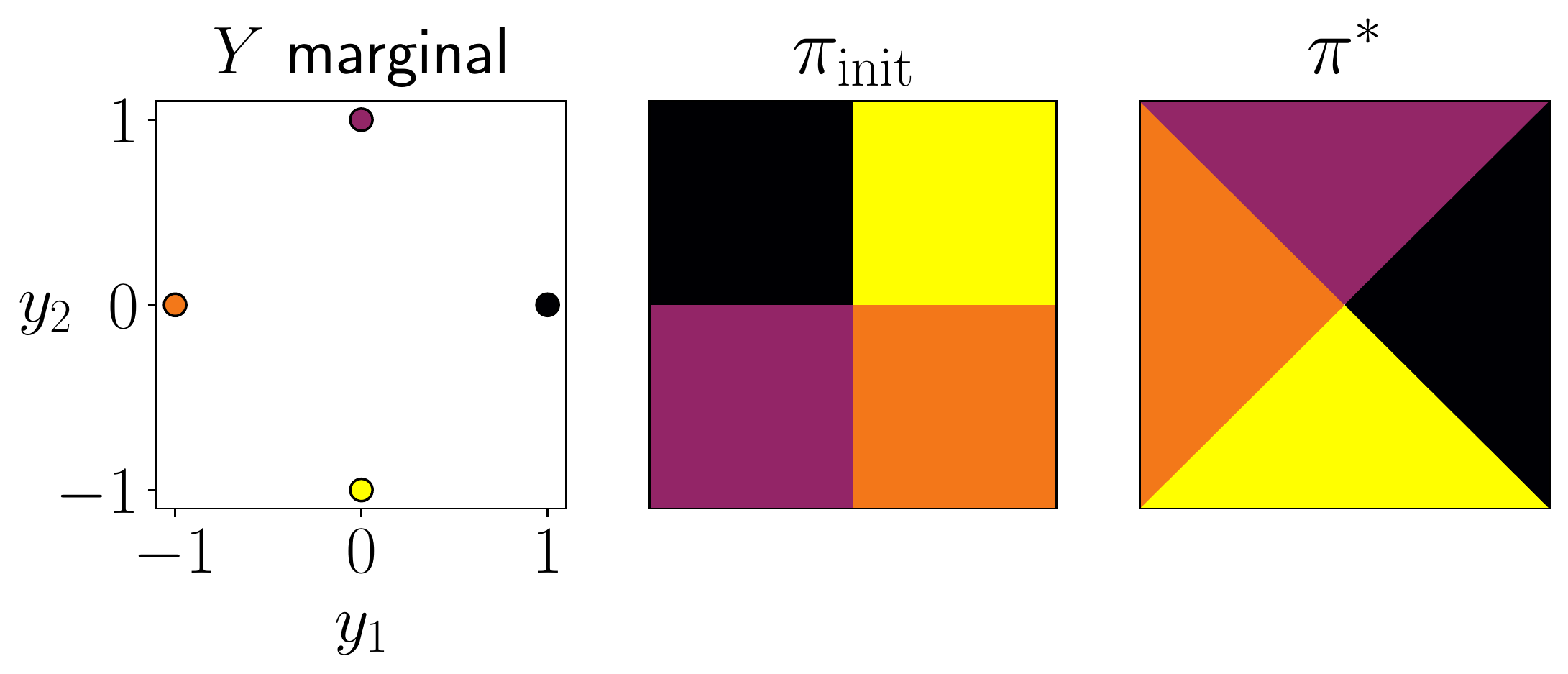}
	\caption{Configuration, initialization and optimal state for the unbounded $\WTVb$ example. 
	}
	\label{fig:semidiscrete-WTV}
\end{figure}

Finally, we give an example that seems to indicate that $\WTVb(\iter{\pi}{n,k})$ is not uniformly bounded in $n$ and $k$ in general. For this choose $\mu = \Lebesgue \restr X$, $\mu^n$ the one-point-per-basic-cell discretization of $\mu$, $\nu = \nu^n$ a measure composed of 4 Diracs with equal mass as shown in Figure \ref{fig:semidiscrete-WTV}, where we also show the initialization $\piInit$.

\begin{figure}[btp]
	\centering	
	\includegraphics[width=\linewidth]{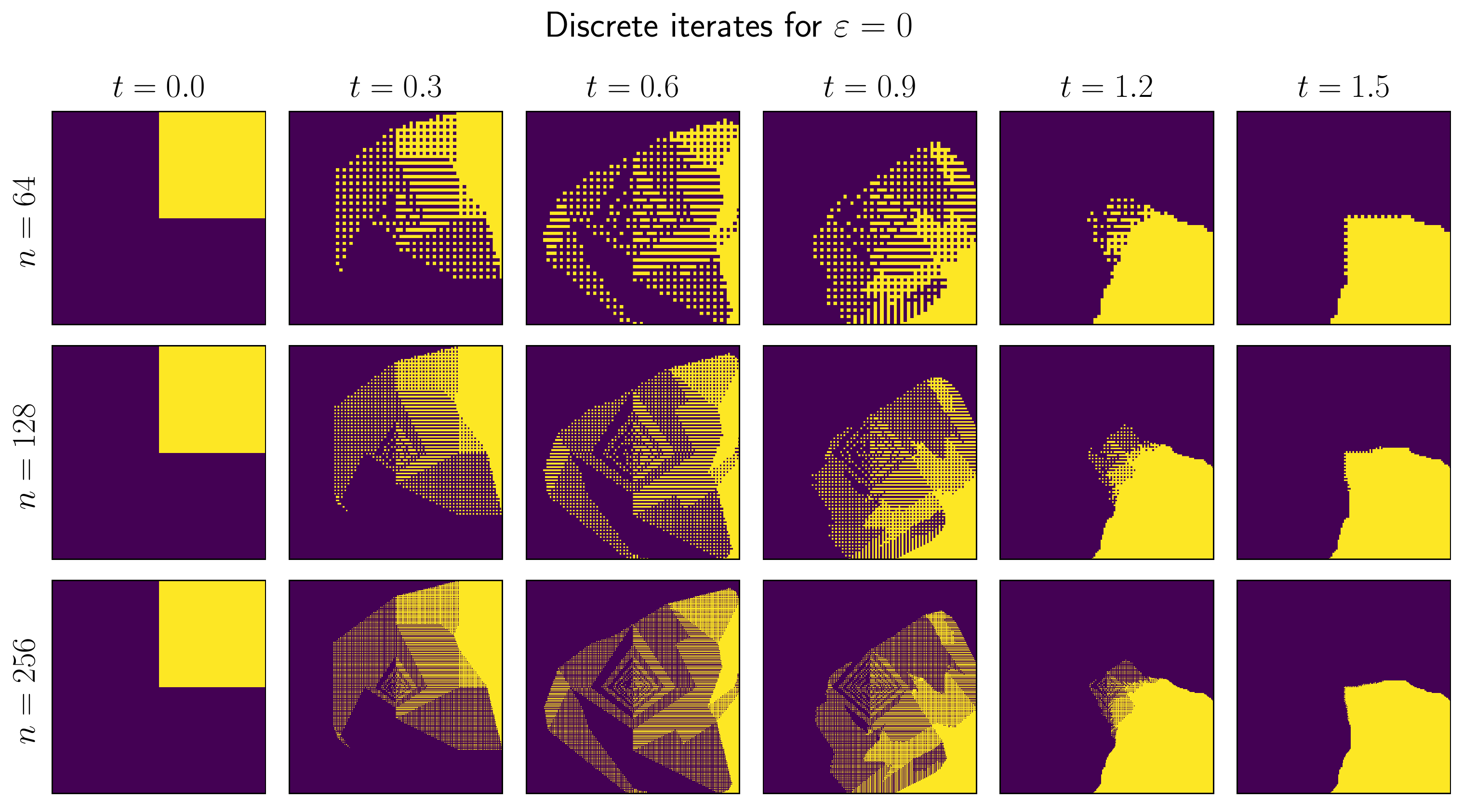}
	
	\vspace{2mm}
	\includegraphics[width=\linewidth]{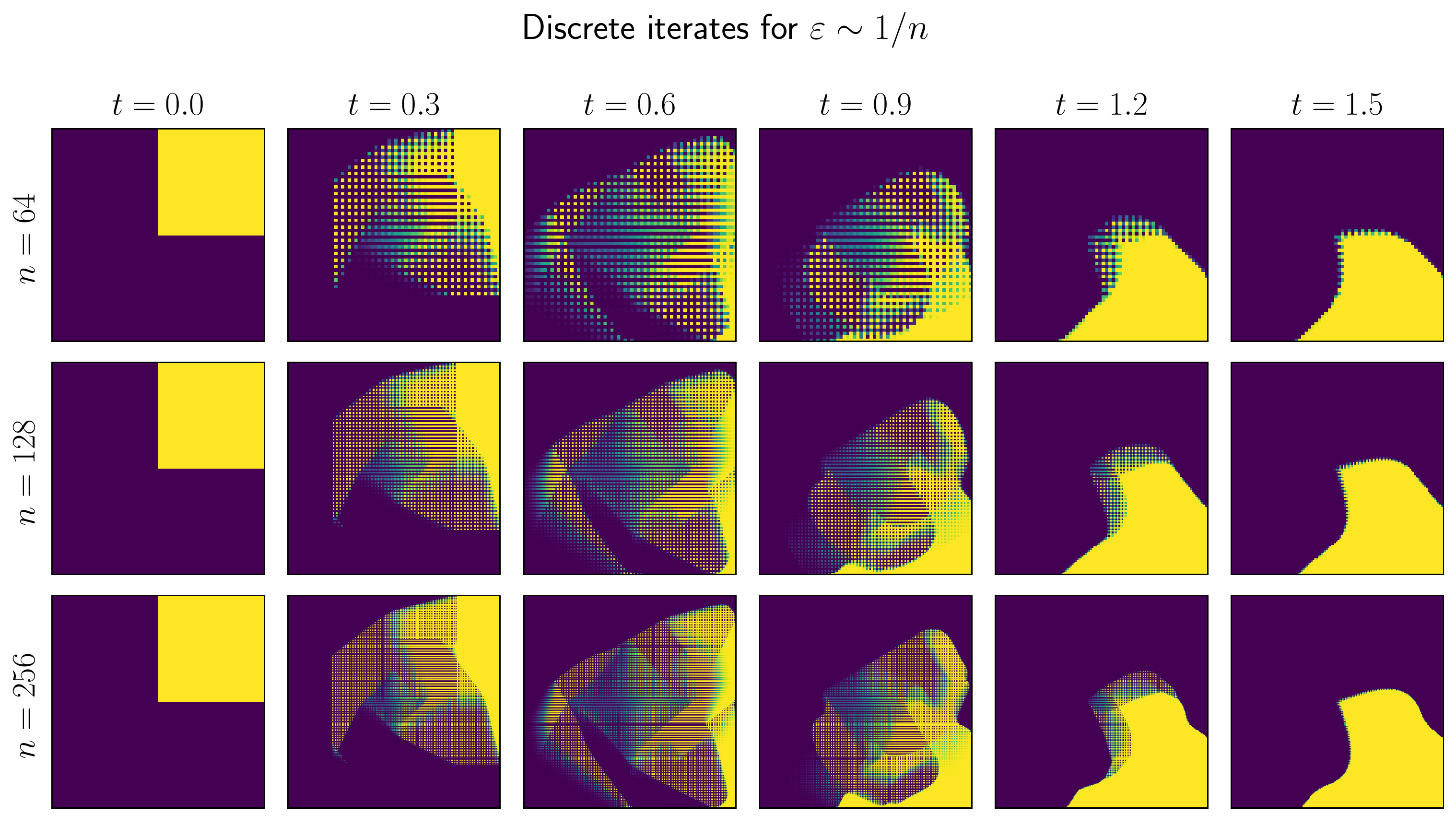}
	\caption{Top, iterates for the unbounded $\WTVb$ example for $\veps^n = 0$. Only the preimage of $(0,-1)$ is shown. Notice the intricate patterns near the center of $X$. Bottom, iterates for $\veps^n \sim 1/n$. The complex patterns are considerably smoothed out when regularization is introduced.} 
	\label{fig:semidiscrete-WTV-B}
\end{figure}

In Figure \ref{fig:semidiscrete-WTV-B} (top) we show a set of feasible discrete trajectories for $\veps^n= 0$ (solutions may not be unique) and discrete trajectories for $\veps^n \sim 1/n$ (bottom).
For simplicity, we show a color coding of the mass that each basic cell transports to $y_0=(0,-1)$ (i.e.~the disintegration of $\iter{\pi}{n,k}$ with respect to $Y$ at point $y_0$).
For $\veps^n=0$ this is binary, for $\veps^n>0$ it is generally not.
In Figure \ref{fig:semidiscrete-local-WTV} the local contributions of each basic cell to $\WTVb$ are shown for the  same unregularized and regularized couplings. Figure \ref{fig:semidiscrete-WTV-history} provides the total $\WTVb$ values of the trajectories over time.

\begin{figure}[btp]
	\centering
	\includegraphics[width=\linewidth]{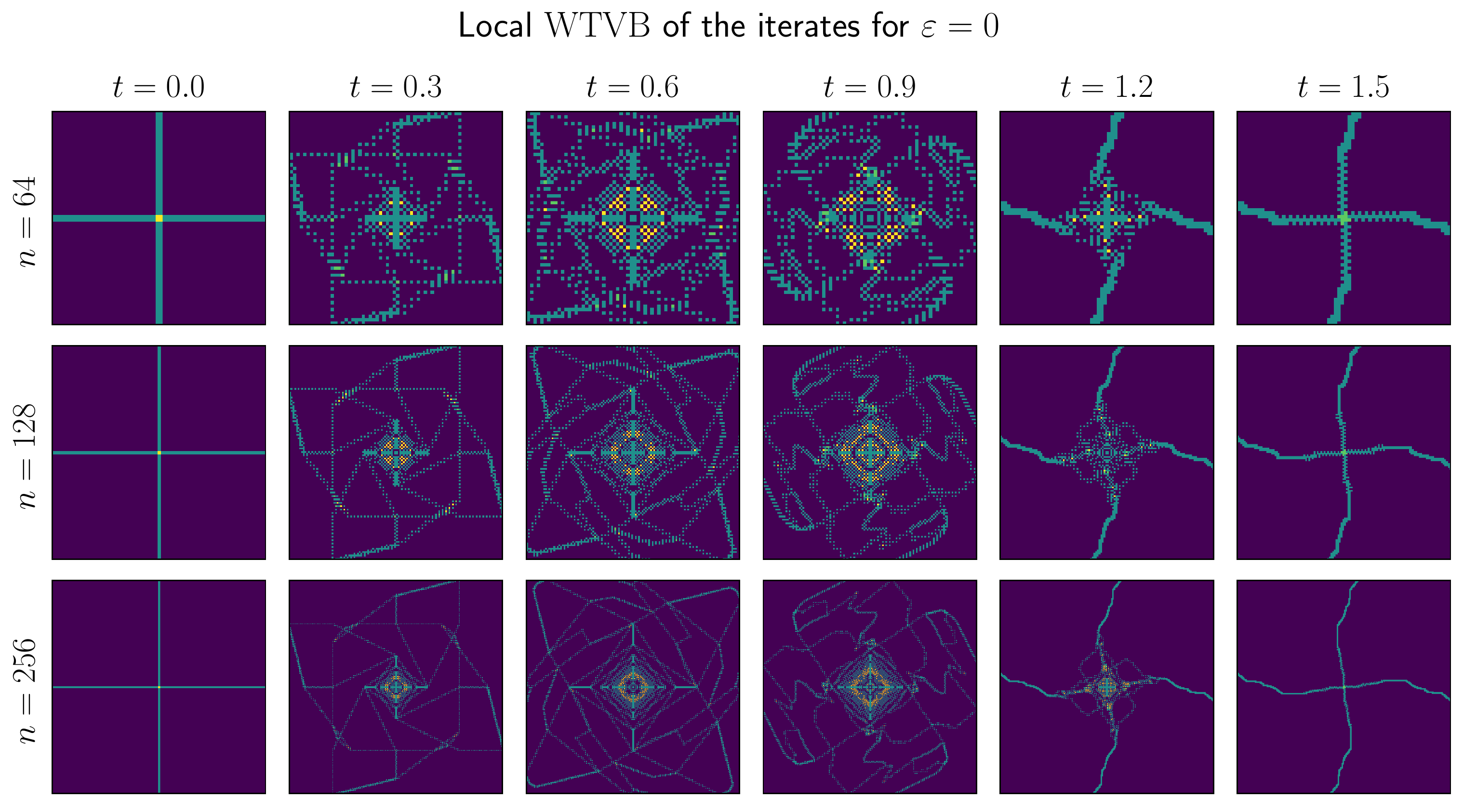}
	
	\vspace{2mm}	
	\includegraphics[width=\linewidth]{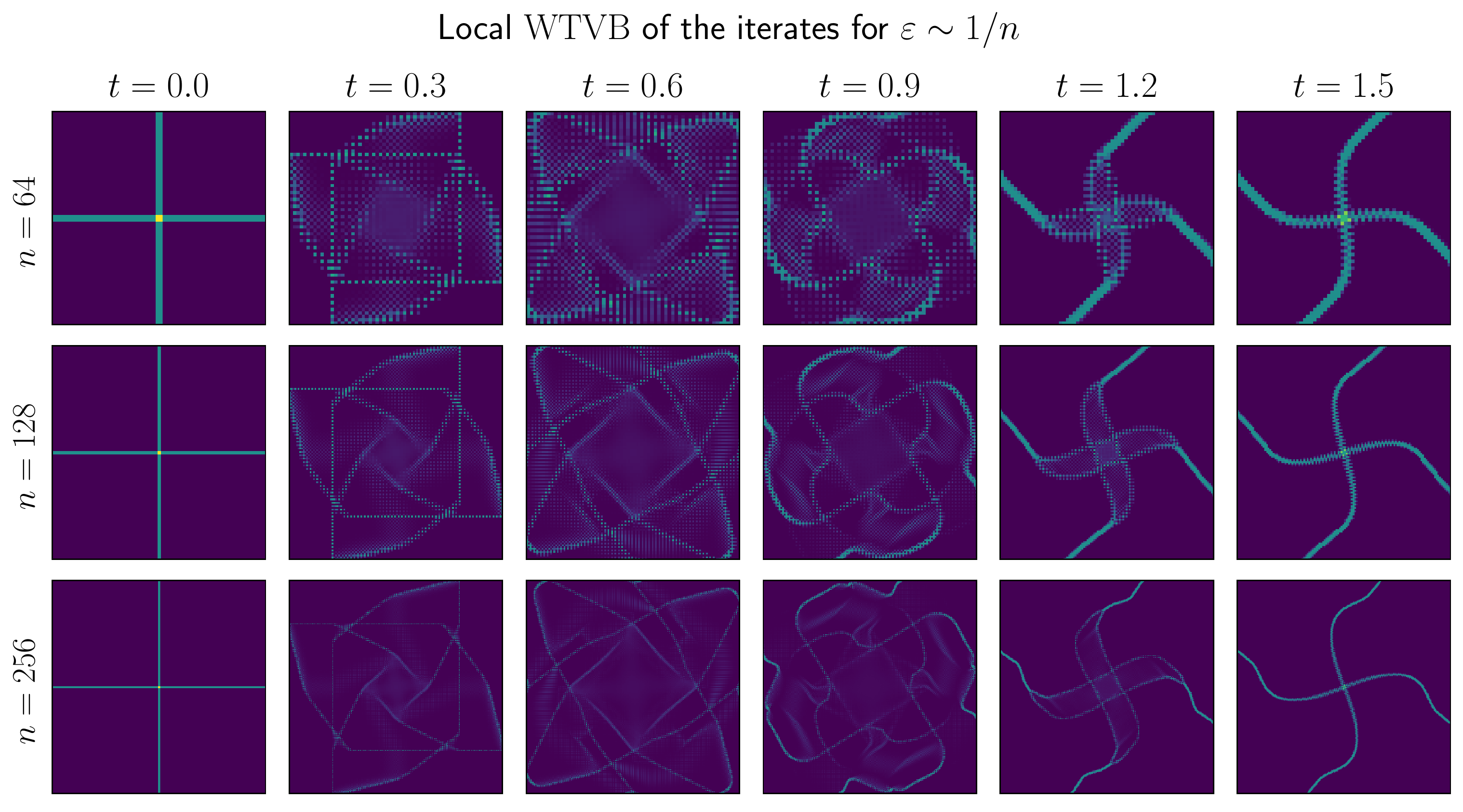}
	\caption{Top, local contributions to $\WTVb$ in the unregularized case. Bottom, local contributions in the regularized case. The (approximately) two-dimensional region that appears in the center for $\veps = 0$ disappears for $\veps \sim 1/n$. 
	}
	\label{fig:semidiscrete-local-WTV}
\end{figure}

In the unregularized case we find that the trajectories for $\veps^n=0$ exhibit increasingly intricate, irregular oscillation patterns near the center of $X$. The local contributions to $\WTVb$ (Figure \ref{fig:semidiscrete-local-WTV}, top) contain one-dimensional boundary contributions between different `regular' oscillation regions and approximately two-dimensional (possibly fractal) non-zero regions corresponding to the `irregular' oscillations. The total $\WTVb$-sums seem to increase logarithmically with $n$ (Figure \ref{fig:semidiscrete-WTV-history}), indicating that there may be no uniform bound.
Note that regions of `regular' alternating oscillations do not contribute to $\WTVb$.

\begin{figure}[bt]
	\centering
	\includegraphics[width=\linewidth]{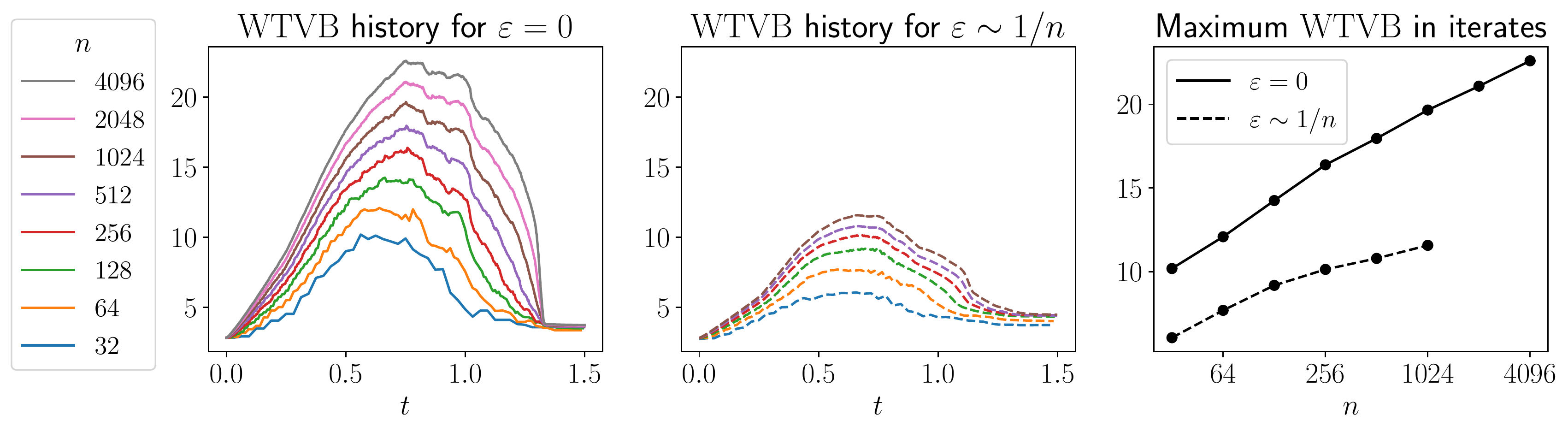}
	\caption{History of the $\WTVb$ for the iterates in Figure \ref{fig:semidiscrete-WTV}. For $\veps=0$ the maximum value over the iterations seems to increase approximately logarithmic in $n$, therefore being seemingly unbounded.
	In the regularized case $\WTVb$ is reduced substantially, but the eventual trend is hard to predict. 
	}
	\label{fig:semidiscrete-WTV-history}
\end{figure}

With $\veps^n \sim 1/n$ the patterns are somewhat subdued. The boundary contributions are smoothed and the irregular oscillations are damped. $\WTVb$ is still increasing in $n$ but the trend is weaker and the question whether it is unbounded is less clear.

\section{Conclusion and outlook}
\label{sec:Conclusion}
\paragraph{Summary.}
In this article we derived a description of the asymptotic limit dynamics of a family of domain decomposition algorithms for optimal transport (regularized and unregularized, discretized and continuous) as the size of the individual cells tends to zero.
To be able to analyze a `pointwise' limit of the cell problems, a sufficiently strong convergence of trajectories, at the level of the disintegrations along the first marginal had to be established first. We introduced a suitable metric for convergence and established said convergence under the assumption of an oscillation bound. Preliminary results on the validity of the oscillation bound assumption were given.
Subsequently, we proved $\Gamma$-convergence of the cell problems to a limit problem where the cells have shrunk to single points.
The trajectories generated by the domain decomposition algorithm could then be shown to converge to a limit trajectory which is driven by a `horizontal' momentum field, which is in turn extracted from solutions to the limit cell problems.
Numerical examples were given to illustrate several interesting aspects of the trajectories.

\paragraph{Open questions.}
The result immediately raises several open questions.
\begin{itemize}
\item We conjecture that under suitable conditions the limit trajectory $t \mapsto \bmpi_t$ converges to a stationary coupling $\pi_\infty$ as $t \to \infty$ and that this coupling is concentrated on the graph of a map (for suitable cost functions $c$, for instance of `McCann-type' \cite{McCannGangboOTGeometry1996}, $c(x,y)=h(x-y)$ for strictly convex $h : \R^d \to \R$).
\item Under what conditions is $\pi_\infty$ a minimizer of the transport problem? Several counter-examples with different underlying mechanisms that prevent optimality are presented or discussed in this article:
\begin{itemize}
\item In the discretized case, with a fixed number of points per basic cell and with $\veps^n=0$, local optimality on the composite cells does not necessarily induce global optimality of the problem (Section \ref{sec:HessianExample}).
\item In this case, for $\veps^n>0$, convergence to the global minimizer was shown in \cite{BoSch2020} for each fixed $n$.
But we conjecture that if $\veps^n \to 0$ too fast, the the asymptotic trajectory may still get stuck in a sub-optimal position.
\item Conversely, if $\veps^n$ does not tend to zero sufficiently fast ($\eta=\infty$), asymptotically the algorithm freezes in the initial configuration (Figure \ref{fig:example_iterations_flipped_bottleneck}, Theorem \ref{thm:main}).
\item Finally, even in the non-discretized, unregularized setting, where convergence of the algorithm for finite $n$ follows from Benamou's work \cite{BenamouPolarDomainDecomposition1994} and Appendix \ref{sec:Benamou},
the asymptotic trajectory may become stuck in a sub-optimal configuration if the sub-optimality is `concentrated on an interface' and almost all cells are locally optimal (Section \ref{sec:SemidiscreteExample}).
\end{itemize}
\item In cases where convergence to the minimizer can be established, how fast is this convergence?
This would be relevant to estimate the required number of domain decomposition iterations.
Intuitively, domain decomposition (and its asymptotic limit dynamics) resembles a minimizing movement scheme on the set $\Pi(\mu,\nu)$ with respect to a `horizontal $W_\infty$ metric', where particles are only allowed to move in $X$ direction by at most distance 1 along each spatial axis per unit time.
Can this relation be made rigorous?
\item Another open question is the bound on $\WTVb$ that is required for the convergence result of Section \ref{sec:Convergence}. Based on numerical evidence it seems to hold in the vast majority of cases, but possibly not always (Section \ref{sec:WTVExample}). What are sufficient conditions for this to hold and how can Proposition \ref{prop:OscillationsBoundPartialResult} be generalized?
\end{itemize}		

\paragraph{Implications of the main result.}
The main result of the article does not yet fully describe the asymptotic behaviour of the domain decomposition method. Various challenging open questions require further study. But already at this stage it does provide us with some valuable insights.

\cite{BenamouPolarDomainDecomposition1994} and \cite{BoSch2020} establish convergence of domain decomposition to the global minimizer in their respective settings (unregularized, regularized) at \emph{finite scale} $n$.
In this article we give examples for both where convergence fails asymptotically as $n \to \infty$.
Moreover, the former does not address the speed of convergence and the upper bound on the convergence speed given by the latter does not accurately describe the behaviour on `geometric' problems.
The main result of this article establishes that if the iterates are sufficiently regular (satisfying the $\WTVb$-bound), then from convergence to the limit dynamics we deduce that the algorithms at finite scale $n$ all exhibit similar behaviour at iterates $\lfloor t \cdot n \rfloor$ for $t \in \R_+$.
If the limit dynamics could be shown to converge to the optimal solution (or at least to some other stationary state), then convergence at finite scale $n$ will require a number of iterations that is proportional to $n$ to approximate this state.

In addition, the result sheds preliminary light on the efficiency of the coarse-to-fine approach of \cite{BoSch2020}. Assume that from an iterate $\pi^{n,k}$ at scale $n$ an approximation of the iterate $\pi^{2n,2k}$ at scale $2n$ can be obtained by refinement and that the approximation error can be remedied by running a finite, fixed number of additional iterations at scale $2n$. Then, by starting at low $n$, and then repeatedly running a fixed number of iterations, and refining to $2n$, one can obtain an approximation of the limit point $\bmpi_t$ in a number of iterations that is logarithmic in $t$. This is in agreement with the numerical results of \cite{BoSch2020}.

\paragraph{Conclusion.}
In summary we consider the results of this article as an important step in the geometric convergence analysis of the domain decomposition algorithm.
The obtained limit dynamics and its relation to global optimality should be studied further, both for its implications for numerical algorithms and in its own right as a new type of minimization-driven dynamics on measures.

\paragraph{Acknowledgement.}
This work was supported by the Emmy Noether programme of the DFG.

\bibliography{references}{}
\bibliographystyle{plain}

\appendix

\section{Domain decomposition for unregularized optimal transport}
\label{sec:Benamou}

In this section we give an extended version of the convergence proof by Benamou \cite{BenamouPolarDomainDecomposition1994} for domain decomposition in the continuous unregularized setting with quadratic cost.
First, we give an alternative (and shorter) argument for the convergence towards a fixed point. Then we give an extended argument for the continuity of the global Kantorovich potential on more than two partition cells, which was not discussed in detail in the original article. Finally, we weaken the convex overlap principle that was required in Benamou's proof. Originally, for any two cells $X_1$ and $X_2$ with overlapping interior it was required that any line segment starting in $\inter X_1$ and ending in $\inter X_2$ has a non-zero length intersection with the overlap $\inter X_1 \cap \inter X_2$. This is not satisfied by the tilings of staggered cubes considered in this article.
For future reference we consider a relatively general setting, going beyond the concrete partition structure considered in the rest of the article and give statement and proof in a self-contained way.

\paragraph{Setting.}
Let $X$ be a compact, convex polytope of $\R^d$. Let $Y$ be a bounded subset of $\R^d$. Let $\mu \in \prob(X)$ with $\mu \ll \Lebesgue$ and $\nu \in \prob(Y)$.
We assume that the density of $\mu$ has full support, i.e. $\RadNik{\mu}{\Lebesgue} > 0\ \Lebesgue$-a.e.
Set $c(x,y) \assign \|x-y\|^2$ for $(x,y) \in X \times Y$. We are concerned with solving the unregularized optimal transport problem
\begin{align*}
	\inf \left\{ \la c,\pi \ra \middle| \pi \in \Pi(\mu,\nu) \right\}
	\quad \tn{where} \quad \la c,\pi \ra \assign \int_{X \times Y} c\,\diff \pi
\end{align*}
with a domain decomposition algorithm.

Let $(X_i)_{i\in I}$ be a \emph{tiling} of $X$ composed by closed, convex polytopes with non-empty interior. This means $(X_i)_{i\in I}$ is a finite covering of $X$ and $\inter{X_i}\cap  \inter{X_j} = \emptyset$ if $i\neq j$. We call these the \emph{basic cells}.
We do not enforce any regularity condition at the intersection of basic cells, i.e., there is no problem when a corner of one cell meets the edge of another. 

Let $\partJ=\{J_1,\ldots,J_K\}$ be a finite collection of subsets of $I$, i.e.~$K \in \N$ and $J_k \subset I$ for $k=1,\ldots,K$.
The corresponding unions of basic cells,
\begin{equation}
	X_J = \bigcup_{i\in J} X_i
	,\qquad
	\tn{for all }J\in \partJ
\end{equation}
will be called the \emph{composite cells}.
We only admit $J$ such that $X_J$ is itself a convex polytope and where $(X_J)_{J\in \partJ}$ covers $X$.
The assumption that basic and composite cells are convex polytopes is not minimal but easy to satisfy in practice and allows for a simpler presentation of the proof arguments.

We say that composite cells $J$ and $\hat{J}$ are \emph{neighbors} if their intersection $X_J\cap X_{\hat{J}}$ has non-empty interior.
$J$ and $J'$ will be \emph{adjacent} when their intersection is non-empty, but has empty interior. 
Furthermore, we make the following assumption:
\begin{assumption}\label{assumption:BoundaryCovering}
	For $J$ and $J'$ adjacent, for any $x\in X_J\cap X_{J'}$ there exists some $\hat{J}$ neighboring both $J$ and $J'$ such that $x\in X_{\hat{J}}$. 
\end{assumption} 

Again, this assumption is easy to satisfy in practice. It will ensure that when the intersection of two cells is not big enough to guarantee consistency of the dual potential, there is always a third cell to arbitrate on the intersection.

\paragraph{Domain decomposition algorithm.}
Now we briefly describe the domain decomposition algorithm in this setting. 
We will iterate cyclically over the cells in $\partJ$, i.e.~we set $J_k \assign J_{(k-1)\% K +1}$ for integers $k>K$ where $\%$ denotes the modulo operator.
An initialization $\pi^0 \in \Pi(\mu,\nu)$ is given. Then, at iteration $k\ge1$ solve the problem 
\begin{equation}
	\pi_{J_k} \assign \argmin \{\la c, \hat{\pi}\ra\mid \hat{\pi}\in \Pi(\mu_{J_k} , \nu^k_{J_k}\}
\end{equation}
with $\mu_{J_k} \assign \mu \restr X_{J_k}$ and $\nu^k_{J_k} \assign \proj_Y \pi^{k-1} \restr( X_{J_k} \times Y))$. The solution is unique since $\mu_{J_k} \ll \Lebesgue$. The next iterate is then given by
\begin{equation}
	\pi^k \assign \pi_{J_k} + \pi^{k-1}\restr ((X \setminus X_{J_k})\times Y).
\end{equation}
Arguing as in \cite[Proposition 3.3]{BoSch2020}, we find that $\pi^k \in \Pi(\mu,\nu)$ for all $k \in \N$ and it is clear that the sequence of transport costs $(\la c,\pi^k \ra)_k$ is non-increasing.
The domain decomposition setting considered in the rest of this article, with two sets of staggered partitions is recovered by setting $\partJ = \partA \cup \partB$, and placing all $A$ cells first in the ordering of $\partJ$.
The staggered partition scheme ensures that Assumption \ref{assumption:BoundaryCovering} is satisfied.
Clearly, successive iterations where the interiors of the cells $X_J$ do not overlap can be carried out in parallel. 

\begin{theorem}
	\label{thm:DomDecAlgorithmConvergence}
	In the setting stated above the iterates of the domain decomposition algorithm converge to the unique global minimizer of the unregularized optimal transport problem.
\end{theorem}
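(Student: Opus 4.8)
The plan is to follow the two–stage structure of Benamou's argument, but with a streamlined fixed–point step and a careful treatment of the potential on many cells. Stage one shows that every weak* cluster point of the iterates $(\pi^k)_k$ is a \emph{full fixed point}: it equals the (unique) optimal plan on every composite cell with respect to its own marginals. Stage two shows that any such full fixed point is the global Brenier plan between $\mu$ and $\nu$; since that plan is unique and $\Pi(\mu,\nu)$ is weak*-compact, the whole sequence then converges to it.

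For stage one, observe that $\la c,\pi^k\ra$ is non-increasing and bounded below, hence convergent, so the per-iteration cost decrements tend to $0$. Fix a cluster point $\pi^\infty$ and a subsequence $k_l\to\infty$ with $\pi^{k_l}\rightweaks\pi^\infty$; passing to a further subsequence we may assume $k_l$ has a fixed residue modulo $K$. Since $\mu\ll\Lebesgue$ and each $\partial X_J$ is Lebesgue-null, restriction to $X_J\times Y$ is weak*-continuous along the iterates, so the cell marginal $\nu^{k_l+1}_{J_{k_l+1}}$ converges weak*; by stability of optimal transport \cite[Theorem 5.20]{Villani-OptimalTransport-09} together with uniqueness of the cell problem's solution (again from $\mu_J\ll\Lebesgue$), $\pi^{k_l+1}$ converges weak* to the plan $\Phi_{J_{k_l+1}}(\pi^\infty)$ obtained by performing one domain-decomposition step for partition $J_{k_l+1}$ on $\pi^\infty$. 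Weak*-continuity of $\la c,\cdot\ra$ and the vanishing of the cost decrements force $\la c,\Phi_{J_{k_l+1}}(\pi^\infty)\ra=\la c,\pi^\infty\ra$, which by uniqueness of the cell optimizer means $\Phi_{J_{k_l+1}}(\pi^\infty)=\pi^\infty$ and $\pi^{k_l+1}\rightweaks\pi^\infty$. Iterating this along one full cycle of length $K$ (which sweeps through all partitions of $\partJ$) shows $\pi^\infty$ is a fixed point for every composite cell.

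For stage two, fix a full fixed point $\pi^\infty$. Since $\mu_J\ll\Lebesgue$ with density positive a.e.\ on $X_J$, Brenier's theorem \cite{MonotoneRerrangement-91} gives $\pi^\infty\restr(X_J\times Y)=(\id,\nabla\psi_J)_\sharp\mu_J$ for a convex $\psi_J:X_J\to\R$, with $\nabla\psi_J$ determined $\mu_J$-a.e.; moreover $\psi_J$ is (as an infimum over bounded $Y$ of affine-plus-quadratic functions) locally Lipschitz, hence continuous on the closed cell $X_J$. On the interior of the overlap $X_J\cap X_{\hat J}$ of two \emph{neighboring} cells — an open convex set of positive $\mu$-measure — uniqueness of the Brenier map forces $\nabla\psi_J=\nabla\psi_{\hat J}$ a.e., so $\psi_J$ and $\psi_{\hat J}$ differ there by an additive constant, and the maps glue into a bounded Borel map $T:X\to Y$ with $\pi^\infty=(\id,T)_\sharp\mu$ and $T=\nabla\psi_J$ a.e.\ on each $X_J$. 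Because $X$ is convex, hence simply connected, there is no cocycle obstruction: the additive constants can be chosen consistently around loops of the overlap graph, yielding a single continuous function $\psi:X\to\R$ that coincides with $\psi_J$ up to a constant on each $X_J$, in particular convex on each composite cell. The remaining step is to upgrade cell-wise convexity plus continuity of $\psi$ to global convexity on $X$: given $x,x'\in X$ one slides along $[x,x']\subset X$, which is covered by composite cells on each of which $\psi$ is convex, matching values and one-sided slopes at the transitions because the two local descriptions agree on the intervening open overlap; where two \emph{adjacent} cells meet only along a common face, Assumption \ref{assumption:BoundaryCovering} supplies a third cell that straddles that face and arbitrates, which is precisely the weakening of Benamou's convex overlap principle needed for staggered cubic tilings. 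Once $\psi$ is globally convex, $\pi^\infty=(\id,\nabla\psi)_\sharp\mu$ is the unique Brenier (hence globally optimal) plan between $\mu$ and $\nu$, and convergence of the whole sequence follows.

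I expect the main obstacle to be this last step — passing from cell-wise convexity of the glued potential to global convexity. The two delicate points are (i) the a priori danger of an inconsistent choice of additive constants around loops of the overlap graph, which is dissolved by simple-connectedness of $X$ forcing the glued map, and hence the potential, to be globally well defined and continuous; and (ii) crossing cell interfaces where the overlap is only a lower-dimensional face, which is exactly the situation Assumption \ref{assumption:BoundaryCovering} is designed for — carefully verifying that convexity propagates along every segment of $X$ via the arbitrating third cell is the technical heart of the appendix.
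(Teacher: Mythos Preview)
Your Stage~1 matches the paper's argument essentially verbatim: continuity of the one-cell solving map via stability of optimal transport and uniqueness from $\mu_J\ll\Lebesgue$, combined with the non-increasing cost, forces every cluster point to be a simultaneous fixed point. Likewise, the local Brenier potentials $\psi_J$ and their agreement up to a constant on \emph{neighboring} overlaps are exactly the paper's Parts~2--3.

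The two steps you flag as delicate are indeed where your proposal is incomplete, and the paper fills them with techniques you have not anticipated.

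\textbf{(i) Global well-definedness of the potential.} Your appeal to simple-connectedness of $X$ to kill the ``cocycle obstruction'' on the overlap graph is not sufficient as stated. The overlap graph (vertices = composite cells, edges = neighbor relations) has its own first cohomology; that $X$ is simply connected does not automatically force every \v Cech 1-cocycle of constants to be a coboundary, and for merely \emph{adjacent} cells there is not even an edge on which an offset is defined. The paper bypasses this by defining a path integral along polygonal curves as a finite sum of local potential differences (so no regularity of $T$ is needed), proving the value is independent of the subdivision via \eqref{eq:LocalConsistencyPotentials}, and then showing the integral around any polygonal cycle vanishes by triangulating from a fixed apex and decomposing each triangle into boundaries of its intersections with \emph{basic} cells --- on each such small cycle a single composite potential is available and the sum telescopes to zero. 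This is the substantive use of convexity of $X$ (so the triangles lie in $X$) rather than an abstract homotopy argument.

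\textbf{(ii) Global convexity.} Your plan ``slide along $[x,x']$, use the arbitrating third cell at adjacent interfaces'' has a gap: Assumption~\ref{assumption:BoundaryCovering} guarantees a cell $\hat J$ containing the interface \emph{point}, not an open sub-segment of $[x,x']$. If the segment only touches $\partial X_{\hat J}$, you cannot match one-sided slopes through $\psi_{\hat J}$. The paper's route is different and sharper: it shows the exceptional set $\BoundarySet=\{x\in\inter X:\ x\notin\inter X_J\text{ for all }J\}$ has finite $(d-2)$-dimensional volume (this is where Assumption~\ref{assumption:BoundaryCovering} is actually used, to rule out $(d-1)$-dimensional pieces of $\BoundarySet$). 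Any segment disjoint from $\BoundarySet$ is then covered by finitely many open cell interiors with genuine one-dimensional overlaps along the segment, and piecewise convexity with overlapping pieces gives convexity on that segment. An arbitrary segment is approximated by nearby segments avoiding $\BoundarySet$ --- possible precisely because a $(d-2)$-dimensional set cannot separate open subsets of $\R^d$ --- and convexity passes to the limit by continuity of $\phi$.
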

\begin{proof}
	\textbf{Part 1: Existence of a fixed point.}
	This part is an adaptation of arguments from \cite[Proposition 3.6]{BoSch2020} to the unregularized setting.
	Denote by $S$ the solving map
	$$S: \{ \pi \in \measp(X \times Y) : \proj_X\pi \ll\Lebesgue\} \ni \pi \mapsto \argmin \{\la c, \hat{\pi}\ra\mid \hat{\pi}\in \Pi(\proj_X \pi, \proj_Y \pi)\}.$$
	Since $\proj_X\pi \ll\Lebesgue$ the minimizer is unique and therefore the mapping is well defined. To show that it is continuous, take a sequence $(\pi^n)_n$ weak* converging to $\pi$. Then the marginals of $\pi^n$ converge to those of $\pi$, and by stability of OT \cite[Theorem 5.20]{Villani-OptimalTransport-09}, any cluster point of $(S(\pi^n))_n$ must solve the OT problem for $\Pi(\proj_X \pi, \proj_Y \pi)$. But since the solution of this problem is unique (and given by $S(\pi)$), and any subsequence of $(S(\pi^n))_n$ must have a cluster point, it follows that $\lim_{n\rightarrow\infty} S(\pi^n) = S(\pi)$.
	
	Now call $F_J$ the function that performs an iteration on cell $J \in \partJ$. $F_J$ is built by restriction to the composite cell $X_J$, solving the partial problem and merging the solution with the rest of the coupling. Since the restriction in this setting is weak* continuous (because $\mu$ does not give mass to cell boundaries, cf.~proof of Proposition \ref{prop:MomentumConvergence} for a similar argument),
	$F_J$ is weak* continuous for all $J\in \partJ$.
	
	Consider now the subsequence $(\pi^{K \cdot \ell})_\ell$, i.e.,~the iterates obtained after every completion of a cycle over $\partJ$, and let $\pi^\ast$ be a cluster point of this subsequence. By continuity of $F_{J_1}$, $F_{J_1}(\pi^\ast)$ is therefore a cluster point of $(\pi^{K \cdot \ell+1})_\ell$ and more generally, all $(F_{J_k} \circ F_{J_{k-1}} \circ \ldots \circ F_{J_1})(\pi^\ast)$ must be cluster points of $(\pi^\ell)_\ell$ for $k=1,\ldots,K$.
	Since the sequence $(\la c,\pi^k \ra)_k$ is non-increasing, all cluster points of $(\pi^k)_k$ must yield the same transport cost. This means that applying $F_{J_1}$ to $\pi^\ast$ does not decrease its score. This can only happen if $F_{J_1}(\pi^\ast)=\pi^\ast$, since any change implies a decrease in score (as solutions are unique). Therefore, $(F_{J_{K}}\circ ... \circ F_{J_1})(\pi^\ast)=\pi^\ast$.
	This proves that $\pi^\ast$ is optimal on each composite cell.
	
	\textbf{2. Local convex potentials.}
	Now, since $\pi^*$ is optimal on each composite cell $X_J$, each composite cell is the closure of its interior, and the latter is connected, by Brenier's polar factorization \cite{MonotoneRerrangement-91} (here we use $\mu \ll \Lebesgue$) $\pi^\ast$ is concentrated on the graph of a map $T : X \to Y$ and on every composite cell $J$, $T|_{X_J}$ is ($\mu \restr X_J$-almost everywhere) the gradient of a convex function $\phi_J: \inter X_J \rightarrow \R$, which is unique up to constant shifts
	(here we use that the density of $\mu$ has full support).
	Since $Y$ is bounded and $T$ takes only values in $Y$ (almost everywhere), all $\phi_J$ are Lipschitz continuous and therefore there is a unique continuous extension of $\phi_J$ to the boundary of $X_J$.
	The rest of the proof shows that it is possible to choose the constant shifts in all $\phi_J$ such that one obtains a global convex function $\phi: X \rightarrow \R$ with $\phi|_{X_J} = \phi_J$ on all composite cells $J$ and consequently $T = \nabla \phi$ $\mu$-almost everywhere in $X$, which implies that $T$ is the unique optimal transport map and therefore that $\pi^\ast$ is optimal.
	Since all cluster points of $(\pi^k)_k$ have the same transport cost, the whole sequence must therefore converge to $\pi^\ast$.
	
	\textbf{3. Local consistency of potentials.}
	Now we show that
	\begin{align}
		\tn{for any $J, J' \in \partJ$, if $x, x' \in X_J\cap X_{J'}$, then $\phi_J(x')  -\phi_{J}(x) = \phi_{J'}(x')  -\phi_{J'}(x)$.}
		\label{eq:LocalConsistencyPotentials}
	\end{align}
	We first show this when $J, J'$ are neighboring cells. Then, the interior of $X_J\cap X_{J'}$ is non-empty and convex, and thus it is also connected. By restriction \cite[Theorem 4.6]{Villani-OptimalTransport-09}, $\pi^\ast$ is optimal on $X_J\cap X_{J'}$ and by uniqueness of the potentials $\phi_J$, $\phi_{J'}$ up to constant shifts, the partial potentials $\phi_J$ and $\phi_{J'}$ must agree up to a constant on $\inter (X_J\cap X_{J'})$.
	Arguing as before, since $\phi_J$ and $\phi_{J'}$ are Lipschitz continuous and $\ol{\inter(X_J \cap X_{J'})}=X_J \cap X_{J'}$ (implied by their convexity), $\phi_J$ and $\phi_{J'}$ agree up to a constant on the whole, closed set $X_J\cap X_{J'}$.
	
	Now assume $J, J'$ are just adjacent. 
	Denote by $[x,x']$ the line segment between $x$ and $x'$, which is a convex set contained in $X_J\cap X_{J'}$. So it can be covered by (possible multiple) simultaneous neighbors of $J$ and $J'$ (cf.~Assumption \ref{assumption:BoundaryCovering}). 
	Since the subdomains are closed and convex, there are some points $x=x_1, x_2,...,x_m = x'$ such that each segment $[x_i,x_{i+1}]$ is contained in some single neighbor of both $J$ and $J'$, that we denote $\hat{J}_i$.
	Then, 
	\begin{align*}
		\phi_J(x') - \phi_J(x) 
		&= 
		\sum_{i=1}^{m-1} \phi_J(x_{i+1}) - \phi_J(x_i)
		= 
		\sum_{i=1}^{m-1} \phi_{\hat{J}_i}(x_{i+1}) - \phi_{\hat{J}_i}(x_i)
		\\
		&=
		\sum_{i=1}^{m-1} \phi_{J'}(x_{i+1}) - \phi_{J'}(x_i)
		=
		\phi_{J'}(x_m) - \phi_{J'}(x_1) 
		=
		\phi_{J'}(x') - \phi_{J'}(x). 
	\end{align*} 
	\textbf{4. Path integrals on the potential.}
	Intuitively, our goal is to construct the global potential $\phi$ by integrating the vector field $T$. For this we need to show that integrals around closed loops vanish. To avoid regularity issues we express path integrals as suitable sums of finite differences of the local potentials $\phi_J$ instead. Moreover, it will be sufficient to integrate along paths that are composed of a finite number of affine segments.
	
	Let $\gamma : [0,1] \to X$ be a continuous curve, composed of a finite number of affine segments. We call this polygonal. We define the path integral of $\gamma$ as follows: Find a tiling of the interval $[0,1]$ into smaller intervals $[t_i,t_{i+1}]$ for $0=t_1 < t_2 < \ldots < t_m=1$ such that each partial image $\gamma([t_i, t_{i+1}])$ is contained in a single cell $J_i$ for all $i=1,\ldots,m-1$
	(this sequence of cells has nothing to do with the labeling of the subdomains in the partition $\partJ$; we denote them in the same manner to avoid overcrowding the notation).
	Since $\gamma$ is polygonal, this is always possible.
	Then define the path integral as 
	\begin{equation}
		\label{eq:PathIntegral}
		\IntWeight(\gamma)
		\assign 
		\sum_{i=1}^{m-1} \phi_{J_i}(\gamma(t_{i+1})) - \phi_{J_i}(\gamma(t_{i})).
	\end{equation} 
	The value of $\IntWeight(\gamma)$ does not depend on the choice of the tiling nor on the choice of the containing cells: Had we chosen a different tiling, specified by $(s_1,\ldots,s_n)$, with different associated cells, then we could first generate a refined combined tiling, specified by some $(r_1,\ldots,r_p)$, such that each interval $[r_i,r_{i+1}]$ is contained in an interval of both the $t$ and $s$-tiling and we can associate to each $r$-interval the two cells that were associated to that interval by the $t$ and $s$-tiling. The partial image $\gamma([r_i,r_{i+1}])$ is then contained in both of these cells.
	Switching from the $t$-tiling to the $r$-tiling (with the $t$-cells) does not change \eqref{eq:PathIntegral}. By \eqref{eq:LocalConsistencyPotentials} we can then on each $r$-interval swap the $t$-cells for the $s$-cells without changing the value, and finally switch from the $r$-tiling (with the $s$-cells) to the $s$-tiling.
	
	With similar arguments we can show that the path integral is parametrization invariant 
	and reverses its sign if we traverse the path in the opposite direction.

	\textbf{5. Vanishing integrals along cyclic polygonal paths.}
	Consider a polygonal cycle $\gamma : [0,1] \to X$ with $\gamma(0)=\gamma(1)$. We show that the path integral around $\gamma$ vanishes, by using that cycles on $X$ can be continuously contracted to single points, since $X$ has a trivial fundamental group. Assume that for some times $0=t_1<\ldots<t_m=1$, $\gamma$ is affine on each of the intervals $[t_i,t_{i+1}]$ and let $x_0$ be some point in $X$. Let now $\gamma_i : [0,1] \to X$ be a polygonal parametrization of the triangle boundary spanned by the points $(x_0,\gamma(t_i),\gamma(t_{i+1}))$ for $i=1,\ldots,m-1$ (by convexity of $X$ this lies entirely within $X$). The collection of paths $\gamma_i$ traverses each edge $[x_0,\gamma(t_i)]$ exactly once in each direction (using $\gamma(t_1)=\gamma(t_m)$) and each segment $[\gamma(t_i),\gamma(t_{i+1})]$ exactly once in the same direction as $\gamma$. Therefore
	\begin{align*}
		\IntWeight(\gamma) = \sum_{i=1}^m \IntWeight(\gamma_i).
	\end{align*}
	Now, for $i=1,\ldots,m$ consider the intersection of the filled triangle spanned by $(x_0,\gamma(t_i),\gamma(t_{i+1}))$ with all basic cells $X_k$, $k \in I$. For each $k$ this intersection will be a convex polytope of at most $2$ dimensions with a finite number of one-dimensional boundary segments.
	The sum of the integrals of the paths that constitute these boundaries ---which is a finite sum --- naturally yields the whole path integral of $\gamma_i$. But at the same time, each of the small paths on basic cells vanish, since we can take some composite cell $J$ including this basic cell (there is always one because basic cells tile $X$ and composite cells are unions of them that cover $X$) and evaluate \eqref{eq:PathIntegral} just using that cell's potential. This means that the terms in \eqref{eq:PathIntegral} cancel each other for each basic cell cycle, and therefore $\IntWeight(\gamma_i) = 0$. Finally, $\IntWeight(\gamma) = \sum_{i=1}^m \IntWeight(\gamma_i)=0$.
	
	\textbf{6. Global continuous potential.}
	Fix now some $x_0 \in X$ and set
	\begin{equation}
		\phi : X \to \R, \qquad x \mapsto \IntWeight([x_0, x]).
	\end{equation}
	Since $\IntWeight$ vanishes around closed polygonal paths we find for any $x,x' \in X_J$ for $J \in \partJ$ that
	$$ \phi(x)-\phi(x')=\IntWeight([x,x'])=\phi_J(x)-\phi_{J}(x')$$
	where we used that the segment $[x,x']$ lies entirely within $X_J$. Moreover, since all $\phi_J$ are equi-Lipschitz (by the boundedness of $Y$), we have $\IntWeight([x,x']) \leq L \cdot \|x-x'\|$ for some $L < \infty$ and thus $\phi$ is also Lipschitz continuous. Therefore, the local potentials $\phi_J$ can indeed be combined to a globally continuous function.
	
	\textbf{7. Convexity of the global potential.}
	At last, we check that $\phi$ is convex. We show first an auxiliary argument: the set
	\begin{equation}
		\BoundarySet 
		\assign 
		\{x \in \inter X ,\, x\notin \inter X_J  \tn{ for all $J\in \partJ$} \}
	\end{equation}
	has finite $(d-2)$-dimensional volume. We will prove this by showing that $\BoundarySet$ is contained in the set
	\begin{equation}
		\mathfrak{F}
		\assign
		\bigcup\{F_1 \cap F_2 \mid F_1, F_2 \tn{ faces of composite cells, $F_1\cap F_2$ being $(d-2)$-dimensional}\},
	\end{equation}
	where we allow $F_1$ and $F_2$ to be faces of the same composite cell. Note that by finiteness of $\partJ$, $\mathfrak{F}$ must have finite $(d-2)$-dimensional volume.
	
	Clearly, $\BoundarySet$ is contained in $\bigcup_{J \in \partJ} \partial X_J$. Let now $x \in \BoundarySet$ lie on $\partial X_J$ for some $J$. Either $x$ lies at the intersection of two faces of $X_J$ (so it is already in $\mathfrak{F}$), or in the relative interior of some face $F_1$. Let us assume the latter case, and let $H$ be the affine hyperplane containing $F_1$.
	By convexity, $X_J$ lives in one of the two closed halfspaces defined by $H$; call it $V_1$, and the other $V_2$. Since $x \notin \partial X$, we must have $x \in \partial X_{J'}$ for at least one other cell $X_{J'}$ that intersects the interior of $V_2$ (a way to see this is to take a sequence $(x_n)_n$ in $\inter V_2$, converging to $x$ and selecting a composite cell $X_{J'}$ that contains an infinite subsequence). 
	
	Since $x\in \partial X_{J'}$, it is again either at the intersection of two faces of $X_{J'}$ (and therefore in $\mathfrak{F}$) or in the relative interior of some face $F_2$.
	In the latter case, either $F_2$ has $(d-2)$-dimensional intersection with $F_1$ or they lie in the same hyperplane $H$. The second possibility would imply that $X_{J'}$ is contained in $V_2$, and therefore must be adjacent to $X_J$. Then, by Assumption \ref{assumption:BoundaryCovering} there exists a third cell $X_{\hat{J}}$ neighboring both $X_J$ and $X_{J'}$, with some face $F_3$ that contains $x$. This face cannot lie in the hyperplane $H$ because then $X_{\hat{J}}$ would live either in $V_1$ or in $V_2$, and could not intersect the interior of both $X_J$ and $X_{J'}$. We conclude then that $F_1\cap F_3$ has finite $(d-2)$-dimensional volume, and therefore $x\in \mathfrak{F}$. This proves that $\BoundarySet$ is contained in the set $\mathfrak{F}$, and consequently has finite $(d-2)$-dimensional volume.
	
	Now we will prove that $\phi$ is convex on every segment $[x,x']$ for $x,x'\subset X$; since $X$ is convex, the segment is always contained in $X$. If the segment does not cross the set $\BoundarySet$, we can cover $[x,x']$ with relatively open sub-segments, each of which lies in the interior of some $X_J$, $J \in \partJ$. $\phi$ is convex on each of the sub-segments and consequently on he whole segment.
	
	Finally, for a segment $[x,x']$ which does cross the set $\BoundarySet$, we can find a sequence of segments $([x_n, x_n'])_n$ converging to $[x,x']$ such that none of them crosses $\BoundarySet$; the reason is that $\BoundarySet$ has finite $(d-2)$-dimensional volume, and such a set cannot separate open sets of $\R^d$. Thus, $\phi$ is convex on each $[x_n, x'_n]$, and since the convexity condition passes to the limit, $\phi$ is convex also on the segment $[x,x']$.
\end{proof}

\section{Proof of Proposition \ref{prop:OscillationsBoundPartialResult}}
\label{app:ProoWTVbBound}
\textit{Step 1: optimal transport in one dimension.}
The proof relies heavily on monotonicity properties of optimal transport in one dimension, for details we refer to \cite[Chapter 2]{SantambrogioOT}. Let $\mu_1, \mu_2, \nu_1, \nu_2 \in \measp(\R)$ (with bounded first moments) and $\mu_i(\R)=\nu_i(\R)$ for $i=1,2$. If $x_1 \leq x_2$ for $\mu_1 \otimes \mu_2$ almost all $(x_1,x_2) \in \R^2$ then we write $\mu_1 \measleq \mu_2$. By monotonicity of optimal transport and convexity (or sub-additivity) of Wasserstein distances we obtain
\begin{align}
	\label{eq:OTMonotonous}
	\WoR(\mu_1+\mu_2,\nu_1+\nu_2) \leq \WoR(\mu_1,\nu_1) + \WoR(\mu_2,\nu_2)
	\quad \tn{with equality if } \mu_1 \measleq \mu_2 \,\wedge\, \nu_1 \measleq \nu_2.
\end{align}

\smallskip
\noindent
\textit{Step 2: $\WTVb$ bound.}
In the following recall that $\mu=\mu^n$ assigns equal mass to each basic cell, i.e.~$m_i^n=1/n$ for all basic cells $i \in I^n$ and $m_J^n=2/n$ for all composite cells $J \in \partAn$ and all interior composite cells in $\partBn$. This will simplify switching between normalized basic and composite cell marginals $\rho_i^{n,k}$ and $\rho_J^{n,k}$ as introduced in Section \ref{sec:DomDecNotation}, item \ref{item:NotationPartialMarginals}. Some attention must be paid on the boundaries of the composite $B$-partition.

We encourage the reader to compare the following arguments against the iterations shown in Figure \ref{fig:example_iterations_detail}. Let $k \in \N$, $k$ even: this means, iteration $k$ is on the $B$-partition (for $k>0$), $k+1$ is on the $A$-partition. Let $J=\{i,i+1\}$, $J'=\{i+2,i+3\}$ be two successive composite cells in $\partnkk=\partAn$ (i.e.~$i \in I^n$ is even). This implies $\rho_i^{n,k+1} \measleq \rho_{i+1}^{n,k+1}$ and $\rho_{i+2}^{n,k+1} \measleq \rho_{i+3}^{n,k+1}$ (because we just updated the composite cells $\{i,i+1\}$ and $\{i+2,i+3\}$ and $h$ is strictly convex) and therefore with \eqref{eq:OTMonotonous}
\begin{align}
	\WoY(\iter{\rho_J}{n,k+1}, \iter{\rho_{J'}}{n,k+1})
	&=
	\frac12(
	\WoY(\iter{\rho_{i}}{n,k+1}, \iter{\rho_{i+2}}{n,k+1})
	+
	\WoY(\iter{\rho_{i+1}}{n,k+1}, \iter{\rho_{i+3}}{n,k+1})
	),
	\label{eq:WTVb_equals_WTV_k_even}
	\intertext{and for the preceeding iteration (but on the same composite cells)}
	\WoY(\iter{\rho_J}{n,k}, \iter{\rho_{J'}}{n,k})
	&\le
	\frac12(
	\WoY(\iter{\rho_{i}}{n,k}, \iter{\rho_{i+2}}{n,k})
	+
	\WoY(\iter{\rho_{i+1}}{n,k}, \iter{\rho_{i+3}}{n,k})
	).
	\label{eq:WTVb_smaller_WTV_k_odd}
\end{align}
Now we enumerate the $A$ composite cells as $\{J_j\}_{j=1}^{n/2}$ and the $B$ composite cells as $\{\hat{J}_j\}_{j=0}^{n/2}$. Then, since $k+1$ is odd (which corresponds to an $A$-iteration) and recalling that $\iter{\rho_J}{n,k+1} = \iter{\rho_J}{n,k}$ for all $J\in \partnkk$, \eqref{eq:CompCellMarginalPreservation}, we have
\begin{align}
	\WTVb(\iter{\pi}{n,k+1})
	&=
	\sum_{i=0}^{n-3}
	\WoY(\rho_i^{n,k+1}, \rho_{i+2}^{n,k+1} )
	\overset{\eqref{eq:WTVb_equals_WTV_k_even}}{=}
	2\sum_{j=1}^{n/2-1}
	\WoY(\iter{\rho_{J_j}}{n,k+1}, \iter{\rho_{J_{j+1}}}{n,k+1})
	\nonumber
	\\
	&=
	2\sum_{j=1}^{n/2-1} \WoY(\iter{\rho_{J_j}}{n,k}, \iter{\rho_{J_{j+1}}}{n,k})
	\overset{\eqref{eq:WTVb_smaller_WTV_k_odd}}{\le}
	\sum_{i=0}^{n-3}
	\WoY(\iter{\rho_{i}}{n,k}, \iter{\rho_{i+2}}{n,k})
	=\WTVb(\iter{\pi}{n,k}).
	\label{eq:WTVbkOdd}
\intertext{Let now $k>0$ (still even). We now repeat the argument to go back another iteration but need to be careful about the boundary cells of the $B$-partition.}
	\WTVb(\iter{\pi}{n,k}) &=
	\WoY(\iter{\rho_{0}}{n,k}, \iter{\rho_{2}}{n,k})
	+
	\WoY(\iter{\rho_{n-3}}{n,k}, \iter{\rho_{n-1}}{n,k})
	+
	\sum_{i=1}^{n-4}
	\WoY(\iter{\rho_{i}}{n,k}, \iter{\rho_{i+2}}{n,k})
	\nonumber
	\\
	&\overset{\eqref{eq:WTVb_equals_WTV_k_even}}{=}
	\WoY(\iter{\rho_{0}}{n,k}, \iter{\rho_{2}}{n,k})
	+
	\WoY(\iter{\rho_{n-3}}{n,k}, \iter{\rho_{n-1}}{n,k})
	+
	2\sum_{j = 1}^{n/2 -2}
	\WoY(\iter{\rho_{\hat{J}_j}}{n,k}, \iter{\rho_{\hat{J}_{j+1}}}{n,k}).
	\label{eq:Bound1DStep1}
\end{align}
Using again preservation of the $Y$-marginals over composite cells at iteration $k$, the last term in \eqref{eq:Bound1DStep1} can be bounded as
\begin{align}
	2\sum_{j = 1}^{n/2 -2}&
	\WoY(\iter{\rho_{\hat{J}_j}}{n,k}, \iter{\rho_{\hat{J}_{j+1}}}{n,k})
	=
	2\sum_{j = 1}^{n/2-2}
	\WoY(\iter{\rho_{\hat{J}_j}}{n,k-1}, \iter{\rho_{\hat{J}_{j+1}}}{n,k-1})
	\overset{\eqref{eq:WTVb_smaller_WTV_k_odd}}{\le}
	\sum_{i = 1}^{n-4}
	\WoY(\iter{\rho_{i}}{n,k-1}, \iter{\rho_{i+2}}{n,k-1})
	\nonumber
	\\
	&=
	\WTVb(\iter{\pi}{n,k-1})
	-
	\WoY(\iter{\rho_{0}}{n,k-1}, \iter{\rho_{2}}{n,k-1})
	-
	\WoY(\iter{\rho_{n-3}}{n,k-1}, \iter{\rho_{n-1}}{n,k-1}).
	\label{eq:Bound1DStep2}
\end{align}
Now notice that $\iter{\rho_{0}}{n,k-1} = \iter{\rho_{0}}{n,k}$, since during iteration $k$ (which is a $B$ iteration) the cell $i=0$ is the only one in its corresponding composite cell; for the same reason, $\iter{\rho_{n-1}}{n,k-1} = \iter{\rho_{n-1}}{n,k}$. Using this fact and the triangle inequality, one obtains
\begin{align}
	\WoY(\iter{\rho_{0}}{n,k}, \iter{\rho_{2}}{n,k})
	-
	\WoY(\iter{\rho_{0}}{n,k-1}, \iter{\rho_{2}}{n,k-1})
	&=
	\WoY(\iter{\rho_{0}}{n,k}, \iter{\rho_{2}}{n,k})
	-
	\WoY(\iter{\rho_{0}}{n,k}, \iter{\rho_{2}}{n,k-1})
	\nonumber
	\\
	&\le
	\WoY(\iter{\rho_{2}}{n,k}, \iter{\rho_{2}}{n,k-1}).
	\label{eq:Bound1DStep3}
\end{align}
After an analogous consideration for the other boundary term, combining \eqref{eq:Bound1DStep1}, \eqref{eq:Bound1DStep2}, and \eqref{eq:Bound1DStep3} one obtains
\begin{align}
	\WTVb(\iter{\pi}{n,k}) \leq \WTVb(\iter{\pi}{n,k-1})
	+
	\WoY(\iter{\rho_2}{n,k}, \iter{\rho_2}{n,k-1})
	+
	\WoY(\iter{\rho_{n-3}}{n,k}, \iter{\rho_{n-3}}{n,k-1})
	\label{eq:WTVbkEven}
\end{align}

\smallskip
\noindent
\textit{Step 3: controlling the boundary contributions.}
Using once more the preservation of $Y$-marginals in composite $B$-cells during $B$-iterations, \eqref{eq:CompCellMarginalPreservation}, and recall that $k$ is still even, we observe
\begin{align*}
	\iter{\rho_1}{n,k} +\iter{\rho_2}{n,k} = \iter{\rho_1}{n,k-1} + \iter{\rho_2}{n,k-1}
	\qquad \Rightarrow \qquad
	\iter{\rho_1}{n,k} - \iter{\rho_1}{n,k-1} =  -\left(\iter{\rho_2}{n,k} - \iter{\rho_2}{n,k-1}\right)
\end{align*}
and using that the Wasserstein-1 distance between two measures only is a function of the difference of them (and is invariant under the order of the difference), see \eqref{eq:KantRubin}, this implies
\begin{align}
	\label{eq:WTVbBoundaryLeft}
	\WoY(\iter{\rho_2}{n,k}, \iter{\rho_2}{n,k-1}) = \WoY(\iter{\rho_1}{n,k}, \iter{\rho_1}{n,k-1})
	\leq 2\WoY(\rho_{J_1}^{n,k},\rho_{J_1}^{n,k-1})
\end{align}
where in the inequality we used $J_1=\{0,1\}$, $\rho_0^{n,k}=\rho_0^{n,k-1}$ ($B$-iteration on the boundary cell $\hat{J}_0=\{0\}$) and \eqref{eq:OTMonotonous}. The same argument on the right boundary yields
\begin{align}
	\label{eq:WTVbBoundaryRight}
	\WoY(\iter{\rho_{n-3}}{n,k}, \iter{\rho_{n-3}}{n,k-1}) = \WoY(\iter{\rho_{n-2}}{n,k}, \iter{\rho_{n-2}}{n,k-1})
	\leq 2\WoY(\rho_{J_{n/2}}^{n,k},\rho_{J_{n/2}}^{n,k-1})
\end{align}
Now, we first plug \eqref{eq:WTVbBoundaryLeft} and \eqref{eq:WTVbBoundaryRight} into \eqref{eq:WTVbkEven}, then add the artificial zero
\begin{align}
	\label{eq:WTVbArtificialZero}
0=2\WoY(\rho_{J_{1}}^{n,k},\rho_{J_{1}}^{n,k+1})+2\WoY(\rho_{J_{n/2}}^{n,k},\rho_{J_{n/2}}^{n,k+1})
\end{align}
(which follows again from the preservation of composite cell marginals during the respective iterations) to the right hand side of \eqref{eq:WTVbkOdd}, and finally repeatedly use \eqref{eq:WTVbkOdd} and \eqref{eq:WTVbkEven} to bound $\WTVb(\pi^{n,k+1})$ by $\WTVb(\pi^{n,0})$, we get for any $k \in \N$
\begin{align}
	\WTVb(\pi^{n,k}) & \leq \WTVb(\pi^{n,0}) + 2 \sum_{\ell=1}^{k} \left[
		\WoY(\rho_{J_1}^{n,\ell},\rho_{J_1}^{n,\ell-1})
		+\WoY(\rho_{J_{n/2}}^{n,\ell},\rho_{J_{n/2}}^{n,\ell-1}) \right].
		\label{eq:WTVbTelescopicPre} 
\end{align}
\smallskip
\noindent
\textit{Step 4: telescopic sum.}
We now show that the sum in \eqref{eq:WTVbTelescopicPre} has a telescopic structure (intuitively, mass within $J_1$ never moves `up', only `down', cf.~Figure \ref{fig:example_iterations_detail}). In the following, denote by
\begin{align*}
	F^{n,k}_i(y) \assign \rho^{n,k}_i((-\infty,y]) \qquad \tn{for } y \in \R
\end{align*}
the cumulative distribution functions of the normalized basic cell marginals. One finds \cite[Proposition 2.17]{SantambrogioOT} that
$$\WoY(\rho^{n,k}_i,\rho^{n,k-1}_i)=\|F^{n,k}_i-F^{n,k-1}_i\|_1 \assign \int_{\R} |F^{n,k}_i-F^{n,k-1}_i|\diff \Lebesgue.$$
Of course the formula holds also for the cumulative distribution functions of composite cells. Now we show that $F^{n,k}_1 \geq F^{n,k-1}_1$ for $k$ even (mass moves `down'). Indeed, after the $B$-iteration in iteration $k$ on $\hat{J}_1=\{1,2\}$, one has $\rho^{n,k}_1 \measleq \rho^{n,k}_2$ (by strict convexity of $h$ and monotonicity of optimal transport in one dimension). This implies that there is some $y \in \R$ such that
\begin{align*}
	F^{n,k}_1(y') & = 1 \geq F^{n,k-1}_1(y') & & \tn{ for $y'\geq y$, and} \\
	F^{n,k}_1(y') & = F^{n,k}_1(y')+\underbrace{F^{n,k}_2(y')}_{=0} = F^{n,k-1}_1(y')+F^{n,k-1}_2(y')
	\geq F^{n,k-1}_1(y') & & \tn{ for } y'<y,
\end{align*}
where we used $\rho^{n,k}_1+\rho^{n,k}_2=\rho^{n,k-1}_1+\rho^{n,k-2}_1$ (from which equality of the cumulative distributions follows). Combining this, for the first part of the sum in \eqref{eq:WTVbTelescopicPre} we obtain
\begin{align*}
	2 \sum_{\ell=1}^{k} \WoY(\rho_{J_1}^{n,\ell},\rho_{J_1}^{n,\ell-1}) &
	= \sum_{\ell=1}^{k} \left\|F_0^{n,\ell}+F_1^{n,\ell}-F_0^{n,\ell-1}-F_1^{n,\ell-1}\right\|_1. \\
\intertext{For $\ell$ odd, the function within $\|\cdot\|_1$ must be zero a.e.~due to \eqref{eq:WTVbArtificialZero}. For $\ell$ even, one has $F_0^{n,\ell}=F_0^{n,\ell-1}$ and we just showed $F_1^{n,\ell}\geq F_1^{n,\ell-1}$, hence the function within $\|\cdot\|_1$ is non-negative. Therefore, we may pull the sum into the norm and obtain}
	& = \left\|F_0^{n,k}+F_1^{n,k}-F_0^{n,0}-F_1^{n,0}\right\|_1=2\,\WoY(\rho_{J_1}^{n,k},\rho_{J_1}^{n,0}).
\end{align*}
The same argument applies to the right boundary term and thus \eqref{eq:WTVbTelescopicPre} becomes
\begin{align*}
	\WTVb(\pi^{n,k}) & \leq \WTVb(\pi^{n,0}) + 2 \left[
		\WoY(\rho_{J_1}^{n,k},\rho_{J_1}^{n,0})
		+\WoY(\rho_{J_{n/2}}^{n,k},\rho_{J_{n/2}}^{n,0}) \right]
		\nonumber \\
	& \leq \WTVb(\pi^{n,0}) + 4\, \diam Y
\end{align*}
where we used the boundedness of $Y$ in the second line.

\smallskip
\noindent
\textit{Step 5: bound on starting variation.}
We show that $\WTVb(\iter{\pi}{n,0}=\piInitN=\piInit)$ is bounded by $\WTV(\piInit)$:
\begin{align}
	\WTVb(\piInit)
	&=
	\sum_{i = 0}^{n-3}
	\WoY(\iter{\rho_{i}}{n,0}, \iter{\rho_{i+2}}{n,0})
	\le
	\sum_{i = 0}^{n-3}
	\int_{x^n_i-1/2n}^{x^n_i+1/2n}
	\WoY(\pi_{\text{init},x}, \pi_{\text{init},x+2/n})
	\cdot n\,\diff x
	\\
	\intertext{where we used that $\rho_i^{n,0}$ is obtained by averaging $\piInit$ over basic cells, sub-additivity of $\WoY$, and $m_i^n = 1/n$, }
	\label{eq:BVLemmaMetricSpaceUse}
	&=
	n \cdot
	\int_{0}^{1-2/n} 	\WoY(\pi_{\text{init},x}, \pi_{\text{init},x+2/n})
	\diff x
	\le
	n \cdot
	\frac{2}{n}
	\WTV(\piInit)
	=
	2\,\WTV(\piInit),
\end{align}
where in the inequality we used Lemma \ref{lemma:IntegralDifferenceTV} in its formulation for metric space-valued functions (cf.~\cite[Lemma 3.2(ii)]{AmbrosioMetricBVFunctions1990}). This provides the desired uniform bound
\[
\WTVb(\iter{\pi}{n,k}) \le 2\WTV(\piInit) + 4\diam Y.
\]

\section{Proof of Lemma \ref{lem:RegularityMuN}}
\label{sec:RegularityMuN}
The proof hinges on the Lebesgue differentiation theorem which we gather from \cite{RudinRealAndComplexAnalysis}.

\begin{theorem}[Lebesgue points \protect{\cite[Theorems 7.7, 7.10, Definition 7.9]{RudinRealAndComplexAnalysis}}]
For $x \in X$ we say a sequence $(E_n)_n$ of Borel sets in $X$ \emph{shrinks nicely} to $x$ if there exists $\alpha > 0$ and radii $(r_n)_n$ such that
\[
\lim_{n \to \infty} r_n = 0, \qquad E_n \subset B(x, r_n) \quad \text{ and } \quad \Lebesgue(E_n) \geq \alpha \cdot \Lebesgue(B(x,r_n)).
\]
Note that $x$ may not belong to $E_n$.

Assume $f \in L^1(X)$. Then $\Lebesgue$-a.e.~$x \in X$ is a Lebesgue point of $f$ and for every such $x$ it holds
\[
f(x) = \lim_{n \to \infty} \frac{1}{\Lebesgue(E_n)}\int_{E_n} f(x') \,\diff\Lebesgue(x').
\]
if the sets $(E_n)_n$ shrink nicely to $x$.
\end{theorem}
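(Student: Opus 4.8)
The plan is to prove this along the classical route via the Hardy--Littlewood maximal function: first establish that $\Lebesgue$-a.e.\ point is a Lebesgue point for balls, and then deduce the ``shrinks nicely'' refinement as an elementary corollary. Since $X$ is bounded we may extend $f$ by zero to a function in $L^1(\R^d)$ (still denoted $f$); for $x$ in the interior of $X$ and $r$ small the balls $B(x,r)$ lie inside $X$, so this changes nothing, and $\partial X$ is $\Lebesgue$-null and hence irrelevant to an ``a.e.'' statement. Thus I work on $\R^d$ throughout.

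First I would introduce the maximal function $(Mf)(x) \assign \sup_{r>0}\tfrac{1}{\Lebesgue(B(x,r))}\int_{B(x,r)}|f|\,\diff\Lebesgue$ and prove the weak-type $(1,1)$ estimate $\Lebesgue(\{Mf>\lambda\}) \le \tfrac{C_d}{\lambda}\,\|f\|_{L^1(\R^d)}$ with a purely dimensional constant $C_d$. This rests on a Vitali-type covering lemma: given a compact subset of $\{Mf>\lambda\}$, cover it by balls on which the average of $|f|$ exceeds $\lambda$, extract a finite pairwise disjoint subfamily whose $3$-fold dilations still cover the compact set, and estimate the total measure of the disjoint balls by $\lambda^{-1}\int|f|$. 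I expect this covering argument to be the only genuine technical obstacle, although it is completely standard.

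Next I would show that $\Lebesgue$-a.e.\ $x$ is a Lebesgue point of $f$, i.e.\ $\lim_{r\to 0}\tfrac{1}{\Lebesgue(B(x,r))}\int_{B(x,r)}|f(y)-f(x)|\,\diff\Lebesgue(y)=0$. Fix $\veps>0$; since continuous compactly supported functions are dense in $L^1(\R^d)$, choose such a $g$ with $\|f-g\|_{L^1}<\veps$ and put $h\assign f-g$. From $|f(y)-f(x)|\le |h(y)|+|g(y)-g(x)|+|h(x)|$ and the continuity of $g$ (which kills the middle average as $r\to 0$) we get $\limsup_{r\to0}\tfrac{1}{\Lebesgue(B(x,r))}\int_{B(x,r)}|f(y)-f(x)|\,\diff\Lebesgue(y)\le (Mh)(x)+|h(x)|$ for every $x$. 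Hence
\[
\Big\{ x \;\Big|\; \limsup_{r\to0}\tfrac{1}{\Lebesgue(B(x,r))}\!\int_{B(x,r)}\!|f(y)-f(x)|\,\diff\Lebesgue(y) > 2\sqrt{\veps}\Big\} \subset \{Mh>\sqrt{\veps}\}\cup\{|h|>\sqrt{\veps}\},
\]
whose measure is at most $(C_d+1)\sqrt{\veps}$ by the maximal inequality and Chebyshev's inequality. Intersecting over $\veps=1/m$, $m\in\N$, the set of non-Lebesgue points is $\Lebesgue$-null.

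Finally I would derive the claim for nicely shrinking sets. Let $x$ be a Lebesgue point and let $(E_n)_n$ shrink nicely to $x$ with constant $\alpha>0$ and radii $r_n\to0$, $E_n\subset B(x,r_n)$. Then
\[
\Big|\tfrac{1}{\Lebesgue(E_n)}\int_{E_n}\!f\,\diff\Lebesgue - f(x)\Big| \le \tfrac{1}{\Lebesgue(E_n)}\int_{E_n}\!|f(y)-f(x)|\,\diff\Lebesgue(y) \le \tfrac{1}{\alpha\,\Lebesgue(B(x,r_n))}\int_{B(x,r_n)}\!|f(y)-f(x)|\,\diff\Lebesgue(y),
\]
using $E_n\subset B(x,r_n)$ to enlarge the domain of integration and $\Lebesgue(E_n)\ge\alpha\,\Lebesgue(B(x,r_n))$ for the prefactor. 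Since $x$ is a Lebesgue point the right-hand side tends to $0$ as $n\to\infty$, which is exactly the asserted identity $f(x)=\lim_n \Lebesgue(E_n)^{-1}\int_{E_n}f\,\diff\Lebesgue$. The only point needing a word of care is the reduction to $\R^d$ near $\partial X$, which is harmless because the boundary is null and the ``a.e.'' conclusion is unaffected.
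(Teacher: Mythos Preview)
Your proof is correct and follows the classical route via the Hardy--Littlewood maximal function. Note, however, that the paper does not give its own proof of this theorem: it is simply quoted from Rudin's \emph{Real and Complex Analysis} (Theorems 7.7 and 7.10) as a tool for the proof of Lemma~\ref{lem:RegularityMuN}. Your argument is essentially the one Rudin gives, so there is nothing to compare.
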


\begin{proof}[Proof of Lemma \ref{lem:RegularityMuN}]
By assumption $\mu \ll \Lebesgue$, so $\mu$ has a density with respect to $\Lebesgue$ and $\RadNik{\mu}{\Lebesgue} \in L^1(X)$. Therefore
\begin{equation}
\lim_{n \to \infty}
\frac{1}{\Lebesgue(E_n)}
\int_{E_n} \RadNikD{\mu}{\Lebesgue}(x') \diff \Lebesgue(x')
=
\RadNikD{\mu}{\Lebesgue}(x)
> 0
\end{equation}
for $\mu$-a.e.~$x \in X$ and $(E_n)_n$ a corresponding sequence shrinking nicely to $x$.
In particular, if $(A_n)_n$ and $(B_n)_n$ are two sequences shrinking nicely to $x$ such that $\Lebesgue(A_n) = a/n^d$ and $\Lebesgue(B_n) = b/n^d$ for some $a,b>0$, then
\begin{align}
\lim_{n \to \infty}
\frac{\mu(A_n)}{\mu(B_n)}
=
\lim_{n \to \infty}
\overbrace{
	\frac{\Lebesgue(A_n)}{\Lebesgue(B_n)}
}^{a/b}
\overbrace{
	\frac{
		\frac{1}{\Lebesgue(A_n)} \int_{A_n} \RadNik{\mu}{\Lebesgue}(x') \diff \Lebesgue(x')
	}{
		\frac{1}{\Lebesgue(B_n)} \int_{B_n} \RadNik{\mu}{\Lebesgue}(x') \diff \Lebesgue(x')
	}
}^{\tn{Tends to $1$}}
=
\frac{a}{b}.
\label{eq:LimitRatioMeasureMu}
\end{align}

Consider now the first scheme, \ref{item:MuNDirac}, were the mass of each basic cell $i \in I^n$ is collapsed to its center $x_i^n$, so that $\mu^n = \sum_{i \in I^n} m_i^n\delta_{x_i^n}$.
Let $x \in X\setminus \partial X$ be a Lebesgue point of $\RadNik{\mu}{\Lebesgue}$ with $\RadNik{\mu}{\Lebesgue}(x)>0$ (this holds for $\mu$-a.e.~$x \in X$).
Fix $t\in \R_+$.
For any $n\in 2\N$ denote $J^n \assign J_{t,x}^n$ (Definition \ref{def:reference_cell}).
For $n$ sufficiently large, $J^n$ will contain $2^d$ basic cells: this holds whenever $J^n$ is part of $\partAn$ and it will eventually hold for $J^n \in \partBn$ since $x \in X \setminus \partial X$.
Then
	\begin{equation}
	\sigma_{t,x}^n = \sigma_{J^n}^n
	=
	\sum_{b\in \{-1,1\}^d}
	\frac{m_{i(J^n,b)}^n}{m_{J^n}^n} \delta_{b/2}
	=
	\sum_{b\in \{-1,1\}^d}
	\frac{\mu(X_{i(J^n,b)}^n)}{\mu(X_{J^n}^n)} \delta_{b/2}
	\label{eq:sigmaJPointDiscretization}
	\end{equation}
	where $i(J^n, b)$ is the basic cell in composite cell $J^n$ whose center $x_i^n$ is at $x_J^n + b/2n$. Choose any $b\in \{-1,1\}^d$ and define $A^b_n = X^n_{i(J^n, b)}$ and
	$B_n = X^n_{J^n}$. The sequences $(A^b_n)_n$ and $(B_n)_n$ shrink nicely to $x$ and satisfy $\Lebesgue(A^b_n) = 1/n^d$ and $\Lebesgue(B_n) = 2^d/n^d$, so by \eqref{eq:LimitRatioMeasureMu}
	\begin{equation}
	\lim_{n \to \infty} \frac{\mu(X_{i(J^n,b)}^n)}{\mu(X_{J^n}^n)}
	=
	\frac{1}{2^d},
	\label{eq:ConvergenceMassSigma}
	\end{equation}
	and thus $(\sigma_{t,x}^n)_n$ converges as $n\rightarrow\infty$ to
	\begin{equation}
	\sigma = \sum_{b\in \{-1, +1\}^d} \frac{1}{2^d} \delta_{b/2}.
	\end{equation}
	for $\mu$-a.e.~$x \in X$ and all $t \in \R_+$.
	
	Now consider the variant \ref{item:MuNSelf} with $\mu^n = \mu$ for all $n$. The arguments are similar to the previous case. Choose a point $x \in X$ that is a Lebesgue point of $\RadNik{\mu}{\Lebesgue}$ with $\RadNik{\mu}{\Lebesgue}(x)>0$ ($\mu$-a.e.~$x\in X$ fulfills this) and choose a time $t \in \R_+$. Then for any measurable set $A\subset Z$, the sequence $(A_n)_n$ defined by $A_n = (S_{t,x}^n)^{-1}(A)$ shrinks nicely to $x$, and so does the sequence $(B_n)_n$ defined by $B_n = (S_{t,x}^n)^{-1}(Z) = X_{J_{t,x}^n}^n$. Thus, by \eqref{eq:LimitRatioMeasureMu}
	\begin{align*}
	\lim_{n \to \infty}
	\sigma_{t,x}^n(A)
	=
	\lim_{n \to \infty}
	\frac{\mu((S_{t,x}^n)^{-1}(A))}{m_J^n}
	=
	\lim_{n \to \infty}
	\frac{\mu(A_n)}{\mu(B_n)}
	=
	\frac{1}{2^d}\Lebesgue(A)
	\end{align*}
	from which we conclude that $\sigma=\frac{1}{2^d}\Lebesgue\restr Z$.
	
	Finally, in the scheme \ref{item:MuDiracFine}, at $\mu$-a.e.~$x \in X$ and every $t \in \R_+$, the sequence $\sigma^n_{t,x}$ has the same weak* cluster points as in the scheme \ref{item:MuNSelf}, since $s^n \to \infty$. Thus the result follows from the previous part.
\end{proof}

\begin{remark}
	\label{remark:ZQuadrantsSameMass}
	For $\mu^n$ a regular discretization sequence, the limit $\sigma$ assigns mass $1/2^d$ to each of the `quadrants'
	\begin{equation}
		Z_b 
		\assign
		 \{z\in Z \mid \sign(z_\ell) = b_\ell \tn{ for all } \ell = 1,...,d\}
	\end{equation}
	with $b\in \{-1,1\}^d$. This follows quickly analogous to \eqref{eq:ConvergenceMassSigma}: for $\mu$-a.e.~$x \in X$ and all $t \in \R_+$, denoting $J^n = J(t,x,n)$, we obtain
	\begin{equation}
		\sigma(Z_b)
		=
		\frac{\sigma(Z_b)}{\sigma(Z)}
		=
		\lim_{n \to \infty}
		\frac{\sigma_{J^n}^n(Z_b)}{\sigma_{J^n}^n(Z)}
		=
		\lim_{n \to \infty}
		\frac{m_{J^n}^n \sigma_{J^n}^n(Z_b)}{m_{J^n}^n\sigma_{J^n}^n(Z)}
		=
		\lim_{n \to \infty} \frac{\mu(X_{i(J^n,b)}^n)}{\mu(X_{J^n}^n)}
		=
		\frac{1}{2^d}.
	\end{equation}
\end{remark}

\section{Block approximation}
\label{app:BlockApproximation}
For the construction of recovery sequences in Section \ref{sec:Limsup} we use the \emph{block approximation} \cite{Carlier-EntropyJKO-2015} of a transport plan. The idea is to cover the product space with a grid of small cubes and then replace the mass of the original coupling within any cube with a suitable product measure (on that cube). This preserves the marginals and by controlling the size of the cubes one may balance between the entropy and the transport term. For simplicity, we compactly gather the required results and provide a sketch of proof.
\begin{lemma}[Block approximation]\label{lem:block_approximation}
	Let $\Omega \subset \R^d$ be compact, $\mu,\nu \in \prob(\Omega)$, $\gamma\in\Pi(\mu, \nu)$, $\len>0$.
	For each $k \in \Z^d$ let $Q_k \assign [k_1, k_1+1) \times ... \times [k_d,k_d+1)$ and $Q_k^\len \assign \len \cdot Q_k$. We define the \textit{block approximation of $\gamma$} at scale $\len$ as 
	
	\begin{align}
		\label{eq:BlockApproximation}
		\gamma_\len & \assign \sum_{\substack{j,k \in \Z^d:\\
			\mu(Q_j^\len) > 0,\ \nu(Q_k^\len)>0}}
			\frac{\gamma(Q_j^\len \otimes Q_k^\len)}{\mu(Q_j^\len) \cdot \nu(Q_k^\len)}
			\cdot (\mu \restr Q_j^\len) \otimes (\nu \restr Q_k^\len).
	\end{align}
	We find that
	\begin{align*}
		\gamma_\len & \in \Pi(\mu,\nu), &
		W_{\Omega \times \Omega}(\gamma,\gamma_\len) & \leq \len \cdot \sqrt{2d}, &
		\KL(\gamma_\len|\mu \otimes \nu) & \leq C - 2d\log \len.
	\end{align*}
	for a constant $C<\infty$ (only depending on $d$ and the diameter of $\Omega$).
\end{lemma}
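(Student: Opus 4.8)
The plan is to verify the three asserted properties of the block approximation $\gamma_\len$ defined by \eqref{eq:BlockApproximation}, treating each in turn. The key observation throughout is that on each nonempty cube $Q_j^\len$ the sum of the contributing terms reconstitutes exactly $\mu \restr Q_j^\len$ (and symmetrically for $\nu$), so that $\gamma_\len$ is a genuine ``blockwise product'' version of $\gamma$.

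\textbf{Marginal preservation.} First I would compute $\proj_1 \gamma_\len$. Fixing $j$ with $\mu(Q_j^\len)>0$, the terms in \eqref{eq:BlockApproximation} with that first index sum (in the first marginal) to $\sum_{k:\,\nu(Q_k^\len)>0} \frac{\gamma(Q_j^\len \times Q_k^\len)}{\mu(Q_j^\len)\nu(Q_k^\len)} \cdot (\mu\restr Q_j^\len) \cdot \nu(Q_k^\len) = \frac{\mu\restr Q_j^\len}{\mu(Q_j^\len)} \sum_{k} \gamma(Q_j^\len \times Q_k^\len)$. Since $\{Q_k^\len\}_k$ tiles $\R^d \supset \Omega$ and $\gamma \in \Pi(\mu,\nu)$, the inner sum equals $\gamma(Q_j^\len \times \Omega) = \mu(Q_j^\len)$, leaving $\mu \restr Q_j^\len$. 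Summing over all relevant $j$ (noting that $\mu$ is concentrated on the union of cubes with positive mass) gives $\proj_1 \gamma_\len = \mu$. The argument for $\proj_2 \gamma_\len = \nu$ is identical. Hence $\gamma_\len \in \Pi(\mu,\nu)$.

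\textbf{Transport estimate.} For the Wasserstein bound I would build an explicit (not necessarily optimal) coupling between $\gamma$ and $\gamma_\len$: on each pair of cubes $(Q_j^\len, Q_k^\len)$, the mass $\gamma\restr(Q_j^\len\times Q_k^\len)$ and the corresponding block term $\frac{\gamma(Q_j^\len\times Q_k^\len)}{\mu(Q_j^\len)\nu(Q_k^\len)}(\mu\restr Q_j^\len)\otimes(\nu\restr Q_k^\len)$ have the same total mass and both live in $Q_j^\len \times Q_k^\len$, a set of diameter at most $\len\sqrt{2d}$ in $\Omega\times\Omega \subset \R^{2d}$. Coupling these two pieces arbitrarily (and summing over $j,k$) produces an element of $\Pi(\gamma,\gamma_\len)$ supported where the two coordinates lie in a common $Q_j^\len\times Q_k^\len$, so every transported unit of mass moves at most $\len\sqrt{2d}$. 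By \eqref{eq:KantRubin} (Kantorovich--Rubinstein), $W_{\Omega\times\Omega}(\gamma,\gamma_\len)\le \len\sqrt{2d}$.

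\textbf{Entropy estimate.} For the KL bound, note that on each nonempty block $\gamma_\len$ has density $\frac{\gamma(Q_j^\len\times Q_k^\len)}{\mu(Q_j^\len)\nu(Q_k^\len)}$ with respect to $\mu\otimes\nu$, so $\KL(\gamma_\len|\mu\otimes\nu) = \sum_{j,k} \gamma(Q_j^\len\times Q_k^\len)\,\varphi\!\big(\tfrac{\gamma(Q_j^\len\times Q_k^\len)}{\mu(Q_j^\len)\nu(Q_k^\len)}\big)$ with $\varphi(s)=s\log s - s + 1 \le s\log s$. Bounding $\varphi(s)\le s\log s$, this is at most $\sum_{j,k}\gamma(Q_j^\len\times Q_k^\len)\log\!\big(\tfrac{\gamma(Q_j^\len\times Q_k^\len)}{\mu(Q_j^\len)\nu(Q_k^\len)}\big)$; splitting the logarithm and using $\gamma(Q_j^\len\times Q_k^\len)\le\min(\mu(Q_j^\len),\nu(Q_k^\len))$ shows $\log\gamma(Q_j^\len\times Q_k^\len) - \log\mu(Q_j^\len) \le 0$ and likewise the $\nu$ term, so it suffices to control $-\sum_{j,k}\gamma(Q_j^\len\times Q_k^\len)\log\nu(Q_k^\len)$ (say). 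Here one uses a volume comparison: since $\Omega$ is bounded, only $O(\len^{-d})$ cubes meet $\Omega$, and a convexity/Jensen argument (or the elementary bound that $-\sum p_k\log p_k$ over $N$ bins is at most $\log N$) gives $-\sum_k \nu(Q_k^\len)\log\nu(Q_k^\len) \le \log(C'\len^{-d}) = C'' - d\log\len$. Combining the analogous bound for the $\mu$-cubes yields $\KL(\gamma_\len|\mu\otimes\nu)\le C - 2d\log\len$ for a constant $C$ depending only on $d$ and $\diam\Omega$.

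The main obstacle is the entropy estimate: one must be careful that the crude bounds $\gamma(Q_j^\len\times Q_k^\len)\le\min(\mu(Q_j^\len),\nu(Q_k^\len))$ plus the counting bound on the number of occupied cubes genuinely close the estimate with the stated $2d$ coefficient, rather than leaving a residual term depending on the (possibly very irregular) distribution of $\mu$ and $\nu$ among the cubes. The resolution is precisely that after cancelling the ``numerator'' logarithm, what remains are two separate entropies of the discrete block-mass vectors of $\mu$ and of $\nu$, each of which is bounded uniformly by the log of the number of boxes, independently of the shape of $\gamma$. The other two parts are routine. A reference to \cite{Carlier-EntropyJKO-2015} can be cited for the original construction.
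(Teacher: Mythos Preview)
Your arguments for the marginals and the Wasserstein bound are correct and match the paper's proof essentially verbatim.

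The entropy paragraph, however, contains a logical muddle. First, a small slip: the identity you write, $\KL(\gamma_\len\mid\mu\otimes\nu)=\sum_{j,k}\gamma(Q_j^\len\times Q_k^\len)\,\varphi(s_{jk})$, is wrong as stated (the weight should be $\mu(Q_j^\len)\nu(Q_k^\len)$, not $\gamma$); you nonetheless land on the correct expression $\sum_{j,k}\gamma(Q_j^\len\times Q_k^\len)\log s_{jk}$, which is in fact an equality, not an inequality. More seriously, your bounding strategy is inconsistent. You use $\gamma(Q_j^\len\times Q_k^\len)\le\mu(Q_j^\len)$ to discard the $\log\mu$ piece, which leaves \emph{only} the $\nu$-entropy $-\sum_k\nu(Q_k^\len)\log\nu(Q_k^\len)\le C'-d\log\len$. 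There is then nothing to ``combine with the analogous bound for the $\mu$-cubes'': the $\mu$ contribution has already been thrown away, and adding a second entropy is unjustified. Your argument as written actually proves the sharper bound $\KL\le C'-d\log\len$, which is not of the stated form $C-2d\log\len$ (though it suffices for the application in Lemma~\ref{lemma:gamma_limsup} where $\len\to 0$).

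Your concluding paragraph in fact describes the correct fix, which is exactly what the paper does: split $\sum\gamma\log s_{jk}=\sum\gamma\log\gamma-\sum\gamma\log\mu-\sum\gamma\log\nu$, bound the \emph{first} sum by $0$ (since each $\gamma(Q_j^\len\times Q_k^\len)\in[0,1]$, not via $\gamma\le\mu$), and observe that the remaining two sums are precisely $-\sum_j\mu(Q_j^\len)\log\mu(Q_j^\len)$ and $-\sum_k\nu(Q_k^\len)\log\nu(Q_k^\len)$ after summing out the free index. Each of these is bounded by $\log N_\len\le\log(\tilde C\,\len^{-d})$ via Jensen, giving the factor $2d$ in the final estimate.
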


\begin{proof}
In \cite{Carlier-EntropyJKO-2015}, $\Omega$ was not compact, but $\mu$ and $\nu$ were assumed to be Lebesgue-absolutely continuous. $\Omega$ compact allows for a much shorter, self-contained proof.

$\gamma_\len \in \Pi(\mu,\nu)$ follows from a direct computation as in \cite[Proposition 2.10]{Carlier-EntropyJKO-2015}. For all measurable $A \subset \Omega$ one has
\begin{align*}
	\gamma_\len(A \times \Omega) = \sum_{\substack{j,k \in \Z^d:\\
			\mu(Q_j^\len) > 0,\ \nu(Q_k^\len)>0}}
			\frac{\gamma(Q_j^\len \otimes Q_k^\len)}{\mu(Q_j^\len)}
			\cdot (\mu \restr Q_j^\len)(A)
			= \sum_{\substack{j \in \Z^d:\\
			\mu(Q_j^\len) > 0}} (\mu \restr Q_j^\len)(A) = \mu(A)
\end{align*}
where we used [$\gamma(Q_j^\len \otimes Q_k^\len)>0$] $\Rightarrow$ [$\nu(Q_k^\len)>0$] and that the first marginal of $\gamma$ is $\mu$ in the second equality. The same argument applies for the second marginal.

Transforming $\gamma$ into $\gamma_L$ only requires rearrangement of mass within each hypercube $Q_j^\len \otimes Q_k^\len$, which has diameter $\len \cdot \sqrt{2d}$ in $\R^{2d}$. This provides the bound $W_{\Omega \times \Omega}(\gamma,\gamma_\len) \leq \len \cdot \sqrt{2d}$, cf.~\cite[Corollary 2.12]{Carlier-EntropyJKO-2015}.

For the entropy bound we adopt \cite[Lemma 2.15]{Carlier-EntropyJKO-2015}, accounting for more general $\mu$, $\nu$ and considerably simplifying the last part due to compactness of $\Omega$:
\begin{align}
	\KL(\gamma_\len \mid \mu\otimes \nu)
	&=
	\int_{\Omega \times \Omega}
	\log\left(
	\RadNikD{\gamma_{\len}}{(\mu\otimes \nu)}
	\right)
	\diff \gamma_{\len} - \gamma_{\len}(\Omega \times \Omega) + (\mu \otimes \nu)(\Omega \times \Omega)
	\nonumber
	\\
	&=
	\sum_{\substack{
			j,k \in\Z^{d}: \\
			\mu(Q_{j}^{\len})>0,\, \nu(Q_{k}^{\len})>0
		}}
	\log\left(
	\frac{\gamma(Q_j^\len \otimes Q_k^\len)}{\mu(Q_j^\len)\nu(Q_k^\len)}
	\right)
	\gamma(Q_j^\len \otimes Q_k^\len)
	\nonumber
	\\
	\intertext{where we deduce from \eqref{eq:BlockApproximation} that $\RadNikD{\gamma_{\len}}{(\mu\otimes \nu)}=\frac{\gamma(Q_j^\len \otimes Q_k^\len)}{\mu(Q_j^\len)\nu(Q_k^\len)}$ on $Q_j^\len \otimes Q_k^\len$ and $\gamma_\len$ and $\gamma$ carry the same mass on $Q_j^\len \otimes Q_k^\len)$. We continue:}
	&=
	\sum_{\substack{
			(j, k) \in\left(\Z^{d}\right)^2: \\
			\mu(Q_{j}^{\len})>0,\, \nu(Q_{k}^{\len})>0
		}}
	\gamma(Q_j^\len \otimes Q_k^\len)\cdot \log(\gamma(Q_j^\len \otimes Q_k^\len))
	\nonumber
	\\
	&\qquad-
	\sum_{j\in \Z^d} \mu(Q_j^\len) \log(\mu(Q_j^\len))
	-
	\sum_{k\in \Z^d} \nu(Q_k^\len) \log(\nu(Q_k^\len))
	\label{eq:BlockApproxEntropyStep}
\end{align}
where we use the convention $0 \log 0 =0$.
The first term is less than or equal to zero. For the other two we observe that by compactness of $\Omega$ at most a finite number $N_\len \in \N$ of non-zero terms can appear in each sum with $N_\len \leq \tilde{C} \cdot \len^{-d}$ for some $\tilde{C}$ depending on $d$ and the diameter of $\Omega$.
Using Jensen's inequality and convexity of $\R_+ \ni s \mapsto \phi(s) \assign s \log(s)$ we find
\begin{align*}
	\sum_{j\in \Z^d} \phi(\mu(Q_j^\len)) \geq
	N_\len \cdot \phi\left(\tfrac{1}{N_\len} \sum_{j\in \Z^d} \mu(Q_j^\len)\right)
	= N_\len \phi(1/N_\len) = -\log(N_\len) \geq -\log(\tilde{C})+d \log(\len).
\end{align*}
Applying this bound on the second and third term in \eqref{eq:BlockApproxEntropyStep} we arrive at the desired entropy bound for $C \assign 2\log(\tilde{C})$.

\end{proof}

\end{document}